\DeclareRobustCommand\widecheck[1]{{\mathpalette\@widecheck{#1}}}
\def\@widecheck#1#2{%
    \setbox\z@\hbox{\m@th$#1#2$}%
    \setbox\tw@\hbox{\m@th$#1%
       \widehat{%
          \vrule\@width\z@\@height\ht\z@
          \vrule\@height\z@\@width\wd\z@}$}%
    \dp\tw@-\ht\z@
    \@tempdima\ht\z@ \advance\@tempdima2\ht\tw@ \divide\@tempdima\thr@@
    \setbox\tw@\hbox{%
       \raise\@tempdima\hbox{\scalebox{1}[-1]{\lower\@tempdima\box
\tw@}}}%
    {\ooalign{\box\tw@ \cr \box\z@}}}
\newcommand{\Mstar}{[m]}
\newcommand{\Qro}{\mathbb{Q}}
\newcommand{\Pro}{{\mathbb{P}}}
\newcommand{\Exp}{{\mathbb{E}}}
\newcommand{\bN}{\mathbb{N}}
\newcommand{\bR}{{\mathbb{R}}}
\newcommand{\cF}{\mathcal{F}}
\theoremstyle{plain}
\newtheorem{theorem}{Theorem}[section]
\newtheorem{lemma}[theorem]{Lemma}
\newtheorem{remark}{Remark}[section]
\theoremstyle{definition}
\newtheorem{definition}{Definition}[section]
\newtheorem{example}{Example}
\begin{document}
\begin{frontmatter}

\title{Efficient Importance Sampling for Wrong Exit Probabilities over Combinatorially Many Rare Regions}

\runtitle{Efficient Simulation of Wrong Exit Probability}


\begin{aug}
\author[A]{\fnms{Yanglei} \snm{Song}\ead[label=e1]{yanglei.song@queensu.ca}}
\and
\author[B]{\fnms{Georgios} \snm{Fellouris}\ead[label=e2]{fellouri@illinois.edu}}

\runauthor{Y. Song and G. Fellouris}

\address[A]{Department of Mathematics and Statistics, 
Queen's University,
Kingston, ON, Canada \\
\printead{e1}}

\address[B]{Department of Statistics,
University of Illinois Urbana-Champaign,
Champaign, IL, USA\\
\printead{e2}}
\end{aug}


\begin{abstract}
We consider importance sampling for estimating the probability that a light-tailed $d$-dimensional random walk exits through one of many disjoint rare-event regions before reaching an anticipated target. This problem arises in sequential multiple hypothesis testing, where the number of such regions may grow combinatorially and in some cases exponentially with the dimension. While mixtures over all associated exponential tilts are asymptotically efficient, they become computationally infeasible even for moderate values of $d$. We develop a method for constructing asymptotically efficient mixtures with substantially fewer components by combining optimal tilts for a small number of regions with additional proposals that control variance across a large collection of regions. The approach is applied to the estimation of three probabilities that arise in sequential multiple testing, including a multidimensional extension of Siegmund's classical exit problem, and is supported by both theoretical analysis and numerical experiments.
\end{abstract}

\begin{keyword}[class=MSC]
\kwd[Primary ]{65C05}
\kwd[; Secondary ]{60F10}
\kwd{62L10}
\end{keyword}
\begin{keyword}
\kwd{importance sampling}
\kwd{rare-event probability}
\kwd{large deviations}
\kwd{random walks}
\kwd{multiple hypothesis testing}
\end{keyword}


\end{frontmatter}

\tableofcontents

\section{Introduction} 
Motivated by the problem of estimating error probabilities for sequential probability ratio tests (SPRT) \citep{alma993552393405158}, \citet{siegmund1976importance} studied the use of Monte Carlo simulation to evaluate the probability that a one-dimensional, light-tailed random walk with negative drift exits a bounded interval, containing zero in its interior, through the upper boundary. An importance sampling estimator was proposed, and it was shown that its relative error remains bounded as the probability of interest tends to zero; see Section \ref{subsec:literature_review} for further discussion.

When testing multiple pairs of simple hypotheses, a natural generalization of Wald's SPRT is to monitor each log-likelihood ratio statistic until all of them are simultaneously large in absolute value, and then to decide for each pair of hypotheses based on the sign of the corresponding test statistic \citep{de2012sequential,song2017asymptotically}. For this procedure, it is desirable to estimate the probability of at least one wrong decision. This consideration motivates an extension of Siegmund's problem to the multidimensional setting.

To be specific, let $\{X_n : n \geq 1\}$ be a sequence of independent and identically distributed (i.i.d.) $d$-dimensional random vectors defined on a probability space $(\Omega, \mathcal{F}, \mathbb{P})$. Consider the associated random walk
$S_n := \sum_{i=1}^n X_i$ for $n \geq 1$ with $S_0 = 0$, and define the stopping time
\begin{equation} \label{def:siemund_stopping}
T = \inf\left\{ n \geq 0 : \; S_{n,k} > b u \;\; \text{or} \;\; S_{n,k} < - b \ell, \;\; \text{for all } k \in [d] \right\},
\end{equation}
where $\ell, u > 0$, $b > 0$, $S_n = (S_{n,1}, \ldots, S_{n,d})^\top$, and $[d] := \{1,\ldots,d\}$. If every coordinate of $X_1$ has negative expectation, then with high probability all coordinates of $S_T$ will be negative. We refer to the problem of estimating the probability that at least one coordinate of the random walk at time $T$ is positive as the \textit{multidimensional Siegmund problem}.

In this work, we consider a general formulation that encompasses the multidimensional Siegmund problem, as well as the estimation of a broad class of rare-event probabilities arising in the sequential multiple testing literature under various information structures and error metrics; see, e.g., \cite{de2012sequential,de2012step,song2017asymptotically,song2019sequential,he2021asymptotically}. Although motivated by statistical applications, the proposed formulation is also relevant in queuing and insurance contexts; see, e.g., \cite{alma9917764973405158,glasserman2004monte,hult2006heavy}.

\subsection{Problem description}
For any Borel subset $W \subset \mathbb{R}^d$ and $b > 0$, define the hitting time of the dilated set $bW$ by the random walk $\{S_n\}$ as
\begin{align}
    \label{def:hitting_time}
    \tau_b(W) := \inf\{n \geq 0 : S_n \in bW\}, 
   \quad \text{where}  \quad  bW := \{b x : x \in W\}.
\end{align}
Fix disjoint open sets $W^{0}, W^{1}, \ldots, W^{J} \subset \mathbb{R}^d$ and set
\begin{align}\label{def:hitting_W_js} 
    \tau_b^j := \tau_b(W^j), \quad j=0,1,\ldots,J, 
    \qquad \tau_b^{*} := \min_{j \in [J]} \tau_b^j.
\end{align}

We are interested in estimating the probability that the random walk $\{S_n\}$ hits $bW^j$ for some $j \in [J]$ before it hits $bW^0$, i.e., $\Pro\left(\tau_{b}^{*} < \tau_b^{0}\right)$, under the following assumptions: (i) $W^1,\ldots,W^J$ are convex; (ii) the random walk is light-tailed; and (iii) the drift $\Exp[X_1]$ is directed away from $W^j$ for every $j \in [J]$. Note that $W^0$ need not be convex. In fact, it may be empty, in which case the quantity of interest reduces to the \emph{hitting probability} $\Pro(\tau_b^{*} < \infty)$, namely the probability that the random walk eventually hits $bW^j$ for some $j \in [J]$. In general, however, $W^0$ will be nonempty and, although this is not required for our theoretical results, in all applications we consider the drift $\mathbb{E}[X_1]$ points toward $W^0$. In view of this, we refer to $\Pro\left(\tau_{b}^{*} < \tau_b^{0}\right)$ as the  \emph{wrong exit probability}.

Since  $\Pro\left(\tau_{b}^{*} < \tau_b^{0}\right)$ decays exponentially fast as $b$ increases,  plain Monte Carlo estimation becomes impractical for large values of $b$. It is therefore natural to appeal to importance sampling (I.S.) techniques. However, as we will discuss, existing efficient I.S.~methods become computationally prohibitive when the number, $J$, of rare-event regions is very large (e.g., exponential in $d$), as it is typical in the motivating statistical applications.

Consider, for example, the multidimensional Siegmund problem. For each $A \subset [d]$, define
\begin{align}
   \label{def:W_A} 
   W^{A} = \{x \in \mathbb{R}^d : x_{k} > u \;\; \text{for}\;\; k \in A, \;\; x_{k'} < -\ell \;\; \text{for}\;\; k' \notin A\},
\end{align}
identify the empty set $\emptyset$ with $0$, and index $\{A \subset [d] : A \neq \emptyset\}$ by $[J]$ with $J := 2^d - 1$. Then the event that at least one coordinate of the random walk is positive at time $T$, as defined in \eqref{def:siemund_stopping}, coincides with exiting through some $bW^{A}$ for $A \neq \emptyset$ before hitting $bW^{\emptyset}$, that is,
\[
\{S_{T,k} > bu \text{ for some } k \in [d]\} \;\;=\;\; \{\tau_{b}^{*} < \tau_b^{\emptyset}\}.
\]
See Figure~\ref{fig:intersection_rule} for an illustration of the exit regions when $d=2$. In this context, $J = 2^d - 1$ grows exponentially with $d$ and is already enormous for moderate dimensions (e.g., $J > 10^{12}$ when $d=40$). Similar combinatorial complexity arises in events associated with other sequential multiple testing rules; see Sections~\ref{sec:gap_rule} and~\ref{sec:sum_inter}.

Thus, our main goal is the design of an efficient I.S.~approach for the estimation of $\Pro\left(\tau_{b}^{*} < \tau_b^{0}\right)$, \textit{which remains feasible even when $J$ is very large}. Before describing the main contributions of this work, we first review the relevant literature.

\begin{figure}[!t]
    \centering
    \includegraphics[width=0.4\textwidth]{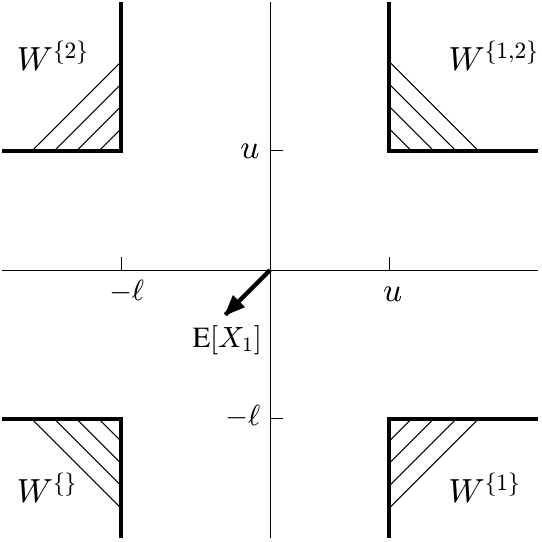}
    \caption{Exit regions of the multidimensional Siegmund problem with $d = 2$. For general $d$, the number of disjoint regions is $2^d$, which grows exponentially with $d$. }
    \label{fig:intersection_rule}
\end{figure}

\subsection{Literature review}\label{subsec:literature_review}
A wide range of variance reduction techniques have been developed for rare-event simulation; see~\cite{alma992931930105151,glasserman2004monte,asmussen2007stochastic} for comprehensive textbook treatments, and~\cite{59428b60bd3211dc88d5000ea68e967b,juneja2006rare,blanchet2012state} for survey articles. This work focuses on importance sampling (I.S.), a widely used method that simulates under alternative probability distributions and reweights samples using likelihood ratios to produce unbiased estimators; see Section~\ref{sec:preliminary} for a formal description.

Since our setting involves rare events associated with light-tailed random walks, we restrict our literature review to this class of problems. Importantly, I.S.~techniques differ substantially between light-tailed and heavy-tailed settings; for the latter, see~\cite{blanchet2008efficient,blanchet2008state,blanchet2010efficient}. Moreover, although our focus is on random walks, we should also mention several related works that are developed in the setting of more general Markov processes; see, e.g.,~\cite{ney1987markovI,ney1987markovII,sadowsky1990large,bucklew1990monte,lehtonen1992asymptotically,glasserman1995analysis,sadowsky1996monte,collamore2002,blanchet2007fluid}.

For light-tailed random walks, a powerful tool for designing I.S.~proposals is \emph{exponential tilting}. Specifically, let $\mu$ denote the distribution of $X_1$, and define the cumulant generating function $\Lambda : \mathbb{R}^d \to \mathbb{R} \cup \{\infty\}$ by $\Lambda(\theta) := \log \mathbb{E}[e^{\theta \cdot X_1}]$, where $\theta \cdot X_1$ denotes the standard inner product in $\mathbb{R}^d$. For $\theta \in \operatorname{dom}(\Lambda) := \{z \in \mathbb{R}^d : \Lambda(z) < \infty\}$, define the tilted measure $\mu_\theta$ on $\mathbb{R}^d$ via the Radon--Nikodym derivative 
$$
\frac{d\mu_\theta}{d\mu}(x) = \exp\left(\theta \cdot x - \Lambda(\theta)\right), \text{ for } x \in \bR^d,
$$
and let $\mathbb{P}_\theta$ denote the probability measure under which $X_n, n \geq 1$ are i.i.d.\ with law $\mu_\theta$.

As discussed earlier, in the one-dimensional setting ($d = 1$)  \citet{siegmund1976importance} considered the problem of estimating $\Pro(S_T \geq b u)$, where $T$ is the first exit time from $b[-\ell,u]$, i.e., as defined in~\eqref{def:siemund_stopping} with $d=1$, where $\ell, u > 0$ are fixed and $b > 0$ is an index parameter. Assume that $\mathbb{E}[X_1] < 0$, $\Pro(X_1 > 0) > 0$, and that the interior of $\operatorname{dom}(\Lambda)$ contains zero. Then there exists a unique $z > 0$ such that $\Lambda(z) = 0$. If $X_n$, $n \geq 1$, are generated under $\Pro_{z}$, the estimator $\exp(-z S_T)\, \mathbbm{1}\{S_T > bu\}$ is unbiased, and as shown in~\citet{siegmund1976importance}, its relative error---defined as the ratio of its standard deviation to the probability of interest---is uniformly bounded over $b > 0$. Moreover, under mild conditions, $\Pro_{z}$ is the unique I.S.~proposal with this property in the family of exponential tiltings $\{\Pro_{\theta} : \theta \in \operatorname{dom}(\Lambda)\}$.

\citet{collamore1996,collamore2002} considered the estimation of the hitting probability 
$\mathbb{P}(\tau_b(W) < \infty)$ for a general convex open set $W$ and a general light-tailed $d$-dimensional random walk, where $\tau_b(W)$ is defined as in~\eqref{def:hitting_time}. Specifically, \citet{collamore1996} showed, under mild conditions, that this probability decays exponentially as $b \to \infty$, with exponent given by a rate function $I : \mathbb{R}^d \to [0,\infty]$ (see Theorem~\ref{thm:Collomore_prob} for a precise statement):
\begin{align}
    \label{def:intro_rate}
    \lim_{b \to \infty} b^{-1} \log \mathbb{P}(\tau_b(W) < \infty) = -\inf_{x \in W} I(x).
\end{align}
Furthermore, \citet{collamore2002} established the existence of a unique point $v \in \partial W$ such that $I(v) = \inf_{x \in W} I(x)$. For this $v$, there exists a unique $\beta^{W} \in \operatorname{dom}(\Lambda)$ satisfying $\Lambda(\beta^{W}) = 0$ and $\nabla \Lambda(\beta^{W}) = v$, and the tilted measure $\mathbb{P}_{\beta^{W}}$ yields an \textit{asymptotically efficient} (A.E.) I.S.~proposal for estimating the hitting probability. Specifically, under $\mathbb{P}_{\beta^{W}}$, the estimator $\exp(-\beta^{W} \cdot S_{\tau_b(W)}) \, \mathbbm{1}\{\tau_b(W) < \infty\}$ is unbiased, and its second moment decays at the optimal exponential rate; see Definition~\ref{def:LR_theta} for the definition of asymptotic efficiency. Although this criterion is weaker than bounded relative error, it often provides sufficient accuracy in practice \citep{asmussen2007stochastic}.

As seen in the previous two examples, large deviations theory \citep{dembo2009large,borovkov2012large} often guides the construction of efficient I.S.~proposals. When the target set $W$ is more complex than a single convex set, as shown in \citet{sadowsky1990large,collamore2002}, a mixture of measures $\{\Pro_{\theta} : \theta \in \Theta\}$ can be effective, where $\Theta \subset \operatorname{dom}(\Lambda)$ is finite. Specifically, \citet{collamore2002} also considered the hitting probability $\Pro(\tau_b^* < \infty)$, where $\tau_b^*$ is defined in~\eqref{def:hitting_W_js}, i.e., the probability that the random walk ever hits $bW^j$ for some $j \in [J]$. Assuming that the sets $W^1, \ldots, W^J$ are disjoint, convex, and open subsets of $\mathbb{R}^d$, and that the drift of the random walk, $\mathbb{E}[X_1]$, is directed away from all of them, it was shown that a mixture of $\{\Pro_{\beta^j} : j \in [J]\}$ is asymptotically efficient for estimating $\Pro(\tau_b^* < \infty)$, where $\beta^j$ is the exponential tilt corresponding to the set $W^j$, as discussed above. However, when $J$ is very large, this mixture becomes computationally infeasible. Thus, even the efficient simulation of the \emph{hitting} probability $\Pro(\tau_b^* < \infty)$ remains an open problem in such regimes.

In a conference paper \citep{song2016logarithmically}, we studied the simulation of two wrong exit probabilities, which are special cases of the setups considered in Sections~\ref{sec:multi_Siegmund} and~\ref{sec:gap_rule}. However, that work requires 
the coordinates of 
$X_1$ to be log-likelihood ratios, independent, and “symmetric”; see Remark~\ref{remark:song_intersection} and Example~\ref{example:gap_ind_hom}. Moreover, it does not apply to probabilities such as the one in Section~\ref{sec:sum_inter}. 

Finally, state-dependent I.S.~schemes have also been developed for light-tailed systems; see \cite{DupuisWang2007,DupuisSezerWang2007}. However, their applicability to the large $J$ setting remains unclear.

\subsection{Contributions and organization}
We now outline our main contributions.

\smallskip
\noindent\textbf{(i).} We characterize the exponential decay rate as $b \to \infty$ of the wrong exit probability $\Pro\left(\tau_{b}^{*} < \tau_b^{0} \right)$. As this  is bounded above by the hitting probability $\Pro\left(\tau_{b}^{*} < \infty\right)$, whose exponential decay rate is established in~\cite{collamore1996}, our contribution lies in proving a matching lower bound, which we obtain through a pathwise large deviations analysis.


\smallskip
\noindent\textbf{(ii).} We show that the mixture of the $J$ tilted measures $\mathbb{P}_{\beta^1}, \ldots, \mathbb{P}_{\beta^J}$, which is asymptotically efficient (A.E.) for the hitting probability $\Pro\left(\tau_{b}^{*} < \infty\right)$, is also A.E.~for the wrong exit probability $\Pro\left(\tau_{b}^{*} < \tau_b^{0}\right)$ under mild conditions on the set $W^0$. However,  this mixture becomes computationally intractable when $J$ is large. Accordingly, we focus on the design of  feasible A.E.~mixtures that require substantially fewer than $J$ components.

\smallskip
\noindent\textbf{(iii).} In view of~\eqref{def:intro_rate}, 
$\mathcal{C}_* := \{j \in [J] : r_j = r_*\}$  is the collection of regions most likely to be hit in the event of a wrong exit, where $r_j := \inf_{x \in W^j} I(x)$ for $j \in [J]$ and $r_* := \min_{j \in [J]} r_j$. We show that for a mixture of tilted measures $\{\mathbb{P}_\theta : \theta \in \Theta\}$ to be A.E., the parameter set $\Theta \subset \operatorname{dom}(\Lambda)$ must include the tilts associated with \textit{all} regions in $\mathcal{C}_*$, i.e., $\{\beta^j : j \in \mathcal{C}_*\}$. Consequently, if $\mathcal{C}_*$ is too large (e.g., exponential in $d$), then no computationally feasible A.E.~mixture of this form can exist. A concrete example is provided in Subsection~\ref{subsec:negative}.

\smallskip
\noindent\textbf{(iv).} Since computing all $\{r_j\}$ is infeasible when $J$ is large, the set $\mathcal{C}_*$ and the value $r_*$ are generally unknown. However, guided by problem-specific intuition, we can often identify a candidate subset $\mathcal{C} \subset [J]$ that is likely to contain $\mathcal{C}_*$. Based on this insight, we propose a mixture of exponential tilts $\{\mathbb{P}_\theta : \theta \in \Theta\}$, where $\Theta \subset \operatorname{dom}(\Lambda)$ contains the set $\{\beta^j : j \in \mathcal{C}\}$ \textit{along with additional components designed to control the variance contributed by regions $W^j$ for $j \in [J] \setminus \mathcal{C}$}. The key idea is that, instead of assigning a separate tilt to each such region---which would be computationally infeasible---we identify a small set of tilts, not necessarily in $\{\beta^j : j \in [J] \setminus \mathcal{C}\}$, such that each tilt simultaneously accounts for many regions. Most importantly, we provide a sufficient condition for the asymptotic efficiency of this mixture, which in turn guides the selection of the additional components. See Subsection \ref{subsec:strategy}.

\smallskip
\noindent\textbf{(v).} We illustrate the proposed strategy through the estimation of three wrong exit probabilities motivated by problems in sequential multiple testing; see Sections~\ref{sec:multi_Siegmund}--\ref{sec:sum_inter}. In each case, the number of regions $J$ grows exponentially or combinatorially with the dimension $d$, yet we construct mixtures whose number of components grows only polynomially in $d$ with a substantially lower order. These mixtures are computationally feasible, and we provide numerical results that demonstrate their practical performance.

\smallskip
\noindent\textbf{Organization.} Section \ref{sec:preliminary} reviews the basics of importance sampling and relevant results from \cite{collamore1996,collamore2002}. In Section \ref{sec:prob_wrong_exit}, we develop a general strategy for constructing A.E.\ proposals for wrong exit probabilities and present necessary conditions for asymptotic efficiency. Sections \ref{sec:multi_Siegmund}--\ref{sec:sum_inter} present three applications of this strategy. Section \ref{sec:conclusion} concludes the paper. Proofs and technical details are deferred to the appendices.

\smallskip
\noindent \textbf{Notations.} 
Let $\bN := \{1, 2, \ldots\}$ denote the set of positive integers. 
For two vectors $x,y \in \bR^d$, denote by  $x \cdot y$ the inner product. For $x \in \bR^d$ and $r >0$, define $B(x,r) = \{y \in \bR^d: \|y-x\| < r\}$, where $\|\cdot\|$ denotes the Euclidean norm. For a subset  $A \subset \bR^d$ and a scalar $b \in \bR$, define $b A =\{bx : x \in A\}$ and  $\operatorname{cone}(A)  :=  \{\lambda x: \lambda > 0, x \in A\}$. 

For $A \subset \bR^d$, denote by $\operatorname{int}(A)$ the interior, $\overline{A}$ the closure, and $\partial A = \overline{A} \setminus \operatorname{int}(A)$ the boundary. For a convex $V \subset \bR^d$, denote by $\operatorname{ri}(V)$ its relative interior. 
For a function $f: \bR^d \to (-\infty, +\infty]$ and $a \in \bR$, denote by 
$\operatorname{dom}(f) = \{x \in \bR^d: f(x) < \infty\}$ its effective domain, 
by $\mathcal{L}_a f = \{x \in \bR^d: f(x) \leq a\}$ the level set of $f$, and 
by $\nabla f(x)$ the gradient at $x \in \bR^d$.

For brevity, we use A.E.~to denote ``asymptotically efficient'' and I.S.~to denote ``importance sampling''.


 \section{Importance Sampling  for  Hitting Probabilities}\label{sec:preliminary}
In this section, we introduce notation and review key concepts related to importance sampling. We also discuss the simulation of hitting probabilities for convex sets, following the foundational work of \cite{collamore1996, collamore_thesis, collamore2002}. Our approach to simulating wrong exit probabilities builds upon these developments.

\subsection{Basics of importance sampling} 
Let $\cF_n, n \geq 0$ be a filtration of the probability space  $(\Omega,\mathcal{F},\Pro)$. 
Let $\Qro$ be a probability measure on $(\Omega, \cF)$ such that $\Pro$ is absolutely continuous with respect to $\Qro$ on $\cF_n$ for each $n \geq 0$. We denote by $\Exp_{\Qro}$ the expectation under $\Qro$. Let $T$ be an $\{\cF_n\}$-stopping time and denote by $\cF_T$ the associated $\sigma$-algebra. Define
$$
 {Z}_{\Qro}:= \sum_{n=0}^{\infty} \frac{d\Pro}{d\Qro}(\cF_n) \mathbbm{1}\{T = n\},
$$
where $\frac{d\Pro}{d\Qro}(\cF_n)$ is the Radon–Nikodym derivative of $\Pro$ against $\Qro$ on $\cF_n$ for $n \geq 0$ and $\mathbbm{1}\{\cdot\}$ denotes the indicator function of an event. For each $b > 0$, let $A_b \in \cF_T$, that is, $A_b \cap \{T = n\} \in \cF_n$ for $n \geq 0$. By a change of measure, we have
\begin{align*}
    \Pro\left(A_b \cap \{T < \infty \}\right) = \Exp_{\Qro}\left[ {Z}_{\Qro} \,  \mathbbm{1}\{A_b \} \right].
\end{align*}
That is, under $\Qro$, the random variable ${Z}_{\Qro} \, \mathbbm{1}\{A_b\}$ is an unbiased estimator of $\Pro\left(A_b \cap \{T < \infty\}\right)$. We refer to $\Qro$ as a proposal measure. By Jensen's inequality, 
\begin{align*}
    \Exp_{\Qro}\left[ \left({Z}_{\Qro}\right)^2 \mathbbm{1}\{A_b\} \right] \geq \left(\Pro\left(A_b \cap \{T < \infty \}\right)\right)^2,
\end{align*}
which motivates the following definition.

\begin{definition} \label{def:AE}
Assume $\Pro\left(A_b \cap \{T < \infty \}\right) > 0$ for $b > 0$.
We say that a proposal measure $\mathbb{Q}$ is \emph{asymptotically efficient (A.E.)} for estimating $\Pro(A_b \cap \{T < \infty\})$ if
\begin{align*}
    \log \Exp_{\mathbb{Q}}\left[\left({Z}_{\Qro} \right)^2  \mathbbm{1}\left\{A_b\right\}\right] \;\sim\; 2 \log \Pro\left(A_b \cap \{T < \infty\}\right),
\end{align*}
where $\sim$ denotes that the ratio of the two sides converges to $1$ as $b \to \infty$.
\end{definition}
Thus, if $\mathbb{Q}$ is an asymptotically efficient proposal, then as $b \to \infty$, the variance under $\mathbb{Q}$ of the unbiased estimator
${Z}_{\Qro}  \mathbbm{1}\left\{A_b\right\}$  is as small as possible on the logarithmic scale. For further discussion, see, e.g., \cite{asmussen2007stochastic, glasserman1997}.

\subsection{Exponential tilting}
In this work we focus on the setting where $X_n$, $n \geq 1$, are i.i.d.~$d$-dimensional random vectors with common law $\mu$, and define the associated random walk by $S_n := \sum_{i=1}^{n} X_i$ for $n \geq 0$, with $S_0 = 0$. Let $\Lambda$ denote the cumulant generating function of $X_1$, and let $\Lambda^*$ denote its Legendre--Fenchel conjugate, defined respectively by
\begin{align*}
&\Lambda(\theta) := \log \Exp\left[ \exp\left( \theta \cdot X_1 \right) \right] = \log \int_{\mathbb{R}^d} \exp( \theta \cdot x ) \, d\mu(x), \quad \text{for } \theta \in \mathbb{R}^d, \\
&\Lambda^*(x) := \sup_{\theta \in \mathbb{R}^d} \left\{ \theta \cdot x - \Lambda(\theta) \right\}, \quad \text{for } x \in \mathbb{R}^d.
\end{align*}

We consider a special family of importance sampling proposals based on exponential tilting. 
For each $n \geq 1$  let $\cF_n$ be the $\sigma$-algebra generated by $X_1, \ldots, X_n$, and  $\cF_0$  the trivial $\sigma$-algebra. 
For each $\theta \in \operatorname{dom}(\Lambda)$  let $\Pro_{\theta}$ be a probability measure on $(\Omega, \mathcal{F})$ such that
\begin{equation}\label{def:exp_tilting}
\log \frac{d\Pro_{\theta}}{d\Pro}(\mathcal{F}_{n}) = \theta \cdot S_n - n \Lambda(\theta), \quad \text{for } n \in \mathbb{N},
\end{equation}
and denote by $\Exp_{\theta}$ the expectation under $\Pro_{\theta}$. Under $\Pro_{\theta}$, the sequence $\{X_n : n \geq 1\}$ is i.i.d. with common distribution $\mu_{\theta}$, whose Radon--Nikodym derivative relative to $\mu$ is given by
\begin{equation}\label{def:exp_tilting_common_dist}
\frac{d\mu_{\theta}}{d\mu}(x) = \exp\left( \theta \cdot x - \Lambda(\theta) \right), \quad x \in \mathbb{R}^d.
\end{equation}
For any $\theta \in \operatorname{dom}(\Lambda)$, the measures $\Pro$ and $\Pro_{\theta}$ are mutually absolutely continuous on $\mathcal{F}_n$ for each $n \geq 0$, and $\mathbb{L}_{\theta}^{-1}(T) \mathbbm{1}\{A_b\}$ is an unbiased estimator of  $\Pro\left(A_b \cap \{T < \infty\} \right)$ under $\Pro_{\theta}$, where we define
\begin{align}\label{def:LR_theta}
   \mathbb{L}_{\theta}(T) := \exp\left( \theta \cdot S_{T} - T \Lambda(\theta) \right) \, \mathbbm{1}\{T < \infty\}.
\end{align}

\subsection{Hitting probabilities}
For a subset $W \subset \bR^d$ and $b > 0$, recall from \eqref{def:hitting_time}  that  $\tau_b(W)$ denotes  the hitting time of the set $bW$ by the random walk $\{S_n : n \geq 0\}$. 
Assuming that $W \subset \mathbb{R}^d$ is  convex and open, and that  the random walk $\{S_n : n \geq 0\}$ is light-tailed and directed away from $W$, 
 \cite{collamore1996} characterizes  the asymptotic exponential decay rate of the hitting probability $\Pro\left( \tau_b(W) < \infty \right)$ as $b \to \infty$. To be specific, define $I : \mathbb{R}^d \to [0, \infty]$ as the support function of the set $\mathcal{L}_0 \Lambda$, i.e., for $x \in \bR^d$,
\begin{align}\label{def:rate_function}
I(x) :=\sup\left\{\theta \cdot x:\; \theta \in \mathcal{L}_0 \Lambda \right\}  = \sup\left\{\theta \cdot x:\; \log \Exp[\exp(\theta \cdot X_1)] \leq 0 \right\}.
\end{align}
Further, for each subset $W \subset \bR^d$, define the minimal rate as follows:
\begin{align}\label{def:r_W}
    r_W := \inf_{x \in W} I(x).
\end{align}

\begin{theorem}[From \cite{collamore1996}]\label{thm:Collomore_prob}
Let $W \subset \mathbb{R}^d$ be a convex open set. 
 Assume that (1) there exists  
 $\delta > 0$ such that $W$ does not intersect $\operatorname{cone}(B(\Exp[X_1], \delta))$, 
 and (2)  $\textbf{0}$ belongs to $\operatorname{int}(\operatorname{dom}(\Lambda))$ and  
$W$ intersects the relative interior of $cone(dom(\Lambda^*))$. Then
\begin{align*}
\lim_{b \to \infty}    \frac{1}{b} \log \Pro(\tau_b(W) < \infty)  =- r_W.
\end{align*}
\end{theorem}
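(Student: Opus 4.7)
The plan is to prove matching upper and lower bounds for $(1/b)\log \Pro(\tau_b(W) < \infty)$. Both rely on the optimal exponential tilt $\beta^W \in \operatorname{dom}(\Lambda)$, characterized by $\Lambda(\beta^W) = 0$ and $\nabla \Lambda(\beta^W) = v$, where $v \in \partial W$ is the unique minimizer of $I$ on $\overline{W}$; existence and uniqueness of $v$ and $\beta^W$ are standard convex-duality consequences of assumptions (1)--(2). Two geometric facts follow: $\beta^W \cdot v = I(v) = r_W$, and because $W$ and the sublevel set $\{x : I(x) < r_W\}$ are disjoint convex sets with $v$ on their common boundary, the half-space $\{y \in \bR^d : \beta^W \cdot y \geq r_W\}$ contains $W$.

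\textbf{Upper bound.} Since $\Lambda(\beta^W) = 0$, the process $M_n := \exp(\beta^W \cdot S_n)$ is a nonnegative $\Pro$-martingale. Optional stopping at $\tau_b(W) \wedge N$ combined with Fatou's lemma as $N \to \infty$ yields
\begin{align*}
\Exp\!\left[\exp(\beta^W \cdot S_{\tau_b(W)})\, \mathbbm{1}\{\tau_b(W) < \infty\}\right] \leq 1.
\end{align*}
Combined with the supporting-hyperplane bound $\beta^W \cdot S_{\tau_b(W)} \geq b\,r_W$ on $\{\tau_b(W) < \infty\}$, this gives the non-asymptotic estimate $\Pro(\tau_b(W) < \infty) \leq \exp(-b\,r_W)$, hence $\limsup_{b \to \infty} (1/b) \log \Pro(\tau_b(W) < \infty) \leq -r_W$.

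\textbf{Lower bound.} Change measure to a slightly perturbed tilt $\theta_\delta \in \operatorname{int}(\operatorname{dom}(\Lambda))$ near $\beta^W$, chosen so that the tilted mean $v_\delta := \nabla \Lambda(\theta_\delta)$ lies strictly inside $W$; this is possible because assumption (2) makes $\nabla \Lambda$ a local diffeomorphism near $\beta^W$ while $W$ is open with $v \in \partial W$. From the identity
\begin{align*}
\Pro(\tau_b(W) < \infty) = \Exp_{\theta_\delta}\!\left[\exp(-\theta_\delta \cdot S_{\tau_b(W)} + \tau_b(W) \Lambda(\theta_\delta))\, \mathbbm{1}\{\tau_b(W) < \infty\}\right]
\end{align*}
and the law of large numbers under $\Pro_{\theta_\delta}$ (so $S_n/n \to v_\delta$), one obtains $\tau_b(W) = t_\delta b + o(b)$ and $S_{\tau_b(W)} = \tau_b(W) v_\delta + o(b)$ with $\Pro_{\theta_\delta}$-probability tending to $1$, where $t_\delta > 0$ is the smallest time for which $t_\delta v_\delta \in W$. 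On this event the exponent equals $-\tau_b(W) \Lambda^*(v_\delta) + o(b) = -b\, t_\delta \Lambda^*(v_\delta) + o(b)$. By the variational identity $I(x) = \inf_{t > 0} t\,\Lambda^*(x/t)$, whose optimum at $x = v$ is attained at $t = 1$ and equals $\Lambda^*(v) = \beta^W \cdot v - \Lambda(\beta^W) = r_W$, we conclude $t_\delta \Lambda^*(v_\delta) \to r_W$ as $\delta \to 0$. Taking $(1/b) \log$ and sending $b \to \infty$, then $\delta \to 0$, yields $\liminf_{b \to \infty} (1/b) \log \Pro(\tau_b(W) < \infty) \geq -r_W$.

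\textbf{Main obstacle.} The lower bound's perturbation argument is the delicate step: using the natural tilt $\beta^W$ directly would send the drift to $v \in \partial W$, so under $\Pro_{\beta^W}$ the walk merely grazes $\partial W$ and $\tau_b(W)$ may be infinite with positive probability. One must perturb both the tilt and the target drift inward, while verifying that the exponent and the hitting-time scaling depend continuously on the perturbation and converge to the correct limit; the $o(b)$ control of the overshoot $\beta^W \cdot S_{\tau_b(W)} - b r_W$ is a standard but technical nonlattice estimate. Assumption (2) is exactly what makes the perturbation well-defined---$\beta^W$ sits in $\operatorname{int}(\operatorname{dom}(\Lambda))$ where $\nabla \Lambda$ is a homeomorphism, and the intersection of $W$ with $\operatorname{ri}(\operatorname{cone}(\operatorname{dom}(\Lambda^*)))$ permits $v$ to be approximated from inside $W$ by gradients of $\Lambda$. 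Assumption (1) ensures that the drift under $\Pro$ points away from $W$, so $r_W > 0$ and the event is genuinely exponentially rare. The upper bound, in contrast, is essentially immediate once the geometry of $(\beta^W, v)$ is in hand.
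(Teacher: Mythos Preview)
Your proposal is essentially correct and follows the same exponential-tilting paradigm that the paper (in its Appendix~\ref{app:discuss_collomore_prob}) uses for its self-contained proof of the weaker version, Theorem~\ref{app:thrm_weaker_prob}. The upper bounds are effectively identical: you go straight to the optimal tilt $\beta^W$ and invoke the separating hyperplane, while the paper bounds with a generic $\theta\in\mathcal{L}_0\Lambda$ and then appeals to Sion's minimax theorem to identify the supremum over $\theta$ with $r_W$; once one knows $\beta^W$ exists and separates, these are the same argument.

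The lower bounds differ in a way worth noting. The paper fixes an \emph{arbitrary} interior point $x\in W$, finds $\theta_x$ on the level set $\{\Lambda=0\}$ with $\nabla\Lambda(\theta_x)\propto x$ (via KKT), and changes measure to $\Pro_{\theta_x}$; because $\Lambda(\theta_x)=0$, the time term $\tau_b\Lambda(\theta_x)$ vanishes and only the spatial overshoot $\theta_x\cdot S_{\tau_b}$ needs control, handled trivially by restricting to a small ball $B(x,\epsilon)\subset W$. One then sends $\epsilon\to 0$ and optimizes over $x\in W$. Your route instead perturbs $\beta^W$ \emph{off} the zero level set to $\theta_\delta$ with drift $v_\delta\in W$; this forces you to track the hitting-time scaling $\tau_b\approx t_\delta b$ and to argue that $t_\delta\Lambda^*(v_\delta)\to r_W$. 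That convergence does hold (since $t_\delta\to 1$ and $\Lambda^*(v_\delta)\to\Lambda^*(v)=r_W$ by continuity), but you have not quite justified $t_\delta\to 1$: it requires that the entry point $t_\delta v_\delta\in\partial W$ converges to $v$, which in turn needs some regularity of $\partial W$ near $v$ or a more careful choice of $v_\delta$. The paper's device of staying on $\{\Lambda=0\}$ sidesteps this entirely and is the cleaner execution.
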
 


Condition (1) is essentially geometric, as it requires that the drift of the random walk, $\Exp[X_1]$, does not point toward $W$. This implies that $\mathbf{0} \notin W$ and that the random walk is unlikely to hit the set $bW$ when $b$ is large. Condition (2) further guarantees both the positivity and the exponential decay of the hitting probability $\Pro(\tau_b(W) < \infty)$ as $b \to \infty$. Indeed, if $\Lambda$ is finite in a neighborhood of the origin $\mathbf{0}$, then by Cram\'er's theorem \cite[Corollary~6.1.6]{dembo2009large} the sequence $\{S_n/n : n \geq 1\}$ satisfies a large deviations principle with convex, lower semicontinuous rate function $\Lambda^*$. Consequently, the domain of $\Lambda^*$ may be interpreted as the set of points that $S_n/n$ can approach for large $n$, and the cone of rays through $\operatorname{dom}(\Lambda^*)$ as the set of points that the random walk $\{S_n : n \geq 0\}$ can potentially reach. Thus, the assumption that $W$ intersects the relative interior of this set ensures that the random walk can hit $bW$ for sufficiently large $b > 0$ with nonzero probability, but that the probability of such an event decays exponentially as $b$ increases.

Finally, we note that condition (2) depends on both the random walk and the set $W$. In Appendix~\ref{app:discuss_collomore_prob} (see Remark~\ref{aux:remark_stronger}), we show that it is satisfied for \emph{any} convex open set $W$ provided the random walk satisfies the following condition:
\begin{align}
\label{assumption:stronger}
\operatorname{dom}(\Lambda) \text{ is open \;  and \;  } \{\theta \in \bR^d : \Lambda(\theta) \leq 0\} \text{ is bounded.}
\end{align}
In Appendix \ref{app:discuss_collomore_prob}, we also provide a \text{self-contained}, elementary proof of a weaker version of Theorem \ref{thm:Collomore_prob}, which requires assumption  \eqref{assumption:stronger} instead of (ii)
 (see Theorem \ref{app:thrm_weaker_prob}). 


\subsection{Importance sampling for hitting probabilities}
We next discuss the results in \citet{collamore2002} regarding the estimation via importance sampling of the hitting probability $\Pro(\tau_b(W) < \infty)$ for large values of $b > 0$,  when $W \subset \mathbb{R}^d$ is a convex  open set.
To present them, for each $\theta \in \bR^d$, we define the following function:
\begin{align} \label{def:rate_theta}
    \mathcal{V}_\theta(x)  :=  \sup\left\{\alpha \cdot x: \;\alpha \in \bR^d, \;\;\Lambda(\theta) + \Lambda(\alpha-\theta) \leq 0  \right\},\; \text{ for } \;x \in \bR^d,
\end{align}
where the supremum of an empty set is $-\infty$. For any $\theta \in \mathbb{R}^d$ and $W \subset \mathbb{R}^d$, we define
\begin{align}\label{def:V_W_theta}
    v(W;\theta) := \inf_{x \in \overline{W}} \mathcal{V}_\theta(x). 
\end{align}
Moreover,  we recall the definition of $r_W$ in \eqref{def:r_W}.

\begin{theorem}[From \cite{collamore2002}]\label{thrm:Collamore_simulation}
Let $W \subset \mathbb{R}^d$ be a convex open set. 
 Assume  that (1) there exists  
 $\delta > 0$ such that $W$ does not intersect $\operatorname{cone}(B(\Exp[X_1], \delta))$, and (2)
 $\operatorname{dom}(\Lambda)$ is open  and 
  $W$ intersects   the interior of $\operatorname{cone}(\operatorname{dom}(\Lambda^*))$. Then the following results hold.
  
\begin{enumerate}[label=(\roman*)]
    \item There exists a unique $\beta^{W} \in \bR^d$ such that
    $\Lambda(\beta^{W}) = 0$, 
    $$
    W \subset \{x \in \bR^{d}: \; \beta^{W} \cdot x \geq r_{W}\}, \quad \text{ and } \quad 
    \mathcal{L}_{r_W} I  \subset \{x \in \bR^{d}: \; \beta^{W} \cdot x \leq r_{W} \}.
    $$
    \item $\beta^{W}$ is the unique element in $\operatorname{dom}(\Lambda)$ such that
$$
\lim_{b \to \infty} \frac{1}{b} \log \Exp_{\beta^{W}}\left[ 
\mathbb{L}_{\beta^{W}}^{-2}(\tau_b(W))
\right] = -2 r_W= 2  \lim_{b \to \infty} \frac{1}{b} \log \Pro(\tau_b(W) < \infty).
$$
Thus, 
 $\Pro_{\beta^W}$ is the unique asymptotically efficient proposal for estimating $\Pro\left( \tau_b(W) < \infty \right)$ among all exponential tilting measures $\{\Pro_{\theta} : \theta \in \operatorname{dom}(\Lambda)\}$.
\item For $\theta \in \operatorname{dom}(\Lambda)$, if  $v(W;\theta) > 0$, then
\begin{align*}
    \limsup_{b \to \infty} \frac{1}{b} \log \Exp_{\theta}\left[ 
\mathbb{L}_{\theta}^{-2}(\tau_b(W))
\right] \leq   - v(W;\theta).
\end{align*}
\end{enumerate}
\end{theorem}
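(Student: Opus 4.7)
The plan is to address the three claims in order: part (i) supplies the convex-geometric ingredients for (ii), and the upper-bound step in (ii) is a special case of the more general bound in (iii). For part (i), since $I$ is the support function of the closed convex set $\mathcal{L}_0 \Lambda$, the existence of $\beta^W$ and of a companion minimizer $v \in \overline{W}$ with $\beta^W \cdot v = r_W$ would follow from convex duality applied to the bilinear saddle problem $r_W = \inf_{x \in \overline{W}} \sup_{\theta \in \mathcal{L}_0 \Lambda} \theta \cdot x$; condition (2) supplies the constraint qualification needed for strong duality, and condition (1) forces $r_W > 0$ so $v \neq \mathbf{0}$. The half-space inclusions $W \subset \{x : \beta^W \cdot x \geq r_W\}$ and $\mathcal{L}_{r_W} I \subset \{x : \beta^W \cdot x \leq r_W\}$ then follow from the saddle property and from the inequality $I(x) \geq \beta^W \cdot x$ valid since $\beta^W \in \mathcal{L}_0 \Lambda$. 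The identity $\Lambda(\beta^W) = 0$ is forced: if $\Lambda(\beta^W) < 0$, a small scaling $(1+\epsilon)\beta^W$ would remain in $\mathcal{L}_0 \Lambda$ by continuity while strictly improving $\beta^W \cdot v > 0$. Uniqueness of $\beta^W$ follows from strict convexity of $\Lambda$ on the interior of its (open) domain.

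For the A.E.~claim in part (ii), the change-of-measure identity yields $\Exp_{\beta^W}[\mathbb{L}_{\beta^W}^{-2}(\tau_b(W))] = \Exp[e^{-\beta^W \cdot S_{\tau_b(W)}} \mathbbm{1}\{\tau_b(W) < \infty\}]$ using $\Lambda(\beta^W) = 0$, which the half-space inclusion from (i) bounds by $e^{-b r_W} \Pro(\tau_b(W) < \infty)$; combining with Theorem~\ref{thm:Collomore_prob} produces $e^{-2 b r_W + o(b)}$, and the matching lower bound is Jensen.

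For part (iii), the core tool is a two-parameter change of measure. For any $\alpha \in C_\theta := \{\alpha : \Lambda(\theta) + \Lambda(\alpha - \theta) \leq 0\}$, the factorization
\[
e^{-\theta \cdot S_n + n\Lambda(\theta)} = e^{-\alpha \cdot S_n} \cdot e^{(\alpha - \theta)\cdot S_n - n\Lambda(\alpha-\theta)} \cdot e^{n[\Lambda(\theta)+\Lambda(\alpha-\theta)]}
\]
exhibits the middle factor as an exponential $\Pro$-martingale, which after change of measure to $\Pro_{\alpha - \theta}$ is absorbed into a probability $\leq 1$, while the last factor is $\leq 1$ by feasibility. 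Taking expectation at $\tau_b(W)$ and using $e^{-\alpha \cdot S_{\tau_b(W)}} \leq e^{-b \inf_{x \in \overline{W}} \alpha \cdot x}$ on $\{\tau_b(W) < \infty\}$ gives $\Exp_\theta[\mathbb{L}_\theta^{-2}(\tau_b(W))] \leq e^{-b \inf_{x \in \overline{W}} \alpha \cdot x}$ for each feasible $\alpha$. To sharpen this to $-v(W;\theta)$, I would apply convex subgradient calculus at a minimizer $x^* \in \overline{W}$ of $\mathcal{V}_\theta$: since $\mathcal{V}_\theta$ is the support function of $C_\theta$, its subdifferential satisfies $\partial \mathcal{V}_\theta(x^*) = \{\alpha \in C_\theta : \alpha \cdot x^* = \mathcal{V}_\theta(x^*)\}$, and the KKT optimality condition for $\min_{x \in \overline{W}} \mathcal{V}_\theta(x)$ produces some $\alpha^* \in \partial \mathcal{V}_\theta(x^*)$ satisfying $\alpha^* \cdot (x - x^*) \geq 0$ for all $x \in \overline{W}$; substituting this $\alpha^*$ in the change-of-measure bound closes the argument. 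For the uniqueness half of (ii), part (iii) applied at $\theta = \beta^W$ together with the feasibility of $\alpha = 2\beta^W$ and the inclusions $\beta^W \cdot x \geq r_W$ and $I(x) \geq r_W$ on $\overline{W}$ yields $v(W;\beta^W) = 2 r_W$; a pathwise large-deviations lower bound on $\Exp_\theta[\mathbb{L}_\theta^{-2}(\tau_b)]$ along trajectories concentrated near $v$ then forces any A.E.~tilt to satisfy $\Lambda(\theta) = 0$ and $\theta \cdot v = r_W$, and the uniqueness characterization from (i) identifies $\theta = \beta^W$.

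The principal difficulty is making the subgradient step in part (iii) fully rigorous when the infimum defining $v(W;\theta)$ is not attained or when $\mathcal{V}_\theta$ is not classically subdifferentiable at the minimizer. The natural remedy is an approximation scheme: take a minimizing sequence $x_k \in \overline{W}$ and approximate maximizers $\alpha_k \in C_\theta$ satisfying $\alpha_k \cdot x_k \geq \mathcal{V}_\theta(x_k) - 1/k$ together with approximate KKT conditions on $\overline{W}$, then exploit the boundedness of $C_\theta$ provided by assumption~\eqref{assumption:stronger} (or a suitable strengthening of condition (2)) to extract a convergent subsequence, applying the change-of-measure bound for each $k$ before passing to the limit. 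A secondary obstacle is constructing the pathwise lower bound underpinning the uniqueness argument in (ii), which requires a careful tube around the linear trajectory from $\mathbf{0}$ to $b v$ and a direct evaluation of the likelihood-ratio cost along that trajectory.
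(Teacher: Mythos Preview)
Your proposal is largely aligned with the paper's approach. The paper's proof of this theorem is essentially a citation to \cite{collamore2002} together with a notational dictionary identifying $\Lambda_{\mathcal{K}_{\Qro}}(\alpha)$ there with $\Lambda(\theta) + \Lambda(\alpha-\theta)$ here, so your self-contained sketch actually goes further than the paper's proof section. For parts (i) and (ii) your outline matches: part (i) is the saddle-point/separation argument of Collamore's Lemma~3.2, and the upper bound in (ii) is exactly the one-line computation the paper reproduces in the discussion following the theorem statement (the paper stays under $\Exp_{\beta^W}$ and bounds $\Exp_{\beta^W}[e^{-2\beta^W \cdot S_{\tau_b}}] \leq e^{-2br_W}$ directly; your detour via $\Exp[e^{-\beta^W \cdot S_{\tau_b}}\mathbbm{1}\{\cdot\}] \leq e^{-br_W}\Pro(\tau_b<\infty)$ is equivalent).

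The substantive difference is in part (iii). The paper's self-contained version (Theorem~\ref{app:thrm_weaker_simulation}, under the stronger assumption~\eqref{assumption:stronger}) proceeds exactly as you do through the change of measure to $\Pro_{\alpha-\theta}$, obtaining the bound $-\sup_{\alpha \in \widetilde{D}} \inf_{x \in \overline{W}} \alpha \cdot x$. But rather than attacking the sup--inf swap via KKT/subgradient analysis at a minimizer of $\mathcal{V}_\theta$---which, as you correctly flag, runs into attainment and subdifferentiability issues---the paper simply invokes Sion's minimax theorem: under~\eqref{assumption:stronger}, the feasible set $\widetilde{D} = \{\alpha : \Lambda(\theta)+\Lambda(\alpha-\theta)\leq 0\}$ is compact (it sits inside $2\,\mathcal{L}_0\Lambda$ by convexity, and $\mathcal{L}_0\Lambda$ is bounded and closed), $\overline{W}$ is convex, and the objective is bilinear, so Sion gives $\sup_\alpha \inf_x = \inf_x \sup_\alpha = v(W;\theta)$ in one stroke. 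This bypasses your approximation scheme entirely. Your route could be made to work, but Sion is the cleaner tool once you have compactness---which you already note you need anyway. The paper also remarks that the weaker $\sup_\alpha\inf_x$ bound (obtained before Sion) already suffices for all the applications in Sections~\ref{sec:multi_Siegmund}--\ref{sec:sum_inter}, so the full min--max swap is not even essential downstream.
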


Condition (1) in Theorem~\ref{thrm:Collamore_simulation} is identical to that in Theorem~\ref{thm:Collomore_prob}, whereas condition (2) is slightly stronger, which holds for any $W$ provided the random walk satisfies \eqref{assumption:stronger}.



Part (i) is established in \cite{collamore2002}, with further details provided in Appendix~\ref{app:collomore_simulation}. The main idea of the proof is as follows. First, one shows that no point $x \in W$ attains the value $r_W = \inf_{x \in W} I(x)$, which implies that $W$ and the level set $\mathcal{L}_{r_W} I$ are disjoint convex sets. By the separation theorem, these sets can be separated by a hyperplane. Lemma~3.2 of \cite{collamore1996} then identifies the normal vector and offset of this separating hyperplane using tools from convex analysis.

Given part (i), part (ii) (without the uniqueness result) follows immediately. Specifically,  
\begin{align*}
\mathbb{E}_{\beta^{W}}\left[ 
\mathbb{L}_{\beta^{W}}^{-2}(\tau_b(W))
\right] 
&= \mathbb{E}_{\beta^{W}}\left[ 
\exp\left(-2\,\beta^{W} \cdot S_{\tau_b(W)}\right) \, \mathbbm{1}\{\tau_b(W) < \infty\}
\right] 
\leq \exp(-2 b r_W),
\end{align*}
where  the first equality holds because $\Lambda(\beta^W) =  0$, and  the  second by the fact that on the event $\{\tau_b(W) < \infty\}$ we have $S_{\tau_b(W)} \in bW$, and hence $\beta^{W} \cdot S_{\tau_b(W)} \geq b \, r_W$ by part~(i).  

Part (iii), which is critical for our purposes in this work, is independent of parts (i) and (ii). It establishes an asymptotic upper bound on the variance of the importance sampling estimator induced by an arbitrary exponential tilting. 
In Appendix \ref{app:collomore_simulation}, 
we provide an \textit{elementary} proof for this result (see Theorem \ref{app:thrm_weaker_simulation}) under the stronger assumption \eqref{assumption:stronger}.
In fact,  in the proof of Theorem \ref{app:thrm_weaker_simulation}, under the sole condition  $v(W;\theta) > 0$,   we establish the following weaker version of part (iii):
$$
    \limsup_{b \to \infty} \frac{1}{b} \log \Exp_{\theta}\left[ 
\mathbb{L}_{\theta}^{-2}(\tau_b(W))
\right] \leq   -    \sup_{\alpha \in \bR^d:  \Lambda(\theta) + \Lambda(\alpha - \theta) \leq 0} \; \inf_{x \in \overline{W}} (\alpha \cdot x),
$$
which however suffices  for all  applications in Section \ref{sec:multi_Siegmund}, \ref{sec:gap_rule} and \ref{sec:sum_inter}.

\begin{remark}
\cite{collamore2002} investigates the simulation of hitting probabilities for Markov additive processes, which include random walks as a special case. Moreover, a corresponding lower bound to part (iii) of Theorem \ref{thrm:Collamore_simulation} is also established.
\end{remark}

\section{Importance Sampling for Wrong Exit Probabilities}
\label{sec:prob_wrong_exit}
In this section we fix open, convex sets $W^1, \ldots, W^J \subset \mathbb{R}^d$, together with an open (but not necessarily convex) set $W^0 \subset \mathbb{R}^d$, and focus on the estimation of the \textit{wrong exit probability} $\Pro(\tau_b^* < \tau_b^0)$. Recall from~\eqref{def:hitting_W_js} that $\tau_b^j$ denotes the first hitting time of the set $bW^j$ for $j = 0,1,\ldots,J$, and that $\tau_b^*$ denotes the first hitting time of the union $bW^1 \cup \cdots \cup bW^J$.

We assume that  the random walk $\{S_n\}$ is directed away from the sets $W^j$, $j \in [J]$, in the sense that
\begin{align}\tag{C1}\label{assumptions_directions_of_mean}
\begin{split}
\text{there exists } \;\; \delta>0 \;\; \text{such that} \;\;  \operatorname{cone}(B(\Exp[X_1], \delta)) \cap W^j = \emptyset \;\; \text{for every } j \in [J].
\end{split}
\end{align}
We do not assume that the random walk is directed toward $W^0$, although this holds in all applications we consider. 
Furthermore,  we assume the following:
\begin{align} \tag{C2} \label{assumption_on_X}
    \begin{split}
        &\operatorname{dom}(\Lambda) \text{ is an open set}; \\
         &\operatorname{int}\left(\operatorname{cone}\left( \operatorname{dom}(\Lambda^*)\right)\right) \cap W^j \neq \emptyset, \quad \text{for  every }  j \in [J].
    \end{split}
\end{align}

Conditions \eqref{assumptions_directions_of_mean} and \eqref{assumption_on_X} impose the assumptions  in Theorem~\ref{thrm:Collamore_simulation} to each $W^j$, $j \in [J]$. In particular, part (i) of Theorem \ref{thrm:Collamore_simulation} motivates the following definition. Here, and in what follows, we recall the definition of  $I(\cdot)$  in \eqref{def:rate_function}, and we set
\begin{align}\label{def:rj_rstar}
r_j := r_{W^j} = \inf_{x \in W^j} I(x), \quad j \in [J].
\end{align}

\begin{definition}
\label{def:optimal_beta_j}
Suppose conditions \eqref{assumptions_directions_of_mean} and \eqref{assumption_on_X} hold. Then, for each $j \in [J]$ there exists a unique vector $\beta^j \in \mathbb{R}^d$ such that $\Lambda(\beta^j) =0$ and
\begin{align*}
    W^j \subset \{v \in \mathbb{R}^d : \beta^j \cdot v \geq r_j\}, \quad \text{and} \quad \mathcal{L}_{r_j} I \subset \{v \in \mathbb{R}^d : \beta^j \cdot v \leq r_j\}.
\end{align*}
We refer to $\beta^j$ as the \textit{optimal (exponential) tilting} associated with $W^j$.
\end{definition}

\begin{remark}
The computation of $\beta^j$ and $r_j$ is discussed in Subsection \ref{subsec:computation}.
\end{remark}

Finally, we assume that
 $W^0, W^1, \ldots, W^J$
 lie within disjoint cones and are separated from the origin. Specifically, we  assume 
\begin{align} \tag{C3} \label{assumption_on_regions}
    \begin{split}
        &\operatorname{cone}(W^j) \cap \operatorname{cone}(W^{j'}) = \emptyset \quad \text{for all } 0 \leq j \neq j' \leq J;\\
        &\text{there exists } \;\; \delta>0 \;\; \text{such that} \;\; B(\mathbf{0}, \delta) \cap W^j = \emptyset \quad \text{for each } j \in \{0\} \cup [J].
    \end{split}
\end{align}

\subsection{Characterization of exponential decay rates of wrong exit probabilities} 
For each $b > 0$, we have 
\[
\Pro(\tau_b^* < \tau_b^0) = \Pro(\tau_b^* < \infty) - \Pro\left( \tau_b^0 < \tau_b^*  < \infty \right).
\]
Thus, by applying Theorem \ref{thm:Collomore_prob} to each hitting probability $\Pro(\tau_b^{j} < \infty)$ for $j \in [J]$, we immediately obtain an upper bound on the exponential decay rate of the wrong exit probability as $b \to \infty$. In the next theorem, we establish a \emph{matching} lower bound, thereby characterizing its exponential decay rate. 


\begin{theorem}\label{thm:prob_rate}
Assume that conditions \eqref{assumptions_directions_of_mean}, \eqref{assumption_on_X}, and \eqref{assumption_on_regions} hold. Then, 
\begin{align*}
    &\lim_{b \to \infty} \frac{1}{b} \log \Pro(\tau_b^* < \tau_b^0,\; \tau_b^* = \tau_b^j) = -r_j, \quad \text{ for each } j \in [J];\\
    &\lim_{b \to \infty} \frac{1}{b} \log \Pro(\tau_b^* < \tau_b^0) = -r_*, \;\; \text{ where } r_* := \min_{j \in [J]} r_j.
\end{align*}
Moreover, we have $0 < r_j < \infty$ for each $j \in [J]$.
\end{theorem}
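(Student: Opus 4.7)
The plan is to treat upper and lower bounds separately, with the lower bound as the substantive new step. First, for the upper bounds, I would invoke Theorem~\ref{thm:Collomore_prob} applied to each $W^j$ (whose hypotheses follow from \eqref{assumptions_directions_of_mean} and \eqref{assumption_on_X}); since $\{\tau_b^* < \tau_b^0,\,\tau_b^* = \tau_b^j\} \subseteq \{\tau_b^j < \infty\}$, this gives $\limsup_{b} b^{-1} \log \Pro(\tau_b^* < \tau_b^0,\,\tau_b^* = \tau_b^j) \leq -r_j$, and the union bound $\Pro(\tau_b^* < \tau_b^0) \leq \sum_j \Pro(\tau_b^j < \infty)$ then yields the $-r_*$ upper bound. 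The boundedness $r_j < \infty$ follows because \eqref{assumption_on_X} supplies a point in $W^j \cap \operatorname{int}(\operatorname{cone}(\operatorname{dom}(\Lambda^*)))$ at which $I$ is finite, while positivity $r_j > 0$ rests on sublinearity of $I$, on $I(\Exp[X_1]) = 0$, and on \eqref{assumptions_directions_of_mean}, which separates $W^j$ in the angular sense from the zero-set cone of $I$.

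Next, for the matching lower bound, I would fix $j$ and work under the tilt $\Pro_{\beta^j}$ from Definition~\ref{def:optimal_beta_j}. Writing $v^j := \nabla \Lambda(\beta^j)$ and exploiting $\Lambda(\beta^j) = 0$, the change-of-measure formula yields
\begin{equation*}
\Pro(A_b^j) \;=\; \Exp_{\beta^j}\!\left[ \exp(-\beta^j \cdot S_{\tau_b^*}) \, \mathbbm{1}_{A_b^j} \right], \qquad A_b^j := \{\tau_b^* < \tau_b^0,\ \tau_b^* = \tau_b^j\}.
\end{equation*}
The Legendre identity $\beta^j \cdot v^j = r_j$ (from $\Lambda(\beta^j) = 0$ and $\nabla \Lambda(\beta^j) = v^j$) shows that on $G_b^\epsilon := \{\|S_{\tau_b^j}/b - v^j\| \leq \epsilon\}$ one has $\beta^j \cdot S_{\tau_b^*} \leq b(r_j + C\epsilon)$ on $A_b^j$, where $C := \|\beta^j\|$. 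Hence $\Pro(A_b^j) \geq \exp(-br_j - bC\epsilon) \, \Pro_{\beta^j}(A_b^j \cap G_b^\epsilon)$, and the plan reduces to showing $\liminf_{b\to\infty} \Pro_{\beta^j}(A_b^j \cap G_b^\epsilon) > 0$ for each $\epsilon > 0$; sending $\epsilon \downarrow 0$ yields the per-$j$ lower bound, and taking $j = j^*$ for any $j^* \in \arg\min_k r_k$ delivers the $-r_*$ bound via $\Pro(\tau_b^* < \tau_b^0) \geq \Pro(A_b^{j^*})$.

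The hard part will be the positivity $\liminf_b \Pro_{\beta^j}(A_b^j \cap G_b^\epsilon) > 0$, which decomposes into two pathwise claims under $\Pro_{\beta^j}$: (a) the walk enters $bW^j$ at a point close to $bv^j$ in time of order $b$, and (b) it does not hit any $bW^k$, $k \in \{0\}\cup[J]\setminus\{j\}$, before $\tau_b^j$. For (a) I would invoke the SLLN $S_n/n \to v^j$ under $\Pro_{\beta^j}$, together with convexity and openness of $W^j$. For (b) I would rely on \eqref{assumption_on_regions}: the clause $B(\mathbf{0},\delta) \cap W^k = \emptyset$ precludes hitting for small $n$, while for $n$ of order $b$ the concentration of $S_n$ near $\{t v^j\}$ together with $\overline{\operatorname{cone}(W^j)} \cap \operatorname{cone}(W^k) = \emptyset$ (implied by disjointness plus openness of the cones) keeps the walk in a region separated from $\operatorname{cone}(W^k)$. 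The subtle case will be when $v^j$ lies on the shared boundary of the two closed cones, so that $O(\sqrt{b})$-scale fluctuations of $S_n - nv^j$ could straddle $\operatorname{cone}(W^k)$; this I would handle by restricting to the positive-probability event that the relevant perpendicular component has favorable sign, an event whose $\Pro_{\beta^j}$-probability is bounded below via the central limit theorem.
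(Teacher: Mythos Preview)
Your upper-bound argument and the reasoning for $0<r_j<\infty$ match the paper's and are fine. The lower bound, however, takes a different route and has a genuine gap. First a correctable slip: the identity $\beta^j\cdot v^j=r_j$ is false in general. What holds (Lemma~\ref{app_lemma:upper_bound_var_single_j}) is that $x^j:=\rho^j\,v^j\in\partial W^j$ for some $\rho^j>0$ and $\beta^j\cdot x^j=r_j$, so $\beta^j\cdot v^j=r_j/\rho^j$. The walk enters $bW^j$ near $bx^j$, not $bv^j$, and your event $G_b^\epsilon$ must be recentered at $x^j$ (when $\rho^j>1$ the point $v^j$ need not even lie in $\overline{W^j}$, so $G_b^\epsilon$ as you wrote it can be empty for small $\epsilon$).

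The substantive problem is that the tilted drift $v^j$ points along the ray through the \emph{boundary} point $x^j$, not an interior point. Hence (a) the SLLN $S_n/n\to v^j$ does not by itself give $\tau_b^j<\infty$ under $\Pro_{\beta^j}$, since the ray $\{tv^j:t>0\}$ need not ever enter the open set $W^j$ (e.g.\ $W^j=\{x_1>1,\,x_2>0\}$ with $x^j=(2,0)$); a genuine fluctuation argument like the one in Lemma~\ref{lemma:aux_prob1} is required. And (b) under \eqref{assumption_on_regions} the point $x^j$ can lie in $\partial\operatorname{cone}(W^k)$ for \emph{several} $k$ at once, and intersecting CLT ``favorable-sign'' events across all such $k$ need not have probability bounded away from zero; indeed, the paper's own change-of-measure avoidance argument (Lemma~\ref{lemma:aux_prob2}, used for Theorem~\ref{thm:negative}) needs the stronger cone-separation hypothesis \eqref{assumption:negative_results}. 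The paper sidesteps both issues by a different mechanism: fix an arbitrary \emph{interior} $v\in W^j$, build via Lemma~\ref{lemma:existence_epsilon_lower_bound} an $\epsilon$-tube around the segment from $\mathbf{0}$ to $v$ that misses all other regions, and apply a pathwise (Mogulskii-type) large-deviations lower bound to get $\liminf_b b^{-1}\log\Pro(A_b^j)\ge -\inf_{\tau>0}\tau^{-1}\Lambda^*(\tau v)$. Optimizing over $v\in W^j$ then yields $-r_j$; the boundary point $x^j$ is approached only in the limit and never targeted directly.
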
 
\begin{proof}
See Appendix \ref{app:lower_bound_exit}. 
\end{proof}

When $W^0 = \emptyset$, so that $\tau_b^0 = \infty$, Theorem \ref{thm:prob_rate} follows directly from \cite{collamore1996} (see Theorem \ref{thm:Collomore_prob} above). In the general case, to derive a lower bound on the probability that the random walk hits $bW^j$ before entering any other region,
$\Pro(\tau_b^* < \tau_b^0,\; \tau_b^* = \tau_b^j)$,
for some $j \in [J]$,  we apply \textit{pathwise} large deviations results from \cite{borovkov2012large}.

\subsection{Importance sampling for the probability of wrong exit from a specific region}\label{sec:IS_general0}



Fix $j \in [J]$.   For any $\theta \in \operatorname{dom}(\Lambda)$, define
\begin{align*}
\widehat{\mathbb{W}}_{b}^{j}(\theta) :=  
\mathbb{L}_{\theta}^{-1}(\tau_b^*)  \, \mathbbm{1}\left\{
\tau_b^* < \tau_b^0,\; \tau_b^* = \tau_b^j \right\} ,
\end{align*}
where we recall the definitions of the tilted measure $\Pro_{\theta}$ in~\eqref{def:exp_tilting} and the likelihood ratio $\mathbb{L}_{\theta}(\cdot)$ in~\eqref{def:LR_theta}. By a change of measure, under $\Pro_\theta$ it is an unbiased estimator of the probability that the random walk hits $bW^{j}$ before any $bW^i$, $i \in \{0,1,\ldots,J\} \setminus \{j\}$, that is,
$$
\Pro\left(\tau_b^* < \tau_b^0,\; \tau_b^* = \tau_b^j\right)= \Exp_{\theta} \left[   \widehat{\mathbb{W}}_{b}^{j}(\theta) \right].
$$
 The following lemma, an immediate consequence of Theorems~\ref{thm:prob_rate}and~\ref{thrm:Collamore_simulation}, shows that 
 the exponential tilting $\beta^j$ associated with $W^j$ in Definition~\ref{def:optimal_beta_j} is the choice of $\theta$ that leads to an  A.E.~proposal for estimating the above probability. Recall the definition of   $r_j$ in~\eqref{def:rj_rstar}.

\begin{lemma}\label{lemma:upper_bound_var_single_j}
 Assume conditions \eqref{assumptions_directions_of_mean},  \eqref{assumption_on_X} and \eqref{assumption_on_regions} hold. For each $j \in [J]$, we have
$$
\lim_{b \to \infty} \frac{1}{b} \log \Exp_{\beta^{j}}\left[ 
\left(\widehat{\mathbb{W}}_{b}^{j}(\beta^j)\right)^2
\right]
 = -2r_j.
$$
Further,  for each  $\theta \in \operatorname{dom}(\Lambda)$, if $v(W^j;\theta) > 0$, then
\begin{align}\label{def:vj_def}
\limsup_{b \to \infty} \frac{1}{b} \log \Exp_{\theta}\left[ 
\left(\widehat{\mathbb{W}}_{b}^{j}(\theta)\right)^2
\right]
\leq 
v(W^j;\theta) := v_j(\theta),
\end{align}
where we recall the definition of $v(\cdot;\cdot)$ in \eqref{def:V_W_theta}.
\end{lemma}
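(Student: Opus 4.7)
The plan is to derive both statements as a direct packaging of Theorems~\ref{thm:prob_rate} and~\ref{thrm:Collamore_simulation}. For the first statement, the upper bound exploits the geometric properties of $\beta^j$ recorded in Definition~\ref{def:optimal_beta_j}, while the matching lower bound follows from Jensen's inequality combined with the exponential decay rate of the wrong exit probability provided by Theorem~\ref{thm:prob_rate}. For the second statement, the key observation is that the restricted estimator is dominated pointwise by the unrestricted hitting-time estimator for the single convex set $W^j$, at which point part~(iii) of Theorem~\ref{thrm:Collamore_simulation} applies directly.

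For the upper bound in part one, note that on the event $\{\tau_b^* < \tau_b^0,\; \tau_b^* = \tau_b^j\}$ the random walk satisfies $S_{\tau_b^*} \in bW^j$, using the pairwise disjointness of $W^1,\ldots,W^J$ implied by \eqref{assumption_on_regions}. Definition~\ref{def:optimal_beta_j} then gives $\beta^j \cdot S_{\tau_b^*} \geq b r_j$, and combined with $\Lambda(\beta^j) = 0$ this yields $\mathbb{L}_{\beta^j}(\tau_b^*) \geq \exp(b r_j)$ on this event. Hence
\begin{align*}
\Exp_{\beta^j}\!\bigl[(\widehat{\mathbb{W}}_b^j(\beta^j))^2\bigr]
\leq e^{-2 b r_j}\, \Pro_{\beta^j}\!\bigl(\tau_b^* < \tau_b^0,\; \tau_b^* = \tau_b^j\bigr) \leq e^{-2 b r_j},
\end{align*}
so the limsup of $b^{-1}\log \Exp_{\beta^j}[\cdot]$ is at most $-2 r_j$. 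For the matching lower bound, Jensen's inequality together with unbiasedness of $\widehat{\mathbb{W}}_b^j(\beta^j)$ gives
\begin{align*}
\Exp_{\beta^j}\!\bigl[(\widehat{\mathbb{W}}_b^j(\beta^j))^2\bigr]
\geq \bigl(\Exp_{\beta^j}\!\bigl[\widehat{\mathbb{W}}_b^j(\beta^j)\bigr]\bigr)^2
= \bigl(\Pro(\tau_b^* < \tau_b^0,\; \tau_b^* = \tau_b^j)\bigr)^2,
\end{align*}
and Theorem~\ref{thm:prob_rate} yields the matching exponential rate $-2 r_j$ on the right-hand side.

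For the second statement, the reduction is immediate: on the event $\{\tau_b^* < \tau_b^0,\; \tau_b^* = \tau_b^j\}$ we have $\tau_b^j = \tau_b^* < \infty$, so
\begin{align*}
\bigl(\widehat{\mathbb{W}}_b^j(\theta)\bigr)^2
\leq \mathbb{L}_\theta^{-2}(\tau_b^j)\, \mathbbm{1}\{\tau_b^j < \infty\}.
\end{align*}
Conditions \eqref{assumptions_directions_of_mean} and \eqref{assumption_on_X} restricted to $W^j$ are exactly the hypotheses of Theorem~\ref{thrm:Collamore_simulation} for the single convex open set $W^j$, so part~(iii) of that theorem applies and gives
\begin{align*}
\limsup_{b \to \infty} \frac{1}{b} \log \Exp_\theta\!\bigl[(\widehat{\mathbb{W}}_b^j(\theta))^2\bigr] \leq -v(W^j;\theta) = -v_j(\theta),
\end{align*}
whenever $v_j(\theta) > 0$. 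I do not anticipate any genuine technical obstacle; the argument is essentially bookkeeping built on top of the earlier results. The only small points worth checking are the disjointness of the $W^j$'s used in part one (which follows from \eqref{assumption_on_regions}) and the verification that the single-set hypotheses of Theorem~\ref{thrm:Collamore_simulation} are inherited from \eqref{assumptions_directions_of_mean}--\eqref{assumption_on_X}, both of which are routine.
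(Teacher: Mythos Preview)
Your proof is correct and follows essentially the same approach as the paper: the second statement via pointwise domination and part~(iii) of Theorem~\ref{thrm:Collamore_simulation}, and the lower bound in the first statement via Jensen's inequality together with Theorem~\ref{thm:prob_rate}. The only cosmetic difference is that for the upper bound in the first statement the paper simply invokes part~(ii) of Theorem~\ref{thrm:Collamore_simulation}, whereas you spell out the underlying geometric argument from Definition~\ref{def:optimal_beta_j}; these are the same computation.
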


\begin{proof}
    See Appendix \ref{app:proof_var_single_j}.
\end{proof}


\subsection{Importance sampling for the   wrong exit probability}\label{sec:IS_general}
For the estimation of the wrong exit probability $\Pro\left(\tau_b^* < \tau_b^0\right)$, we use the mixture measure
\begin{align*}
    \Pro_{\Theta} := \frac{1}{|\Theta|} \sum_{\theta \in \Theta} \Pro_{\theta}
\end{align*}
as an importance sampling proposal, where $\Theta \subset \operatorname{dom}(\Lambda)$ is a \emph{finite} subset and $|\Theta|$ denotes its cardinality. We denote by $\Exp_{\Theta}$ the expectation with respect to $\Pro_{\Theta}$.

\begin{remark}
For a fixed $\theta \in \operatorname{dom}(\Lambda)$, the distribution of $X_1,X_2,\ldots$ 
under $\Pro_{\theta}$ is the product measure of the distribution $\mu_{\theta}$ in \eqref{def:exp_tilting_common_dist}. On the other, for $\Theta \subset \operatorname{dom}(\Lambda)$ with $|\Theta| \geq 2$, $X_1, X_2,\ldots$ are not independent  under $\Pro_{\Theta}$.
\end{remark}

Recall $\mathbb{L}_{\theta}(\cdot)$ from~\eqref{def:LR_theta}.
For any choice of finite $\Theta \subset \operatorname{dom}(\Lambda)$ and $b>0$,  we define
\begin{align}
    \label{def:unbiased_estimator_Theta}
\widehat{\mathbb{W}}_{b}(\Theta)  :=  
   \left( \frac{1}{|\Theta|} \sum_{\theta \in \Theta} 
\mathbb{L}_{\theta}(\tau_b^*) \right)^{-1} \mathbbm{1}\{\tau_b^* < \tau_b^0\},
\end{align}
which is  an  unbiased estimator  for the wrong exit probability under $\Pro_\Theta$, that is,
$$
\Pro\left(\tau_b^* < \tau_b^0\right)= \Exp_{\Theta} \left[   \widehat{\mathbb{W}}_{b}(\Theta) \right].
$$



\begin{remark}\label{remark:simulation strategy}
 To simulate under the mixture measure $\Pro_{\Theta}$, we proceed as follows: first, sample $\theta \in \Theta$ uniformly at random; then generate a sequence of i.i.d.\ random variables $\{X_n : n \geq 1\}$ with distribution $\mu_{\theta}$ defined in \eqref{def:exp_tilting_common_dist}, and compute a realization of the estimator $\widehat{\mathbb{W}}_b(\Theta)$ defined in \eqref{def:unbiased_estimator_Theta}. Repeating this procedure multiple times and averaging the resulting values yields an estimate of the wrong exit probability.
\end{remark}  

We aim to choose the set $\Theta$ such that (i) the mixture $\Pro_\Theta$ is asymptotically efficient, and (ii) both constructing $\Theta$ and evaluating $\widehat{\mathbb{W}}_{b}(\Theta)$ are computationally tractable, so that the resulting I.S. estimator is feasible. 

\subsection{Sufficient conditions for asymptotic efficiency}
In view of Definition  \ref{def:AE} and Theorem \ref{thm:prob_rate}, $\Pro_{\Theta}$ is an A.E. proposal for estimating $\Pro(\tau_b^* < \tau_b^0)$ if
 \begin{align} \label{control_var}
        \limsup_{b\to \infty} \frac{1}{b} \log  \Exp_{\Theta}\left[ \left(\widehat{\mathbb{W}}_{b}(\Theta) \right)^2 \right] \leq  -2  r_*.
\end{align}

The next lemma shows that this is always the case if  $\Theta$ contains the optimal tilting corresponding to each region. In what follows, for any
$\mathcal{C} \subset [J]$ we set  
\begin{equation}
    \label{def:r_C}
    r_{\mathcal{C}} := \min_{j \in \mathcal{C}} r_j.
\end{equation}

 \begin{lemma}\label{lemma:full_mixture_efficient}
 Assume that conditions \eqref{assumptions_directions_of_mean},  \eqref{assumption_on_X} and  \eqref{assumption_on_regions} hold.   Let $\mathcal{C} \subset [J]$
 and $\Theta \subset \operatorname{dom}(\Lambda)$ be a finite set. 
 If   $\{\beta^j : j \in \mathcal{C}\} \subset \Theta$,  then
\begin{align} \label{control_var1}
        \lim_{b\to \infty} \frac{1}{b} \log  \Exp_{\Theta}\left[ \left(\widehat{\mathbb{W}}_{b}(\Theta) \right)^2 \sum_{j \in \mathcal{C}} \mathbbm{1}\{\tau_b^* = \tau_b^j\}\right] = -2  r_{\mathcal{C}}.
\end{align}
Consequently, if $\{\beta^j: j \in [J]\} \subset \Theta$, then \eqref{control_var} holds, and as a result, $\Pro_{\Theta}$ is an asymptotically efficient proposal for estimating the wrong exit probability $\Pro(\tau_b^* < \tau_b^0)$.
 \end{lemma}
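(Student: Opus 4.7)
The plan is to establish \eqref{control_var1} by a sandwich argument: an upper bound on the logarithmic growth of the second moment coming from the presence of $\beta^j$ in $\Theta$, matched by a lower bound coming from Cauchy--Schwarz and the sharp exponential rate from Theorem~\ref{thm:prob_rate}. The consequence \eqref{control_var} then follows by specializing $\mathcal{C}=[J]$, since the events $\{\tau_b^*=\tau_b^j\}$, $j\in[J]$, partition $\{\tau_b^*<\infty\}$.

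For the upper bound, the key observation is that, since $\beta^j\in\Theta$, a single term dominates the average in the denominator of $\widehat{\mathbb{W}}_b(\Theta)$, yielding pointwise
\[
 \widehat{\mathbb{W}}_b(\Theta)\,\mathbbm{1}\{\tau_b^*=\tau_b^j\}
 \;\leq\; |\Theta|\,\mathbb{L}_{\beta^j}^{-1}(\tau_b^*)\,\mathbbm{1}\{\tau_b^*=\tau_b^j,\ \tau_b^*<\tau_b^0\}.
\]
On $\{\tau_b^*=\tau_b^j\}$ one has $S_{\tau_b^*}\in bW^j$, so Definition~\ref{def:optimal_beta_j} (together with $\Lambda(\beta^j)=0$) gives $\mathbb{L}_{\beta^j}(\tau_b^*)=\exp(\beta^j\cdot S_{\tau_b^*})\geq\exp(br_j)$. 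Squaring and summing over $j\in\mathcal{C}$ produces the deterministic bound
\[
 \bigl(\widehat{\mathbb{W}}_b(\Theta)\bigr)^2\,\sum_{j\in\mathcal{C}}\mathbbm{1}\{\tau_b^*=\tau_b^j\}
 \;\leq\; |\Theta|^2 \sum_{j\in\mathcal{C}} e^{-2br_j}\,\mathbbm{1}\{\tau_b^*=\tau_b^j\}
 \;\leq\; |\Theta|^2\,|\mathcal{C}|\,e^{-2br_{\mathcal{C}}},
\]
so taking $\mathbb{E}_\Theta$ and then $b^{-1}\log$ yields $\limsup_b \leq -2r_{\mathcal{C}}$.

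For the lower bound, let $A_b := \{\tau_b^*<\tau_b^0\}\cap\bigcup_{j\in\mathcal{C}}\{\tau_b^*=\tau_b^j\}$; unbiasedness of the I.S.\ estimator gives $\mathbb{E}_\Theta[\widehat{\mathbb{W}}_b(\Theta)\mathbbm{1}_{A_b}]=\mathbb{P}(A_b)$, so Cauchy--Schwarz and $\mathbb{P}_\Theta(A_b)\leq 1$ produce
\[
 \bigl(\mathbb{P}(A_b)\bigr)^2 \;\leq\; \mathbb{E}_\Theta\bigl[(\widehat{\mathbb{W}}_b(\Theta))^2\,\mathbbm{1}_{A_b}\bigr].
\]
Theorem~\ref{thm:prob_rate} applied to each $j\in\mathcal{C}$, combined with a union bound, yields $b^{-1}\log\mathbb{P}(A_b)\to -r_{\mathcal{C}}$, whence $\liminf_b \geq -2r_{\mathcal{C}}$ and \eqref{control_var1} follows. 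For the consequence, take $\mathcal{C}=[J]$: disjointness of $W^1,\ldots,W^J$ implies that $\sum_{j\in[J]}\mathbbm{1}\{\tau_b^*=\tau_b^j\}=1$ on $\{\tau_b^*<\infty\}$, and since $\widehat{\mathbb{W}}_b(\Theta)$ carries the factor $\mathbbm{1}\{\tau_b^*<\tau_b^0\}\leq \mathbbm{1}\{\tau_b^*<\infty\}$, \eqref{control_var1} reduces to \eqref{control_var}; combined with Theorem~\ref{thm:prob_rate} and Definition~\ref{def:AE} this proves that $\mathbb{P}_\Theta$ is A.E. The only genuinely nontrivial step is the pointwise bound $\mathbb{L}_{\beta^j}(\tau_b^*)\geq e^{br_j}$ on $\{\tau_b^*=\tau_b^j\}$, which hinges on the two defining properties of $\beta^j$; everything else is change of measure, Cauchy--Schwarz, and a union bound.
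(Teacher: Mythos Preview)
Your proof is correct and follows essentially the same approach as the paper: the lower bound via unbiasedness and Cauchy--Schwarz (the paper uses Jensen) combined with Theorem~\ref{thm:prob_rate}, and the upper bound via the pointwise domination $\widehat{\mathbb{W}}_b(\Theta)\,\mathbbm{1}\{\tau_b^*=\tau_b^j\}\le|\Theta|\,\mathbb{L}_{\beta^j}^{-1}(\tau_b^*)\,\mathbbm{1}\{\cdots\}$. Your upper bound is in fact a bit more direct than the paper's, which passes through two changes of measure and invokes Lemma~\ref{lemma:upper_bound_var_single_j}, whereas you apply the hyperplane inequality $\beta^j\cdot S_{\tau_b^*}\ge b\,r_j$ from Definition~\ref{def:optimal_beta_j} pointwise to get an almost-sure bound; the underlying content is identical.
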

 \begin{proof}
     See Appendix \ref{app:proof_full_mixture}.
 \end{proof}

Lemma \ref{lemma:full_mixture_efficient} establishes the asymptotic efficiency of the mixture $\Pro_\Theta$ for $\Theta = \{\beta^j : j \in [J]\}$. However, when $J$ is extremely large (e.g., exponential in the dimension $d$), it is computationally prohibitive both to compute $\{\beta^j : j \in [J]\}$ and to evaluate $\widehat{\mathbb{W}}_{b}(\Theta)$ in~\eqref{def:unbiased_estimator_Theta}, since the latter involves a sum over $J$ likelihood ratios. Thus, in what follows, our goal is to construct a \emph{feasible} A.E.~mixture $\Pro_{\Theta}$, whose cardinality $|\Theta|$ is \emph{much smaller} than $J$. For this, note that for any $\mathcal{C} \subset [J]$ the limit in~\eqref{control_var1} holds provided that $\Theta$ contains the tilts corresponding to the sets indexed by $\mathcal{C}$, i.e., $\{\beta^j : j \in \mathcal{C}\} \subset \Theta$.
If this is the case, and if additionally
\begin{align} \label{control_var2}
 \limsup_{b\to \infty} \frac{1}{b} \log \Exp_{\Theta}\left[ \left(\widehat{\mathbb{W}}_{b}(\Theta)\right)^2 \sum_{j \notin \mathcal{C}} \mathbbm{1}\{\tau_b^* = \tau_b^j\}\right] \leq -2 r_{\mathcal{C}},
\end{align}
then by Lemma~\ref{lemma:full_mixture_efficient} and Theorem \ref{thm:prob_rate} it follows that $r_* = r_{\mathcal{C}}$   and that $\Pro_{\Theta}$ is an A.E.~proposal.

Condition~\eqref{control_var2} requires that (i) $\mathcal{C}$ is large enough so that $r_{\mathcal{C}} = r_*$, and (ii) the second moment of $\widehat{\mathbb{W}}_{b}(\Theta)$ can be controlled when $\{S_n\}$ exits through one of the regions $bW^j$ with $j \notin \mathcal{C}$.
The following theorem provides a sufficient condition for~\eqref{control_var2}, and hence for the asymptotic efficiency of $\Pro_\Theta$.
Recall $r_{\mathcal{C}}$ from~\eqref{def:r_C} and $v_j(\cdot)$ from~\eqref{def:vj_def}.

\begin{theorem}\label{thm:main_strategy}
Assume that conditions \eqref{assumptions_directions_of_mean}, \eqref{assumption_on_X} and \eqref{assumption_on_regions} hold. Let $\mathcal{C} \subset [J]$ and $\Theta  \subset \operatorname{dom}(\Lambda)$ be a finite set. 
Suppose that $\{\beta^j : j \in \mathcal{C}\} \subset \Theta$  and 
\begin{align}\label{def:key_condition}
\text{for each } \; j \in [J] \setminus \mathcal{C} \; \text{ there exists }  \; \gamma \in \Theta \; \text{ such that } \;\;
v_j(\gamma) 
\;\geq\; 2r_\mathcal{C}.
\end{align}
Then,  $r_* := r_{\mathcal{C}}$,
and  $\Pro_{\Theta}$ is an asymptotically efficient proposal for estimating the wrong exit probability $\Pro(\tau_b^* < \tau_b^0)$; that is,
\[
\lim_{b \to \infty} \frac{1}{b} \log \Exp_{\Theta}\left[ \left(\widehat{\mathbb{W}}_{b}(\Theta) \right)^2 \right] 
= -2 r_{*} = 2  \lim_{b \to \infty} \frac{1}{b} \log \Pro(\tau_b^* < \tau_b^0).
\]
\end{theorem}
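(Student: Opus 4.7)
The plan is to decompose the second moment of $\widehat{\mathbb{W}}_b(\Theta)$ according to which of the sets $bW^j$ the walk exits through, control the $\mathcal{C}$-part using Lemma \ref{lemma:full_mixture_efficient}, and control the $([J]\setminus\mathcal{C})$-part using the hypothesis \eqref{def:key_condition} together with Lemma \ref{lemma:upper_bound_var_single_j}. Specifically, I would write
\[
\Exp_{\Theta}\!\left[\widehat{\mathbb{W}}_b(\Theta)^2\right] = \Exp_{\Theta}\!\left[\widehat{\mathbb{W}}_b(\Theta)^2 \sum_{j \in \mathcal{C}} \mathbbm{1}\{\tau_b^*=\tau_b^j\}\right] + \sum_{j \in [J]\setminus \mathcal{C}} \Exp_{\Theta}\!\left[\widehat{\mathbb{W}}_b(\Theta)^2\, \mathbbm{1}\{\tau_b^*=\tau_b^j,\, \tau_b^*<\tau_b^0\}\right].
\]
Since $\{\beta^j : j \in \mathcal{C}\} \subset \Theta$, Lemma \ref{lemma:full_mixture_efficient} already identifies the exponential rate of the first summand as $-2 r_{\mathcal{C}}$.

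For each $j \in [J] \setminus \mathcal{C}$, I would bound the corresponding term using the pointwise dominance
\[
\widehat{\mathbb{W}}_b(\Theta) \leq |\Theta|\, \mathbb{L}_{\gamma}(\tau_b^*)^{-1}\, \mathbbm{1}\{\tau_b^* < \tau_b^0\} \quad \text{for every } \gamma \in \Theta,
\]
which is immediate because $(|\Theta|^{-1}\sum_{\theta \in \Theta} \mathbb{L}_\theta(\tau_b^*))^{-1} \leq |\Theta|\, \mathbb{L}_\gamma(\tau_b^*)^{-1}$ for any $\gamma \in \Theta$. I would then take $\gamma = \gamma_j \in \Theta$ provided by \eqref{def:key_condition}, so that $v_j(\gamma_j) \geq 2 r_\mathcal{C}$, and perform two successive changes of measure (first from $\Pro_\Theta$ to $\Pro$ using the Radon--Nikodym derivative $|\Theta|^{-1}\sum_\theta \mathbb{L}_\theta(\tau_b^*)$, and then from $\Pro$ to $\Pro_{\gamma_j}$ using $\mathbb{L}_{\gamma_j}(\tau_b^*)$) to obtain
\[
\Exp_{\Theta}\!\left[\widehat{\mathbb{W}}_b(\Theta)^2\, \mathbbm{1}\{\tau_b^*=\tau_b^j,\, \tau_b^*<\tau_b^0\}\right] \leq |\Theta|\, \Exp_{\gamma_j}\!\left[\bigl(\widehat{\mathbb{W}}_b^j(\gamma_j)\bigr)^2\right].
\]
Lemma \ref{lemma:upper_bound_var_single_j} then supplies a rate of at most $-v_j(\gamma_j) \leq -2 r_\mathcal{C}$ for the right-hand side. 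Since $[J]\setminus\mathcal{C}$ is finite and $|\Theta|$ is constant in $b$, the standard ``log-of-sum equals max-of-log'' principle promotes these termwise bounds to $\limsup_{b \to \infty} b^{-1}\log \Exp_{\Theta}[\widehat{\mathbb{W}}_b(\Theta)^2] \leq -2 r_\mathcal{C}$.

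To conclude both $r_* = r_\mathcal{C}$ and the asymptotic efficiency claim in a single stroke, I would combine this upper bound with the universal Jensen lower bound $\Exp_\Theta[\widehat{\mathbb{W}}_b(\Theta)^2] \geq \Pro(\tau_b^* < \tau_b^0)^2$, whose exponential rate is $-2r_*$ by Theorem \ref{thm:prob_rate}. This forces $r_* \geq r_\mathcal{C}$, and since $\mathcal{C} \subset [J]$ yields the reverse inequality trivially, we get $r_* = r_\mathcal{C}$. The matching of the upper and lower exponential rates is exactly the asymptotic efficiency stipulated in Definition \ref{def:AE}.

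The main obstacle, and the only genuinely non-routine step, is spotting the dominance bound $\widehat{\mathbb{W}}_b(\Theta) \leq |\Theta|\, \mathbb{L}_{\gamma_j}(\tau_b^*)^{-1}$ and pairing it with the two-step change of measure: this is what reduces the variance contribution from a region $W^j$ with $j \notin \mathcal{C}$ (for which we do not have the optimal tilt $\beta^j$ available in $\Theta$) to the single-tilt quantity $\Exp_{\gamma_j}[(\widehat{\mathbb{W}}_b^j(\gamma_j))^2]$ that Lemma \ref{lemma:upper_bound_var_single_j} is designed to control. Everything else is bookkeeping: the $|\Theta|$ prefactor is exponentially negligible in $b$, and finiteness of $[J]$ lets termwise bounds aggregate without loss on the logarithmic scale.
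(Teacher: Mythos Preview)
Your proposal is correct and follows essentially the same approach as the paper's proof: the same decomposition over $j\in\mathcal{C}$ versus $j\notin\mathcal{C}$, the same pointwise bound $\widehat{\mathbb{W}}_b(\Theta)\le|\Theta|\,\mathbb{L}_{\gamma_j}^{-1}(\tau_b^*)$, the same change of measure to $\Pro_{\gamma_j}$, and the same appeal to the variance bound in Lemma~\ref{lemma:upper_bound_var_single_j} (equivalently Theorem~\ref{thrm:Collamore_simulation}(iii)). The only cosmetic difference is that the paper deduces $r_*=r_{\mathcal{C}}$ by applying Jensen termwise to each $j\notin\mathcal{C}$, whereas you apply it once to the full second moment; both routes are equally valid.
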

\begin{proof}
    See Appendix \ref{app:proof_main_strategy}.
\end{proof}

\begin{remark} \label{remark:key_condition_special}
If $\Theta = \{\beta^j : j \in \mathcal{C}\}$, then condition~\eqref{def:key_condition} reduces to
\begin{align*}
\text{for each } j \in [J] \setminus \mathcal{C}  \; \text{there exists } i \in \mathcal{C} \; \text{such that} \;\;
v_j(\beta^i) \;\geq\; 2 r_\mathcal{C}.
\end{align*}
\end{remark}

The construction of $\Theta$ for applying Theorem~\ref{thm:main_strategy} is discussed in Subsection~\ref{subsec:strategy}.

\subsection{A necessary condition for  asymptotic efficiency}
In this subsection, we establish a necessary condition for $\Pro_{\Theta}$ to be A.E.. Let $\mathcal{C}_*$ denote the set of indices corresponding to the regions with the smallest exponential rates:
\begin{align}\label{def:C_star}
    \mathcal{C}_* := \left\{j \in [J] :\;\; r_j = r_* \right\}.
\end{align}
By Theorem \ref{thm:prob_rate}, $\mathcal{C}_*$ indexes the regions from which the random walk $\{S_n\}$ is most likely to exit on the event of a wrong exit, i.e., $\{\tau_b^* < \tau_b^0\}$. It is thus natural to expect that an efficient choice of   $\Theta$ should include 
the optimal tiltings (recall  Definition \ref{def:optimal_beta_j}) associated with these most likely exit regions, namely that $
    \left\{\beta^j : j \in \mathcal{C}_* \right\} \subset \Theta$. We next establish the \emph{necessity} of this inclusion, under the additional assumption stated below:
\begin{align}\label{assumption:negative_results}  \tag{C4}
    \begin{split}
    &  \overline{\operatorname{cone}(W^j)} \;\cap\; \overline{\operatorname{cone}(W^{j'})} = \{\mathbf{0}\}, \;\;\; \text{ for } \;0 \leq j \neq j' \leq J. \\
    & \text{ Either } \; (i). \; W^0 = \emptyset, \quad  \text{ or } \;\; (ii).\; W^0 \text{ is convex and } \operatorname{int}\left(\operatorname{cone}\left( \operatorname{dom}(\Lambda^*)\right)\right) \cap W^0 \neq \emptyset.
    \end{split}
\end{align}
The first condition in \eqref{assumption:negative_results} is slightly stronger than the first part of \eqref{assumption_on_regions}. The second 
allows $W^0$ to be either empty or to satisfy assumptions analogous to those imposed on $W^j$ for $j \in [J]$ in  condition \eqref{assumption_on_X}.


\begin{theorem}\label{thm:negative}
Assume that conditions \eqref{assumptions_directions_of_mean}, \eqref{assumption_on_X}, \eqref{assumption_on_regions} and \eqref{assumption:negative_results} hold.
Let $\Theta \subset \operatorname{dom}(\Lambda)$ be finite.
If $\beta^j \not \in \Theta$ for some $j \in \mathcal{C}_*$, then 
$\Pro_{\Theta}$ fails to be asymptotically efficient for estimating $\Pro(\tau_b^* < \tau_b^0)$, i.e., 
 \begin{align*}
     \liminf_{b\to \infty} \frac{1}{b} \log \Exp_{\Theta}\left[ \left(\widehat{\mathbb{W}}_{b}(\Theta) \right)^2  \right] > - 2r_*.
 \end{align*}
\end{theorem}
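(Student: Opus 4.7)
The plan is to exhibit an index $j \in \mathcal{C}_*$ with $\beta^j \notin \Theta$ and to show that even the restricted contribution to the second moment from the event
$A_b^j := \{\tau_b^* = \tau_b^j,\ \tau_b^* < \tau_b^0\}$
already decays strictly slower than $e^{-2br_*}$. Writing
$M_b := |\Theta|^{-1}\sum_{\theta \in \Theta} \mathbb{L}_\theta(\tau_b^*)$,
two successive changes of measure (first to $\Pro$, then to $\Pro_{\beta^j}$) yield
\begin{equation*}
\Exp_{\Theta}\!\left[\bigl(\widehat{\mathbb{W}}_b(\Theta)\bigr)^{2} \mathbbm{1}\{A_b^j\}\right]
= \Exp\!\left[M_b^{-1} \mathbbm{1}\{A_b^j\}\right]
= \Exp_{\beta^j}\!\left[\mathbb{L}_{\beta^j}^{-1}(\tau_b^*)\, M_b^{-1}\, \mathbbm{1}\{A_b^j\}\right],
\end{equation*}
and I would analyze the exponential rate of this last quantity via the law of large numbers under $\Pro_{\beta^j}$.

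Under $\Pro_{\beta^j}$ the walk has drift $\nabla \Lambda(\beta^j)$, and the Lagrangian characterization behind Definition~\ref{def:optimal_beta_j} makes the unique minimizer $v^j$ of $I$ over $\overline{W^j}$ collinear with it, namely $v^j = t_j\, \nabla \Lambda(\beta^j)$ for some $t_j > 0$. The cone-disjointness in \eqref{assumption:negative_results} prevents the $\Pro_{\beta^j}$-typical trajectory $n \mapsto n\, v^j/t_j$ from entering any $bW^{j'}$ ($j' \neq j$) or $bW^0$; combined with Cram\'er-type deviation bounds this gives $\Pro_{\beta^j}(A_b^j) \to 1$ and, on $A_b^j$, $\tau_b^*/b \to t_j$ and $S_{\tau_b^*}/b \to v^j$ almost surely. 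Therefore, for every $\theta \in \operatorname{dom}(\Lambda)$,
\begin{equation*}
\frac{1}{b}\log \mathbb{L}_\theta(\tau_b^*)
= \theta \cdot \frac{S_{\tau_b^*}}{b} - \frac{\tau_b^*}{b}\, \Lambda(\theta)
\;\longrightarrow\; f(\theta) := \theta \cdot v^j - t_j\, \Lambda(\theta),
\end{equation*}
$\Pro_{\beta^j}$-almost surely on $A_b^j$; in particular $b^{-1}\log M_b \to \max_{\theta \in \Theta} f(\theta)$ and $b^{-1}\log \mathbb{L}_{\beta^j}(\tau_b^*) \to f(\beta^j) = r_j$ (the latter using $\Lambda(\beta^j) = 0$ and $\beta^j \cdot v^j = r_j$, which follows from Theorem~\ref{thrm:Collamore_simulation}(i)).

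The decisive step is the uniqueness of $\beta^j$ as maximizer of the (strictly) concave function $f$ on $\operatorname{int}(\operatorname{dom}(\Lambda))$: the stationarity condition $v^j = t_j \nabla \Lambda(\theta)$ combined with strict convexity of $\Lambda$ forces $\theta = \beta^j$. Since $\beta^j \notin \Theta$ by hypothesis and $\Theta$ is finite, $\eta := r_j - \max_{\theta \in \Theta} f(\theta) > 0$. Restricting to the event
$B_b^\epsilon := A_b^j \cap \{|b^{-1}\log M_b - \max_{\theta \in \Theta} f(\theta)| < \epsilon\} \cap \{|b^{-1}\log \mathbb{L}_{\beta^j}(\tau_b^*) - r_j| < \epsilon\}$,
whose $\Pro_{\beta^j}$-probability tends to $1$ by the a.s.\ limits above, gives
$\Exp_{\beta^j}[\mathbb{L}_{\beta^j}^{-1}(\tau_b^*)\, M_b^{-1}\, \mathbbm{1}\{A_b^j\}] \geq e^{-b(2 r_* - \eta + 2\epsilon)} \Pro_{\beta^j}(B_b^\epsilon)$,
and sending $b \to \infty$ then $\epsilon \downarrow 0$ produces
$\liminf_{b\to\infty} b^{-1}\log \Exp_{\Theta}[(\widehat{\mathbb{W}}_b(\Theta))^{2}] \geq -2 r_* + \eta > -2 r_*$, as required.

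The main obstacle is verifying $\Pro_{\beta^j}(A_b^j) \to 1$, i.e., that under $\Pro_{\beta^j}$ the walk does not reach any competing region $bW^{j'}$ with $j' \in \{0,1,\ldots,J\}\setminus\{j\}$ before $bW^j$. This is where the disjoint-cones half of \eqref{assumption:negative_results} is essential: the drift direction $v^j/t_j$ lies in $\overline{\operatorname{cone}(W^j)}$ and hence in no other $\overline{\operatorname{cone}(W^{j'})}$, so ruling out premature exits reduces to a Chernoff/LDP estimate on lateral excursions of the $\Pro_{\beta^j}$-walk. The two subcases for $W^0$ in \eqref{assumption:negative_results} are handled separately: the case $W^0 = \emptyset$ is trivial, and in the convex case the same cone-separation argument applies to $W^0$ as well.
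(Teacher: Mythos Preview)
Your overall strategy coincides with the paper's: change measure to $\Pro_{\beta^j}$, restrict to an event on which the walk hits $bW^j$ first with $\tau_b^j \approx b t_j$ and $S_{\tau_b^j} \approx b v^j$, and exploit the strict concavity of $f(\theta)=\theta\cdot v^j - t_j\Lambda(\theta)$ to extract the strict gap $\eta>0$. The gap in your proposal is in the strength of the probabilistic claims you rely on.

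You assert that under $\Pro_{\beta^j}$ one has $\Pro_{\beta^j}(A_b^j)\to 1$ and, on $A_b^j$, $\tau_b^*/b\to t_j$ and $S_{\tau_b^*}/b\to v^j$ almost surely. The difficulty is that the $\Pro_{\beta^j}$-drift equals $v^j/t_j$ with $v^j\in\partial W^j$, \emph{not} $v^j\in W^j$. The LLN therefore only places $S_n$ near $b\,\partial W^j$ at time $n\approx b t_j$; whether the walk actually enters the open set $bW^j$, and at what time, is a boundary-fluctuation question that does not follow from the LLN or from a Chernoff bound on lateral excursions. Your diagnosis of ``the main obstacle'' focuses on ruling out entry to competing regions $bW^{j'}$---which is indeed exponentially unlikely under $\Pro_{\beta^j}$ thanks to the cone separation in \eqref{assumption:negative_results}---but the more delicate part is establishing entry to $bW^j$ itself at the correct time scale.

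The paper sidesteps this by never claiming $\to 1$ or almost-sure limits. It defines an explicit event $\mathcal{A}_b^{(j,*)}$ (combining $|\tau_b^j/b - t_j|\le\epsilon$, a one-sided bound $\beta^j\cdot S_{\tau_b^j}\le b(r_j+\delta/4)$, the inequalities $(\theta-\beta^j)\cdot S_{\tau_b^j}-\tau_b^j\Lambda(\theta)\le -\delta b$ for each $\theta\in\Theta$, and $\tau_b^{j'}=\infty$ for $j'\neq j$) and proves only that $b^{-1}\log\Pro_{\beta^j}(\mathcal{A}_b^{(j,*)})\to 0$. The lower bound on $\Pro_{\beta^j}(|\tau_b^j/b - t_j|\le\epsilon)$ comes from Cram\'er's LDP applied at interior points $v_n\in W^j$ with $v_n\to v^j$, using continuity of $\Lambda^{j,*}$ at $v^j/t_j$ where it vanishes; this yields a subexponential lower bound, which suffices since only the exponential rate of the second moment matters. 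Your argument can be repaired by replacing the almost-sure and $\to 1$ statements with exactly this weaker subexponential control.
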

\begin{proof}
See Appendix \ref{app:neg_result}. 
\end{proof}

 Theorem \ref{thm:negative} can be used to derive negative results: if the size of $\mathcal{C}_*$ grows exponentially with $d$, then it becomes \textit{computationally infeasible} to construct any asymptotically efficient proposal of the form $\Pro_{\Theta}$ for some finite $\Theta \subset \operatorname{dom}(\Lambda)$, since $\Theta$ must include $\{\beta^j: j \in \mathcal{C}_*\}$.

\begin{remark}
As demonstrated by the counterexamples in \citet{glasserman1997}, the condition $\{\beta^j : j \in \mathcal{C}_*\} \subset \Theta$ does not, in general, suffice for $\Pro_{\Theta}$ to be A.E.
\end{remark}

\subsection{Strategy for constructing $\Theta$}\label{subsec:strategy}

Theorem~\ref{thm:main_strategy} provides a strategy for selecting $\Theta$ so that the mixture $\Pro_\Theta$ is A.E.:  
\begin{itemize}
    \item[\textbf{Step 1.}] Select a subset $\mathcal{C} \subset [J]$ and compute $r_j$ and $\beta^j$ for $j \in \mathcal{C}$. 
    
    \item[\textbf{Step 2.}] Choose a subset $\Theta \subset \operatorname{dom}(\Lambda)$ such that (i) $\{\beta^j : j \in \mathcal{C}\} \subset \Theta$, and (ii)   we can verify that condition~\eqref{def:key_condition} holds.
\end{itemize}

For Step 1, we note that $r_j$ and $\beta^j$ do not in general admit closed-form expressions, and their computation typically requires solving a convex optimization problem (see Subsection~\ref{subsec:computation}). Hence, Step 1 is computationally feasible provided that $|\mathcal{C}|$ is not too large.

  Moreover,  Theorem~\ref{thm:negative} suggests that $\mathcal{C}$ should contain those regions most likely to produce a wrong exit, i.e., the set $\mathcal{C}_*$ defined in~\eqref{def:C_star}. However, when $J$ is large, it is generally not feasible to compute all values in $\{r_j : j \in [J]\}$, and consequently $\mathcal{C}_*$ will not be known. In practice, therefore, $\mathcal{C}$ should be selected based on \emph{application-specific intuition} about where wrong exits are most likely.

For Step~2, the more components we include in $\Theta$, the easier it is for condition~\eqref{def:key_condition} to hold. In the extreme, condition~\eqref{def:key_condition} is always satisfied if $\Theta$ includes the exponential tilting that corresponds to region $W^j$, $\beta^j$, defined in Definition~\ref{def:optimal_beta_j},
 \textit{for every  $j \notin \mathcal{C}$}. However,  this is computationally infeasible when $J$ is large. To ensure that $|\Theta|$ remains much smaller than $J$, we aim to include in $\Theta$ components that \emph{simultaneously} accommodate multiple indices $j \in [J] \setminus \mathcal{C}$, rather than being tailored to a single region. That is,  the goal is that for each $\gamma \in \Theta \setminus \{\beta^j : j \in \mathcal{C}\}$  there exist multiple $j \notin \mathcal{C}$ such that $v_j(\gamma) \geq 2 r_\mathcal{C}$. We stress that such $\gamma$ need not belong to $\{\beta^j : j \notin \mathcal{C}\}$.

Given a finite $\Theta \subset \operatorname{dom}(\Lambda)$, there are two ways to verify condition~\eqref{def:key_condition}. The first is analytical: for each $j \in [J] \setminus \mathcal{C}$, we need to find some $\gamma \in \Theta$, construct an \emph{analytical} lower bound for $\nu_j(\gamma)$, and verify that it is at least $2 r_\mathcal{C}$. In Theorems~\ref{thm:sufficient}, \ref{thm:Gap_sufficient}, and~\ref{thm:sum_intersection_rule}, we adopt this approach to derive sufficient conditions for three different problems. The second way is computational and requires the  numerical evaluation  of $v_j(\gamma)$ 
for \textit{all} 
$j \in [J] \setminus \mathcal{C}$ and some $\gamma \in \Theta$. If $[J] \setminus \mathcal{C}$ is large, this
is generally infeasible. However, under \textit{special} structures, such as independence or homogeneity, it may suffice to perform this computation for only a small subset of $[J] \setminus \mathcal{C}$. This is the case in Example~\ref{ex:cor_normal_siegmund} and Subsection~\ref{subsec:iid_case_Siegmund}.



\subsection{Discussion of computation} \label{subsec:computation}

To implement the above strategy, we need to be able to   (i) compute $r_j$ and $\beta^j$ for a given $j \in [J]$, and (ii)   obtain a \textit{lower bound} on $v_j(\gamma)$ defined in \eqref{def:vj_def} for some  $\gamma \in \operatorname{dom}(\Lambda)$ and $j \in [J]$.
We now provide further details on each of these two tasks. Fix $j \in [J]$ and $\gamma \in \operatorname{dom}(\Lambda)$.

We first consider task (i). As established in Lemma~3.2 of \cite{collamore2002} (see also Lemma \ref{app_lemma:upper_bound_var_single_j} for a detailed explanation), we have
\begin{align}\label{def:r_j_alt}
    r_j = \sup_{\theta \in \mathcal{L}_0\Lambda} \left( \inf_{x \in \overline{W^j}} \theta \cdot x \right).
\end{align}
If $W^j$ is defined via constraints, and if the \textit{strong duality} holds, then the inner optimization is equal to its convex dual for each $\theta$, and the overall problem becomes a convex optimization and can typically be solved using efficient numerical algorithms.  For example, 
in applications discussed below, each region $W^j$ is of the form
\begin{align}
    \label{def:special_W_j}
W^j = \left\{x \in \mathbb{R}^d : (A^j)^\top x > b^j\right\},
\end{align}
for some integer $n^j \geq 1$, $A^j \in \bR^{d \times n^j}$ and $ b^j \in \bR^{n^j}$. In this case, the inner optimization is a linear programming for each $\theta \in \bR^d$, and when the \textit{strong duality} holds, we have
\begin{align*}
    r_j =  \max_{\theta \in \bR^d, \;y \in \bR^{n^j}} & b^j \cdot y,\;\; \text{ subject to }\;\;
    \Lambda(\theta) \leq 0,\;\; A^j y = \theta, \;\; y \geq 0,
\end{align*}
where the constraint $y \geq 0$ is interpreted coordinatewise. 

Next, we consider task (ii). By the definition of $v(\cdot;\cdot)$ in \eqref{def:V_W_theta}, and the max-min inequality,
\begin{align}\label{def:lower_bound_V_gamma}
    v_j(\gamma) \geq \sup_{\theta \in \bR^{d}: \Lambda(\gamma) + \Lambda(\theta -\gamma) \leq 0} \left( 
    \inf_{x \in \overline{W}^j} \theta \cdot x
    \right).
\end{align}
If $W^j$ is of the form \eqref{def:special_W_j} and if $\Lambda(\gamma) \leq 0$,  due to weak duality, we \emph{always} have 
\begin{align*}
    v_j(\gamma) \geq \max_{\theta \in \bR^{d}, y \in \bR^{n^j}}  b^j \cdot y,\;\; \text{ subject to }  \Lambda(\theta - \gamma) \leq 0,\;\; A^j y = \theta,\;\; y \geq 0.
\end{align*}
It is important to note that the convex optimization problem on the right-hand side is typically not solved exactly due to computational complexity. Instead, we construct a feasible solution, which provides a \textit{lower bound} for $v_j(\gamma)$.



\section{Multidimensional Siegmund Problem}\label{sec:multi_Siegmund}
In this section, we consider the multidimensional Siegmund problem.  Specifically, assume that $\Exp[X_{1,k}] < 0$ for each $k \in [d]$, where $X_1 = (X_{1,1},\ldots,X_{1,d})^\top$. Let $\ell,u > 0$ be fixed. For each $A \subset [d]$, recall the definition of $W^A$ in \eqref{def:W_A}.  
Further, denote by $\tau^{A}_{b}$ the first hitting time of $bW^{A}$ for $A \subset [J]$ and define
$$
\mathcal{D}_b  :=  A \;\;\text{ if and only if }\;\; \tau_b(W^{A}) \leq \tau_b(W^{\tilde{A}})\, \text{ for any } \tilde{A} \subset [d]. 
$$
We expect the random walk $\{S_n\}$ to first hit $b W^{\emptyset}$, and the goal is to use importance sampling to estimate the probability of it reaching other regions first.

First, we make connections with the setup in Section \ref{sec:prob_wrong_exit}. Denote by $\mathcal{P}_d$ all subsets of $[d]$  and by $\mathcal{P}_d^{-} = \mathcal{P}_d \setminus \{\emptyset\}$ the power set with the empty set removed. We identify $\emptyset$ with $0$ and $\mathcal{P}_d^{-}$ with $[J]$, where $J = 2^d-1$. For example, $W^{0}$ and $\tau_b^0$ is identified with $W^{\emptyset}$ and $\tau_b^\emptyset$, while
$\{\tau_b^A, W^{A}: A \in \mathcal{P}_d^{-}\}$ with $\{\tau_b^j, W^{j}:j\in [J]\}$.  Similarly, $\{r_{j}, v_j(\gamma), \beta^j: j \in [J]\}$  
in \eqref{def:rj_rstar}, \eqref{def:vj_def}, and Definition \ref{def:optimal_beta_j} are identified with $\{r_{A}, v_A(\gamma), \beta^{A}: A  \in \mathcal{P}_d^{-}\}$, and 
\begin{equation}
    \label{def:siegmund}
\Pro\left(\mathcal{D}_b \neq \emptyset\right) =
\Pro\left(\tau_b^* < \tau_b^{\emptyset}\right), \;\; \text{ with }
\tau_b^{*} = \min\{\tau_b^{A}: A \in \mathcal{P}_d^{-}\}.
\end{equation}
Note that conditions \eqref{assumptions_directions_of_mean} and  \eqref{assumption_on_regions} always hold in this setup. In addition, throughout this section, we assume that \eqref{assumption_on_X} holds.

\subsection{A general proposal}
In this example, we have $J = 2^d - 1$. Thus, a mixture of $J$ components, as suggested in Lemma \ref{lemma:full_mixture_efficient}, is not computationally feasible even for moderate values of $d$; for instance, $2^d > 10^{12}$ when $d = 40$. Instead, we apply the strategy proposed in Theorem \ref{thm:main_strategy} and show that, under suitable conditions, mixtures with significantly fewer components remain asymptotically efficient. For each $A \in \mathcal{P}_d^{-}$, we begin by characterizing the pair $(r_A, \beta^A)$ and lower bounding $v_A(\gamma)$, following the discussions in \eqref{def:r_j_alt} and \eqref{def:lower_bound_V_gamma}.

\begin{lemma}\label{lemma:opt_probs}
For each $A \in \mathcal{P}_d^{-}$, $r_A$ and $\beta^A$ are the optimal value and solution of the following optimization problem:
\begin{align*}
& \max_{\theta \in \bR^d} \;\;  u \sum_{k \in A} \theta_k -  \ell \sum_{k' \in A^c} \theta_{k'}, \quad \textup{ subject to }  \\
& \Lambda(\theta) \leq 0, \quad \theta_k \geq 0\; \text{ for } \;k \in A, \quad \theta_{k'} \leq 0\; \text{ for } \;k' \in A^c.
\end{align*}
Further, for each $\gamma \in \bR^d$ such that $\Lambda(\gamma) \leq 0$, $v_A(\gamma)$ is lower bounded by
\begin{align*}
& \max_{\theta \in \bR^d} \;\;  u \sum_{k \in A} \theta_k -  \ell \sum_{k' \in A^c} \theta_{k'}, \quad \textup{ subject to }  \\
& \Lambda(\theta - \gamma) \leq 0, \quad \theta_k \geq 0\; \text{ for } \;k \in A, \quad \theta_{k'} \leq 0\; \text{ for } \;k' \in A^c.
\end{align*}
\end{lemma}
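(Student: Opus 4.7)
The plan is to reduce both claims to a single explicit computation of the inner infimum $\inf_{x \in \overline{W^A}} \theta \cdot x$ over the closed region
\[
\overline{W^A} = \{x \in \mathbb{R}^d : x_k \geq u \text{ for } k \in A,\ x_{k'} \leq -\ell \text{ for } k' \notin A\}.
\]
First I would observe that, because $\overline{W^A}$ is a Cartesian product of half-lines, the infimum separates coordinate-by-coordinate: for $k \in A$ the term $\theta_k x_k$ is minimized by $x_k = u$ if $\theta_k \geq 0$ and is unbounded below otherwise, while for $k' \in A^c$ the term $\theta_{k'} x_{k'}$ is minimized by $x_{k'} = -\ell$ if $\theta_{k'} \leq 0$ and is unbounded below otherwise. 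Consequently, $\inf_{x \in \overline{W^A}} \theta \cdot x = u \sum_{k \in A} \theta_k - \ell \sum_{k' \in A^c} \theta_{k'}$ precisely when $\theta_k \geq 0$ for $k \in A$ and $\theta_{k'} \leq 0$ for $k' \in A^c$, and equals $-\infty$ otherwise.

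For the characterization of $r_A$, I would invoke the dual formula \eqref{def:r_j_alt}, namely $r_A = \sup_{\theta \in \mathcal{L}_0 \Lambda} \inf_{x \in \overline{W^A}} \theta \cdot x$. Substituting the explicit inner expression above, the supremum is automatically restricted to those $\theta$ obeying the sign constraints and the objective becomes the linear function in the statement, with the constraints $\Lambda(\theta) \leq 0$ and the sign constraints. To identify $\beta^A$ as the optimal solution rather than merely matching the optimal value, I would appeal to Lemma~3.2 of \citet{collamore2002}, which underlies both \eqref{def:r_j_alt} and Theorem~\ref{thrm:Collamore_simulation}(i), and which asserts that the supremum is attained at the unique vector satisfying the separation property of Definition~\ref{def:optimal_beta_j}---namely $\beta^A$.

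For the lower bound on $v_A(\gamma)$, I would start from \eqref{def:lower_bound_V_gamma}, which gives
\[
v_A(\gamma) \geq \sup_{\theta \in \mathbb{R}^d \,:\, \Lambda(\gamma) + \Lambda(\theta - \gamma) \leq 0} \inf_{x \in \overline{W^A}} \theta \cdot x.
\]
Since $\Lambda(\gamma) \leq 0$ by hypothesis, the stronger constraint $\Lambda(\theta - \gamma) \leq 0$ is \emph{sufficient} for $\Lambda(\gamma) + \Lambda(\theta - \gamma) \leq 0$, so restricting the supremum to this tighter feasible set only decreases its value and hence preserves the inequality. Substituting the explicit inner infimum from the first paragraph then yields exactly the second optimization problem in the statement.

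The only step requiring input beyond direct coordinatewise computation is the identification of the maximizer in the first part with $\beta^A$ of Definition~\ref{def:optimal_beta_j}; this depends on the uniqueness in Theorem~\ref{thrm:Collamore_simulation}(i) and the argument of Lemma~3.2 in \citet{collamore2002}, and I expect it to be the main (but still modest) obstacle.
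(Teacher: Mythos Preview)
Your proposal is correct and follows essentially the same route as the paper: invoke the dual formula \eqref{def:r_j_alt}, compute $\inf_{x\in\overline{W^A}}\theta\cdot x$ coordinatewise to obtain the linear objective with sign constraints, and for the second part use \eqref{def:lower_bound_V_gamma} together with $\Lambda(\gamma)\le 0$ to pass to the tighter constraint $\Lambda(\theta-\gamma)\le 0$. Your additional remark that identifying the maximizer with $\beta^A$ rests on Lemma~3.2 of \citet{collamore2002} (equivalently, Lemma~\ref{app_lemma:upper_bound_var_single_j}(iii) here) is exactly right and in fact slightly more explicit than the paper's own writeup.
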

\begin{proof}
    See Appendix \ref{app:multi_ruin}.
\end{proof}

\begin{remark}
The Karush–Kuhn–Tucker (KKT) conditions characterizing $\beta^A$, given in Appendix \ref{app:KKT}, provide a basis for analyzing $r_A$ and $\beta^A$. In addition,
a \textit{feasible} solution to the second optimization problem in Lemma \ref{lemma:opt_probs} provides a lower bound for $v_A(\gamma)$, 
as required by Theorem \ref{thm:main_strategy}.
\end{remark}

We next apply the strategy from Section \ref{subsec:strategy} to construct a subset $\Theta \subset \operatorname{dom} \Lambda$ such that the resulting measure $\Pro_{\Theta}$ is asymptotically efficient. We begin with guessing $\mathcal{C}_*$ in \eqref{def:C_star}: if the coordinates of $X_1$ are not highly correlated and $u$ is not far smaller than $\ell$, then we expect that on the event $\{\mathcal{D}_{b} \neq \emptyset\}$, exactly one coordinate is most likely to be positive, that is,  $|\mathcal{D}_{b}| = 1$. This suggests that $r_*$ in Theorem \ref{thm:prob_rate} is most likely attained by regions indexed by singleton subsets, i.e., $\{W^{\{k\}} : k \in [d]\}$. Motivated by this intuition, we set $\mathcal{C} = \{\{k\} : k \in [d]\}$ and include the corresponding optimal tiltings $\{\beta^{\{k\}} : k \in [d]\}$, as defined in Definition \ref{def:optimal_beta_j}, in a candidate set $\Theta$, for which the associated mixture measure $\Pro_{\Theta}$ serves as a potential proposal.

Furthermore, we include additional tilting parameters in $\Theta$ to ensure that condition \eqref{def:key_condition} is satisfied. 
To this end, we consider two collections of parameters in $\operatorname{dom}(\Lambda)$. 
First, for each $k \in [d]$, let $\gamma^k$ be the  solution of the following optimization problem:
\begin{align}
\label{def:gamma_k}
\max_{\theta \in \bR^d} \; u \theta_k,\quad \text{ subject to }\quad
\Lambda(\theta) \leq 0, \quad\theta_k \geq 0, \quad\theta_{k'} = 0 \text{ for } k' \neq k.
\end{align}
Clearly, if we denote by $z_k$ the unique positive number such that $\Exp[\exp(z_k X_{1,k})] = 1$, then 
\begin{align*}
\gamma^{k}_{j} = 
    z_k \;\; \text{ if } j = k, \quad \text{ and }\quad 
     \gamma^{k}_{j} = 0 \;\; \text{ otherwise},
\end{align*}
which implies that the optimal value of \eqref{def:gamma_k} is given by $u z_k$.

Second, for each $m \neq m'\in [d]$, denote by $s_{m,m'}$ and $\gamma^{m,m'
}$ the optimal value and maximizer of the following convex optimization problem:
\begin{align}
\label{def:gamma_k_kp}
\begin{split}
& \max_{\theta \in \bR^d} \;\;  u(\theta_m + \theta_{m'}), \quad \textup{ subject to }  \\
& \Lambda(\theta) \leq 0, \quad \theta_m \geq 0,\quad \theta_{m'} \geq 0,\quad 
\theta_{k} = 0\; \text{ for } \;k \not\in \{m,m'\},
\end{split}
\end{align}
which essentially only involves two variables $\theta_m, \theta_{m'}$, since other variables are fixed to zero.  

The motivation for considering the parameter collections in \eqref{def:gamma_k} and \eqref{def:gamma_k_kp} is twofold. First, their sizes ($d$ and $d(d-1)/2$, respectively) grow polynomially in $d$, in contrast to the exponential growth of all subsets of $[d]$. Second, they enable coverage of multiple subsets $A \subset [d]$ satisfying \eqref{def:key_condition}. Specifically, fix $k \neq k' \in [d]$. For any $A \subset [d]$ containing $\{k, k'\}$,  $\gamma^k + \gamma^{k,k'}$ is a \textit{feasible} solution to the second optimization problem in Lemma~\ref{lemma:opt_probs} with $\gamma = \gamma^k$, and thus
\begin{align}
    \label{aux:inter_gamma_k_lower}
v_A(\gamma^k) \geq u z_k + s_{k,k'}.
\end{align}
In other words, $u z_k + s_{k,k'}$ provides a lower bound for $v_A(\gamma^k)$ whenever $k \neq k' \in A$, and thus $\gamma^k$ can effectively cover all subsets $A \subset [d]$ containing $k$, whose total number is $2^{d-1}$. Similarly, if $A$ contains $\{k, k'\}$, then $\gamma^{k,k'} + \gamma^{k,k'}$ is a \textit{feasible} solution to the same problem with $\gamma = \gamma^{k,k'}$, and hence
\begin{align}
    \label{aux:inter_gamma_k_kp_lower}
v_A(\gamma^{k,k'}) \geq 2 s_{k,k'},
\end{align}
so that $\gamma^{k,k'}$ covers all $A \subset [d]$ with $\{k, k'\} \subset A$, of size $2^{d-2}$. 

Based on the preceding analysis, we derive sufficient conditions for constructing asymptotically efficient proposals.

\begin{theorem}\label{thm:sufficient}
Consider the following two conditions.
\begin{align*}
& (H1): \min_{k \neq k' \in [d] } \left(u z_k + s_{k,k'}\right) \geq \min_{k \in [d]} 2 r_{\{k\}}.\\
& (H2): \min_{k \neq k' \in [d] } 2s_{k,k'} \geq  \min_{k \in [d]} 2r_{\{k\}}.
\end{align*}
Under either condition, $r_* = \min_{k \in [d]} r_{\{k\}}$. Further, define $\Theta^{(0)}  :=  \{\beta^{\{k\}}: k \in [d]\}$ and
\begin{align*}
\Theta^{(1)}  :=  \Theta^{(0)} \cup \{\gamma^{k}: k \in [d]\},\quad
\Theta^{(2)}  :=  \Theta^{(0)} \cup \{\gamma^{k,k'}: k \neq k' \in [d]\}.
\end{align*}
If $(H1)$ holds, then $\Pro_{\Theta^{(1)}}$ is asymptotically efficient. If $(H2)$ holds, then $\Pro_{\Theta^{(2)}}$ is asymptotically efficient.
\end{theorem}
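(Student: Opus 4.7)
The plan is to invoke Theorem \ref{thm:main_strategy} with the index set $\mathcal{C} = \{\{k\} : k \in [d]\}$, so that $r_{\mathcal{C}} = \min_{k \in [d]} r_{\{k\}}$ and $[J] \setminus \mathcal{C}$ consists of all $A \in \mathcal{P}_d^{-}$ with $|A| \geq 2$. The required inclusion $\{\beta^{\{k\}}: k \in [d]\} \subset \Theta^{(i)}$ for $i \in \{1,2\}$ holds by construction, and both $\Theta^{(1)}$ and $\Theta^{(2)}$ are contained in $\operatorname{dom}(\Lambda)$ since the vectors $\gamma^k$ and $\gamma^{k,k'}$ satisfy $\Lambda(\cdot) \leq 0$ by definition in \eqref{def:gamma_k} and \eqref{def:gamma_k_kp}. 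It therefore suffices to verify the key hypothesis \eqref{def:key_condition}, namely that for every $A \in \mathcal{P}_d^{-}$ with $|A| \geq 2$, some $\gamma \in \Theta^{(i)}$ satisfies $v_A(\gamma) \geq 2 r_{\mathcal{C}} = \min_{k \in [d]} 2 r_{\{k\}}$.

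Most of the work is already done in the discussion preceding the theorem: the bounds \eqref{aux:inter_gamma_k_lower} and \eqref{aux:inter_gamma_k_kp_lower} are obtained by plugging $\gamma^k + \gamma^{k,k'}$ and $2\gamma^{k,k'}$, respectively, as feasible points into the second optimization problem of Lemma \ref{lemma:opt_probs}. Feasibility holds because, for $k \neq k' \in A$, both $\gamma^k$ and $\gamma^{k,k'}$ have nonnegative coordinates supported in $\{k,k'\} \subset A$, so the sign constraints $\theta_m \geq 0$ for $m \in A$ and $\theta_m \leq 0$ for $m \notin A$ are automatically met; the constraint $\Lambda(\theta - \gamma) \leq 0$ reduces in each case to $\Lambda(\gamma^{k,k'}) \leq 0$, which holds by definition. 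Evaluating the objective $u\sum_{m \in A} \theta_m - \ell \sum_{m \notin A} \theta_m$ on these feasible points yields exactly $u z_k + s_{k,k'}$ and $2 s_{k,k'}$, respectively.

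With these two lower bounds in hand, each case collapses to one line. For $\Theta^{(1)}$ under $(H1)$: given $A$ with $|A|\geq 2$, pick any $k \in A$ and $k' \in A \setminus \{k\}$ and write
\[
v_A(\gamma^k) \;\geq\; u z_k + s_{k,k'} \;\geq\; \min_{k \neq k' \in [d]}(u z_k + s_{k,k'}) \;\geq\; \min_{k \in [d]} 2 r_{\{k\}} \;=\; 2 r_{\mathcal{C}}.
\]
For $\Theta^{(2)}$ under $(H2)$: pick any $k \neq k' \in A$ and write
\[
v_A(\gamma^{k,k'}) \;\geq\; 2 s_{k,k'} \;\geq\; \min_{k \neq k' \in [d]} 2 s_{k,k'} \;\geq\; 2 r_{\mathcal{C}}.
\]
Theorem \ref{thm:main_strategy} then yields both $r_* = r_{\mathcal{C}} = \min_{k \in [d]} r_{\{k\}}$ and the asymptotic efficiency of the corresponding mixture. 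The only place requiring genuine care is the feasibility verification above; the crucial point is that the construction in \eqref{def:gamma_k} and \eqref{def:gamma_k_kp} forces coordinates outside $\{k,k'\}$ to vanish, which is what allows a \emph{single} tilt $\gamma^k$ or $\gamma^{k,k'}$ to simultaneously cover all $A$ containing the pair $\{k,k'\}$ and thereby keeps $|\Theta^{(i)}|$ polynomial in $d$.
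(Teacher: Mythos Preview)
Your proposal is correct and follows essentially the same approach as the paper's proof: both invoke Theorem~\ref{thm:main_strategy} with $\mathcal{C}=\{\{k\}:k\in[d]\}$ and verify condition~\eqref{def:key_condition} via the feasibility-based lower bounds \eqref{aux:inter_gamma_k_lower} and \eqref{aux:inter_gamma_k_kp_lower}. Your write-up is in fact more explicit about the feasibility check and the inclusion $\Theta^{(i)}\subset\operatorname{dom}(\Lambda)$ than the paper's terse proof, which simply cites those displayed bounds and Theorem~\ref{thm:main_strategy}.
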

\begin{proof}
Let $A \subset [d]$ be arbitrary such that $|A| \geq 2$, and let $k \neq k' \in A$. First, under condition $(H1)$ and due to \eqref{aux:inter_gamma_k_lower}, we have $v_A(\gamma^k) \geq u z_k + s_{k,k'} \geq 2 \min_{k \in [d]} r_{\{k\}}$. Then the claims under $(H1)$ follow immediately from Theorem \ref{thm:main_strategy}. 

Similarly, the claims under $(H2)$ follow directly from \eqref{aux:inter_gamma_k_kp_lower} and Theorem \ref{thm:main_strategy}.
\end{proof}

Theorem
\ref{thm:sufficient}  suggests a procedure that solves $d+d(d-1)/2$ numerical optimization problems for $\{ r_{\{k\}}, \beta^{\{k\}}: k \in [d]\}$ and $\{s_{k,k'}, \gamma^{k,k'}: k \neq k' \in [d]\}$. If $(H1)$ (resp.~$(H2)$) holds, then a mixture with  $2d$ (resp.~$d(d+1)/2$) components is asymptotically efficient. 

\begin{remark}
Since $\gamma^k$ in \eqref{def:gamma_k} is a feasible solution to the problem in \eqref{def:gamma_k_kp}, we have $u z_k \leq s_{k,k'}$, and thus condition $(H2)$ is weaker than $(H1)$. Adding more elements from $\operatorname{dom}(\Lambda)$ to $\Theta^{(2)}$ allows for constructing asymptotically efficient proposals under conditions weaker than (H2), at the cost of increased computation. For instance, one could consider  $\gamma \in \operatorname{dom}(\Lambda)$ with \textit{three} nonzero coordinates.
\end{remark}

\begin{remark}\label{remark:general_sufficient_condition}
It is not always necessary to add components to $\Theta^{(0)}$ to achieve A.E. 
For example, by Remark \ref{remark:key_condition_special},
if for each $A \subset [d]$ with $|A| \geq 2$ there is a  $j \in [d]$ such that
\begin{align}\label{def:general_sufficient_condition}
    v_A(\beta^{\{j\}})\geq 2\min_{k \in [d]} r_{\{k\}},
\end{align}
then   $\Pro_{\Theta^{(0)}}$ is asymptotically efficient. However, this requires solving the second optimization in Lemma \ref{lemma:opt_probs} for all $A \subset [d]$ with $|A| \geq 2$. While computationally prohibitive in general, this becomes feasible under special structures, as we will see in the following examples.
\end{remark}

\begin{example}[correlated normals]\label{ex:cor_normal_siegmund}
Assume that $X_1$ has the multivariate normal distribution such that 
\begin{align}
    \label{def:cor_normal}
\Exp[X_1] = -\frac{1}{2}\mathbbm{1}_d, \quad
\text{Cov}(X_1) = \Sigma_{\rho}, \;\;\text{ where }\;\; \Sigma_{\rho,k,k'} = \begin{cases}
1, \text{ if } k = k' \in [d]\\
\rho,  \text{ if } k \neq k' \in [d]
\end{cases},
\end{align}
where $\mathbbm{1}_d$ is the $d$-dimensional vector with all entries being $1$ and $\rho \in [0,1)$.  The cumulant generating function of $X_1$ is given by: 
\begin{equation}
    \label{example:cor_normal_Lambda}
\Lambda(\theta) = -\frac{1}{2}\mathbbm{1}_d^{\top} \theta + \frac{1}{2}\theta^{\top} \Sigma_{\rho} \theta, \;\; \text{ for } \theta \in \bR^d.
\end{equation}
Furthermore, for each $\theta \in \mathbb{R}^d$, the distribution of $X_1$ under $\Pro_{\theta}$ is multivariate normal with
\begin{align*}
    \Exp_{\theta}[X_1] = - \frac{1}{2} \mathbbm{1}_d + \Sigma_\rho \theta, \quad
    \text{Cov}_{\theta}(X_1) = \Sigma_\rho.
\end{align*}


For $k \neq k' \in [d]$, the optimization problem in \eqref{def:gamma_k} simplifies to
$$
\max_{\theta_{k} \geq 0}\; u \theta_k, \quad\;\;\text{ such that } \;\;  -\frac{1}{2}\theta_k + \frac{1}{2} \theta_k^2 \leq 0,
$$
while the problem in \eqref{def:gamma_k_kp} becomes
$$
\max_{\theta_{k}, \theta_{k'} \geq 0}\; u (\theta_k + \theta_{k'}), \quad\;\;\text{ such that } \;\;  -\frac{1}{2}(\theta_k + \theta_{k'}) + \frac{1}{2} (\theta_k^2 + 2\rho \theta_k \theta_{k'} + \theta_{k'}^2) \leq 0.
$$
As a result, their optimal values are, respectively,
$$
u z_k = u, \qquad s_{k,k'} = 2u/(1+\rho).
$$
Due to symmetry, we have $r_{\{k\}}=r$ for every $k \in [d]$.  Thus, condition   $(H1)$ (resp.~$(H2)$) in Theorem \ref{thm:sufficient} holds  if
$r \leq u/(1+\rho) + u/2$ (resp. $r \leq 2u/(1+\rho)$).  Further, as discussed in Remark \ref{remark:general_sufficient_condition}, the mixture
$\Pro_{\Theta^{(0)}}$ is asymptotically efficient if condition \eqref{def:general_sufficient_condition} holds.  Due to the homogeneity of the current example, it is feasible to check this condition as it suffices to show that for any $2 \leq m \leq d$ and $A =\{1,\ldots,m\}$ we have
$v_A(\beta^{\{1\}}) \geq 2 r$, 
which only requires us to solve $d-1$ optimizations in Lemma \ref{lemma:opt_probs}.

In Figure \ref{fig:correlated_normals_rho} we plot $r$ as a function of $\rho \in \{0,0.01,\ldots,0.90\}$ for 
$\ell = u =1$ and $d=50$.
Further, the top solid curve  in this figure corresponds to the function $\rho \mapsto 2/(1+\rho)$, and the bottom one to $\rho \mapsto 1/(1+\rho) + 1/2$. Thus, condition   $(H1)$ (resp.~$(H2)$) is satisfied as long as the bottom (resp. top) solid curve is above $r$. In this context,  if $\rho \leq 0.45$, then condition (H1) holds and the proposal $\Pro_{\Theta^{(1)}}$ is asymptotically efficient. Similarly, if $\rho \leq 0.54$, then (H2) holds and $\Pro_{\Theta^{(2)}}$ is asymptotically efficient.

In Table \ref{tab:vary_u}, we fix $d = 50$ and $\ell = 1$, and we report the largest value of $\rho$ in the grid $\{0, 0.01, \ldots, 0.90\}$ for which conditions (H1),  (H2), and \eqref{def:general_sufficient_condition} hold  for different values of  $u$. We see that  smaller values of $u$ correspond to less favorable settings for our simulation purposes. For example, the mixture $\Pro_{\Theta^{(2)}}$, which contains at most $d(d+1)/2$ components, is provably asymptotically efficient whenever $\rho \leq 0.54$ (respectively, $\rho \leq 0.26$) 
for $u = 1$ (respectively, $u = 1/3$).


\end{example}

\begin{figure}[!tb]
    \centering
    \includegraphics[width=0.9\textwidth]{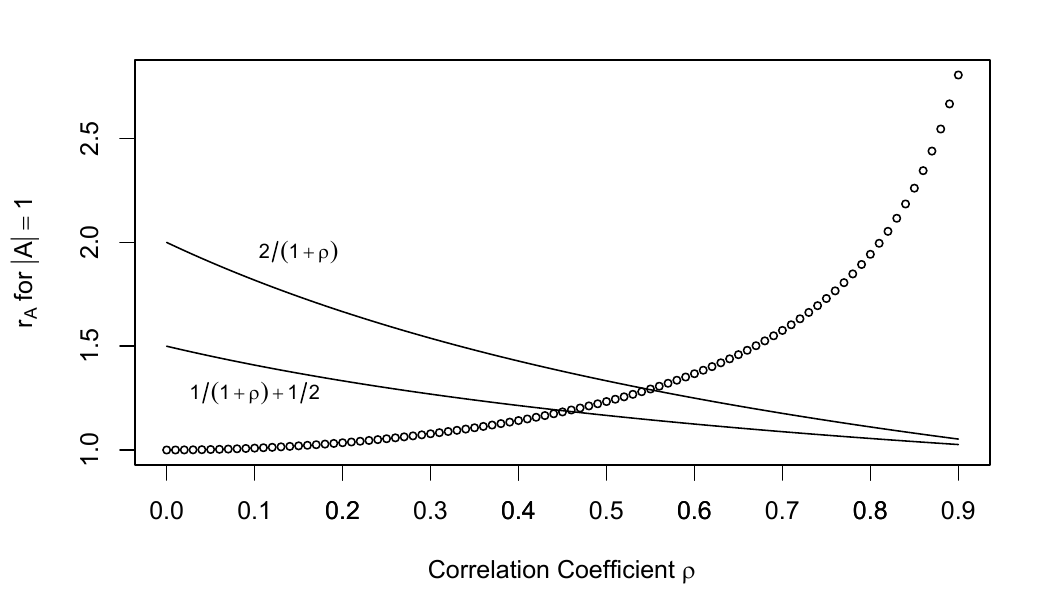}
    \caption{A plot of $r = r_{\{1\}}$ (circles) against the correlation coefficient $\rho$ for $d=50,\ell=1,u=1$. When the lower (resp.~upper) solid curve is above the circles,  $(H1)$ (resp.~$(H2)$) holds.}
    \label{fig:correlated_normals_rho}
\end{figure}

\begin{table}[!tb]
\begin{tabular}{c|c|c|c|c|c}
           & $u=3$ & $u=2$ & $u=1$ & $u=1/2$ & $u=1/3$ \\ \hline
(H1) holds & 0.61  & 0.57  & 0.45  & 0.25    & 0.09    \\ \hline
(H2) holds & 0.67  & 0.64  & 0.54  & 0.39    & 0.26   \\
\hline
\eqref{def:general_sufficient_condition} holds & 0.61  &   0.58 &  0.50 & 0.39    &  0.32
\end{tabular}
\caption{For different values of $u$, we list the maximal number within $\{0,0.01,\ldots,0.90\}$ such that $(H1)$ and $(H2)$ in Theorem \ref{thm:sufficient} and condition \eqref{def:general_sufficient_condition} hold.} 
\label{tab:vary_u}
\end{table}

\subsection{Independent coordinates}\label{subsec:independent_intersection_rule}
In this subsection, we assume that the coordinates of $X_1$ are \textit{independent}. In this case,  $\gamma^k$ in \eqref{def:gamma_k} has special interpretations as discussed below. Further, if, in addition, the coordinates of $X_1$ have the same distribution, then under mild conditions, a mixture of $d$ components is asymptotically efficient; see Subsection \ref{subsec:iid_case_Siegmund}.

\begin{definition}
Let $\nu$ and $\tilde{\nu}$ be probability measures on $\mathbb{R}$. We say $\tilde{\nu}$ is Siegmund exponential tilting of $\nu$ if there exists $z \neq 0$ such that $\frac{d\tilde{\nu}}{d\nu}(x) = \exp(zx)$ for all $x \in \mathbb{R}$. Note that when such a $z \in \bR$ exists, it must be unique.
\end{definition}

\begin{remark}
To justify the terminology, we consider $d=1$ and assume that the distribution of $X_1$ is given by $\nu$ and $\Exp[X_1] < 0$. Denote by $\tau = \inf\{n \geq 1: S_n > b \text{ or } S_n < -a\}$ for some $a,b > 0$. To simulate the probability that $\{S_{\tau} > b\}$, as shown in \cite{siegmund1976importance}, the unique asymptotically efficient exponential-tilting distribution is $\tilde{\nu}$.
\end{remark}

\begin{remark}
Siegmund exponential-tilting pairs appear naturally in hypothesis testing problems. 
Let $\nu_0$ and $\nu_1$ be two probability distributions on $\bR$, and denote by $\ell(x)  :=  \log(d\nu_1/d\nu_0(x))$ for $x \in \bR$ the log-likelihood ratio function. Denote by $\nu$ (resp.~$\tilde{\nu}$) the distribution of $\ell(Z)$ if $Z$ has the distribution $\nu_0$ (resp.~$\nu_1$). Then $\tilde{\nu}$ is Siegmund exponential-tilting distribution of $\nu$ (and vice versa).
\end{remark}

Denote by $\mu_k^{(0)}$ the distribution of $X_{1,k}$  and by $\Lambda^{k}(t)  :=  \log \Exp[\exp(t X_{1,k})]$ for $t \in \bR$ the cumulant generating function of $X_{1,k}$ under $\Pro$. Denote by $\mu_k^{(1)}$ the distribution of $X_{1,k}$  under $\Pro_{\gamma^{k}}$, where $\gamma^k$ is defined in \eqref{def:gamma_k} and recall that $z_k >0 $ is the unique positive number such that $\Lambda^{k}(z_k) = 0$. For $k \in [d]$, 
under $\Pro_{\gamma^k}$, the coordinates of $X_1$ remain independent and
$$
X_{1,k'} \sim \mu^{(0)}_{k'}\;\; \text{ if } k' \neq k,\quad \text{ and }\quad X_{1,k} \sim \mu^{(1)}_k.
$$
That is, under $\Pro_{\gamma^k}$, the distribution of the $k$-th coordinate is tilted from $\mu^{(0)}_k$ to its Siegmund exponential tilting $\mu^{(1)}_k$, while the other coordinates retain their original distributions.



Next, we show that under certain conditions,
$\beta^{\{k\}}$, as defined in Definition \ref{def:optimal_beta_j}, is in fact equal to $\gamma^{k}$ in \eqref{def:gamma_k}, for $k \in [d]$. 
Denote by
$$
\kappa_k^{(0)}  :=  -(\Lambda^k)'(0) > 0, \quad \kappa_k^{(1)}  :=   (\Lambda^k)'(z_k) > 0.
$$
Note that $-\kappa_k^{(0)}$ and $\kappa_k^{(1)}$ are  the expectation of $X_{1,k}$ under $\Pro$ and $\Pro_{\gamma^k}$, respectively.

\begin{lemma}\label{lemma:independent_one_change}
Assume the coordinates of $X_1$ are independent. 
For each $k \in [d]$, 
$$\beta^{\{k\}} = \gamma^{k} \quad \text{ if and only if } \quad (\ell/u) \kappa_k^{(1)}  \leq \kappa_{k'}^{(0)}, \quad \forall\;  k' \neq k.$$

Assume for some $k^* \in [d]$, $(\ell/u) \kappa_{k^*}^{(1)} \leq \kappa_{k}^{(0)}$ for each $k \neq k^* \in [d]$. If $z_{k^*} \leq z_{k} + z_{k'}/2$ for any  $k \neq k'$ and $z_{k} \geq z_{k'}$, then 
 $(H1)$ in Theorem \ref{thm:sufficient}  holds. If  $z_{k^*} \leq z_{k} + z_{k'}$ for any  $k \neq k'$,  then  $(H2)$ holds.
\end{lemma}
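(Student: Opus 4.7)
The proof splits cleanly into the two parts, with most of the work concentrated in Part~1.

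For \textbf{Part~1 (the iff)}, the plan is to apply KKT conditions to the convex program of Lemma \ref{lemma:opt_probs} specialized to $A=\{k\}$. Independence makes $\Lambda(\theta)=\sum_j\Lambda^j(\theta_j)$ separable, so the Lagrangian decouples coordinatewise. The candidate $\gamma^k$ activates the constraint $\Lambda(\gamma^k)=\Lambda^k(z_k)=0$; stationarity in $\theta_k$ fixes the dual multiplier as $\lambda=u/\kappa_k^{(1)}$, and for each $k'\neq k$ the multiplier attached to the inactive sign constraint $\theta_{k'}\leq 0$ works out to $\mu_{k'}=\lambda\kappa_{k'}^{(0)}-\ell$. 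Requiring $\mu_{k'}\geq 0$ translates precisely to $(\ell/u)\kappa_k^{(1)}\leq\kappa_{k'}^{(0)}$. This proves both directions: if $\beta^{\{k\}}=\gamma^k$, these inequalities are forced by primal-dual feasibility; conversely, under these inequalities the tuple $(\gamma^k,\lambda,\mu)$ satisfies all KKT conditions, and convexity of the program certifies that $\gamma^k$ is the unique maximizer.

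For \textbf{Part~2}, Part~1 immediately yields $r_{\{k^*\}}=u z_{k^*}$ and hence $\min_j r_{\{j\}}\leq u z_{k^*}$. The key ingredient in the independent case is an elementary feasibility bound for $s_{k,k'}$: the point $(\theta_k,\theta_{k'})=(z_k,z_{k'})$ is feasible for the program in \eqref{def:gamma_k_kp} since $\Lambda^k(z_k)+\Lambda^{k'}(z_{k'})=0$, yielding
$$
s_{k,k'} \;\geq\; u(z_k+z_{k'}) \quad \text{for all } k\neq k'.
$$

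For $(H2)$, combining this bound with the hypothesis $z_{k^*}\leq z_k+z_{k'}$ for all $k\neq k'$ gives $\min_{k\neq k'} s_{k,k'}\geq u z_{k^*}\geq\min_j r_{\{j\}}$, which is $(H2)$. For $(H1)$, the feasibility bound gives $u z_k+s_{k,k'}\geq u(2z_k+z_{k'})$; applied to an ordering with $z_k\geq z_{k'}$ the hypothesis $z_{k^*}\leq z_k+z_{k'}/2$ forces $2z_k+z_{k'}\geq 2z_{k^*}$, so $u z_k+s_{k,k'}\geq 2uz_{k^*}\geq 2\min_j r_{\{j\}}$, which is $(H1)$ after taking the appropriate minimum.

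The main bookkeeping lies in Part~1, where the KKT multipliers must be tracked carefully and uniqueness extracted from convexity of the program; Part~2 is then a short algebraic chain once the feasibility lower bound on $s_{k,k'}$ is in hand, and no additional optimization over $\operatorname{dom}(\Lambda)$ is required.
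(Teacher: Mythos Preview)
Your proposal is correct and matches the paper's approach: Part~1 via the KKT conditions for the optimization in Lemma~\ref{lemma:opt_probs} under independence (exactly the paper's argument in Appendix~\ref{app:independent_streams}), and Part~2 via $r_{\{k^*\}}=u z_{k^*}$ combined with the feasibility bound $s_{k,k'}\geq u(z_k+z_{k'})$ from the trial point $(\theta_k,\theta_{k'})=(z_k,z_{k'})$. The paper's write-up of Part~2 is terser than yours but relies on the same ingredients.
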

\begin{proof}
See Appendix \ref{app:independent_streams}.
\end{proof}

\begin{remark}
If $d = 1$, then $[d]$ is a singleton, so the conditions in Lemma~\ref{lemma:independent_one_change} automatically hold, recovering the result of \cite{siegmund1976importance} for one-dimensional random walks.
\end{remark}

\begin{remark}\label{remark:song_intersection}
If $\ell = u$, $\kappa_{k}^{(1)} \leq \kappa_{k}^{(0)}$ for $k \in [d]$ and $z_{k} = z_{k'}$ for $k, k' \in [d]$,  $\Theta^{(1)}$ in Theorem  \ref{thm:sufficient} reduces to $\{\gamma^{k}: k \in [d]\}$, which recovers Theorem 1 in \cite{song2016logarithmically} as a special case of the above lemma.
\end{remark}

\subsection{The i.i.d.~case} \label{subsec:iid_case_Siegmund}
Next, we consider the case where the coordinates of $X_1$ are i.i.d. We show that if either the dimension of the random walk is sufficiently large or the upper boundary $u$ is not much smaller than the lower boundary $\ell$, then $r_{*}$ is attained by $\{r_{\{k\}}: k \in [d]\}$, and a mixture with $d$ components is asymptotically efficient.

\begin{theorem}\label{thm:homo_case}
Assume the coordinates of $X_1$ are independent and identically distributed. If $d \geq 1+\ell/u$, then $r_* = \min_{k \in [d]} r_{\{k\}}$. Further, if $d \geq 2+2\ell/u$, then $\Pro_{\Theta^{(0)}}$ is asymptotically efficient with $\Theta^{(0)} = \{\beta^{\{k\}}: k \in [d]\}$.
\end{theorem}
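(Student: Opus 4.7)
The plan is to use the coordinate-permutation symmetry of the i.i.d.\ model to collapse everything to a two-variable optimization, to transplant one feasible point across the different problems, and then to carry out the heart of both assertions via a single convexity argument tuned to two different ratios. First, symmetry and the identity $\Lambda(\theta) = \sum_k \Lambda^1(\theta_k)$ force $r_A$ and the structure of $\beta^A$ to depend only on $m = |A|$: $\beta^A$ equals $a_m$ on $A$ and $-c_m$ off $A$, and
\[
r_m \;=\; \max\{\, uma + \ell(d-m)c \;:\; m\Lambda^1(a) + (d-m)\Lambda^1(-c) \leq 0,\ a,c \geq 0 \,\}.
\]
I would then show that $(a_1, c_1)$, the $r_1$ optimizer, is feasible in every $r_m$ problem: using the tightness $\Lambda^1(a_1) = -(d-1)\Lambda^1(-c_1)$ from the $r_1$ problem, the left-hand side rewrites as $-d(m-1)\Lambda^1(-c_1)$, which is $\leq 0$ because $\Lambda^1(-c_1) \geq 0$ (this follows from $-c_1 \leq 0$, $\Lambda^1(0)=0$, and convexity with $(\Lambda^1)'(0)<0$). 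Substituting $(a_1, c_1)$ into the $r_m$ objective produces the clean bound $r_m \geq r_1 + (m-1)(ua_1 - \ell c_1)$, reducing the first assertion to the single inequality $ua_1 \geq \ell c_1$.

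This inequality is the main technical step, and I would establish it through a convex auxiliary function whose same form, for a different value of a parameter, will also deliver the second assertion. For $\nu > 0$ set $\phi_\nu(a) := \Lambda^1(a) + (d-1)\Lambda^1(-a/\nu)$; it is convex, vanishes at zero, and has $\phi_\nu'(0) = (\Lambda^1)'(0)\,[1 - (d-1)/\nu]$. Since $(\Lambda^1)'(0) = \Exp[X_{1,1}] < 0$, the hypothesis $d \geq 1 + \nu$ makes $\phi_\nu'(0) \geq 0$, and convexity forces $\phi_\nu(a) \geq 0$ for all $a \geq 0$. Evaluating at $a = a_1$ and subtracting the $r_1$ tightness equation yields $(d-1)\{\Lambda^1(-a_1/\nu) - \Lambda^1(-c_1)\} \geq 0$; because $\Lambda^1$ is strictly decreasing on $(-\infty, 0]$ (its derivative at zero is negative and it is convex), this gives $c_1 \leq a_1/\nu$. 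Taking $\nu = \ell/u$ under $d \geq 1 + \ell/u$ produces $ua_1 \geq \ell c_1$, finishing the first assertion.

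For the A.E.\ claim, Remark~\ref{remark:general_sufficient_condition} together with the symmetry-induced equality of the $r_{\{k\}}$ reduces the task to producing, for each $A$ with $|A| = m \geq 2$ and some $j \in A$, a feasible point in the lower-bound problem for $v_A(\beta^{\{j\}})$ of Lemma~\ref{lemma:opt_probs} whose objective is at least $2r_1$. I would try $\theta = \beta^{\{j\}} + \sigma^A$, where $\sigma^A$ has entry $a_1$ on $A$ and $-c_1$ off $A$; then $\Lambda(\theta - \beta^{\{j\}}) = \Lambda(\sigma^A) = -d(m-1)\Lambda^1(-c_1) \leq 0$ by exactly the previous feasibility computation. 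The sign constraints $\theta_k \geq 0$ for $k \in A \setminus \{j\}$ reduce to $a_1 \geq c_1$. Expanding objective minus $2r_1$ and simplifying, the desired inequality becomes $u(a_1 - c_1) \geq 2\ell c_1$, i.e., $c_1 \leq a_1/(1 + 2\ell/u)$; the same $\phi_\nu$ argument applied with $\nu := 1 + 2\ell/u$ delivers this under $d \geq 1 + \nu = 2 + 2\ell/u$, which also automatically secures the needed sign constraint $a_1 \geq c_1$. The main obstacle in the whole proof is spotting this one convex test function $\phi_\nu$ and matching $\nu$ to the correct ratio for each assertion; once this template is in place, the two parts become parallel corollaries of one convexity fact.
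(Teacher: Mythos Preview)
Your proof is correct and takes a genuinely different route from the paper's.  The paper first proves an auxiliary lemma establishing two facts: a Jensen-type bound $a_1 \geq (d-1)c_1$ (in your notation) and the monotonicity $a_1 \leq a_2 \leq \cdots$, $c_1 \leq c_2 \leq \cdots$ of the optimizers in $m=|A|$; it then uses the monotonicity to compare $r_m$ with $r_1$ and, for the A.E.\ part, builds the test point from $\beta^{\{j\}}+\beta^A$ (so the computation passes through $r_A$ and the $m$-dependent quantities $a_m,c_m$).  You bypass the monotonicity entirely by observing that $(a_1,c_1)$ itself is feasible for every $r_m$ problem, and for the A.E.\ part you take the simpler test point $\beta^{\{j\}}+\sigma^A$ built only from $(a_1,c_1)$, so the whole argument never touches $a_m,c_m$ for $m\geq 2$.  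Your key inequality $c_1 \leq a_1/\nu$ is obtained via the convex test function $\phi_\nu$ rather than the paper's Jensen step $a_1 \geq (d-1)c_1$; the two are different convexity facts, yours tuned exactly to the needed ratio.  The upshot: your argument is more self-contained and lighter for this theorem (no case analysis, no monotonicity), while the paper's lemma yields the additional structural information that $a_m,c_m$ are monotone in $m$, which is of independent interest but not required here.
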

\begin{proof}
See Appendix \ref{app:homo_streams}.
\end{proof}

The condition for the mixture $\Pro_{\Theta^{(0)}}$ to be A.E.~in Theorem~\ref{thm:homo_case} depends only on the dimension $d$ and the ratio $\ell/u$, not on the distribution of the random walk. In particular, if $u \geq 2\ell$ (respectively, $u \geq \ell$), then $\Pro_{\Theta^{(0)}}$ is asymptotically efficient for any $d \geq 3$ (respectively, $d \geq 4$).

If this condition is violated, that is, if $d < 2 + 2\ell/u$, then additional components may need to be added to $\Theta^{(0)}$ in order to achieve asymptotic efficiency. In such situations, the choice must be made on a case-by-case basis; for example, when $d$ is relatively small, it may be feasible to apply the full mixture $\{\Pro_{\beta^A} : A \in \mathcal{P}_{d}^-\}$ in Lemma~\ref{lemma:full_mixture_efficient}.

If $u \ll \ell$ such that $d < 1+\ell/u$, $r_*$ may be attained with $A = [d]$, that is, $r_{[d]} = r_*$. In other words, among all regions $\{bW^{A}: A\in \mathcal{P}_{d}^{-}\}$, $bW^{[d]}$ is the one most likely to be hit, as illustrated by the following numerical example. Note that in the homogeneous setup of this subsection, $r_A$ and $\beta^A$ depend only on the size of $A$. Thus, the computation of $\{r_A, \beta^A : A \in \mathcal{P}_{d}^-\}$ requires solving only $d$ optimization problems.

\begin{example}[i.i.d. coordinates] Assume that the coordinates of $X_1$ are i.i.d., and that $X_{1,1}$ has the normal distribution with mean $-1/2$ and variance $1$.

In Figure \ref{fig:homo_norm}, we let $u = 0.004, \ell = 1, d=50$, and plot $r_A$ for all possible sizes of $A$. In particular, $r_*$ is attained by $r_{[d]}$. For this example, by Thereom \ref{thm:main_strategy}, we can show that $\Pro_{\Theta}$ with $\Theta = \Theta^{(0)} \cup \{\beta^{[d]}\}$ (of size $d+1$) is asymptotically efficient, since for any $2 \leq m \leq d-1$ and $A =\{1,\ldots,m\}$, we can show numerically that
$    
v_A(\beta^{\{1\}})
\geq 2r_*.
$
\end{example}

 \begin{figure}[!tb]
     \centering
     \includegraphics[width=0.85\textwidth]{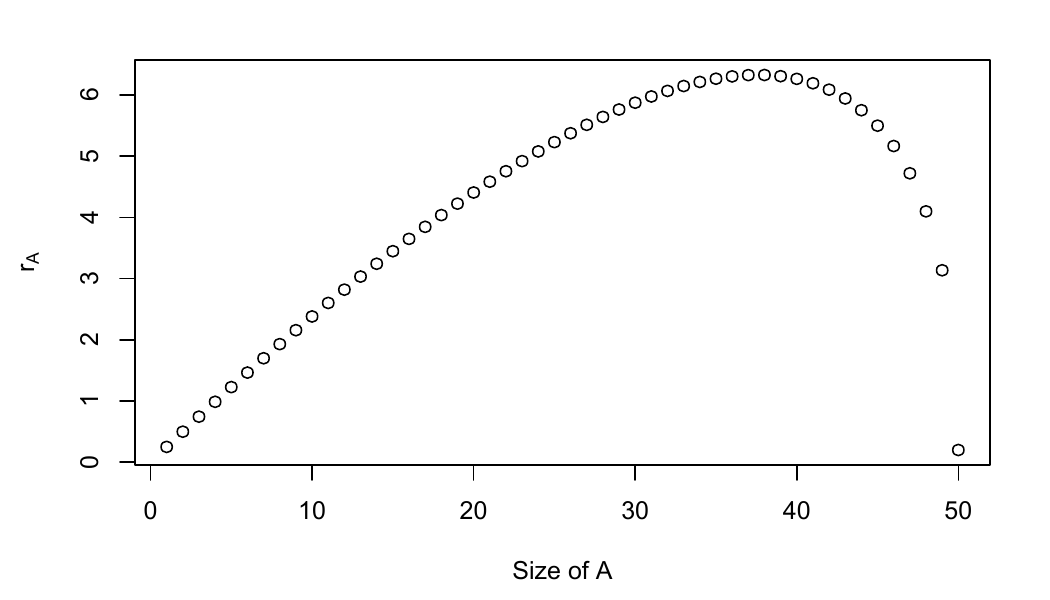}
     \caption{Homogeneous case with $u = 0.004, \ell=1, d = 50$. Here  $r_{\{1\}} \approx 0.2507 $ and $r_{[d]} = 0.2$.}
     \label{fig:homo_norm}
 \end{figure}

\subsection{Two numerical studies} 
First, we consider the i.i.d.~case as in Theorem \ref{thm:homo_case} with $\ell = u = 1, d = 400$. 
 Specifically, we denote by $\mathcal{E}(\lambda)$ the exponential distribution with rate $\lambda$, and assume that the coordinates of $X_1$ are i.i.d. and $X_{1,1}$ has the distribution $\mathcal{E}(2) - \log(2)$. In particular, $z_1 = 1$ and $\kappa^{(0)}_1 \approx 0.1931, \kappa^{(1)}_1 \approx 0.307$.   By Lemma \ref{lemma:independent_one_change}, $\beta^{\{k\}} \neq \gamma^{k}$. Theorem \ref{thm:homo_case} applies as long as $d \geq 4$, that is, $\Pro_{\Theta^{\{0\}}}$ is asymptotically efficient. For $k \in [d]$,
 under $\Pro_{\beta^{\{k\}}}$, $\{X_{1,j}, j \in [d]\}$ are independent, and $X_{1,j}$ has the distribution of $\mathcal{E}(2-\beta^{\{k\}}_j) - \log(2)$ for $j \in [d]$. For  $k \in [d]$, $\beta^{\{k\}}_j \approx 0.8718$ if $j = k$ and $\approx - 4.1194\times 10^{-4}$ if $j \neq k$.

Given $\Theta^{\{0\}}$, for each $b$, we obtain $N = 10^5$ realizations of $\widehat{\mathcal{\mathbb{W}}}_b(\Theta^{\{0\}})$ under $\Pro_{\Theta^{\{0\}}}$ as discussed in Remark \ref{remark:simulation strategy}. We then estimate the wrong exit probability by the average. Further, we compute the relative error as the estimated standard deviation of \textit{the estimator} (i.e., the sample standard deviation of these realizations normalized by $1/\sqrt{N}$) against the estimated probability. In Figure \ref{fig:homo_exp_d400}, the left panel plots the $-\log$ of the estimation probability as we vary the index $b$, while the right panel plots the relative error against $-\log_{10}$ of the estimated probability. As expected, the exponent of the estimated probability is linear in $b$. Further, for probabilities as small as $10^{-9}$, the relative error is less than $2\%$.

Second, we consider the correlated normal case in Example \ref{ex:cor_normal_siegmund} with $d = 100, \ell = u = 1$. Specifically, assume that $X_1$ follows a multivariate normal distribution with mean $-\frac{1}{2}\mathbbm{1}_d$ and covariance matrix $\Sigma_\rho$,  defined in~\eqref{def:cor_normal}.
By numerical computation, if $\rho \leq 0.51$, then condition \eqref{def:general_sufficient_condition} holds and thus $\Pro_{\Theta^{\{0\}}}$ is asymptotically efficient.
In Figure \ref{fig:corr_normal_two_rhos}, for $\rho = 0.2$ and $\rho = 0.45$, we plot the estimated relative error against $-\log_{10}$ of the estimated probability. We observe that for probabilities as small as $10^{-10}$, the estimated relative error is below $2\%$.

\begin{figure}[!tb]
    \centering
    \includegraphics[width=\textwidth]{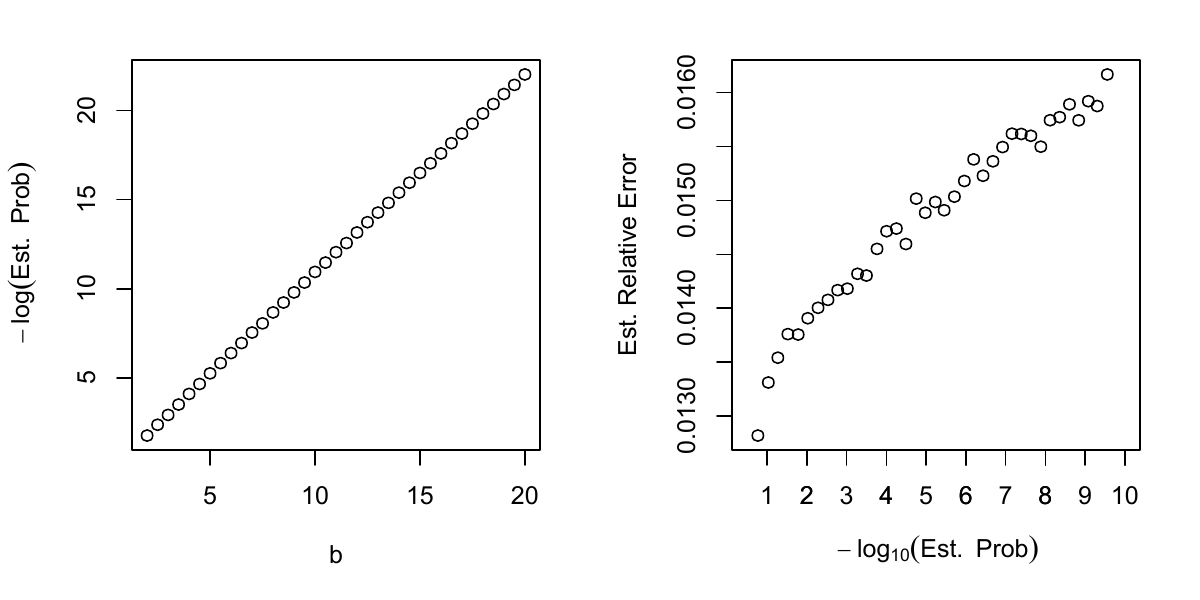}
    \caption{Multidimensional Siegumnd's problem with $d=400$ and $\ell = u = 1$. $X_1$ has i.i.d.~exponential coordinates. The left figure is $-\log$  of the estimated probability against the index $b$, and the right is the estimated relative error against  $-\log_{10}$ of the estimated probability. The results are based on $10^5$ repetitions.}
    \label{fig:homo_exp_d400}
\end{figure}

\begin{figure}[!tb]
    \centering
    
\includegraphics[width=\textwidth]{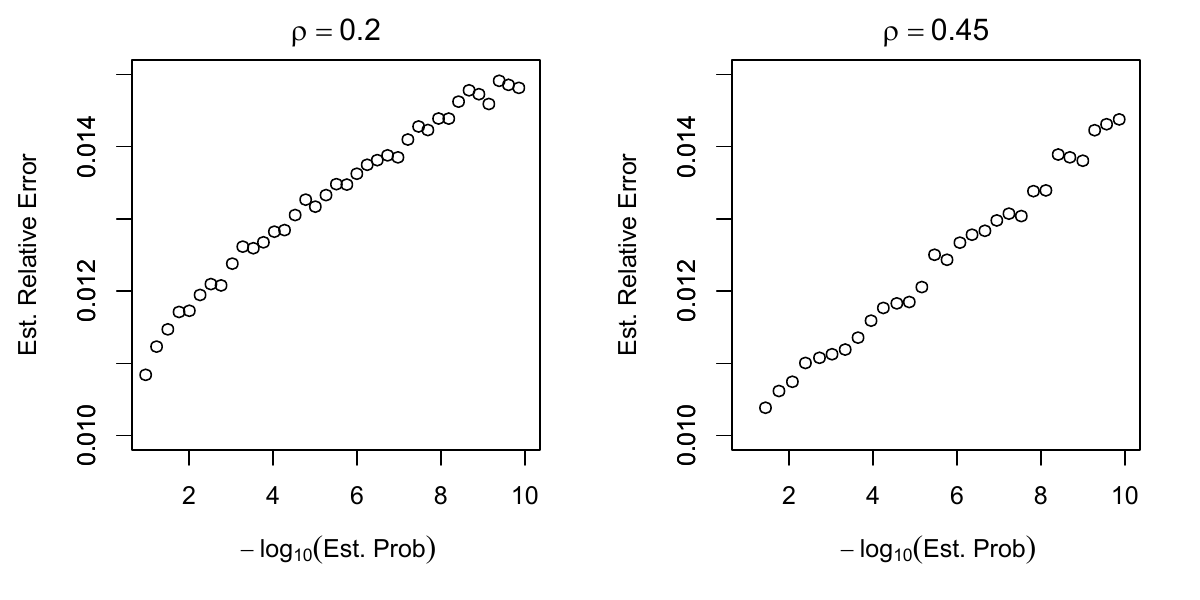}
    \caption{
    Siegumnd's problem with $d=100$ and $\ell = u = 1$.
    The coordinates of $X_1$ are correlated normals with correlation  $\rho = 0.2$ (left) and $\rho = 0.45$ (right). The x-axis is $-\log_{10}$ of the estimated probability, and the y-axis is the estimated relative error. The results are based on $10^5$ repetitions.}
    \label{fig:corr_normal_two_rhos}
\end{figure}

\subsection{Negative results}\label{subsec:negative}
In this subsection, we consider a \textit{modified} multidimensional Siegmund problem to illustrate a negative example. In this setting, any asymptotically efficient mixture $\mathbb{P}_{\Theta}$ must include exponentially many components in $d$, making it computationally infeasible.

Specifically, assume that $d$ is \emph{even}, and  that $\Exp[X_{1,k}] < 0$ for each $k \in [d]$, where $X_1 = (X_{1,1},\ldots,X_{1,d})^\top$. Let $\ell,u > 0$ be fixed. Denote by $\mathcal{S} = \{A \subset [d]: |A| = d/2\}$  all subsets of $[d]$ with size $d/2$. Suppose the goal is to use importance sampling to estimate the probability that the random hits 
$b W^{A}$ \textit{for some} $A \in \mathcal{S}$ before $b W^{\emptyset}$, that is,
\begin{align}
    \label{def:modified_siegmund}
    \Pro\left(\tilde{\tau}_b^{*} < \tau_b^{\emptyset} \right), \quad \text{ where } \; \tilde{\tau}_b^{*}:= \min\{\tau_b^{A}:\; A \in \mathcal{S}\},
\end{align}
and  $W^A$ is defined in  \eqref{def:W_A}. Compared to \eqref{def:siegmund}, the formulation in \eqref{def:modified_siegmund} involves fewer regions, yet still fits within the framework developed in Section \ref{sec:prob_wrong_exit}. Specifically, we let $J = \binom{d}{d/2}$ and identify $W^{\emptyset}$ with $W^{0}$ and $\{W^{A}, r_A, \beta^A:A \in \mathcal{S}\}$ with $\{W^j, r_j, \beta^j: j \in [J]\}$. Further, define
\[
r_* = \min\{r_A : A \in \mathcal{S}\}, \quad
\mathcal{C}_* = \{A \in \mathcal{S} : r_A = r_*\}
\]
as the minimal rate and the collection of regions that attain it, respectively.

Now, assume that the coordinates of $X_1 = (X_{1,1},\ldots,X_{1,d})^\top$ are exchangeable, that is, for any permutation $\sigma$ of $\{1,\ldots,d\}$, $(X_{1,\sigma(1)},\ldots,X_{1,\sigma(d)})^{\top}$ has the same distribution of $(X_{1,1},\ldots,X_{1,d})^\top$. This holds, for example, when the coordinates of $X_1$ are i.i.d..

Under exchangeability, it is clear that $r_A = r_*$ for every $A \in \mathcal{S}$. Consequently, $\mathcal{C}_* = \mathcal{S}$. Then, by Theorem \ref{thm:negative}, any asymptotically efficient mixture $\Pro_{\Theta}$ must include the set $\{\beta^A : A \in \mathcal{S}\}$ in $\Theta$, whose cardinality is $\binom{d}{d/2}$. This becomes computationally infeasible even for moderate values of $d$; for example, $\binom{40}{20} > 10^{11}$.

\section{Gap Rule from Sequential Multiple Testing}\label{sec:gap_rule}

Let $2 \leq m \leq d-2$ be an integer, and recall that $\Mstar = \{1,\ldots, m\}$. Assume that the first $m$ coordinates of the random walk have positive drift and the remaining ones negative, that is, 
$$
\Exp[X_{1,k}] > 0 \quad \text{ if } 1 \leq k \leq m, \qquad \text{ and } \qquad 
\Exp[X_{1,k}] < 0 \quad \text{ if } m+1 \leq k \leq d.
$$

In this section, we consider the first time that the top $m$ coordinates of the random walk $\{S_n\}$ exceed the remaining ones by at least $b > 0$, and we are interested in estimating the probability that, at the stopping time, at least one of these top $m$ coordinates does not belong to the set $[m]$, that is, the coordinates with positive drift \citep{song2017asymptotically}.

To be specific, let $\mathcal{G}_m$ denote the collection of subsets of $[d]$ with cardinality $m$, and let $\mathcal{G}_m^{-}$ denote the subsets of cardinality $m$ excluding $\Mstar$, i.e., 
$\mathcal{G}_m  :=  \{A \subset [d]: |A| = m\}$ and  $\mathcal{G}_m^{-}  :=  \mathcal{G}_m \setminus \{\Mstar\}$. For each $A  \in \mathcal{G}_m$ 
 let $\tau_b^A$ denote the first time the random walk hits  the set $b W^{A}$, where   
\begin{equation}
    \label{def:gap_W_A}
W^{A} = \{x \in \bR^d: x_{k} - x_{k'} > 1 \text{ for every } k \in A,\; k' \in A^c\},
\end{equation}
For large $b > 0$, we expect the random walk to exit from $b W^{\Mstar}$, and we are interested in estimating the probability that it hits   $b W^{A}$ for some 
$ A \in \mathcal{G}_m^{-}$ before  $b W^{\Mstar}$, i.e.,  
$$\Pro\left(  \min_{A \in \mathcal{G}_m^{-}} \tau_b^A < \tau_b^{[m]} \right).$$

  To connect with the notation in Section~\ref{sec:prob_wrong_exit}, 
  we identify $\{W^{\Mstar}, \tau_b^{\Mstar}\}$ with $\{W^{0}, \tau_b^{0}\}$ and $\{W^{A}, \tau_b^{A}: A \in \mathcal{G}_m^{-}\}$ with $\{W^{j}, \tau_b^{j}: j \in [J]\}$; in particular, $J = \binom{d}{m}-1$. Similar identification applies to $\{r_A, v_A(\gamma), \beta^{A}: A \in \mathcal{G}_m^{-}\}$. 

Note that conditions \eqref{assumptions_directions_of_mean} and \eqref{assumption_on_regions}  always hold in this setup. In addition, throughout this section, we assume that \eqref{assumption_on_X} holds.

\begin{remark}
In general, if there exists $a \in \mathbb{R}$ such that $\mathbb{E}[X_{1,k}] > a$ for all $k \in \Mstar$ and $\mathbb{E}[X_{1,k'}] < a$ for all $k' \not\in \Mstar$, then with high probability, the random walk $\{S_n\}$ exits through the region $b W^{\Mstar}$. In this case, one may instead consider the centered random vector $X_1 - a$, where the subtraction is understood coordinatewise.
Furthermore, the gap is assumed to be $1$ in the definition \eqref{def:gap_W_A} of $W^A$. This is without loss of generality, as any constant gap can be absorbed into the scaling parameter $b$.
\end{remark}

\subsection{A general proposal}
If $d$ and $m$ are moderately large, the full mixture in Lemma \ref{lemma:full_mixture_efficient} with $\binom{d}{m}-1$ components
is computationally infeasible; for example, $\binom{50}{20} > 10^{13}$.  We again apply Theorem \ref{thm:main_strategy} to derive sufficient conditions under which mixtures with significantly fewer components are asymptotically efficient.

The following lemma characterizes $\{r_A, \beta^{A}: A \in \mathcal{G}_m^{-}\}$ and lower bounds $v_A(\gamma)$ 
for $\gamma \in \operatorname{dom}(\Lambda)$, following  \eqref{def:r_j_alt} and \eqref{def:lower_bound_V_gamma}.

\begin{lemma}\label{lemma:gap_opt_probs}
Let $ A \in \mathcal{G}_m^{-}$. Then $r_A$ and $\beta^A$ are the optimal value and solution of the following optimization problem:
\begin{align*}
 \max_{\theta \in \bR^d} \;\;  \sum_{k \in A} \theta_k, \quad &\textup{ subject to }   \Lambda(\theta) \leq 0, \quad \sum_{i \in [d]} \theta_i = 0, \\
& \quad \theta_k \geq 0\; \text{ for } \;k \in A, \quad \theta_{k'} \leq 0\; \text{ for } \;k' \in A^c.
\end{align*}
Further, for each $\gamma \in \bR^d$ such that $\Lambda(\gamma) \leq 0$, $v_A(\gamma)$ is lower bounded by
\begin{align*}
\max_{\theta \in \bR^d} \;\;  \sum_{k \in A} \theta_k, \; &\textup{ subject to }  \Lambda(\theta - \gamma) \leq 0, \quad \sum_{i \in [d]} \theta_i = 0, \\
&\theta_k \geq 0\; \text{ for } \;k \in A, \quad \theta_{k'} \leq 0\; \text{ for } \;k' \in A^c.
\end{align*}
\begin{proof}
    See Appendix \ref{app:gap_rule}.
\end{proof}
\end{lemma}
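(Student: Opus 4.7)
The plan is to specialize the two identities from Subsection~\ref{subsec:computation}, namely $r_A = \sup_{\theta \in \mathcal{L}_0 \Lambda}\, \inf_{x \in \overline{W^A}} \theta \cdot x$ in \eqref{def:r_j_alt} and the corresponding lower bound $v_A(\gamma) \geq \sup_{\theta:\,\Lambda(\gamma)+\Lambda(\theta-\gamma)\leq 0}\, \inf_{x \in \overline{W^A}} \theta \cdot x$ in \eqref{def:lower_bound_V_gamma}, to the polyhedron
\[
\overline{W^A} = \{x \in \bR^d : x_k - x_{k'} \geq 1 \text{ for all } (k,k') \in A \times A^c\}.
\]
The entire problem then reduces to evaluating $\inf_{x \in \overline{W^A}} \theta \cdot x$ in closed form for each $\theta \in \bR^d$, which I would carry out via linear programming duality.

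For fixed $\theta$, the inner infimum is an LP. Introducing nonnegative multipliers $y_{k,k'}$ for each constraint $x_k - x_{k'} \geq 1$, the dual reads
\[
\max_{y \geq 0} \sum_{(k,k') \in A \times A^c} y_{k,k'} \quad \text{s.t.} \quad \sum_{k' \in A^c} y_{k,k'} = \theta_k \;(k \in A), \quad -\sum_{k \in A} y_{k,k'} = \theta_{k'} \;(k' \in A^c).
\]
The primal is trivially feasible (e.g.\ $x_k = 1$ on $A$, $x_{k'} = 0$ on $A^c$), so LP strong duality applies: the infimum equals the dual optimum whenever the dual is feasible, and equals $-\infty$ otherwise. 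Summing the first block of dual equalities over $k \in A$ shows that every feasible $y$ has objective value $\sum_{k \in A} \theta_k$; comparing with the sum of the second block over $A^c$ forces $\sum_{i \in [d]} \theta_i = 0$; and $y \geq 0$ forces $\theta_k \geq 0$ for $k \in A$ and $\theta_{k'} \leq 0$ for $k' \in A^c$. Conversely, given these three conditions, dual feasibility holds because the system is a \emph{balanced} transportation problem on the complete bipartite graph $A \times A^c$ with supplies $\theta_k$ and demands $-\theta_{k'}$; an explicit feasible $y$ is $y_{k,k'} = \theta_k (-\theta_{k'}) / S$ with $S = \sum_{k \in A} \theta_k$ (if $S > 0$, otherwise $\theta \equiv 0$ and $y = 0$ works).

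Substituting the resulting formula, namely $\inf_{x \in \overline{W^A}} \theta \cdot x = \sum_{k \in A} \theta_k$ under the above linear constraints and $-\infty$ otherwise, into the identity for $r_A$ yields the first optimization in the lemma; that $\beta^A$ is the maximizer follows from Definition~\ref{def:optimal_beta_j} together with the identification in Subsection~\ref{subsec:computation} of the maximizer of $\sup_{\theta \in \mathcal{L}_0 \Lambda}(\inf_{x \in \overline{W^A}} \theta \cdot x)$ with $\beta^A$. For the lower bound on $v_A(\gamma)$, the same LP duality applies verbatim to the inner infimum; since $\Lambda(\gamma) \leq 0$ is assumed, any $\theta$ with $\Lambda(\theta-\gamma) \leq 0$ also satisfies $\Lambda(\gamma) + \Lambda(\theta - \gamma) \leq 0$, so restricting the outer supremum to this tighter set produces a valid lower bound of exactly the stated form. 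The only step with any substance is the transportation-problem construction certifying dual feasibility; the rest is routine bookkeeping.
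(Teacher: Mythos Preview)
Your proposal is correct and follows the same overall approach as the paper: invoke \eqref{def:r_j_alt} and \eqref{def:lower_bound_V_gamma}, then evaluate $\inf_{x\in\overline{W^A}}\theta\cdot x$ in closed form via LP duality. The only difference is in how the inner LP is handled: the paper first introduces an auxiliary scalar $a$ and rewrites $\overline{W^A}$ as $\{(x,a): x_k\ge a\text{ for }k\in A,\ x_{k'}\le a-1\text{ for }k'\in A^c\}$, which replaces the $m(d-m)$ pairwise constraints by $d$ box constraints in $(x,a)$ and makes the evaluation essentially immediate, whereas you tackle the original polyhedron directly via a bipartite transportation dual. Both routes yield the same conclusion; yours is more explicit, theirs slightly slicker.
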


Following the strategy in Subsection \ref{subsec:strategy}, we begin with guessing $\mathcal{C}_*$ in \eqref{def:C_star}. As discussed above, with high probability, the random walk $\{S_n\}$ exits from $bW^{\Mstar}$. 
When this is not the case, we typically expect the exit to occur through $bW^{A}$ for some 
$A \in \mathcal{G}_m^{-}$ that is \emph{not} too different from  $\Mstar$. Thus define
\begin{align*}
    \mathcal{G}_{m}^{1}  :=  \{(\Mstar \setminus \{k\}) \cup \{k'\}:\;\; k \in \Mstar, \;\; k' \not \in \Mstar\},
\end{align*}
which are subsets that have size $m$ and differ from  $\Mstar$ in two indexes. Based on this intuition, we set $\mathcal{C} = \mathcal{G}_{m}^{1}$, and include the corresponding optimal tiltings $\{\beta^{A} : A \in \mathcal{G}_{m}^{1}\}$, as defined in Definition \ref{def:optimal_beta_j}, in a candidate set $\Theta$, for which   $\Pro_{\Theta}$ serves as a potential proposal.

Further, we include in $\Theta$ the following collections of parameters in $\operatorname{dom}(\Lambda)$.  First,
for $\ell \in \Mstar$ and $\ell' \not\in \Mstar$, denote by $\tilde{z}_{\ell,\ell'}$ and $\tilde{\gamma}^{\ell,\ell'}$ the optimal value and solution to the following optimization problem:
\begin{align}\label{gap:2index_opt}
\begin{split}
 \max_{\theta \in \bR^d} \;\;  \theta_{\ell'} \quad &\textup{ subject to }   \Lambda(\theta) \leq 0, \quad  \theta_{\ell} + \theta_{\ell'} = 0, \\
& \quad \theta_{\ell} \leq 0, \quad \theta_{\ell'} \geq 0,\quad 
\theta_k = 0\; \text{ for } \;k \not\in \{\ell,\ell'\}.
\end{split}
\end{align}

Second, for $\ell_1 \neq \ell_2 \in \Mstar$ and $\ell_1' \neq \ell_2' \not \in \Mstar$, denote by $\tilde{s}_{\ell_1,\ell_2,\ell_1',\ell_2'}$ and $\tilde{\gamma}^{\ell_1,\ell_2,\ell_1',\ell_2'}$ the optimal value and solution to the following optimization problem:
\begin{align}\label{gap:4index_opt}
\begin{split}
    & \max_{\theta \in \bR^d} \;\;  \theta_{\ell_1'} + \theta_{\ell_2'} \quad \textup{ subject to }   \Lambda(\theta) \leq 0, \quad \theta_{\ell_1} + \theta_{\ell_2} + \theta_{\ell'_1} +\theta_{\ell'_2}  = 0, \\
 &\theta_{\ell_1} \leq 0, \quad \theta_{\ell_2} \leq 0, \quad \theta_{\ell_1'} \geq 0,\quad \theta_{\ell_2'} \geq 0, \quad  \theta_k = 0\; \text{ for } \;k \not\in \{\ell_1,\ell_2,\ell_1',\ell_2'\}.
 \end{split}
\end{align}

Let $A \in \mathcal{G}_m^{-} \setminus\mathcal{G}_m^{1}$. That is, $A$ has size $m$ and differs from $\Mstar$ in more than two elements. Thus there exist
\begin{align}\label{aux:A_ell_12_ellp_12}
    \ell_1,\ell_2 \in \Mstar \setminus A, \quad \text{ and } \quad
\ell_1',\ell_2' \in A\setminus \Mstar.
\end{align}
For the second optimization problem in Lemma \ref{lemma:gap_opt_probs} with $\gamma = \tilde{\gamma}^{\ell_1,\ell_1'}$, a feasible solution is given by $\tilde{\gamma}^{\ell_1,\ell_1'} + \tilde{\gamma}^{\ell_1,\ell_2, \ell_1',\ell_2'}$, and as a result, we have
\begin{align}
    \label{aux:gap_lower_bound_12}
    v_A(\gamma^{\ell_1,\ell_1'}) \geq \tilde{z}_{\ell_1,\ell_1'} + \tilde{s}_{\ell_1,\ell_2,\ell_1',\ell_2'}.
\end{align}
In other words, $\tilde{z}_{\ell_1,\ell_1'} + \tilde{s}_{\ell_1,\ell_2,\ell_1',\ell_2'}$ provides a lower bound for $v_A(\gamma^{\ell_1,\ell_1'})$ for all $A \in \mathcal{G}_m^{-} \setminus \mathcal{G}_m^{1}$ such that \eqref{aux:A_ell_12_ellp_12} holds. In this sense, $\gamma^{\ell_1,\ell_1'}$ can be used to cover all such $A \in \mathcal{G}_m^{-} \setminus \mathcal{G}_m^{1}$ satisfying $\ell_1 \in \Mstar \setminus A$ and $\ell_1' \in A \setminus \Mstar$.


Similarly, for the second optimization problem in Lemma \ref{lemma:gap_opt_probs} with $\gamma = \tilde{\gamma}^{\ell_1,\ell_2,\ell_1',\ell_2'}$, a feasible solution is given by $ 2 \tilde{\gamma}^{\ell_1,\ell_2, \ell_1',\ell_2'}$, and as a result,
\begin{align}
    \label{aux:gap_lower_bound_12_prime}
    v_A(\gamma^{\ell_1,\ell_2,\ell_1',\ell_2'}) \geq 2\tilde{s}_{\ell_1,\ell_2,\ell_1',\ell_2'}.
\end{align}
Thus, $2\tilde{s}_{\ell_1,\ell_2,\ell_1',\ell_2'}$ is a lower bound for $v_A(\gamma^{\ell_1,\ell_2,\ell_1',\ell_2'})$ for all $A \in \mathcal{G}_m^{-} \setminus \mathcal{G}_m^{1}$ such that \eqref{aux:A_ell_12_ellp_12} holds, and in this sense, $\gamma^{\ell_1,\ell_2,\ell_1',\ell_2'}$ can be used to cover this collection of $A$.


\begin{theorem}
\label{thm:Gap_sufficient}
Consider the following two conditions.
\begin{align*}
& (H1'): \;\;\min_{\ell_1 \neq \ell_2 \in \Mstar, \ \ell_1' \neq \ell_2'  \not \in \Mstar } \left(\tilde{z}_{\ell_1, \ell_1'} + 
\tilde{s}_{\ell_1,\ell_2,\ell_1',\ell_2'}\right) \geq \min_{A \in \mathcal{G}_m^{1}} 2  r_{A}.\\
& (H2'): \;\;\min_{\ell_1\neq \ell_2 \in \Mstar,\  \ell_1'\neq \ell_2'  \not\in \Mstar} 
2\tilde{s}_{\ell_1,\ell_2,\ell_1',\ell_2'}\geq  \min_{A \in \mathcal{G}_m^{1}} 2r_{A}.
\end{align*}
Under either condition, $r_* =  \min_{A \in \mathcal{G}_m^{1}} r_{A}$. Further, define $\widetilde{\Theta}^{(0)}  :=  \{\beta^{A}: A \in \mathcal{G}_m^{1}\}$ and
\begin{align*}
&\widetilde{\Theta}^{(1)}  :=  \Theta^{(0)} \cup \{\tilde{\gamma}^{\ell,\ell'}: \ell \in \Mstar, \ell'  \not \in \Mstar\}\\
&\widetilde{\Theta}^{(2)}  :=  \Theta^{(0)} \cup \{\tilde{\gamma}^{\ell_1,\ell_2,\ell_1',\ell_2'}: \ell_1 \neq \ell_2 \in \Mstar, \ell_1' \neq \ell_2' \not \in \Mstar\}.
\end{align*}
If $(H1')$ holds, then $\Pro_{\widetilde{\Theta}^{(1)}}$ is asymptotically efficient. If $(H2')$ holds, then $\Pro_{\widetilde{\Theta}^{(2)}}$ is asymptotically efficient.
\end{theorem}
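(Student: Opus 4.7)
The plan is to apply Theorem~\ref{thm:main_strategy} with the choice $\mathcal{C} = \mathcal{G}_m^1$ (interpreted via the identification of $\mathcal{G}_m^-$ with $[J]$), and to use the two bounds \eqref{aux:gap_lower_bound_12} and \eqref{aux:gap_lower_bound_12_prime} already derived in the surrounding discussion to verify the key hypothesis \eqref{def:key_condition} for each of the two mixtures $\widetilde{\Theta}^{(1)}$ and $\widetilde{\Theta}^{(2)}$. The structure mirrors the proof of Theorem~\ref{thm:sufficient}, with pairs $(\ell,\ell')$ and quadruples $(\ell_1,\ell_2,\ell_1',\ell_2')$ playing the role that singletons $\{k\}$ and pairs $\{k,k'\}$ played in the Siegmund setting.

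First I would fix an arbitrary $A \in \mathcal{G}_m^- \setminus \mathcal{G}_m^1$, i.e.\ a subset of $[d]$ of size $m$ that differs from $\Mstar$ in more than two indices. Since $|A \setminus \Mstar| = |\Mstar \setminus A| \geq 2$, I can pick two distinct indices $\ell_1,\ell_2 \in \Mstar \setminus A$ and two distinct indices $\ell_1',\ell_2' \in A \setminus \Mstar$, so that \eqref{aux:A_ell_12_ellp_12} is satisfied. Under $(H1')$, the inequality \eqref{aux:gap_lower_bound_12} gives
\[
v_A(\tilde{\gamma}^{\ell_1,\ell_1'}) \;\geq\; \tilde{z}_{\ell_1,\ell_1'} + \tilde{s}_{\ell_1,\ell_2,\ell_1',\ell_2'} \;\geq\; 2 \min_{A' \in \mathcal{G}_m^1} r_{A'},
\]
and $\tilde{\gamma}^{\ell_1,\ell_1'} \in \widetilde{\Theta}^{(1)}$ by construction. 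Since $\{\beta^{A'} : A' \in \mathcal{G}_m^1\} = \widetilde{\Theta}^{(0)} \subset \widetilde{\Theta}^{(1)}$, condition \eqref{def:key_condition} of Theorem~\ref{thm:main_strategy} holds with $\mathcal{C} = \mathcal{G}_m^1$ and $\Theta = \widetilde{\Theta}^{(1)}$. Thereby $r_* = r_{\mathcal{C}} = \min_{A \in \mathcal{G}_m^1} r_A$ and $\Pro_{\widetilde{\Theta}^{(1)}}$ is asymptotically efficient.

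For $(H2')$ the argument is completely parallel: for the same choice of $A$ and $(\ell_1,\ell_2,\ell_1',\ell_2')$, the bound \eqref{aux:gap_lower_bound_12_prime} gives $v_A(\tilde{\gamma}^{\ell_1,\ell_2,\ell_1',\ell_2'}) \geq 2\tilde{s}_{\ell_1,\ell_2,\ell_1',\ell_2'} \geq 2 \min_{A' \in \mathcal{G}_m^1} r_{A'}$, and $\tilde{\gamma}^{\ell_1,\ell_2,\ell_1',\ell_2'} \in \widetilde{\Theta}^{(2)}$. Applying Theorem~\ref{thm:main_strategy} again yields the conclusion for $\widetilde{\Theta}^{(2)}$.

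The only mild subtlety, and essentially the sole thing to check, is that once $r_*$ is identified with $r_{\mathcal{C}}$, the two hypotheses $(H1')$ and $(H2')$ as stated (which use $\min_{A \in \mathcal{G}_m^1} 2 r_A$ on the right) really do provide the quantity $2 r_{\mathcal{C}}$ needed by Theorem~\ref{thm:main_strategy}; this is immediate from the definition $r_{\mathcal{C}} = \min_{j \in \mathcal{C}} r_j$ in \eqref{def:r_C}. Since the feasibility arguments leading to \eqref{aux:gap_lower_bound_12}--\eqref{aux:gap_lower_bound_12_prime} have already been spelled out in the paragraphs preceding the theorem statement, the proof reduces to two short invocations of Theorem~\ref{thm:main_strategy} and requires no new computations.
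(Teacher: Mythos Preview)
Your proposal is correct and matches the paper's approach exactly: the paper's proof is a one-line invocation of \eqref{aux:gap_lower_bound_12}, \eqref{aux:gap_lower_bound_12_prime}, and Theorem~\ref{thm:main_strategy}, and you have simply spelled out the same argument in more detail. The only difference is cosmetic; your explicit verification that $|A\setminus\Mstar|=|\Mstar\setminus A|\geq 2$ permits the choice of $(\ell_1,\ell_2,\ell_1',\ell_2')$ is a welcome clarification of what the paper leaves implicit.
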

\begin{proof}
The claims under condition $(H1')$ (resp.~$(H2')$) follow directly from equation \eqref{aux:gap_lower_bound_12} (resp.~\eqref{aux:gap_lower_bound_12_prime}) and Theorem~\ref{thm:main_strategy}.
\end{proof}

The above theorem suggests solving $m(d-m)$ optimization problems to compute $\{r_A, \beta^{A}: A \in \mathcal{G}_m^{1}\}$, in addition to $m(d-m) + \binom{m}{2}\binom{d-m}{2}$ optimizations problems to obtain 
$\{\tilde{z}_{\ell,\ell'}, \tilde{\gamma}^{\ell,\ell'}: \ell \in \Mstar, \ell' \not \in \Mstar\}$ and
$\{\tilde{s}_{\ell_1,\ell_2,\ell_1',\ell_2'}, \tilde{\gamma}^{\ell_1,\ell_2,\ell_1',\ell_2'}: \ell_1\neq \ell_2 \in \Mstar, \ell_1'\neq \ell_2' \not \in \Mstar\}$. Then if $(H1')$ (resp.~$(H2')$) holds, a mixture of $2m(d-m)$ (resp.~$m(d-m)+\binom{m}{2}\binom{d-m}{2}$) components is asymptotically efficient.

\begin{remark}\label{remark:gap_weaker}
Since $\tilde{\gamma}^{\ell_1,\ell_1'}$ in \eqref{gap:2index_opt} is a feasible solution to the problem in \eqref{gap:4index_opt}, we have $\tilde{z}_{\ell_1,\ell_1'} \leq s_{\ell_1,\ell_2,\ell_1',\ell_2'}$, and thus 
condition $(H2')$ is weaker than $(H1')$. By adding more elements from $\operatorname{dom}(\Lambda)$ to $\widetilde{\Theta}^{(2)}$,  $(H2')$ can be further relaxed, at the cost of additional computation. 
\end{remark}

\subsection{Examples and special cases} \label{subsec:gap_examples}

\begin{example}[correlated normals---Gap rule]\label{example:corr_normal_Gap} Let $\mu_{+} > 0$, $\mu_{-} < 0$, $\sigma^2 > 0$ and $\rho \in [0,1)$ be constants.  Assume that $X_1$ has the multivariate normal distribution such that 
\begin{align*}
\Exp[X_{1,k}] = \begin{cases}
\mu_{+}, \; & \text{ if } k \in \Mstar\\
\mu_{-},\; & \text{ if } k' \not\in \Mstar
\end{cases}, \;\;
\text{Cov}(X_{1,k}, X_{1,k'}) = 
\begin{cases}
\sigma^2, \; & \text{ if } k = k' \\
\sigma^2 \rho,\; & \text{ if }  k \neq k'
\end{cases}.
\end{align*}
In Appendix \ref{app:gap_example_corrnormal_proof}, we show that for any $\ell \in \Mstar, \; \ell' \not \in \Mstar$,
$$r_{A} = \tilde{z}_{\ell,\ell'}, \quad
\beta^{A} = \tilde{\gamma}^{\ell,\ell'}, \text{ where }
A = (\Mstar \setminus \{\ell\}) \cup \{\ell'\}.
$$
Thus, $\widetilde{\Theta}^{(0)} = \widetilde{\Theta}^{(1)}$. 
Due to Remark \ref{remark:gap_weaker},
we have that $(H1')$ holds.  By Theorem \ref{thm:Gap_sufficient},
$r_* =  \min_{A \in \mathcal{G}_m^{1}} r_{A}$, and 
the mixture $\Pro_{\widetilde{\Theta}^{0}}$ of $m(d-m)$ components is asymptotically efficient.

In Figure \ref{fig:Gap_d100m50},  we present a numerical study that uses $\Pro_{\widetilde{\Theta}^{0}}$ as the importance sampling proposal, under the setup that $\mu_{+} = -\mu_{-} = 1/2$, $\sigma^2 = 1$, $\rho \in \{0.1,0.4\}$, $d = 100$ and $m = d/2$. We use $N = 10^6$ repetitions and plot the estimated relative error against $-\log_{10}$ of the estimated probability. We observe that for probabilities as small as $10^{-10}$, the estimated relative error is below $3.5\%$. In Appendix \ref{app:gap_example_corrnormal_more_sim}, we present additional numerical results for $m = 10$ and $m = 30$, with other parameters the same.

\begin{figure}[!t]
    \centering
    \includegraphics[width = \textwidth]{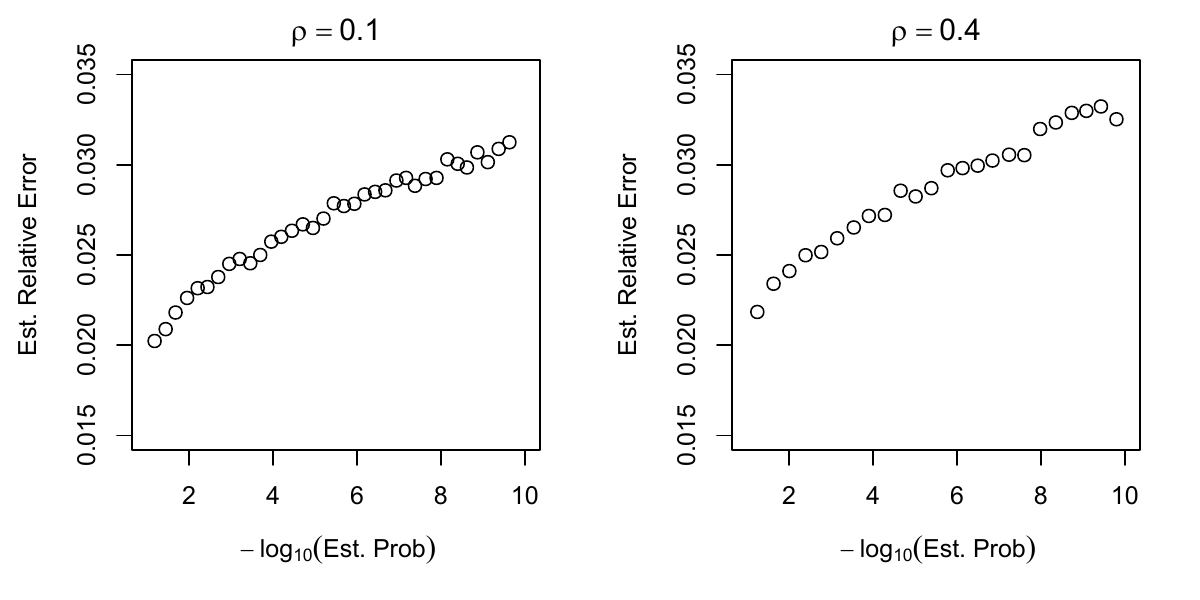}
    \caption{Gap rule with $d=100$ and $m = 50$ under the setup of Example \ref{example:corr_normal_Gap}. The x-axis is $-\log_{10}$ of the estimated probability, and the y-axis is the estimated relative error. The results are based on $10^6$ repetitions.}
    \label{fig:Gap_d100m50}
\end{figure}

\end{example}

\begin{example}[independent normals---Gap rule]\label{example:gap_ind_normal}
Assume that the coordinates of $X_1$ are independent. For $k \in [d]$, $X_{1,k}$ has the normal distribution such that if $k \in \Mstar$, $\Exp[X_{1,k}] = 1/2,  \text{Var}(X_{1,k}) = 1$; otherwise, $\Exp[X_{1,k}] = -1/2,  \text{Var}(X_{1,k}) = v$, where $v > 0$. Elementary 
calculation shows that for $\ell_1 \neq \ell_2 \in \Mstar, \ \ell_1' \neq \ell_2'  \not\in \Mstar$,
$$
\tilde{z}_{\ell_1, \ell_1'} = 2/(1+v),\quad
\tilde{s}_{\ell_1,\ell_2,\ell_1',\ell_2'} = 4/(1+v). 
$$
Therefore,  condition  $(H1')$ (resp. $(H2')$)  in Theorem \ref{thm:Gap_sufficient} holds, and as a result $\Pro_{\widetilde{\Theta}^{(1)}}$ (resp. $\Pro_{\widetilde{\Theta}^{(2)}}$) is asymptotically efficient, 
if  $3/(1+v) \geq  \min_{A \in \mathcal{G}_m^{1}} r_{A}$
(resp.  $4/(1+v) \geq  \min_{A \in \mathcal{G}_m^{1}} r_{A}$).  

In Figure \ref{fig:Gap_ind_normals} we set $d = 50, m = 25$,  and plot $3/(1+v) - \min_{A \in \mathcal{G}_m^{1}} r_{A}$ (bottom dashed line) and  $4/(1+v) - \min_{A \in \mathcal{G}_m^{1}} r_{A}$ (top solid line), respectively, as a function of $v$. By Theorem \ref{thm:Gap_sufficient}, if the bottom (resp. top) line is non-negative, then $(H1')$ (resp. $(H2')$) holds. In particular, if $v \in (0.15,6.95)$ (resp. $v \in (0.071, 14.15)$), then $\Pro_{\widetilde{\Theta}^{(1)}}$ (resp. $\Pro_{\widetilde{\Theta}^{(2)}}$) is asymptotically efficient.
\end{example}

\begin{figure}[!tb]
    \centering
    \includegraphics[width=0.85\textwidth]{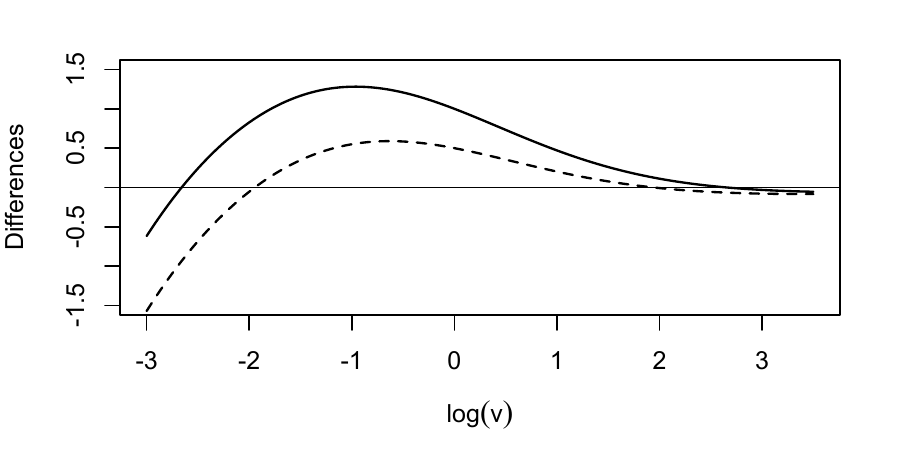}
    \caption{Plot the two differences in Example \ref{example:gap_ind_normal} verse $\log(v)$ for $d=50,m=25$.}
    \label{fig:Gap_ind_normals}
\end{figure}

\begin{example}[Independent, ``homogeneous'' coordinates]\label{example:gap_ind_hom}
Assume the coordinates of $X_{1}$ are independent. For $k \in [d]$, denote by $\Lambda^{k}$ the cumulant generating function of $X_{1,k}$. Assume that for some $I_{+} >0$ and $z, I_{-} < 0$, 
\begin{align}\label{def:gap_example_homo}
\begin{split}
&\Lambda^{k}(z) = 0, \quad (\Lambda^{k})'(0) = I_{+}, \quad
(\Lambda^{k})'(z) = I_{-}, \quad\text{ for } k \in \Mstar, \\
&\Lambda^{k'}(-z) = 0, \quad (\Lambda^{k'})'(0) = I_{-}, \quad
(\Lambda^{k'})'(-z) = I_{+}, \;\;\; \text{ for } \;k' \not\in \Mstar.
\end{split}
\end{align}
By the KKT conditions in Appendix \ref{app:KKT_gap},   for $\ell \in \Mstar$ and $\ell' \not\in \Mstar$,
\begin{align*}
&\tilde{\gamma}^{\ell,\ell'}_{\ell} = z, \quad
\tilde{\gamma}^{\ell,\ell'}_{\ell'} = -z, \quad
\tilde{\gamma}^{\ell,\ell'}_{k} = 0\; \text{ for } \;k \not \in \{\ell,\ell'\}.\\
&r_A = \tilde{z}_{\ell,\ell'}, \quad \beta^A = \tilde{\gamma}^{\ell,\ell'}, \text{ where } A = (\Mstar \setminus \{\ell\}) \cup \{\ell'\}.
\end{align*}
Then by the same argument as for Example \ref{example:corr_normal_Gap},
$r_* =  \min_{A \in \mathcal{G}_m^{1}} r_{A}$, and 
the mixture $\Pro_{\widetilde{\Theta}^{0}}$ of $m(d-m)$ components is asymptotically efficient, which recovers Theorem 2 in \cite{song2016logarithmically} as a special case of this example.
\end{example}

\begin{remark}
Fix $\ell \in \Mstar$ and $\ell' \not\in \Mstar$. Consider the setup in Example \ref{example:gap_ind_hom}. Under $\Pro_{\tilde{\gamma}^{\ell,\ell'}_{\ell}}$, $X_{1,k}$ has the same distribution as under $\Pro$ if $k \not \in \{\ell,\ell'\}$,  while the distributions of $X_{1,\ell}$ and $X_{1,\ell'}$ are changed to the Siegmund exponential-tilting distributions of their original distributions under $\Pro$.
\end{remark}

\begin{remark}
A sufficient condition for \eqref{def:gap_example_homo} is as follows. Assume $\{X_{1,k}: k \in \Mstar\}$ are i.i.d. with distribution $\nu$ and $\{X_{1,k'}: k' \not\in \Mstar\}$ are i.i.d. with distribution $\tilde{\nu}$. If $\tilde{\nu}$ is the Siegmund exponential-tilting distributions of $\nu$, then \eqref{def:gap_example_homo} holds.
\end{remark}

\section{Sum-intersection Rule}\label{sec:sum_inter}
In this section, we consider the \textit{sum-intersection rule} proposed in \cite{song2019sequential}. Let $1 \leq L \leq d-1$ be a fixed integer, and suppose that all coordinates of the random walk have negative expectation, that is,  $\mathbb{E}[X_{1,k}] < 0$ for $k \in [d]$. Consider the first time the sum of the $L$ smallest (in absolute value) coordinates of the random walk exceeds $b>0$. We are interested in   estimating the probability that upon stopping at least $L$ coordinates are positive. When $L = 1$, this reduces to the multidimensional Siegmund problem in Section \ref{sec:multi_Siegmund} with $\ell=u=1$. Thus, we focus on the case that $L \geq 2$.

To formalize this, denote by $\mathcal{S}_L  :=  
\{A \subset [d]: |A| \geq L\}$ the collection of all subsets with size \textit{at least} $L$. For each $A \in \mathcal{S}_L$, denote by $\tau_b^{A}$ the first time the random walk hits the set $b W^A$, where we define
\begin{align}
\begin{split}
W^{A}  :=  \Big\{x \in \bR^d:\;\;& x_k > 0\; \text{ for } \;k \in A, \;\; x_{k'}< 0\; \text{ for } \;k' \in A^c,  \;\;  \\
&  
 \sum_{k \in B} |x_k| > 1\; \text{ for every } \;B \in \mathcal{G}_L \Big\},
 \end{split}
\label{def:SI_W_A}
\end{align}
and as before  $\mathcal{G}_L = \{B \subset [d]: |B| = L\}$ denotes the collection of subsets of size $L$. We are interested in  estimating
$$\Pro\left( \min_{A \in \mathcal{S}_L} \tau_b^A <  \min_{A \notin \mathcal{S}_L} \tau_b^A \right)$$
To connect with the problem formulation in Section \ref{sec:prob_wrong_exit}, we let
$W^{0} = \cup_{A \not\in \mathcal{S}_L} W^{A}$ 
and index $\{W^{A}: A \in \mathcal{S}_L\}$ with $[J]$, where $J := 2^{d} - \sum_{i=0}^{L-1}\binom{d}{i}$. Observe that $W^{0}$ is an open, yet non-convex set, while  $W^{j}$ is open and convex for every $j \in [J]$.  Moreover, conditions \eqref{assumptions_directions_of_mean} and  \eqref{assumption_on_regions} always hold in this setup, and we additionally assume that condition \eqref{assumption_on_X} holds.

As before, we first characterize $\{r_A, \beta^{A}: A \in \mathcal{S}_L\}$ and lower bound $v_A(\gamma)$ 
for $\gamma \in \operatorname{dom}(\Lambda)$, following  \eqref{def:r_j_alt} and \eqref{def:lower_bound_V_gamma}.

\begin{lemma}\label{lemma:sum_intersection_opt_probs}
For each $A \in \mathcal{S}_L$, $r_A$ and $\beta^A$ are the optimal value and solution of the following optimization problem:
\begin{align*}
& \max_{\theta \in \bR^d}    \min_{1 \leq \ell \leq L} \;\; \frac{1}{\ell} \sum_{i = L - \ell + 1}^{d} |\theta|_{(i)}, \quad \textup{ subject to }  \\
& \Lambda(\theta) \leq 0, \quad \theta_k \geq 0 \;\text{ for }\; k \in A, \quad \theta_{k'} \leq 0 \;\text{ for }\; k' \in A^c,
\end{align*}
where  $|\theta|_{(1)} \geq |\theta|_{(2)} \geq \ldots \geq |\theta|_{(d)}$ denotes the decreasing rearrangement of the absolute values  $\{|\theta_i|: i \in [d]\}$ for any $\theta \in \bR^d$. 
Further, for each $\gamma \in \bR^d$ such that $\Lambda(\gamma) \leq 0$, $v_A(\gamma)$ is lower bounded by
\begin{align*}
& \max_{\theta \in \bR^d} \;\;   \min_{1 \leq \ell \leq L} \;\; \frac{1}{\ell} \sum_{i = L - \ell + 1}^{d} |\theta|_{(i)}, \quad \textup{ subject to }  \\
& \Lambda(\theta - \gamma) \leq 0, \quad \theta_k \geq 0\; \text{ for } \;k \in A, \quad \theta_{k'} \leq 0\; \text{ for } \;k' \in A^c.
\end{align*}
\end{lemma}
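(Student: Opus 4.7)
The plan is that both assertions reduce to computing the support function
\[
\phi_A(\theta) := \inf_{x \in \overline{W^A}} \theta \cdot x,
\]
via the variational identities already available in the paper: \eqref{def:r_j_alt} gives $r_A = \sup\{\phi_A(\theta) : \Lambda(\theta) \leq 0\}$ (and Theorem \ref{thrm:Collamore_simulation}(i) identifies $\beta^A$ as its unique maximizer), while \eqref{def:lower_bound_V_gamma}, combined with the hypothesis $\Lambda(\gamma) \leq 0$, yields the lower bound $v_A(\gamma) \geq \sup\{\phi_A(\theta) : \Lambda(\theta - \gamma) \leq 0\}$. The whole task therefore becomes an explicit evaluation of $\phi_A(\theta)$.

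First I restrict the effective set of $\theta$'s. The closure $\overline{W^A}$ sits inside the closed sign orthant $O_A := \{x \in \bR^d : x_k \geq 0 \text{ for } k \in A,\ x_{k'} \leq 0 \text{ for } k' \in A^c\}$ and is cut out from $O_A$ by the constraints $\sum_{k \in B} |x_k| \geq 1$ for $B \in \mathcal{G}_L$. If some $\theta_k$ with $k \in A$ were negative (or some $\theta_{k'}$ with $k' \in A^c$ positive), then pushing $x_k \to +\infty$ (resp.\ $x_{k'} \to -\infty$) stays in $\overline{W^A}$ and drives $\theta \cdot x$ to $-\infty$, so $\phi_A(\theta) = -\infty$ and such $\theta$ cannot achieve either supremum. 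After imposing the sign constraints $\theta_k \geq 0$ on $A$ and $\theta_{k'} \leq 0$ on $A^c$, we have $\theta \cdot x = \sum_k |\theta_k|\,|x_k|$ throughout $\overline{W^A}$, and writing $y_k := |x_k|$ reduces the inner infimum to the linear program
\[
\phi_A(\theta) = \min_{y \geq 0}\ \sum_{k=1}^d |\theta_k|\, y_k \quad \text{subject to}\quad \sum_{k \in B} y_k \geq 1 \text{ for every } B \in \mathcal{G}_L.
\]

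Second, I solve this LP combinatorially. The family of constraints is equivalent to requiring that the sum of the $L$ smallest entries of $y$ be at least $1$. For each $\ell \in \{1,\ldots,L\}$, consider the candidate $y^{(\ell)}$ obtained by setting the $L-\ell$ coordinates carrying the largest values of $|\theta_k|$ to $0$ and the remaining $d-L+\ell$ coordinates to $1/\ell$; this is feasible with cost exactly $(1/\ell) \sum_{i=L-\ell+1}^{d} |\theta|_{(i)}$. A rearrangement-plus-exchange argument, which I expect to be the main obstacle, then shows that every LP optimum has this form: one first uses the rearrangement inequality on the nonnegative cost vector $(|\theta_k|)_k$ to argue that small values of $y$ can be paired with large values of $|\theta_k|$ without increasing the cost, and then uses convexity of the feasible set together with linearity of the objective to equalize the nonzero values on the support; cases where a different number $\ell$ of coordinates remain nonzero are captured by taking the minimum over $\ell$. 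This yields $\phi_A(\theta) = \min_{1 \leq \ell \leq L} (1/\ell) \sum_{i=L-\ell+1}^{d} |\theta|_{(i)}$.

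Substituting this closed-form expression for $\phi_A(\theta)$ back into the two suprema and adjoining the sign conditions identified in the first step produces precisely the two optimization problems in the lemma; the claim that $\beta^A$ is the maximizer of the first one is then immediate from Theorem \ref{thrm:Collamore_simulation}(i). The lower bound for $v_A(\gamma)$ follows verbatim from \eqref{def:lower_bound_V_gamma} once the inner infimum has been computed, with no additional argument needed.
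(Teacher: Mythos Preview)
Your proposal is correct and follows essentially the same route as the paper: reduce both claims to evaluating $\phi_A(\theta)=\inf_{x\in\overline{W^A}}\theta\cdot x$ via \eqref{def:r_j_alt} and \eqref{def:lower_bound_V_gamma}, restrict to the sign orthant, and solve the resulting LP over $y\ge 0$ with covering constraints $\sum_{k\in B}y_k\ge 1$. The only stylistic difference is that the paper packages the LP solution into two auxiliary lemmas (an exchange argument to impose ordering, then KKT conditions to identify the optimum), whereas you sketch a direct rearrangement-plus-equalization argument; both arrive at the same closed form $\min_{1\le\ell\le L}\ell^{-1}\sum_{i=L-\ell+1}^d|\theta|_{(i)}$.
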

\begin{proof}
    See Appendix \ref{app:sum_intersection}.
\end{proof}
\begin{remark}\label{remark:sum-intersection_convex}
As shown in Appendix \ref{app:sum_intersection} (see \eqref{opt:sum_inter_equiv} and \eqref{opt:sum_inter_equiv2}), the above optimization problems are both convex and can be formulated with $\binom{d}{L}+d$ decision variables with one non-linear convex constraint and $\binom{d}{L}+2d$ linear constraints.
\end{remark}

We follow the strategy in Section \ref{subsec:strategy}.
Since each coordinate of the random walk $\{S_n\}$ has a negative drift, on the rare event that
it exists from $b W^{A}$ for some $A \in \mathcal{S}_L$, intuitively the number of coordinates having negative values is mostly likely to be as small as possible, which is $L$. Thus, it is reasonable to set $\mathcal{C} = \mathcal{G}_L$, and include the optimal tiltings $\{\beta^{A}: A\in \mathcal{G}_L\}$, as defined in Definition \ref{def:optimal_beta_j}, in a candidate set $\Theta$, for which the associated mixture measure $\Pro_{\Theta}$ serves as a potential proposal.

Next, we show that a mixture, containing $\{\beta^{A}: A \in \mathcal{G}_L \}$ and some additional components, is asymptotically efficient. First, for $A \in \mathcal{G}_L$, define $z_A$ and $\gamma^A$ to be the optimal value and solution to the following optimization problem:
\begin{align}\label{opt:sum-inter_L}
\begin{split}
& \max_{\theta \in \bR^d} \;\; |\theta|_{(L)}, \;\; \textup{ subject to }  \Lambda(\theta) \leq 0, \quad \theta_k \geq 0 \;\text{ for }\; k \in A, \quad \theta_{k'} = 0 \;\text{ for }\; k' \in A^c,
\end{split}
\end{align}
where  $|\theta|_{(1)} \geq |\theta|_{(2)} \geq \ldots \geq |\theta|_{(d)}$ denotes the decreasing rearrangement of the absolute values $\{|\theta_i|: i \in [d]\}$ for any $\theta \in \bR^d$. In particular, a feasible solution contains at most $L$ non-zero coordinates.

Second,  for $B \in \mathcal{G}_{L+1}$, define $s_B$ and $\tilde{\gamma}^B$ to be the optimal value of the following optimization problem:
\begin{align}\label{opt:sum-inter_L_1}
\begin{split}
& \max_{\theta \in \bR^d} \min_{1 \leq \ell \leq L} \;\; \frac{1}{\ell} \sum_{i = L - \ell + 1}^{L+1} |\theta|_{(i)}, \quad \textup{ subject to } \\
& \Lambda(\theta) \leq 0, \quad \theta_k \geq 0 \;\text{ for }\; k \in B, \quad \theta_{k'} = 0 \;\text{ for }\; k' \in B^c.
\end{split}
\end{align}
In particular, a feasible solution contains at most $L+1$ non-zero coordinates.

Fix any $A \subset [d]$ such that $|A| \geq L+1$. Let $\tilde{A} \subset A$ such that $|\tilde{A}| = L$, and let $\tilde{k} \in A \setminus \tilde{A}$. For the second optimization problem in Lemma \ref{lemma:sum_intersection_opt_probs} with $\gamma = \gamma^{\tilde{A}}$, we have that $\gamma^{\tilde{A}} + \tilde{\gamma}^{\tilde{A}\cup \{\tilde{k}\}}$ is a feasible solution, and in the proof of Theorem \ref{thm:sum_intersection_rule}, we show that this implies
\begin{align*}
    v_{A}(\gamma^{\tilde{A}}) \geq   \min_{1 \leq \ell \leq L} \;\; \frac{1}{\ell} \sum_{i = L - \ell + 1}^{d} \left|\gamma^{\tilde{A}} + \tilde{\gamma}^{\tilde{A}\cup \{\tilde{k}\}}\right|_{(i)} \geq z_{\tilde{A}} + s_{\tilde{A} \cup \{\tilde{k}\}}.
\end{align*}
In other words, $z_{\tilde{A}} + s_{\tilde{A} \cup \{\tilde{k}\}}$ is a lower bound for $ v_{A}(\gamma^{\tilde{A}}) $ for all $A \subset [d]$ such that $\tilde{A} \cup \{\tilde{k}\} \subset A$, and in this sense $\gamma^{\tilde{A}}$ covers for all $A \subset [d]$ that contains $\tilde{A}$.

\begin{theorem}\label{thm:sum_intersection_rule}
Assume that 
\begin{align*}
(H\text{-}SI):\;\;
 \min_{A \in \mathcal{G}_L, k \in A^c}   z_A + s_{A \cup \{k\}} \geq \min_{A \in \mathcal{G}_L} 2 r_A.
\end{align*}
Then $r_* = \min_{A \in \mathcal{G}_L} r_A$. Further, define 
$$
\Theta^{(SI)}  :=   \{\beta^{A}:\; A \in \mathcal{G}_L\} \; \cup \; \{\gamma^{A}: \; A \in \mathcal{G}_L\}.
$$
Then $\Pro_{\Theta^{(SI)}}$ is asymptotically efficient. 
\end{theorem}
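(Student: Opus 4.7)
The plan is to invoke Theorem~\ref{thm:main_strategy} with $\mathcal{C} = \mathcal{G}_L$ (identified with the corresponding subset of $[J]$), so that $r_{\mathcal{C}} = \min_{A \in \mathcal{G}_L} r_A$, and $\{\beta^A : A \in \mathcal{G}_L\} \subset \Theta^{(SI)}$ holds by construction. The remaining burden is to verify condition~\eqref{def:key_condition}: for every $A \in \mathcal{S}_L$ with $|A| \geq L+1$, I must exhibit a $\gamma \in \Theta^{(SI)}$ with $v_A(\gamma) \geq 2\min_{A' \in \mathcal{G}_L} r_{A'}$. Once this is done, both the identification $r_* = \min_{A \in \mathcal{G}_L} r_A$ and the asymptotic efficiency of $\Pro_{\Theta^{(SI)}}$ follow directly from Theorem~\ref{thm:main_strategy}.

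For a fixed $A$ with $|A| \geq L+1$, I would select any $\tilde{A} \subset A$ of size $L$ and any $\tilde{k} \in A \setminus \tilde{A}$, and take $\gamma := \gamma^{\tilde{A}} \in \Theta^{(SI)}$. The test vector for the lower-bound optimization in Lemma~\ref{lemma:sum_intersection_opt_probs} is $\theta := \gamma^{\tilde{A}} + \tilde{\gamma}^{\tilde{A} \cup \{\tilde{k}\}}$. Its feasibility is routine: $\Lambda(\theta - \gamma) = \Lambda(\tilde{\gamma}^{\tilde{A} \cup \{\tilde{k}\}}) \leq 0$ by the defining problem~\eqref{opt:sum-inter_L_1}, and both summands are nonnegative with supports contained in $\tilde{A} \cup \{\tilde{k}\} \subset A$, so the sign constraints on $\theta$ (nonnegative on $A$, zero on $A^c$) hold.

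The substantive task, and the main obstacle I expect, is to bound the objective value $\min_{1 \leq \ell \leq L} \frac{1}{\ell} \sum_{i=L-\ell+1}^{d} |\theta|_{(i)}$ from below by $z_{\tilde{A}} + s_{\tilde{A} \cup \{\tilde{k}\}}$. Since $\theta$ is nonnegative and supported on $\tilde{A} \cup \{\tilde{k}\}$ (of size $L+1$), the tail truncates to $\sum_{i=L-\ell+1}^{L+1} |\theta|_{(i)}$, which equals $\min_{S \subset \tilde{A}\cup\{\tilde{k}\},\, |S|=\ell+1} \sum_{k \in S} \theta_k$. I plan to split this minimum using subadditivity $\min_S (f+g) \geq \min_S f + \min_S g$: the $\gamma^{\tilde{A}}$-contribution is at least $\ell\, z_{\tilde{A}}$ because $|S \cap \tilde{A}| \geq \ell$ and $\gamma^{\tilde{A}}_k \geq |\gamma^{\tilde{A}}|_{(L)} = z_{\tilde{A}}$ on $\tilde{A}$, while the $\tilde{\gamma}^{\tilde{A}\cup\{\tilde{k}\}}$-contribution equals $\sum_{i=L-\ell+1}^{L+1} |\tilde{\gamma}^{\tilde{A}\cup\{\tilde{k}\}}|_{(i)} \geq \ell\, s_{\tilde{A}\cup\{\tilde{k}\}}$ by the definition of $s_{\tilde{A}\cup\{\tilde{k}\}}$ in~\eqref{opt:sum-inter_L_1}. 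Dividing by $\ell$ yields the per-$\ell$ bound $z_{\tilde{A}} + s_{\tilde{A}\cup\{\tilde{k}\}}$, uniformly in $\ell$, which therefore survives the outer minimum. Combined with hypothesis $(H\text{-}SI)$, this gives $v_A(\gamma^{\tilde{A}}) \geq 2\min_{A' \in \mathcal{G}_L} r_{A'}$, verifying~\eqref{def:key_condition}. The delicate point is precisely this coupling: both component lower bounds must scale linearly in $\ell$ so that the final per-$\ell$ bound is independent of $\ell$.
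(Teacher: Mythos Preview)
Your proposal is correct and follows essentially the same strategy as the paper's proof: both invoke Theorem~\ref{thm:main_strategy} with $\mathcal{C} = \mathcal{G}_L$, pick $\gamma = \gamma^{\tilde{A}}$ for some $\tilde{A} \subset A$ of size $L$, use the test vector $\gamma^{\tilde{A}} + \tilde{\gamma}^{\tilde{A}\cup\{\tilde{k}\}}$, and establish $v_A(\gamma^{\tilde{A}}) \geq z_{\tilde{A}} + s_{\tilde{A}\cup\{\tilde{k}\}}$. The only difference is in how that last inequality is justified: the paper rewrites the objective as $\inf_{x \in \overline{W^A}} \theta \cdot x$ via the linear-programming identity from the proof of Lemma~\ref{lemma:sum_intersection_opt_probs} and then applies $\inf_x(f+g)\cdot x \geq \inf_x f\cdot x + \inf_x g\cdot x$ in one line, whereas you carry out the per-$\ell$ combinatorics directly; both arguments are valid, and the paper's is marginally cleaner.
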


\begin{proof}
See Appendix \ref{app:proof_theorem_SI}.
\end{proof}

The above theorem suggests solving $\binom{d}{L}$ optimization problems to compute $\{r_A, \beta^{A}: A \in \mathcal{G}_L\}$, in addition to $\binom{d}{L} + \binom{d}{L+1}$ optimizations problems to obtain 
$\{z_A,  {\gamma}^{A}:  A \in \mathcal{G}_L\}$ and
$\{s_B:  B \in \mathcal{G}_{L+1}\}$. Then, if the condition $(H\text{-}SI)$ holds, a mixture of $2 \binom{d}{L}$ components is asymptotically efficient. As before, we can weaken the condition by adding more components to the mixture.

\begin{example}[correlated normals---sum-intersection rule]\label{example:SI_corr}
Assume that $X_1$ has the multivariate normal distribution with mean $-\frac{1}{2}\mathbbm{1}_d$ and covariance matrix $\Sigma_\rho$  defined in~\eqref{def:cor_normal}, whose cumulant generating function is given by \eqref{example:cor_normal_Lambda}.  For $A \in \mathcal{G}_L$ and $B \in \mathcal{G}_{L+1}$, due to exchangability,
 $\gamma^{A}_k, k \in A$ are identical, and 
 so are $\tilde{\gamma}^{B}_k, k \in B$. Thus, the optimal value  of the optimization problem in \eqref{opt:sum-inter_L} is equal to 
 \begin{align*}
     \max_{t \geq 0}\; t, \text{ such that } -\frac{1}{2} L t + \frac{1}{2}  L(1+(L-1)\rho) t^2 \leq 0,
 \end{align*}
 while the optimal value of \eqref{opt:sum-inter_L_1}  to
 \begin{align*}
\max_{t \geq 0} \;\frac{L+1}{L} t, \text{ such that } -\frac{1}{2} (L+1) t + \frac{1}{2} (L+1)(1+L\rho) t^2 \leq 0.
 \end{align*}
As a result, the optimal values  of the optimization problems  in \eqref{opt:sum-inter_L} and \eqref{opt:sum-inter_L_1}  are, respectively, 
$$
z_A = \frac{1}{\rho L + (1-\rho)}, \quad
s_B =
\frac{1}{\rho (L+1) + (1-\rho)} \times \frac{L+1}{L}.
$$

Let $d = 50$. For $L = 2$ (resp. $L=3$), by numerical computation, if $\rho \leq 0.23$ (resp. $\rho \leq 0.14$), condition $(H\text{-}SI)$ in Theorem \ref{thm:sum_intersection_rule} holds, and thus $\Pro_{\Theta^{(SI)}}$ is asymptotically efficient. Note that for $L=2$ and $L=3$, the size of $\Theta^{(SI)}$ is $2,450$ and $39,200$ respectively.

Next, we estimate the wrong exit probability under $\Pro_{\Theta^{(SI)}}$ for $d=50, L \in \{2,3\}$ and $\rho = 0.1$ with $N = 10^6$ repetitions. In Figure \ref{fig:SI_corr_normal}, we plot the estimated relative error against $-\log_{10}$ of the estimated probability. We observe that for probabilities as small as $10^{-10}$, the estimated relative error is below $1.6\%$ for $L=2$ and $3.5\%$ for $L=3$. In Appendix \ref{app:SI_corrnormal_more_sim}, we present addition simulation results for $d=100,L=2$ and $\rho \in \{0,0.05,0.1,0.2\}$.

\begin{figure}[!t]
    \centering
    \includegraphics[width=\textwidth]{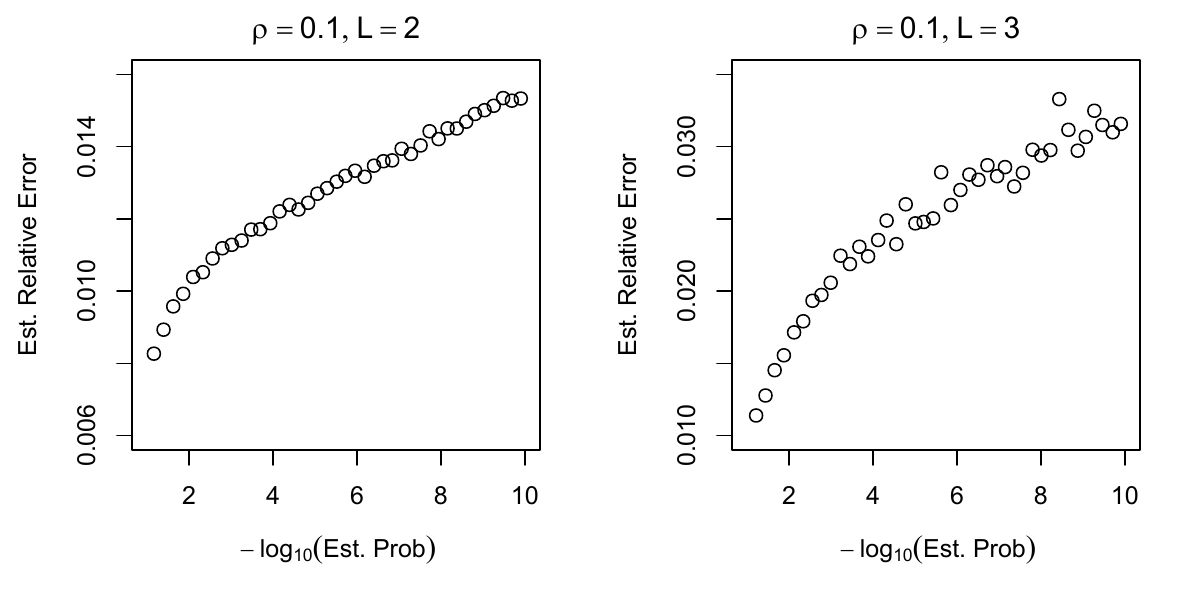}
    \caption{{Sum-intersection rule with $d=50$ and $L \in \{2,3\}$ under the setup of Example \ref{example:SI_corr}. The x-axis is $-\log_{10}$ of the estimated probability, and the y-axis is the estimated relative error. The results are based on $10^6$ repetitions.}}
    \label{fig:SI_corr_normal}
\end{figure}

\end{example}

\section{Conclusions}\label{sec:conclusion}
We develop a strategy for efficiently estimating rare-event probabilities involving many disjoint exit regions. By combining optimal tilts for a small number of regions with carefully selected additional components, the method remains practical even when the number of regions grows combinatorially or even exponentially with dimension $d$. Its effectiveness is demonstrated through several sequential multiple testing problems.

Future directions include extending the random walk model to Markov additive processes, as in \cite{collamore2002}, which may enable the estimation of error probabilities in sequential testing procedures with controlled sensing or composite hypotheses. Another direction is to develop computationally feasible proposals in settings where any asymptotically efficient mixture of exponential tilts provably requires too many components.

\bibliographystyle{imsart-nameyear}
\bibliography{sim}

\begin{appendix}

\section{Proofs and Discussion for Section \ref{sec:preliminary}}
\label{app:collamore_results}

\subsection{A weaker version of Theorem \ref{thm:Collomore_prob}} \label{app:discuss_collomore_prob}

In this appendix, we present a self-contained, elementary proof of a weaker version of Theorem \ref{thm:Collomore_prob}, under the stronger assumption \eqref{assumption:stronger}.


\begin{remark}
    \label{aux:remark_semi}
Since $\Lambda$ is lower semi-continuous \cite[Theorem 1.13]{brown1986fundamentals}, 
we have $\mathcal{L}_0\Lambda$ is closed, and thus condition \eqref{assumption:stronger} implies that $\mathcal{L}_0\Lambda$ is compact.
\end{remark}

\begin{theorem}[Weaker version of Theorem \ref{thm:Collomore_prob}]\label{app:thrm_weaker_prob}
    Let $W \subset \mathbb{R}^d$ be a convex open set. 
 Assume  that \eqref{assumption:stronger} holds, and that 
$\operatorname{cone}(B(\Exp[X_1], \delta)) \cap W$ is empty for some $\delta > 0$.  Then  
\begin{align*}
\lim_{b \to \infty}    \frac{1}{b} \log \Pro(\tau_b(W) < \infty)  =- r_W.
\end{align*}
\end{theorem}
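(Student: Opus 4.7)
The plan is to establish matching logarithmic bounds: I will show that both $\limsup$ and $\liminf$ of $\tfrac{1}{b}\log\Pro(\tau_b(W)<\infty)$ equal $-r_W$. The upper bound uses the exponential supermartingale argument familiar from Cram\'er's theorem, while the lower bound uses a change of measure to an exponentially tilted law whose drift is directed into $W$.

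For the upper bound, fix $\theta \in \mathcal{L}_0\Lambda$. Since $\Exp[e^{\theta\cdot X_1}] = e^{\Lambda(\theta)} \le 1$, the process $\{e^{\theta \cdot S_n}\}_{n \ge 0}$ is a nonnegative supermartingale started at $1$. Optional sampling at $\tau_b(W) \wedge n$, followed by Fatou's lemma as $n \to \infty$, yields $\Exp[e^{\theta \cdot S_{\tau_b(W)}}\mathbbm{1}\{\tau_b(W) < \infty\}] \le 1$. Because $S_{\tau_b(W)} \in bW$ on $\{\tau_b(W) < \infty\}$, the left-hand side is at least $\Pro(\tau_b(W) < \infty)\,e^{b\inf_{x\in W}\theta\cdot x}$, so rearranging gives $\Pro(\tau_b(W) < \infty) \le e^{-b\inf_{x\in W}\theta\cdot x}$. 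Optimizing over $\theta \in \mathcal{L}_0\Lambda$ and invoking Sion's minimax theorem (applicable because $\mathcal{L}_0\Lambda$ is convex and compact by Remark \ref{aux:remark_semi}, $W$ is convex, and $(\theta,x)\mapsto\theta\cdot x$ is bilinear) allows exchanging the supremum and infimum, producing the bound $\exp(-b\,\inf_{x \in W} \sup_{\theta \in \mathcal{L}_0\Lambda} \theta \cdot x) = \exp(-b r_W)$.

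For the lower bound, fix $\varepsilon > 0$ and pick $y \in W$ with $I(y) < r_W + \varepsilon$. The auxiliary identity $I(y) = \inf_{\alpha > 0}\alpha\Lambda^*(y/\alpha)$, combined with openness of $\operatorname{dom}(\Lambda)$, yields a (near-)minimizing $\alpha^* > 0$ and a dual tilt $\theta^* \in \operatorname{dom}(\Lambda)$ with $\nabla\Lambda(\theta^*) = y/\alpha^*$ and $\alpha^*\Lambda^*(y/\alpha^*) = \theta^* \cdot y - \alpha^*\Lambda(\theta^*) \le I(y) + \varepsilon$. Since $W$ is open, choose $\delta > 0$ with $B(y,\delta) \subset W$ and set $n_b := \lfloor \alpha^* b\rfloor$. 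Under $\Pro_{\theta^*}$ the walk has drift $y/\alpha^*$, so by the weak law of large numbers $\Pro_{\theta^*}(S_{n_b}/b \in B(y,\delta)) \to 1$, and this event is contained in $\{\tau_b(W) \le n_b\}$. The change-of-measure identity then gives
\[
\Pro(\tau_b(W) < \infty) \,\ge\, \Exp_{\theta^*}\bigl[e^{-\theta^* \cdot S_{n_b} + n_b\Lambda(\theta^*)}\mathbbm{1}\{S_{n_b} \in bB(y,\delta)\}\bigr] \,\ge\, e^{-b(I(y)+\delta\|\theta^*\|)-o(b)},
\]
using $\theta^* \cdot S_{n_b} \le b\theta^*\cdot y + b\delta\|\theta^*\|$ on the event and $n_b\Lambda(\theta^*) \ge -b(I(y) - \theta^*\cdot y) + O(1)$. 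Taking $\liminf_b \tfrac{1}{b}\log$ and sending first $\delta \downarrow 0$, then $\varepsilon \downarrow 0$, finishes the lower bound.

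The main obstacle is the identity $I(y) = \inf_{\alpha > 0}\alpha\Lambda^*(y/\alpha)$ together with the existence of a workable tilt $\theta^*$. The inequality $I(y) \le \inf_{\alpha > 0}\alpha\Lambda^*(y/\alpha)$ is free, since every $\theta \in \mathcal{L}_0\Lambda$ satisfies $\theta\cdot y \le \theta\cdot y - \alpha\Lambda(\theta)$; the reverse requires exchanging a supremum over $\theta$ with an infimum over the noncompact half-line $\alpha > 0$, handled most cleanly by explicit construction of $(\alpha^*, \theta^*)$ via smoothness of $\Lambda$ on its open domain and the Legendre correspondence $\nabla\Lambda : \operatorname{dom}(\Lambda) \to \operatorname{int}(\operatorname{dom}(\Lambda^*))$. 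A secondary item to verify is that the geometric hypothesis $\operatorname{cone}(B(\Exp[X_1],\delta)) \cap W = \emptyset$ guarantees $r_W > 0$: otherwise, some $y \in W$ lies in the direction of $\Exp[X_1] = \nabla\Lambda(0)$, and convexity of $\Lambda$ would force $\theta\cdot\Exp[X_1] \le \Lambda(\theta) \le 0$ on $\mathcal{L}_0\Lambda$, yielding $I(y) \le 0$ and ruining the exponential decay.
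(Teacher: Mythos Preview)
Your upper bound is essentially the paper's: the supermartingale inequality and the change-of-measure bound are equivalent here, and both finish with Sion's minimax on the compact set $\mathcal{L}_0\Lambda$.

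For the lower bound you take a genuinely different route, and the detour is where the difficulty hides. The paper does \emph{not} pass through the identity $I(y)=\inf_{\alpha>0}\alpha\Lambda^*(y/\alpha)$. Instead it exploits the compactness of $\mathcal{L}_0\Lambda$ (from \eqref{assumption:stronger}) directly: the supremum $I(y)=\sup_{\theta\in\mathcal{L}_0\Lambda}\theta\cdot y$ is attained at some $\theta_y$, Slater's condition holds because $\Exp[X_1]\neq 0$ forces $\Lambda(\tilde\theta)<0$ for some $\tilde\theta$, and the KKT conditions then give $\Lambda(\theta_y)=0$ together with $\nabla\Lambda(\theta_y)=y/\rho$ for some $\rho>0$. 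This is precisely the pair $(\alpha^*,\theta^*)=(\rho,\theta_y)$ you are looking for, obtained with no $\varepsilon$-slack and no appeal to the Legendre correspondence. Your proposed route ``via smoothness and the Legendre correspondence $\nabla\Lambda:\operatorname{dom}(\Lambda)\to\operatorname{int}(\operatorname{dom}(\Lambda^*))$'' would still need you to argue that some near-optimal $\alpha^*$ places $y/\alpha^*$ in the range of $\nabla\Lambda$; that is not automatic, and the cleanest way to see it is the KKT argument above, which is the paper's.

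A second simplification the paper gains: because $\Lambda(\theta_y)=0$, the change of measure at the \emph{stopping time} $\tau_b(B(y,\epsilon))$ has no $\tau\Lambda(\theta)$ term, so one only needs $\Pro_{\theta_y}(\tau_b(B(y,\epsilon))<\infty)\to 1$ from the SLLN. Your deterministic-time bound works too, but carries the extra bookkeeping of $n_b\Lambda(\theta^*)$ and an $\varepsilon$ that has to be sent to zero at the end (and indeed your displayed bound $n_b\Lambda(\theta^*)\ge -b(I(y)-\theta^*\cdot y)+O(1)$ is missing a $-b\varepsilon$).

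Finally, the closing paragraph on $r_W>0$ is not needed for the theorem as stated, and the sketch there is too loose: $r_W=0$ would mean $\inf_{x\in W}I(x)=0$, which does not by itself produce a point $y\in W$ in the direction of $\Exp[X_1]$.
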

\begin{proof}
\noindent \textbf{Upper bound.} For each $\theta \in \mathcal{L}_0\Lambda$, by a change of measure, we have
\begin{align*}
 \Pro(\tau_b(W) < \infty) &= \Exp_{\theta}\left[\exp(- \theta \cdot S_{\tau_b(W)} + \tau_b(W) \Lambda(\theta) ); \tau_b(W) < \infty \right]  \\
& \leq \Exp_{\theta}\left[\exp(- \theta \cdot S_{\tau_b(W)}  ); \tau_b(W) < \infty \right] \\
&\leq \exp(- \inf_{x \in W}  \theta \cdot (bx)  ),
\end{align*}
where the first inequality is because $\Lambda(\theta) \leq 0$, and the second is because on the event $\{\tau_b(W) < \infty\}$, we have $S_{\tau_b(W)} \in bW$.  
Optimizing the upper bound over $\theta \in \mathcal{L}_0\Lambda$,   we have
\begin{align*}
   \frac{1}{b} \log \Pro(\tau_b(W) < \infty) \leq - \sup_{\theta \in \mathcal{L}_0\Lambda} (\inf_{x \in W} \theta \cdot x).
\end{align*}
Since $\mathcal{L}_0 \Lambda$ is compact due to condition \eqref{assumption:stronger} (see Remark \ref{aux:remark_semi}),  by Sion's minimax theorem \citep{sion1958minimax}  we have
\begin{align*}
    r_W = \inf_{x \in W} \sup_{\theta \in \mathcal{L}_0\Lambda} \theta \cdot x =  \sup_{\theta \in \mathcal{L}_0\Lambda} \inf_{x \in W} \theta \cdot x,
\end{align*}
which establishes the upper bound: $b^{-1} \log \Pro(\tau_b(W) < \infty) \leq -r_W$.\\

\noindent \textbf{Lower bound.} Fix an arbitrary $x \in W$. We recall from \eqref{def:rate_function} that:
\begin{align*}
  I(x)=  \sup_{\theta \in \mathcal{L}_0\Lambda} \theta \cdot x.
\end{align*}
Since  $\mathcal{L}_0\Lambda$ is compact due to condition \eqref{assumption:stronger} (Remark \ref{aux:remark_semi}),
this optimization problem 
must have a solution, denoted as $\theta_x \in \mathcal{L}_0\Lambda$.

Due to condition \eqref{assumption:stronger}, $\operatorname{dom}(\Lambda)$ is open, and thus $\nabla \Lambda(\mathbf{0}) = \Exp[X_1]$ (see, e.g., Theorem 2.2 in \cite{brown1986fundamentals}). Since $W \cap \operatorname{cone}(B(\Exp[X_1], \delta)) = \emptyset$ for some $\delta > 0$, we have that $\Exp[X_1] \neq \mathbf{0}$, and that $\mathbf{0} \not \in W$, which also implies that $x \neq \mathbf{0}$ since $x \in W$. As a result, there exists some $\tilde{\theta} \in \bR^{d}$ such that $\Lambda(\tilde{\theta}) < 0$, which  implies that Slater’s Condition \cite[Asumption 6.4.3]{bertsekas2003convex} holds, and thus the KKT condition is necessary for $\theta_x$. Since $x \neq \mathbf{0}$, we must have
\begin{align} \label{aux:for_remark}
    I(x) = \theta_x \cdot x, \quad \Lambda(\theta_x) = 0, \quad \text{ and } \quad \rho  \nabla  \Lambda(\theta_x) =  x  \text{ for some } \rho > 0.
\end{align}

Since $W$ is open, for any sufficiently small $\epsilon > 0$, $B(x,\epsilon) \subset W$.  Fix such an $\epsilon$.
By a change of measure from $\Pro$ to $\Pro_{\theta_x}$,  we have
\begin{align*}
 \Pro(\tau_b(W) < \infty)& \geq \Pro\left(\tau_b(B(x,\epsilon) < \infty \right) \\
 &= \Exp_{\theta_x}\left[\exp\left(- \theta_x \cdot S_{\tau_b(B(x,\epsilon))} + \tau_b(B(x,\epsilon)) \Lambda(\theta_x)\right);\; \tau_b(B(x,\epsilon))< \infty \right]  \\
& \geq   - b (\theta_x \cdot x + \epsilon \|\theta_x\|) \; \Pro_{\theta_x}\left(\tau_b(B(x,\epsilon)) < \infty \right),
\end{align*}
where the final inequality holds  because $\Lambda(\theta_x) = 0$ and  
$S_{\tau_b(B(x,\epsilon))} \in b B(x,\epsilon)$ on the event 
$\{\tau_b(B(x,\epsilon)) < \infty\}$.  Since $\operatorname{dom}(\Lambda)$ is open,
$$\Exp_{\theta_x}[X_1] = \nabla \Lambda(\theta_x) = \frac{x}{\rho}.
$$
Due to the Strong Law of Large Numbers, $\lim_{b \to \infty} \Pro_{\theta_x}\left(\tau_b(B(x,\epsilon)) < \infty \right) = 1$, and thus
\begin{align*}
      \liminf_{b \to \infty} \frac{1}{b} \log \Pro(\tau_b(W) < \infty) \geq 
       - I(x) - \epsilon \|\theta_x\|.
\end{align*}
Since $\epsilon >0$ can be arbitrarily small,  optimizing the lower bound over $x \in W$ we have
\begin{align*}
     \liminf_{b \to \infty} \frac{1}{b} \log \Pro(\tau_b(W) < \infty) \geq 
       - \inf_{x \in W} I(x)  = - r_W.
\end{align*}
The proof is complete.
\end{proof}

\begin{remark}
    \label{aux:remark_stronger}
    Due to \eqref{aux:for_remark}, by  the same argument as for Lemma 2.2.31(b) in \cite{dembo2009large}, for $x \in \mathcal{V}$,
$$
\Lambda^{*}(x/\rho) = \theta_x \cdot (x/\rho) - \Lambda(\theta_x),
$$  
which implies that $\Lambda^{*}(x/\rho) < \infty$. Thus, $W \subset \operatorname{cone}(\operatorname{dom}(\Lambda^*))$. Since $W$ is open, condition \eqref{assumption:stronger} implies that $W \cap \operatorname{ri}(\operatorname{cone}(\operatorname{dom}(\Lambda^*))) \neq \emptyset$. 
\end{remark}

\subsection{Discussions for Theorem \ref{thrm:Collamore_simulation}}
\label{app:collomore_simulation}

We first provide more details about the proof of Theorem \ref{thrm:Collamore_simulation} based on \cite{collamore2002}.

\begin{proof}[Proof of Theorem \ref{thrm:Collamore_simulation}]
The proof is due to \cite{collamore2002}; we provide further details below. In \cite{collamore2002}, the process $\{(X_n, S_n) : n \geq 0\}$ is modeled as a Markov additive process, where $S_n := \sum_{i=1}^{n} \xi_i$ and $\{(X_n, \xi_n) : n \geq 0\}$ is a time-homogeneous Markov process taking values in $\mathbb{S} \times \mathbb{R}^d$, with transition kernel
\begin{align*}
\mathcal{P}(x, E \times \Gamma) := \Pro\left((X_{n+1}, \xi_{n+1}) \in E \times \Gamma \mid X_n = x\right), \quad x \in \mathbb{S},; E \in \mathcal{S},; \Gamma \in \mathcal{R}^d,
\end{align*}
where $\mathcal{S}$ is a $\sigma$-algebra on $\mathbb{S}$, and $\mathcal{R}^d$ is the Borel $\sigma$-algebra on $\mathbb{R}^d$. In the special case of random walks considered in our setting, the state space $\mathbb{S}$ can be taken as a singleton. Hence, the dependence on ${X_n}$ and the structure of $\mathbb{S}$ in \cite{collamore2002} can be disregarded.

For $\theta \in \operatorname{dom}(\Lambda)$, we identify the proposal $\Qro$ in  \cite{collamore2002} with $\Pro_{\theta}$, and then $\mathcal{K}_{\Qro}$ in Equation (2.4) of \cite{collamore2002} is a measure on $\bR^d$ such that for  $v \in \bR^d$,
$$
\frac{d\mathcal{K}_{\Qro}}{d \mu}(v) = 
\frac{d\mu}{d \mu_{\theta}}(v) =
 \exp\left(- \theta \cdot v + \Lambda(\theta) \right).
$$
Further, $\Lambda_{\mathcal{K}_{\Qro}}$ defined in Section 2.2 of \cite{collamore2002}  is defined for $\alpha \in \mathbb{R}^d$ by
\begin{align*}
    \Lambda_{\mathcal{K}_{\Qro}}(\alpha) = &\log\left(
\int_{\bR^d} \exp(\alpha \cdot v) d\mathcal{K}_{\Qro} (v)
\right) \\
=& \log\left(
\int_{\bR^d} \exp(\alpha \cdot v)  \exp\left(- \theta \cdot v  + \Lambda(\theta) \right) d\mu(v)
\right)  =  \Lambda(\theta) + \Lambda(\alpha - \theta).
\end{align*}
As a result, the rate function $I_{\mathcal{K}{\Qro}}$ defined in Section 3.1 of \cite{collamore2002} coincides with $\mathcal{V}_{\theta}(\cdot)$ as given in \eqref{def:rate_theta}.

As discussed around equation (3.6) and in Section 3.2 of \cite{collamore2002}, the assumptions $\Re$, (H1), (H2), and (H3) are satisfied in the case of random walks under the conditions stated in Theorem \ref{thrm:Collamore_simulation}. Thus, parts (i) and (ii) of Theorem \ref{thrm:Collamore_simulation} follow  from Lemma 3.2 and Theorem 3.3 of \cite{collamore2002}, respectively, while part (iii) follows from Theorem 3.1 and the subsequent discussion.
\end{proof}

We next establish a \textit{weaker} version of part~(iii) of Theorem \ref{thrm:Collamore_simulation}. This result offers insight into the function $\mathcal{V}_{\theta}(\cdot)$ defined in~\eqref{def:rate_theta} and the two-argument function $v(\cdot, \cdot)$ defined in~\eqref{def:V_W_theta}.

\begin{theorem}[Weaker version of Part (iii) of Theorem \ref{thrm:Collamore_simulation}]\label{app:thrm_weaker_simulation}
Let $W \subset \mathbb{R}^d$ be a convex open set. 
 Assume  that \eqref{assumption:stronger} holds.
  For $\theta \in \operatorname{dom}(\Lambda)$, if  $v(W;\theta) > 0$, then
\begin{align*}
    \frac{1}{b} \log \Exp_{\theta}\left[ 
\mathbb{L}_{\theta}^{-2}(\tau_b(W))
\right] \leq   - v(W;\theta).
\end{align*}
\end{theorem}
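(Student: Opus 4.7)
The plan is to bound the second moment of the importance sampling estimator pathwise by introducing an auxiliary parameter $\alpha \in \mathbb{R}^d$ and decomposing $\mathbb{L}_\theta^{-2}$ into a boundary term and a $\Pro_\theta$-martingale. Writing $-2\theta \cdot S_n = -\alpha \cdot S_n + (\alpha - 2\theta)\cdot S_n$, and using the exponential-family identity $\Lambda_\theta(\eta) := \log \Exp_\theta[\exp(\eta \cdot X_1)] = \Lambda(\eta+\theta) - \Lambda(\theta)$ with $\eta = \alpha - 2\theta$, the process
\[
M_n^{(\alpha)} := \exp\!\bigl((\alpha - 2\theta)\cdot S_n - n[\Lambda(\alpha-\theta) - \Lambda(\theta)]\bigr), \quad n \geq 0,
\]
is a nonnegative $\Pro_\theta$-martingale with $M_0^{(\alpha)} = 1$.

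Next, I would restrict attention to $\alpha$ in the feasibility set $A := \{\alpha \in \bR^d : \Lambda(\theta) + \Lambda(\alpha - \theta) \leq 0\}$. For such $\alpha$, the identity
\[
\exp\!\bigl((\alpha - 2\theta)\cdot S_n + 2n\Lambda(\theta)\bigr) \;=\; M_n^{(\alpha)} \cdot \exp\!\bigl(n[\Lambda(\alpha - \theta) + \Lambda(\theta)]\bigr)
\]
implies $\mathbb{L}_\theta^{-2}(\tau_b(W)) \leq \exp(-\alpha \cdot S_{\tau_b(W)})\, M_{\tau_b(W)}^{(\alpha)}\, \mathbbm{1}\{\tau_b(W) < \infty\}$. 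On $\{\tau_b(W) < \infty\}$ we have $S_{\tau_b(W)} \in b\overline{W}$, hence $-\alpha \cdot S_{\tau_b(W)} \leq -b\, \inf_{y \in \overline{W}} \alpha \cdot y$. Optional stopping applied to $M^{(\alpha)}_{\cdot \wedge \tau_b(W)}$ together with Fatou's lemma (using nonnegativity and $\Exp_\theta[M_0^{(\alpha)}] = 1$) then gives
\[
\Exp_\theta\!\left[\mathbb{L}_\theta^{-2}(\tau_b(W))\right] \;\leq\; \exp\!\Bigl(-b \inf_{y \in \overline{W}} \alpha \cdot y\Bigr),
\]
valid for every $\alpha \in A$. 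This already yields the weaker conclusion mentioned in the excerpt, namely the bound with $\sup_{\alpha \in A}\inf_{y \in \overline{W}} \alpha \cdot y$ replacing $v(W;\theta)$.

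The remaining step is to swap $\sup_{\alpha \in A}$ and $\inf_{y \in \overline{W}}$ so as to recover $v(W;\theta) = \inf_{y \in \overline{W}} \sup_{\alpha \in A} \alpha \cdot y$. The max-min inequality only gives one direction for free, and Sion's minimax theorem supplies the reverse. To apply it, I would verify that $A$ is compact and convex: convexity follows since $\alpha \mapsto \Lambda(\alpha - \theta)$ is convex, closedness from lower semicontinuity of $\Lambda$, and boundedness from assumption~\eqref{assumption:stronger}, since all nonempty sublevel sets of a convex function share the same recession cone, which reduces to $\{\mathbf{0}\}$ once $\mathcal{L}_0\Lambda$ is bounded. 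The set $\overline{W}$ is convex, and $(\alpha, y) \mapsto \alpha \cdot y$ is bilinear and continuous, so Sion's theorem delivers the equality and hence the desired bound $-v(W;\theta)$.

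The main obstacle is the minimax exchange: since $\overline{W}$ need not be bounded, $\inf_{y \in \overline{W}} \alpha \cdot y$ may be $-\infty$ for some $\alpha$, and a naive swap fails in general. Compactness of $A$ under \eqref{assumption:stronger} is precisely what makes Sion's theorem applicable and is the reason condition~\eqref{assumption:stronger} is invoked in place of the weaker condition in Theorem~\ref{thrm:Collamore_simulation}. The martingale argument is otherwise routine, with only the standard subtlety that one stops at $\tau_b(W) \wedge n$ before passing to the limit $n \to \infty$ via Fatou to handle the event $\{\tau_b(W) = \infty\}$.
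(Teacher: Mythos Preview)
Your proof is correct and follows essentially the same route as the paper: introduce an auxiliary $\alpha$ with $\Lambda(\theta)+\Lambda(\alpha-\theta)\le 0$, bound $\exp(-\alpha\cdot S_{\tau_b(W)})$ via $S_{\tau_b(W)}\in bW$, optimize over $\alpha$, and swap sup/inf by Sion using compactness of the feasibility set. The only stylistic differences are that the paper phrases your martingale step as a change of measure from $\Pro_\theta$ to $\Pro_{\alpha-\theta}$ (your $M_n^{(\alpha)}$ is exactly $d\Pro_{\alpha-\theta}/d\Pro_\theta|_{\cF_n}$), and obtains boundedness of $A$ more directly via $\Lambda(\alpha/2)\le \tfrac12(\Lambda(\theta)+\Lambda(\alpha-\theta))\le 0$ rather than a recession-cone argument.
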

\begin{proof}
Fix arbitrary $\theta \in \operatorname{dom}(\Lambda)$ such that  $v(W;\theta) > 0$. 
Since $v(W;\theta) > 0$, it implies that the following subset is non-empty:
$$
\widetilde{D} := \{\alpha \in \bR^d: \Lambda(\theta) + \Lambda(\alpha - \theta) \leq 0\}.
$$
For each $\alpha \in \widetilde{D}$,
by a change of measure from $\Pro_{\theta}$ to $\Pro_{ \alpha - \theta}$, we have
\begin{align*}
    \Exp_{\theta}\left[ 
\mathbb{L}_{\theta}^{-2}(\tau_b(W))
\right] &=  \Exp_{\alpha -\theta }\left[ 
\exp\left(-\alpha  \cdot S_{\tau_b(W)} + \tau_b(W)(\Lambda(\theta) +\Lambda(\alpha - \theta))\right)
\mathbbm{1}\{\tau_b < \infty\} 
\right] \\
&\leq \Exp_{\alpha -\theta }\left[ 
\exp\left(-\alpha  \cdot S_{\tau_b(W)}  \right)
\mathbbm{1}\{\tau_b(W) < \infty\} 
\right] \\
&\leq \exp(-b \inf_{x \in W} (\alpha \cdot x)),
\end{align*}
where, in the final step, we use the fact that on the event $\{\tau_b(W) < \infty\}$, we have $S_{\tau_b(W)} \in bW$. Optimizing the upper bound over $\alpha \in \widetilde{D}$, we have
\begin{align*}
    \frac{1}{b}\log\Exp_{\theta}\left[ 
\mathbb{L}_{\theta}^{-2}(\tau_b(W))
\right]  \leq - \sup_{\alpha \in \tilde{D}} \left(\inf_{x \in W} (\alpha \cdot x)\right).
\end{align*}
Finally, by the convexity of $\Lambda(\cdot)$,
$$
\widetilde{D} \subset \{\alpha \in \bR^{d}: \Lambda(\alpha/2) \leq 0\}.
$$
Since $\{\tilde{\theta} \in \bR^{d}: \Lambda(\tilde{\theta}) \leq 0\}$ is bounded, 
$\widetilde{D}$ is bounded. Since $\Lambda$ is lower-semicontinuous (see Remark \ref{aux:remark_semi}), $\widetilde{D}$ is compact.  Then, by Sion's minimax theorem \citep{sion1958minimax}, we have
\begin{align*}
    v(W;\theta) = \inf_{x \in \overline{W}} \sup_{\alpha \in \widetilde{D}} \alpha \cdot x =   \sup_{\alpha \in \widetilde{D}} \inf_{x \in \overline{W}} \alpha \cdot x.
\end{align*}
The proof is then complete since $\inf_{x \in W} (\alpha \cdot x) = \inf_{x \in \overline{W}} (\alpha \cdot x)$.
\end{proof}

\section{Proofs regarding the exponential decay of the wrong exit probability}

\subsection{Supporting lemmas}
The following lemma is proved in Lemma 3.2 of \cite{collamore2002}. Recall the definition of the rate function $I(\cdot)$ in \eqref{def:rate_function} and $r_j$ in \eqref{def:rj_rstar}.

\begin{lemma}\label{app_lemma:upper_bound_var_single_j}
Fix $j \in [J]$.  Assume conditions \eqref{assumptions_directions_of_mean},  \eqref{assumption_on_X} and \eqref{assumption_on_regions} hold. 
\begin{enumerate}[label=(\roman*)]
    \item 
There exists a unique $\beta^j \in \bR^d$ such that $\Lambda(\beta^j) = 0$,
\begin{align*}
    W^j \subset \{v\in \bR^{d}: \beta^j \cdot v \geq r_j\},\;\; \text{ and } \;\; \mathcal{L}_{r_j} I \subset \{v \in \bR^{d}:\; \beta^j \cdot v \leq r_j\}.
\end{align*}

\item There exists a unique $x^j \in \partial W^{j}$ such that $I(x^j) = r_j$.

\item For $\beta^j$ and $x^j$ in parts (i) and (ii), we have
\begin{align*}
    \inf_{x \in \overline{W^j}} \sup_{\theta \in \mathcal{L}_0 \Lambda} \theta \cdot x = r_j = \beta^j \cdot x^j = \sup_{\theta \in \mathcal{L}_0 \Lambda} \inf_{x \in \overline{W^j}} \theta \cdot x,
\end{align*}
and for some $\rho > 0$, $x^j = \rho \nabla \Lambda(\beta^j)$.
\end{enumerate}
\end{lemma}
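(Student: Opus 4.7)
The plan is to follow the convex-duality argument of Lemma 3.2 in \cite{collamore2002}, adapted to the random walk setting. I would begin by establishing $0 < r_j < \infty$: condition \eqref{assumption_on_regions} separates $W^j$ from the origin and from $\operatorname{cone}(B(\Exp[X_1],\delta))$, which together with the fact that $I(\mathbf{0}) = 0$ and $I$ is the support function of the compact-looking set $\mathcal{L}_0\Lambda$ forces $r_j > 0$; the intersection condition in \eqref{assumption_on_X} provides a point in $W^j$ with finite $I$-value, giving $r_j < \infty$. These preliminary bounds are essential because they guarantee the hyperplane to be constructed below is nontrivial.

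For part (ii), I would take a minimizing sequence $\{y_n\} \subset W^j$ with $I(y_n) \to r_j$. Using that $W^j$ sits inside a cone bounded away from $\Exp[X_1]$ (condition \eqref{assumptions_directions_of_mean}) and that $I$ grows linearly on $\operatorname{cone}(\operatorname{dom}(\Lambda^*))$, a standard coercivity/compactness argument yields a subsequential limit $x^j \in \overline{W^j}$, and lower semicontinuity of $I$ gives $I(x^j) \leq r_j$, hence equality. Since $I(x) < r_j$ is forbidden on $W^j$, $x^j$ must lie on $\partial W^j$. Uniqueness of $x^j$ will follow once $\beta^j$ is identified: strict convexity of $\Lambda$ on $\operatorname{int}(\operatorname{dom}(\Lambda))$ (ensured by \eqref{assumption_on_X}) makes the KKT stationarity $x^j = \rho\,\nabla\Lambda(\beta^j)$ determine $x^j$ uniquely given $\beta^j$.

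For part (i), the main step is the separation argument. The convex sets $W^j$ and $\operatorname{int}(\mathcal{L}_{r_j} I)$ are disjoint (any common point would contradict the definition of $r_j$), so the Hahn--Banach theorem produces $\beta \in \bR^d \setminus \{\mathbf{0}\}$ and $c \in \bR$ with $\beta \cdot v \leq c$ on $\mathcal{L}_{r_j} I$ and $\beta \cdot v \geq c$ on $W^j$. Because $x^j$ lies on both boundaries, $c = \beta \cdot x^j$; rescaling so that $c = r_j$ yields the candidate $\beta^j$. The delicate point, and the main obstacle, is verifying $\Lambda(\beta^j) = 0$: the inclusion $\mathcal{L}_{r_j} I \subset \{\beta^j \cdot v \leq r_j\}$ combined with $I$ being the support function of $\mathcal{L}_0\Lambda$ implies via Fenchel--Legendre duality that $\beta^j \in \mathcal{L}_0\Lambda$, so $\Lambda(\beta^j) \leq 0$; strict inequality would allow one to enlarge $\beta^j$ slightly along its own direction while staying in $\mathcal{L}_0\Lambda$, strictly improving the separation and forcing $r_j$ to be smaller, a contradiction.

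Part (iii) then follows quickly: the inequality $\sup_{\theta \in \mathcal{L}_0\Lambda} \inf_{x\in\overline{W^j}} \theta\cdot x \leq \inf_{x\in\overline{W^j}} \sup_{\theta \in \mathcal{L}_0\Lambda} \theta\cdot x$ is the trivial max-min bound, whose right-hand side equals $\inf_{x\in\overline{W^j}} I(x) = r_j$, while the reverse inequality uses $\beta^j$ as a witness since $\beta^j \in \mathcal{L}_0\Lambda$ and $\inf_{x\in\overline{W^j}} \beta^j \cdot x \geq r_j$ by part (i). Writing out the KKT conditions for the convex program defining $r_j$ in the form $\max\{\inf_{x\in\overline{W^j}} \theta\cdot x : \Lambda(\theta)\leq 0\}$, with $\beta^j$ as maximizer, yields a Lagrange multiplier $\rho > 0$ with $\rho\,\nabla\Lambda(\beta^j)$ equal to the inner minimizer $x^j$, completing the identification.
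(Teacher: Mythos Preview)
Your proposal is correct and follows the same convex-analytic route (separation of $W^j$ from $\mathcal{L}_{r_j}I$, identification of the normal via duality and KKT) that underlies Lemma~3.2 of \cite{collamore2002}; the paper's own proof is much terser, simply citing that lemma for parts (i), (ii), and most of (iii), and only writing out the max--min equality in (iii) explicitly---which you handle identically, using the weak max--min inequality for one direction and $\beta^j\in\mathcal{L}_0\Lambda$ with $\inf_{x\in\overline{W^j}}\beta^j\cdot x\ge r_j$ as a witness for the other. One small slip: the separation of $W^j$ from $\operatorname{cone}(B(\Exp[X_1],\delta))$ is condition \eqref{assumptions_directions_of_mean}, not \eqref{assumption_on_regions}; and your contradiction for ruling out $\Lambda(\beta^j)<0$ actually shows $I(x^j)>r_j$ rather than ``forcing $r_j$ to be smaller,'' but the argument is sound.
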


\begin{proof}
For part (i), the existence is proved in part (i) of Lemma 3.2 in \cite{collamore2002}. Further, due to \eqref{assumption_on_X}, $\operatorname{int}\left(\operatorname{cone}\left( \operatorname{dom}(\Lambda^*)\right)\right)$ is non-emprty. Then the uniqueness follows from part (iv) of Lemma 3.2 in \cite{collamore2002}.

Parts (ii) and the majority of part (iii) follow from Lemma 3.2 in \cite{collamore2002}, with the exception of the identity
$\sup_{\theta \in \mathcal{L}_0 \Lambda} \inf_{x \in \overline{W^j}} \theta \cdot x = r_j$, 
which we now justify. By the max-min inequality,
\begin{align*}
    \sup_{\theta \in \mathcal{L}_0 \Lambda} \inf_{x \in \overline{W^j}} \theta \cdot x
\leq  \inf_{x \in \overline{W^j}} \sup_{\theta \in \mathcal{L}_0 \Lambda} \theta \cdot x  = r_j.
\end{align*}
Further, since $\beta^j \in \mathcal{L}_0 \Lambda$ by part (i), we have
\begin{align*}
  \sup_{\theta \in \mathcal{L}_0 \Lambda} \inf_{x \in \overline{W^j}} \theta \cdot x
\geq   \inf_{x \in \overline{W^j}} \beta^j \cdot x \geq r_j.
\end{align*}
where the final inequality is also due to part (i). Then, the proof is complete.
\end{proof}

The following lemma is crucial in the proof of Theorem \ref{thm:prob_rate}. Recall that $\textbf{0}$ is the origin in $\bR^d$.

\begin{lemma}\label{lemma:existence_epsilon_lower_bound}
Let $j \in [J]$ and $v \in W^j$. Assume that 
\eqref{assumptions_directions_of_mean} and   \eqref{assumption_on_regions} hold.  
Then there exists $\epsilon > 0$ such that $ B(v, \epsilon) \subset W^j$ and for $j' = 0,1,\ldots,J$ with $j' \neq j$,
\begin{align}
\label{aux_tmp}
B(tv, \epsilon)\cap W^{j'} = \emptyset\; \text{ for } \;0 \leq t \leq 1.
\end{align}
\end{lemma}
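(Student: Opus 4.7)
The conclusion consists of two claims to combine into a single $\epsilon>0$: that $B(v,\epsilon)\subset W^j$, and that $B(tv,\epsilon)\cap W^{j'}=\emptyset$ uniformly in $t\in[0,1]$ and $j'\neq j$. The first is immediate from the openness of $W^j$, so the substance lies in the second. My plan is to fix each $j'\neq j$ and show that the continuous function $t\mapsto \operatorname{dist}(tv,W^{j'})$ is strictly positive on the compact interval $[0,1]$, so its minimum $\eta_{j'}>0$ is attained; then taking $\epsilon$ smaller than both $\min_{j'\neq j}\eta_{j'}$ and the radius from the first claim yields the lemma. Finiteness of the index set $\{0,1,\ldots,J\}$ is essential in order to take a meaningful minimum.

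The pointwise positivity splits naturally into two cases. For $t=0$, the second clause of \eqref{assumption_on_regions} supplies a $\delta>0$ with $B(\mathbf{0},\delta)\cap W^{j'}=\emptyset$, so $\operatorname{dist}(\mathbf{0},W^{j'})\geq\delta>0$. For $t>0$, I would exploit that $\operatorname{cone}(W^j)=\bigcup_{s>0} sW^j$ is a union of open sets, hence itself open, and that $tv$ lies in this open set. If some sequence $y_n\in W^{j'}$ satisfied $y_n\to tv$, then eventually $y_n\in \operatorname{cone}(W^j)$; but $y_n\in W^{j'}\subset \operatorname{cone}(W^{j'})$, contradicting the cone disjointness in \eqref{assumption_on_regions}. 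Hence $\operatorname{dist}(tv,W^{j'})>0$ for every $t\in(0,1]$.

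To finish, note that $t\mapsto \operatorname{dist}(tv,W^{j'})$ is $\|v\|$-Lipschitz and strictly positive on the compact set $[0,1]$, hence bounded below by some $\eta_{j'}>0$. The delicate point, and essentially the only nontrivial step, is converting the \emph{open}-cone disjointness into an actual positive distance at each interior point; this is exactly where openness of $W^j$ combined with $\mathbf{0}\notin W^j$ (from the second clause of \eqref{assumption_on_regions}) is needed to upgrade the bare non-membership $tv\notin W^{j'}$ into the existence of a ball around $tv$ disjoint from $W^{j'}$. Condition \eqref{assumptions_directions_of_mean} does not appear to play a role in this particular lemma.
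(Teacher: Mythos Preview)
Your proof is correct and offers a clean alternative to the paper's argument. The paper proceeds constructively: it fixes $\tilde{\epsilon}$ with $B(v,\tilde{\epsilon})\subset W^j$, sets $c=\delta/(2\|v\|)$ and $\epsilon=\min\{c\tilde{\epsilon},\delta/2\}$, and then verifies the disjointness by an explicit case split at $t=c$---for $t\geq c$ one checks $B(tv,\epsilon)\subset tB(v,\tilde{\epsilon})\subset\operatorname{cone}(W^j)$, while for $t<c$ one checks $B(tv,\epsilon)\subset B(\mathbf{0},\delta)$. Your compactness argument on $[0,1]$ trades this explicit formula for a shorter proof, since you never need to locate the handoff point between the ``near the origin'' regime and the ``inside the open cone'' regime; continuity of $t\mapsto\operatorname{dist}(tv,W^{j'})$ does that work for you. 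The two routes rest on the same two facts---openness of $\operatorname{cone}(W^j)$ for $t>0$, and the $\delta$-separation from $\mathbf{0}$ in \eqref{assumption_on_regions} for $t=0$---so neither is more general, but the paper's version yields a concrete $\epsilon$ while yours is non-constructive. Your closing remark that \eqref{assumptions_directions_of_mean} plays no role is also correct; the paper's proof cites it in two places where only \eqref{assumption_on_regions} is actually invoked.
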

\begin{proof}
Since $J$ is finite, it suffices to fix some $j' \neq j$ and show that there exists $\epsilon > 0$ such that  \eqref{aux_tmp} holds.

Since $W^j$ is an open subset, there exists some $\tilde{\epsilon} >0$ such that $B(v,\tilde{\epsilon}) \subset W^j$. Define
\begin{align*}
    c  :=  \delta/ (2\|v\|), \quad  \epsilon :=  \min\left\{c \tilde{\epsilon}, \delta/2\right\}.
\end{align*}
Due to assumption \eqref{assumptions_directions_of_mean}, $\|v\| > \delta$, and thus $c \in (0,1/2)$. Further, by definition, $\epsilon < \tilde{\epsilon}$, which implies that $B(v,\epsilon) \subset W^j$.
  
Now, we show \eqref{aux_tmp} holds. For $c \leq t \leq 1$, note that by definition, $\epsilon \leq t \tilde{\epsilon}$, and hence
$$ 
B(tv, \epsilon) \subset  B(tv, t \tilde{\epsilon}) = t B(v, \tilde{\epsilon}) \subset \operatorname{cone}(W^j).
$$
Since $\operatorname{cone}(W^j)$ and $\operatorname{cone}(W^{j'})$ are disjoint by \eqref{assumption_on_regions}, we have 
$B(tv, \epsilon) \cap  W^{j'}  = \emptyset$ for $c \leq t \leq 1$.

For $0 \leq t < c$, since by definition
$\|tv\| + \epsilon < c \|v\| + \delta/2 = \delta/2 + \delta/2 = \delta$, 
we have
$$
B(tv, \epsilon)  \subset {B(\textbf{0},\delta)}.
$$
Since by assumption \eqref{assumptions_directions_of_mean} $ {B(\textbf{0},\delta)}\cap W^{j'} =\emptyset$, we have
 $B(tv, \epsilon) \cap W^{j'} = \emptyset$. The proof is complete.
\end{proof}

The following lemma is from  Lemma 1.2.15  in \cite{dembo2009large}, which we present for completeness.

\begin{lemma}\label{lemma:log_max_sum}
Let $J \geq 1$ be a fixed integer, and $\{z_b^j: b>0, j \in [J] \}$ be a collection of positive numbers. Then
\begin{align*}
    \limsup_{b \to \infty} \frac{1}{b} \log\left(\sum_{j \in [J]} z_b^j \right) = 
       \max_{j \in [J]} \limsup_{b \to \infty} \frac{1}{b} \log\left(z_b^j\right).
\end{align*}
The same holds if $\limsup$ is replaced by $\liminf$.
\end{lemma}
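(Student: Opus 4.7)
}
The plan is to reduce the statement about $\log$ of a sum to a statement about maxima, and then exploit the finiteness of $[J]$. The key observation is the elementary sandwich
\[
\max_{j \in [J]} z_b^j \;\le\; \sum_{j \in [J]} z_b^j \;\le\; J \,\max_{j \in [J]} z_b^j,
\]
valid for any collection of positive reals. Taking $(1/b)\log$ gives
\[
\frac{1}{b}\log \max_{j \in [J]} z_b^j \;\le\; \frac{1}{b}\log \sum_{j \in [J]} z_b^j \;\le\; \frac{\log J}{b} + \frac{1}{b}\log \max_{j \in [J]} z_b^j.
\]
Since $J$ is fixed and $\log J/b \to 0$, it follows that
\[
\limsup_{b\to\infty} \frac{1}{b}\log \sum_{j\in[J]} z_b^j \;=\; \limsup_{b\to\infty} \frac{1}{b}\log \max_{j\in[J]} z_b^j \;=\; \limsup_{b\to\infty} \max_{j\in[J]} f_j(b),
\]
where $f_j(b) := (1/b)\log z_b^j$. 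The same identity holds with $\liminf$ in place of $\limsup$. Thus the lemma reduces to the interchange of $\limsup$ (or $\liminf$) with $\max$ over a finite index set.

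For the $\limsup$ case, the inequality $\limsup_b \max_j f_j(b) \ge \max_j \limsup_b f_j(b)$ is immediate from $\max_j f_j(b) \ge f_k(b)$ for each $k$ and monotonicity of $\limsup$. For the reverse, I would use finiteness of $[J]$ crucially: fix $\epsilon > 0$, and for each $j$ choose $B_j$ such that $f_j(b) \le \limsup_b f_j + \epsilon$ for $b \ge B_j$. Taking $B := \max_j B_j$, which is finite because $[J]$ is finite, we obtain
\[
\max_{j \in [J]} f_j(b) \;\le\; \max_{j \in [J]} \limsup_b f_j + \epsilon \qquad \text{for all } b \ge B.
\]
Taking $\limsup_{b\to\infty}$ and then $\epsilon \downarrow 0$ gives the desired inequality.

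For the $\liminf$ case the direction $\liminf_b \max_j f_j(b) \ge \max_j \liminf_b f_j(b)$ follows by the same immediate argument. The reverse inequality is the more delicate step and is the main obstacle: for general non-convergent sequences the identity $\liminf \max = \max \liminf$ need not hold, so one cannot simply mimic the $\limsup$ proof. However, using again the finiteness of $[J]$, one can extract a subsequence $b_n \to \infty$ along which $(1/b_n)\log \sum_j z_{b_n}^j \to \liminf_b (1/b)\log \sum_j z_b^j$ and, by pigeonhole, along which the $\arg\max$ over $j$ is constantly equal to some $j^\star$; on this subsequence $\sum_j z_{b_n}^j \le J z_{b_n}^{j^\star}$, so the limit is bounded above by $\liminf_b f_{j^\star}(b) \le \max_j \liminf_b f_j(b)$, completing the argument.
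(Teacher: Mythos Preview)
Your argument for the $\limsup$ identity is correct and is the standard proof; the paper itself gives no proof and simply cites Lemma~1.2.15 in Dembo--Zeitouni, which in fact states only the $\limsup$ version.

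Your $\liminf$ argument, however, contains a genuine error, and the $\liminf$ identity as stated is \emph{false} in general. In your final step you assert that the limit of $f_{j^\star}(b_n)$ along the extracted subsequence is bounded above by $\liminf_b f_{j^\star}(b)$; this is backwards, since any subsequential limit is at \emph{least} the full $\liminf$, not at most. A concrete counterexample to the $\liminf$ statement: take $J=2$, $z_b^1 = \exp\bigl(b(-1)^{\lfloor b\rfloor}\bigr)$ and $z_b^2 = \exp\bigl(-b(-1)^{\lfloor b\rfloor}\bigr)$. Then $z_b^1 + z_b^2 = e^b + e^{-b}$ for every $b$, so $\liminf_b (1/b)\log(z_b^1+z_b^2) = 1$, whereas $\liminf_b (1/b)\log z_b^j = -1$ for both $j$, giving $\max_j \liminf_b (1/b)\log z_b^j = -1 \neq 1$. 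You yourself flagged that ``for general non-convergent sequences the identity $\liminf\max = \max\liminf$ need not hold''; that warning was correct, and the pigeonhole patch does not repair it.

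This does not affect the paper's applications: the lemma is invoked either for the $\limsup$ direction only, or in situations where the individual limits $\lim_b (1/b)\log z_b^j$ are already known to exist, in which case $\limsup$ and $\liminf$ coincide and both identities hold trivially. The $\liminf$ clause should therefore be read under that additional hypothesis.
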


\subsection{Proof of Theorem \ref{thm:prob_rate}}
\label{app:lower_bound_exit}
Recall that for each $j \in [J]$, $\tau_b^j$ is the first time that $\{S_n\}$ hits $b W^j$, and that $\Lambda^*(\cdot)$, $I(\cdot)$ and $r_j, r_*$ are defined in Section \ref{sec:prob_wrong_exit}.

\begin{proof}[Proof of Theorem \ref{thm:prob_rate}]
Due to condition \eqref{assumptions_directions_of_mean}, we have that
\begin{align*}
       \Pro\left(\tau_b^* < \tau_b^{0} \right) 
\;=\;\sum_{j \in [J]}   \Pro\left(\tau_b^* < \tau_b^{0}, \; \tau_b^* = \tau_b^j \right) .
\end{align*}
Since $J$ is fixed, by Lemma \ref{lemma:log_max_sum}, 
if the first claim holds for each $j \in [J]$, then 
it follows that
\begin{align*}
    \lim_{b \to \infty} \frac{1}{b} \log \Pro\left(\tau_b^* < \infty \right)  = \max_{j \in [J] } (-r_j) = -r_*.
\end{align*}
Thus, it is sufficient to establish the result for a fixed $j \in [J]$. Accordingly, fix such a $j \in [J]$.

By Theorem \ref{thm:Collomore_prob},   we have that 
\begin{align*}
    \lim_{b \to \infty} \frac{1}{b}\log\Pro\left(\tau_b^j < \infty \right) =  -\inf_{x \in W^j} I(x) = -r_j,
\end{align*}
which  implies that
\begin{align*}
\limsup_{b \to \infty}    \frac{1}{b} \log \Pro(\tau^{*}_b < \tau^{0}_b,\;\; \tau_b^{*} = \tau_b^{j} )\leq \lim_{b \to \infty} \frac{1}{b}\log\Pro\left(\tau_b^j < \infty \right) = -r_j.
\end{align*}

Thus, to establish the first claim, it suffices to show that
\begin{align}\label{aux_lower_bound}
\liminf_{b \to \infty}    \frac{1}{b} \log \Pro(\tau^{*}_b < \tau^{0}_b,\;\; \tau_b^{*} = \tau_b^{j} ) \geq -r_j,
\end{align}
on which we now focus.



Let $\mathbb{C}^d[0,1]$ be the space of continuous functions from $[0,1]$ to $\bR^d$, which is endowed with the uniform norm $\|\cdot\|_{\infty}$, that is, for $f,g \in \mathbb{C}^d[0,1]$, $\|f-g\|_{\infty} = \sup_{0\leq t \leq 1} \|f(t) - g(t)\|$. For $f \in \mathbb{C}^d[0,1]$ and $\epsilon > 0$, define the $\epsilon$-neighbourhood of $f$ as
\begin{align*}
    \mathbb{B}_{\infty}(f, \epsilon)  :=  \{ g \in  \mathbb{C}^d[0,1]: \|g-f\|_{\infty} < \epsilon\}.
\end{align*}

Fix an arbitrary $v \in W^j$. By Lemma \ref{lemma:existence_epsilon_lower_bound}, there exists a small enough $\epsilon > 0$ such that
 for each $0 \leq j' \leq J$ with $j' \neq j$,
\begin{align}\label{set_property}
         B(v, 2\epsilon) \subset W^j, \;\;\text{ and } \;\;
        B(tv, 2\epsilon)\cap W^{j'} = \emptyset\; \text{ for } \;0 \leq t \leq 1,
\end{align}
where recall that $B(v, 2\epsilon) \subset \mathbb{R}^d$ denotes the open ball centered at $v$ with radius $2\epsilon$. Denote by $p_{v} \in \mathbb{C}^d[0,1]$ the path from the origin to $v$, that is
$$
p_v(t) = t v,\; \text{ for } \;t \in [0,1].
$$

Fix an arbitrary $\tau > 0$, and denote by $N_b  :=  \lfloor b /\tau \rfloor$, where $\lfloor z \rfloor$ is the integer part of $z > 0$. For $b \geq \tau$, define $s^{(b)} \in \mathbb{C}^d[0,1]$ as follows: for $0 \leq t \leq 1$,
$$
s^{(b)}(t)  :=  \frac{1}{N_b} \left( S_{\lfloor N_b t  \rfloor} + \left(N_b t - \lfloor N_b t \rfloor \right)  X_{\lfloor N_b t \rfloor + 1} \right).
$$
That is, the process $\{s^{(b)}:t \in [0,1]\}$ is constructed by linearly interpolating the random walk $\{S_n : n = 0,1,\ldots, N_b\}$, followed by rescaling in both time and space. In particular,
$s^{(b)}(k/N_b) = S_{k}/N_b$ for $k = 0,1,\ldots,N_b$ and the sample-path of $s^{(b)}$ is piece-wise linear.  Note that for any $f \in \mathbb{B}_{\infty}(p_v, \epsilon)$, by the triangle inequality,
$$
\left\| \tau f -  (b/N_b) p_v\right\|_{\infty} \leq \tau \epsilon + 
\left|\tau - (b/N_b)\right| \|p_v\|_{\infty}.
$$
By definition, for $b > \tau$, we have
$$\tau \leq \frac{b}{N_b} \leq \frac{b\tau}{b - \tau},
$$
which implies that if  $b > \tau ( 1+\epsilon^{-1}\|p_v\|_{\infty})$, then  for any $f \in \mathbb{B}_{\infty}(p_v, \epsilon)$,
$$
\left\| \tau f -  (b/N_b) p_v\right\|_{\infty}  < 2(b/N_b)\epsilon, 
$$
and thus
$$
\tau \mathbb{B}_{\infty}(p_v, \epsilon) \subset (b/N_b) \mathbb{B}_{\infty}(p_v, 2\epsilon).
$$
As a result,  $\{s^{(b)} \in \tau \mathbb{B}_{\infty}(p_v, \epsilon)\}\; \subset \;
\{s^{(b)} \in  (b/N_b) \mathbb{B}_{\infty}(p_v, 2\epsilon)\}$.
Then, by definition,  on the event $\{s^{(b)} \in \tau \mathbb{B}_{\infty}(p_v, \epsilon)\}$,  for $k = 0,1,\ldots, N_b$,
$$
\left\|\frac{1}{N_b} S_{k} - \frac{b}{N_b} p_v\left(\frac{k}{N_b}\right) \right\| \leq 2\frac{b}{N_b}  \epsilon,
$$
which is equivalent to 
$\left\|b^{-1} S_{k} - (k/N_b) v \right\| \leq 2 \epsilon$. Thus,
 due  to \eqref{set_property}, on the event $\{s^{(b)} \in \tau \mathbb{B}_{\infty}(p_v, \epsilon)\}$, we have
 $$
S_{N_b} \in b W^j, \quad S_{k} \not \in b W^{j'}\; \text{ for } \;k = 0,1,\ldots, N_b, \text{ and } 0 \leq j' \leq J \text{ with } j' \neq j,
$$
that is, in the first $N_b$ steps, the random walk $\{S_n\}$ has hit $bW^j$, but has not hit $b W^{j'}$ for each $j' \neq j$. Thus, for large enough $b$, we have 
\begin{align*}
\Pro(\tau^{*}_b < \tau^{0}_b,\;\; \tau_b^{*} = \tau_b^{j} )  \geq  \Pro\left( 
s^{(b)} \in \tau \mathbb{B}_{\infty}(p_v, \epsilon)
\right).
\end{align*}
Next, we apply the path-wise large deviations results to lower bound the term on the right-hand side. We note that the path $\tau p_v$ is differentiable on $[0,1]$ with gradient $\tau v$ for each $0 \leq t \leq 1$. Thus, by \citet[Theorem 2.3]{borovkov2012large}, we have
\begin{align*}
\liminf_{b \to \infty} \frac{1}{N_b} \log    \Pro\left( 
s^{(b)} \in \tau \mathbb{B}_{\infty}(p_v, \epsilon)
\right) \geq - \int_0^1 \Lambda^*(\tau v)\; dt =- \Lambda^*(\tau v),
\end{align*}
which implies that
$\liminf_{b \to \infty} \frac{1}{b} \log \Pro(\tau^{*}_b < \tau^{0}_b,\;\; \tau_b^{*} = \tau_b^{j} ) \geq -\tau^{-1} \Lambda^*(\tau v)$.

The rest of the arguments are the same as the proof of \citet[Theorem 2.1]{collamore1996}. Specifically, let $\mathcal{D} = \operatorname{ri}(\operatorname{cone}(\operatorname{dom}(\Lambda^*))$, which by assumption \eqref{assumption_on_X} has a non-empty intersection with $W^j$. For any  $v \in W^j \cap \mathcal{D}$,
by the proof of \citet[Theorem 2.1]{collamore1996},
$$
\inf_{\tau > 0} \tau^{-1} \Lambda^*(\tau v) = I(v), \quad \text{ and }
\inf_{v \in W^j \cap \mathcal{D}} I(v) = \inf_{v \in W^j} I(v).
$$
As a result, since $\tau > 0$ and $v \in W^j$ are arbitrary,
\begin{align*}
       \liminf_{b \to \infty} \frac{1}{b} \log \Pro(\tau^{*}_b < \tau^{0}_b,\;\; \tau_b^{*} = \tau_b^{j} ) \geq - \inf_{v \in W^j \cap \mathcal{D}} \inf_{\tau > 0} \tau^{-1} \Lambda^*(\tau v) = - \inf_{v \in W^j} I(v),
\end{align*}
which completes the proof of \eqref{aux_lower_bound}, and thus establishes the first claim.

\medskip

\noindent 
\textbf{Next, we prove the final claim that $0 < r_j < \infty$.} By Lemma \ref{app_lemma:upper_bound_var_single_j},  there exists a pair of 
 $x^j \in \overline{W^j}$  and $\beta^j \in \partial \mathcal{L}_0(\Lambda)$ such that 
$$
r_j = \beta^j \cdot x^j = I(x^j).
$$
The first equality immediately implies that $r_j < \infty$. 
Further, note that
$r_j = I(x^j)$. Since $x^j \in \overline{W^j}$, due to condition \eqref{assumptions_directions_of_mean}, we have
$x^j \in (\operatorname{cone}(B(\Exp[X_1],\delta))^c$ and $x^j \neq \textbf{0}$, where $\textbf{0}$ is the origin in $\bR^d$. Thus by \citet[Lemma 3.5]{collamore_thesis}, to show $r_j > 0$, it suffices to verify that $\Lambda^*(\textbf{0}) \neq 0$.

Due to condition \eqref{assumptions_directions_of_mean} and \eqref{assumption_on_X}, we have $\Exp[X_1] \neq \textbf{0}$ and $\textbf{0} \in \operatorname{int}(\operatorname{dom}(\Lambda))$. Since $\Lambda$ is infinitely differentiable on  $\operatorname{dom}(\Lambda)$ and $\nabla \Lambda(\textbf{0}) = \Exp[X_1] \neq \textbf{0}$ (see, e.g., Theorem 2.2 in \cite{brown1986fundamentals}), we have that for some $\theta \in \bR^d$, $\Lambda(\theta) < 0$, and thus $\Lambda^*(\textbf{0}) = \sup_{\theta \in \bR^d} (-\Lambda(\theta)) > 0$.    The proof is complete.
\end{proof}

\section{Proofs regarding importance sampling}

\subsection{Proof of Lemma \ref{lemma:upper_bound_var_single_j}}\label{app:proof_var_single_j}

\begin{proof}[Proof of Lemma \ref{lemma:upper_bound_var_single_j}]
Fix $j \in [J]$. 
Note that by definition, for each $\theta \in \operatorname{dom}(\Lambda)$,
\begin{align*}
    \widehat{\mathbb{W}}_{b}^{j}(\theta)  
= 
\mathbb{L}_{\theta}^{-1}(\tau_b^j) \, \mathbbm{1}\left\{
\tau_b^* < \tau_b^0,\; \tau_b^* = \tau_b^j \right\} \leq \mathbb{L}_{\theta}^{-1}(\tau_b^j),
\end{align*}
which implies that
\begin{align*}
    \Exp_{\theta}\left[ 
\left(\widehat{\mathbb{W}}_{b}^{j}(\theta)\right)^2
\right] \leq
\Exp_{\theta}\left[ 
\left(\mathbb{L}_{\theta}^{-1}(\tau_b^j)\right)^2
\right].
\end{align*}
As a result, the second claim follows immediately from part (iii) of Theorem \ref{thrm:Collamore_simulation}.

For the first claim, applying part (ii) of Theorem \ref{thrm:Collamore_simulation} with $W = W^j$, we have
\begin{align*}
    \limsup_{b \to \infty} \frac{1}{b} \log \Exp_{\beta^j}\left[ 
\left(\widehat{\mathbb{W}}_{b}^{j}(\beta^j)\right)^2
\right]  \leq -2 \inf_{x \in W^j} I(x) = -2r_j.
\end{align*}
Finally, by Jensen's inequality and Theorem \ref{thm:prob_rate},
\begin{align*}
    \liminf_{b \to \infty} \frac{1}{b} \log \Exp_{\beta^j}\left[ 
\left(\widehat{\mathbb{W}}_{b}^{j}(\beta^j)\right)^2
\right]  \geq  2 \liminf_{b \to \infty}   \frac{1}{b} \log \Pro\left(\tau_b^* < \tau_b^0,\; \tau_b^* = \tau_b^j \right) = -2 r_j.
\end{align*}
The proof is complete.
\end{proof}

\subsection{Proof of Lemma \ref{lemma:full_mixture_efficient}}\label{app:proof_full_mixture}
\begin{proof}[Proof of Lemma \ref{lemma:full_mixture_efficient}]
The second part of the lemma follows directly from the first, so we focus on the proof of the latter.
We first establish the lower bound. By change of measure,
\begin{align*}
\Exp_{\Theta}\left[ \widehat{\mathbb{W}}_{b}(\Theta) \sum_{j \in \mathcal{C}} \mathbbm{1}\{\tau_b^* = \tau_b^j\}\right] = \sum_{j \in \mathcal{C}}\Pro\left(\tau_b^* < \tau_0^*, \tau_b^* = \tau^j \right).
\end{align*}
By Lemma \ref{lemma:log_max_sum} and  Theorem \ref{thm:prob_rate},
\begin{align*}
\lim_{b\to \infty} \frac{1}{b} \log  \Exp_{\Theta}\left[ \widehat{\mathbb{W}}_{b}(\Theta) \sum_{j \in \mathcal{C}} \mathbbm{1}\{\tau_b^* = \tau_b^j\}\right] = -\min_{j \in \mathcal{C}} r_j.
\end{align*}
Note that the indicators above correspond to disjoint events due to condition \eqref{assumption_on_regions}. Then by Jensen's inequality, 
\begin{align*}
        \liminf_{b\to \infty} \frac{1}{b} \log  \Exp_{\Theta}\left[ \left(\widehat{\mathbb{W}}_{b}(\Theta) \right)^2 \sum_{j \in \mathcal{C}} \mathbbm{1}\{\tau_b^* = \tau_b^j\}\right] \geq -2 \min_{j \in \mathcal{C}} r_j.
\end{align*}

Next, we establish a matching upper bound. By change of measure,
\begin{align*}
\Exp_{\Theta}\left[ \left(\widehat{\mathbb{W}}_{b}(\Theta)\right)^2 \sum_{j \in \mathcal{C}} \mathbbm{1}\{\tau_b^* = \tau_b^j\}\right] = \sum_{j \in \mathcal{C}}\Exp\left[\widehat{\mathbb{W}}_{b}(\Theta)\mathbbm{1}\{\tau_b^* < \tau_0^*, \tau_b^* = \tau^j\} \right].
\end{align*}
By Lemma \ref{lemma:log_max_sum} and further change of measure,
\begin{align*}
   & \limsup_{b\to \infty} \frac{1}{b} \log       \Exp_{\Theta}\left[ \left(\widehat{\mathbb{W}}_{b}(\Theta)\right)^2 \sum_{j \in \mathcal{C}} \mathbbm{1}\{\tau_b^* = \tau_b^j\}\right]  \\
   =& \max_{j \in \mathcal{C}} \limsup_{b\to \infty} \frac{1}{b} \log       \Exp_{\beta^j}\left[ \mathbb{L}_{\beta^j}^{-1}(\tau_b^j)\widehat{\mathbb{W}}_{b}(\Theta)  \mathbbm{1}\{\tau_b^* = \tau_b^j\}\right].
\end{align*}

By the definition in \eqref{def:unbiased_estimator_Theta}, for each $j \in \mathcal{C}$, since $\beta^j \in \Theta$,
\begin{align}\label{aux_Wb_j_bound}
  \widehat{\mathbb{W}}_{b}(\Theta) \mathbbm{1}\{\tau_b^* = \tau_b^j\} & \leq |\Theta| 
   \left(   
\mathbb{L}_{\beta^j}(\tau_b^j) \right)^{-1} \mathbbm{1}\{\tau_b^* < \tau_b^0, \tau_b^* = \tau_b^j\}.
\end{align}  
Since $|\Theta|$ is fixed and finite,
\begin{align*}
   & \limsup_{b\to \infty} \frac{1}{b} \log       \Exp_{\Theta}\left[ \left(\widehat{\mathbb{W}}_{b}(\Theta)\right)^2 \sum_{j \in \mathcal{C}} \mathbbm{1}\{\tau_b^* = \tau_b^j\}\right]  \\
   \leq & \max_{j \in \mathcal{C}} \limsup_{b\to \infty} \frac{1}{b} \log       \Exp_{\beta^j}\left[ \mathbb{L}_{\beta^j}^{-2}(\tau_b^j)  \mathbbm{1}\{\tau_b^* < \tau_b^0,  \tau_b^* = \tau_b^j\} \right] = -2 \min_{j \in \mathcal{C}} r_j.
\end{align*}
where the last equality is due to Lemma \ref{lemma:upper_bound_var_single_j}.
The proof is complete.
\end{proof}

\subsection{Proof of Theorem \ref{thm:main_strategy}}\label{app:proof_main_strategy}
\begin{proof}[Proof of Theorem \ref{thm:main_strategy}]
By the same argument as in Lemma \ref{lemma:full_mixture_efficient}, in particular, the Jensen's inequality,
\begin{align*}
      \liminf_{b\to \infty} \frac{1}{b} \log \Exp_{\Theta}\left[ \left(\widehat{\mathbb{W}}_{b}(\Theta) \right)^2  \right] \geq -2 r_*.
\end{align*}
Next, we establish a matching upper bound.
By Lemma \ref{lemma:full_mixture_efficient},
\begin{align*}
        \lim_{b\to \infty} \frac{1}{b} \log  \Exp_{\Theta}\left[ \left(\widehat{\mathbb{W}}_{b}(\Theta) \right)^2 \sum_{j \in \mathcal{C}} \mathbbm{1}\{\tau_b^* = \tau_b^j\}\right] = -2 \min_{j \in \mathcal{C}} r_j = -2 r_\mathcal{C}.
\end{align*}

 We claim that it suffices to show that for each $j \not \in \mathcal{C}$,
\begin{align}\label{aux_j_not_in_C}
        \limsup_{b\to \infty} \frac{1}{b} \log  \Exp_{\Theta}\left[ \left(\widehat{\mathbb{W}}_{b}(\Theta) \right)^2 \mathbbm{1}\{\tau_b^* = \tau_b^j\}\right]  \leq -2 r_\mathcal{C}.
\end{align}
Indeed, if \eqref{aux_j_not_in_C} holds, by Theorem \ref{thm:prob_rate} and Jensen's inequality, we have that for each $j \not \in \mathcal{C}$,
$$-r_{j} = \lim_{b \to \infty} \frac{1}{b} \log\Pro\left(\tau_b^* < \tau_b^0, \tau_b^* = \tau_b^j \right) \leq -r_{\mathcal{C}},
$$
which would imply $r_{\mathcal{C}}  = r_{*}$. Then the second claim follows immediately from Lemma \ref{lemma:log_max_sum}.

Now, we fix some $j \not \in \mathcal{C}$ and show \eqref{aux_j_not_in_C} holds. By assumption, there exists some $\gamma \in \Theta$ such that \eqref{def:key_condition} holds. As in the proof of Lemma \ref{lemma:full_mixture_efficient}, by change of measure,
\begin{align*}
    \Exp_{\Theta}\left[ \left(\widehat{\mathbb{W}}_{b}(\Theta) \right)^2 \mathbbm{1}\{\tau_b^* = \tau_b^j\}\right] =
     \Exp_{\gamma}\left[ 
     \mathbb{L}_{\gamma}^{-1}(\tau_b^j)
     \widehat{\mathbb{W}}_{b}(\Theta)  \mathbbm{1}\{\tau_b^* = \tau_b^j\}\right].
\end{align*}
Then by arguments similar to those  in \eqref{aux_Wb_j_bound}, and since $|\Theta|$ is finite,
\begin{align*}
        &\limsup_{b\to \infty} \frac{1}{b} \log  \Exp_{\Theta}\left[ \left(\widehat{\mathbb{W}}_{b}(\Theta) \right)^2 \mathbbm{1}\{\tau_b^* = \tau_b^j\}\right]  \\
        \leq 
        &     \limsup_{b\to \infty} \frac{1}{b} \log  \Exp_{\gamma}\left[ \mathbb{L}_{\gamma}^{-2}(\tau_b^j) \mathbbm{1}\{\tau_b^* < \tau_b^0, \tau_b^* = \tau_b^j\}\right]\leq \limsup_{b\to \infty} \frac{1}{b} \log  \Exp_{\gamma}\left[ \mathbb{L}_{\gamma}^{-2}(\tau_b^j) \right].
\end{align*}
Finally, by Theorem \ref{thm:prob_rate}, $r_{\mathcal{C}} > 0$. Thus, due to condition \eqref{def:key_condition}, by part (iii) of Theorem \ref{thrm:Collamore_simulation},  
\begin{align*}
    \limsup_{b\to \infty} \frac{1}{b} \log  \Exp_{\Theta}\left[ \left(\widehat{\mathbb{W}}_{b}(\Theta) \right)^2 \mathbbm{1}\{\tau_b^* = \tau_b^j\}\right]  \leq - 
    v(W^j; \gamma)
    \leq -2 r_{\mathcal{C}},
\end{align*}
which proves \eqref{aux_j_not_in_C}, and thus completes the proof.
\end{proof}

\subsection{Proof of Theorem \ref{thm:negative}}\label{app:neg_result}
Recall for $j \in [J]$, $\beta^j \in \partial \mathcal{L}_0 \Lambda$ and $x^j \in \partial W^j$ are defined in Definition \eqref{def:optimal_beta_j} and Lemma \ref{app_lemma:upper_bound_var_single_j}. By part (iii) of Lemma \ref{app_lemma:upper_bound_var_single_j}, there exists $\rho^j > 0$ such that 
\begin{align*}
    x^j = \rho^j \nabla \Lambda(\beta^j).
\end{align*}
Further, recall the definition of 
  $\Pro_{\beta^j}$ in \eqref{def:exp_tilting}. For $j \in [J]$, define the cumulant function of $X_1$ under $\Pro_{\beta^j}$ and the corresponding Legendre--Fenchel conjugate as follows:
\begin{align*}
\begin{split}
        &\Lambda^j(\theta) := \log\Exp_{\beta^j}[e^{\theta \cdot X_1}] = \Lambda(\theta + \beta^j),\quad \text{ for } \theta \in \bR^{d}, \\
    & \Lambda^{j,*}(x) := \sup_{\theta \in \bR^{d}} \theta \cdot x - \Lambda^j(\theta) = \Lambda^*(x)- \beta^j \cdot x,\quad \text{ for } x \in \bR^d.
    \end{split}
\end{align*}
 
We begin by establishing several technical lemmas.
\begin{lemma}
    \label{app_lemma:P_beta_j}
    Fix $j \in [J]$. 
    Assume that conditions \eqref{assumptions_directions_of_mean}, \eqref{assumption_on_X}, and \eqref{assumption_on_regions} hold.  
    \begin{enumerate}[label=(\roman*)]
    \item $\operatorname{dom}(\Lambda^j)$ is an open set, and $\operatorname{dom}(\Lambda^{j,*}) = \operatorname{dom}(\Lambda^{*})$. Further, $
\Exp_{\beta^j}[X_1] =  {x^j}/{\rho^j}$.
\item $\Lambda^{j,*}\left({x^j}/{\rho^j} \right) = 0$, and $\Lambda^{j,*}(\cdot)$   is continuously differentiable at ${x^j}/{\rho^j}$. 
\item $\Lambda^{j,*}(x) > 0$ for $x \neq ({x^j}/{\rho^j})$, and for any closed subset $F \subset \bR^{d}$ such that $x^j/\rho^j \not\in F$,  
\begin{align*}
    \inf_{x \in  F} \Lambda^{j,*}(x) > 0.
\end{align*}
\end{enumerate}
\end{lemma}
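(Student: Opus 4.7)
The plan is to establish the three parts of the lemma in order, in each case reducing to standard properties of the cumulant generating function $\Lambda^j(\theta) = \Lambda(\theta+\beta^j)$ of $X_1$ under $\Pro_{\beta^j}$.

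For part (i), since $\operatorname{dom}(\Lambda^j) = \operatorname{dom}(\Lambda) - \beta^j$ is a translate of $\operatorname{dom}(\Lambda)$, condition \eqref{assumption_on_X} immediately gives that $\operatorname{dom}(\Lambda^j)$ is open. Likewise, $\Lambda^{j,*}(x) = \Lambda^*(x) - \beta^j \cdot x$ differs from $\Lambda^*$ only by the finite linear term $\beta^j\cdot x$, so the two effective domains coincide. For the mean identity, $\Lambda(\beta^j) = 0$ together with the openness of $\operatorname{dom}(\Lambda)$ places $0$ in $\operatorname{int}(\operatorname{dom}(\Lambda^j))$, so $\Lambda^j$ is smooth at the origin with $\nabla \Lambda^j(0) = \Exp_{\beta^j}[X_1]$ by standard exponential-family theory \cite[Theorem 2.2]{brown1986fundamentals}. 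The chain rule combined with part (iii) of Lemma \ref{app_lemma:upper_bound_var_single_j} gives $\nabla \Lambda^j(0) = \nabla \Lambda(\beta^j) = x^j/\rho^j$.

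For part (ii), since $x^j/\rho^j = \nabla \Lambda^j(0)$ and $\Lambda^j(0) = \Lambda(\beta^j) = 0$, the equality case of the Fenchel--Young inequality yields $\Lambda^{j,*}(x^j/\rho^j) = 0\cdot (x^j/\rho^j) - \Lambda^j(0) = 0$. Continuous differentiability of $\Lambda^{j,*}$ at $x^j/\rho^j$ follows from the fact that $\Lambda^j$ is $C^{\infty}$ and strictly convex on a neighborhood of $0$ (after, if necessary, quotienting by the nullspace of $\operatorname{Cov}_{\beta^j}(X_1)$, i.e.\ working in the affine hull of $\operatorname{supp}(\mu_{\beta^j})$). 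Hence $\nabla \Lambda^j$ is a local diffeomorphism near the origin, and standard Legendre-transform theory then yields $C^1$ regularity of $\Lambda^{j,*}$ at the image point $x^j/\rho^j$.

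For part (iii), Jensen's inequality gives $\Lambda^j(\theta) \geq \theta \cdot \Exp_{\beta^j}[X_1] = \theta\cdot(x^j/\rho^j)$, so $\Lambda^{j,*}(x) \geq 0$ with equality at $x^j/\rho^j$ by part (ii). Uniqueness of this minimizer is a consequence of the strict convexity of $\Lambda^j$ on $\operatorname{int}(\operatorname{dom}(\Lambda^j))$, modulo the same degeneracy caveat as above: a second zero of $\Lambda^{j,*}$ would give a second distinct affine function supporting $\Lambda^j$ at $0$, violating strict convexity. For the final statement, $0 \in \operatorname{int}(\operatorname{dom}(\Lambda^j))$ implies via \cite[Lemma 2.2.20]{dembo2009large} that the level sets $\{\Lambda^{j,*} \leq a\}$ are compact. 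For any closed $F$ with $x^j/\rho^j \notin F$, a minimizing sequence in $F$ therefore lies eventually in a compact level set; lower semicontinuity of $\Lambda^{j,*}$ together with closedness of $F$ produces a minimizer $x_\infty \in F$, and if $\inf_F \Lambda^{j,*}$ were $0$, uniqueness of the global minimizer would force $x_\infty = x^j/\rho^j \in F$, a contradiction. The principal technical obstacle is the careful treatment of potential degeneracy of $\mu_{\beta^j}$ in parts (ii) and (iii), which I would resolve by reducing to the affine hull of $\operatorname{supp}(\mu_{\beta^j})$ and invoking relative-interior versions of the convex-analysis statements above.
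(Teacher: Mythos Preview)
Your proof is correct and, for parts (i) and (ii), essentially identical to the paper's argument: translation of $\operatorname{dom}(\Lambda)$, the linear shift $\Lambda^{j,*}(x)=\Lambda^*(x)-\beta^j\cdot x$, and the local invertibility of $\nabla\Lambda^j$ near the origin to get $C^1$ regularity of $\Lambda^{j,*}$ at $x^j/\rho^j$. One remark: your degeneracy caveat (quotienting by the nullspace of $\operatorname{Cov}_{\beta^j}(X_1)$, working in the affine hull of the support) is unnecessary here. Condition \eqref{assumption_on_X} guarantees that $\operatorname{int}(\operatorname{cone}(\operatorname{dom}(\Lambda^*)))$ is nonempty, which forces the convex hull of $\operatorname{supp}(\mu)$, and hence of $\operatorname{supp}(\mu_{\beta^j})$, to be full-dimensional; the paper uses this directly to conclude that $\operatorname{Cov}_{\beta^j}(X_1)$ is invertible and then applies the implicit function theorem without any reduction.

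For the second statement of part (iii) the two arguments differ. The paper picks a small $\delta'>0$ with $B(x^j/\rho^j,\delta')\cap F=\emptyset$, uses the $C^1$ regularity from (ii) to get a positive minimum of $\Lambda^{j,*}$ on the sphere $\|x-x^j/\rho^j\|=\delta'$, and then bounds $\Lambda^{j,*}$ on $F$ from below by a convexity (radial) inequality along the segment from $x^j/\rho^j$ to $x$. Your route via compactness of the sublevel sets \cite[Lemma~2.2.20]{dembo2009large} and lower semicontinuity is equally valid and arguably cleaner: it avoids the explicit radial estimate and uses only that $0\in\operatorname{int}(\operatorname{dom}(\Lambda^j))$. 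The paper's argument, on the other hand, gives a slightly more quantitative lower bound (the sphere minimum), though this is not needed downstream.
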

\begin{proof}
By definition, $\operatorname{dom}(\Lambda^j) = \operatorname{dom}(\Lambda) - \beta^j$, and
$\operatorname{dom}(\Lambda^{j,*}) = \operatorname{dom}(\Lambda^{*})$. Thus, due to \eqref{assumption_on_X}, $\operatorname{dom}(\Lambda^j)$ is an open set. Further, by Lemma \ref{app_lemma:upper_bound_var_single_j},
    $$
\Exp_{\beta^j}[X_1] = \nabla \Lambda^j(\mathbf{0}) = \nabla \Lambda(\beta^j) = \frac{x^j}{\rho^j},
$$
which proves (i).

By condition \eqref{assumption_on_X}, the convex hull of the support of $X_1$ under $\Pro$ is $\mathbb{R}^d$, and the same holds under $\Pro_{\beta^j}$ by mutual absolute continuity. As a result, under $\Pro_{\beta^j}$, the covariance matrix of $X_1$ is invertible.  Thus, by the implicit function theorem \cite[Theorem 9.24]{rudin1976principles}, there exists two open sets $\mathcal{U},\mathcal{V} \subset \bR^{d}$ such that
$\mathbf{0} \in \mathcal{U}$, $x^j/\rho^j \in \mathcal{V}$, and 
$$
\nabla \Lambda^j :\; \mathcal{U} \;\; \to \;\; \mathcal{V} \text{ is a one-to-one function}.
$$
Denote by $H:\mathcal{V} \to \mathcal{U}$ the inverse function of $\nabla \Lambda^j: \mathcal{U} \to \mathcal{V}$. Again by Theorem 9.24 in \cite{rudin1976principles}, $H(\cdot)$ is continuously differentiable. By  the same argument as for Lemma 2.2.31(b) in \cite{dembo2009large}, for $x \in \mathcal{V}$,
$$
\Lambda^{j,*}(x) = H(x) \cdot x - \Lambda^j(H(x)),
$$
which, since $H(x^j/\rho^j) = \mathbf{0}$,  implies that (ii) holds.

Now we focus on (iii). Let $x \in \bR^d$, and assume $\Lambda^{j,*}(x) = 0$. By definition of $\Lambda^{j,*}$, we have
$$
\theta \cdot x \leq \Lambda^j(\theta), \text{ for any } \theta \in \bR^d,
$$
and thus $x$ is a sub-gradient of $\Lambda^j(\cdot)$ at zero. Since $\nabla \Lambda^{j}(\mathbf{0}) = \Exp_{\beta^j}[X_1]$, we have $x = \Exp_{\beta^j}[X_1]$, which proves the first statement in (iii).

For the second statement of (iii), since $\Lambda^{j,*}$ is continuous differentiable on $\mathcal{V}$ and $\mathbf{0} \not \in F$, there exists a sufficient small $\delta' > 0$ such that $B(x^j/\rho^j,\delta') \cap F = \emptyset$ and
$$
\inf\{\Lambda^{j,*}(x) :  \|x - x^j/\rho^j\| = \delta'  \} > 0.
$$
Due to the convexity of $\Lambda^{j,*}$, for any $x \in F$,
\begin{align*}
\Lambda^{j,*}(x) \geq &\frac{\|x-x^j/\rho^j\|}{\delta'} \Lambda^{j,*}\left(\frac{\delta'}{\|x-x^j/\rho^j\|}   x + \left(1 - \frac{\delta'}{\|x-x^j/\rho^j\|}   \right) \frac{x^j}{\rho^j}\right) \\
\geq &\inf\{\Lambda^{j,*}(z) :  \|z - x^j/\rho^j\| = \delta'  \}.
\end{align*}
Then the proof is complete.
\end{proof}

\begin{lemma}\label{lemma:aux_prob1}
    Fix $j \in [J]$. 
    Assume that conditions \eqref{assumptions_directions_of_mean}, \eqref{assumption_on_X}, and \eqref{assumption_on_regions} hold.  Then 
    for any $\delta > 0$,
        \begin{align*}
              \lim_{b \to \infty} \frac{1}{b}  \log \Pro_{\beta^j}\left(\left|{\tau_b^j}/{b} - \rho^{j}\right| \leq \delta  \right) = 0.
        \end{align*}
\end{lemma}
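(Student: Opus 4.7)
The plan is to prove the stronger quantitative statement
\[
\liminf_{b \to \infty} \Pro_{\beta^j}\bigl(|\tau_b^j/b - \rho^j| \leq \delta\bigr) \;>\; 0,
\]
which, combined with the trivial upper bound of $1$ on the probability, immediately yields the displayed logarithmic limit. The argument splits into two one-sided bounds on $\tau_b^j/b$.

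For the lower bound on $\tau_b^j$, I would use the separating hyperplane $W^j \subset \{y : \beta^j \cdot y \geq r_j\}$ from Lemma \ref{app_lemma:upper_bound_var_single_j}(i) to get $\tau_b^j \geq \sigma_b := \inf\{n : \beta^j \cdot S_n \geq b r_j\}$. Under $\Pro_{\beta^j}$, the scalar walk $\beta^j \cdot S_n$ has positive drift $\beta^j \cdot \Exp_{\beta^j}[X_1] = r_j/\rho^j$, by Lemma \ref{app_lemma:P_beta_j}(i) and the identity $\beta^j \cdot x^j = r_j$ from Lemma \ref{app_lemma:upper_bound_var_single_j}(iii). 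The strong law of large numbers then gives $\sigma_b/b \to \rho^j$ almost surely, hence $\Pro_{\beta^j}(\tau_b^j < b(\rho^j - \delta)) \to 0$.

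For the matching upper bound, set $n_b := \lfloor b\rho^j \rfloor$ and note that the inclusion $\{S_{n_b} \in b W^j\} \subset \{\tau_b^j \leq n_b\}$ reduces the task to showing $\liminf_b \Pro_{\beta^j}(S_{n_b} \in b W^j) > 0$. Because $\operatorname{dom}(\Lambda)$ is open by condition \eqref{assumption_on_X}, $X_1$ has finite second moments under $\Pro_{\beta^j}$, so the multivariate CLT yields $\sqrt{b}\,(S_{n_b}/b - x^j) \Rightarrow \widetilde{Z}$ in distribution, where $\widetilde{Z}$ is a centred Gaussian with covariance $\rho^j \nabla^2 \Lambda(\beta^j)$. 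A Portmanteau-style argument then reduces $\Pro_{\beta^j}(S_{n_b}/b \in W^j) = \Pro_{\beta^j}\bigl(\sqrt{b}(S_{n_b}/b - x^j) \in \sqrt{b}(W^j - x^j)\bigr)$, in the limit, to the Gaussian mass of the interior of the inner tangent cone of $W^j$ at $x^j$. Since $W^j$ is a full-dimensional open convex set and $x^j \in \partial W^j$, this tangent cone is solid in $\bR^d$; together with non-degeneracy of $\nabla^2 \Lambda(\beta^j)$ (stemming from strict convexity of $\Lambda$ on $\operatorname{int}(\operatorname{dom}(\Lambda))$), this makes the limiting Gaussian mass strictly positive.

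The main obstacle is this final step: upgrading the weak convergence of $\sqrt{b}(S_{n_b}/b - x^j)$ to a genuine uniform positive lower bound on $\Pro_{\beta^j}(S_{n_b} \in bW^j)$, despite the fact that $x^j$ sits exactly on $\partial W^j$ so that the drift path of the walk is tangential rather than transverse to $\partial W^j$ at the crossing time. The argument requires both the convex-geometric observation that the dilated sets $\sqrt{b}(W^j - x^j)$ eventually contain any compact subset of the interior of the tangent cone (which uses openness and convexity of $W^j$), and the non-degeneracy of the Gaussian limit; both rest on the structural conditions \eqref{assumption_on_X} and the openness/convexity of $W^j$.
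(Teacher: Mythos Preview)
Your proposal is correct and proves the stronger statement $\liminf_{b}\Pro_{\beta^j}(|\tau_b^j/b-\rho^j|\le\delta)>0$, but the route is genuinely different from the paper's.

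For the ``too early'' bound, you reduce to the scalar level-crossing time $\sigma_b$ via the half-space inclusion and then invoke the SLLN; the paper instead applies an exponential Markov (Chernoff) bound along the direction $\beta^j$, using $h(\epsilon)=\Lambda((1+\epsilon)\beta^j)$ with $h(0)=0$ and $h'(0)=r_j/\rho^j>0$, to obtain an \emph{exponential} decay of $\Pro_{\beta^j}(\tau_b^j<b(\rho^j-\delta))$. Your argument is more elementary and entirely sufficient here, since only $o(1)$ is needed.

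For the ``not too late'' bound the two arguments diverge more substantially. You use the CLT for $\sqrt{b}(S_{n_b}/b-x^j)$ together with convex-geometric control of the dilates $\sqrt{b}(W^j-x^j)$ via the tangent cone at the boundary point $x^j$; this yields a uniform positive lower bound on $\Pro_{\beta^j}(S_{n_b}\in bW^j)$. The paper avoids the boundary issue altogether: it picks interior points $v_n\in W^j$ with $v_n\to x^j$ and open balls $B(v_n,\epsilon_n)\subset W^j$, applies the Cram\'er lower bound to obtain
\[
\liminf_{b}\tfrac{1}{n_b}\log\Pro_{\beta^j}\bigl(S_{n_b}/n_b\in(\rho^j)^{-1}B(v_n,\epsilon_n/2)\bigr)\ \ge\ -\Lambda^{j,*}(v_n/\rho^j),
\]
and then lets $n\to\infty$, using the continuity of $\Lambda^{j,*}$ at $x^j/\rho^j$ (Lemma~\ref{app_lemma:P_beta_j}(ii)) to drive the right-hand side to~$0$. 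Thus the paper trades your CLT/tangent-cone step for large-deviations machinery plus smoothness of the rate function. One small caveat on your side: the non-degeneracy of $\nabla^2\Lambda(\beta^j)$ does not follow from convexity of $\Lambda$ alone; it requires that the support of $X_1$ not lie in a hyperplane, which the paper extracts from condition~\eqref{assumption_on_X} in the proof of Lemma~\ref{app_lemma:P_beta_j}(ii). With that input, your Portmanteau/tangent-cone argument goes through.
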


\begin{proof}
Note that 
$$\Pro_{\beta^j}\left( \left|\tau_b^j/b - \rho^j\right| \leq  \delta \right) =
\Pro_{\beta^j}\left( \tau_b^j \leq  b(\rho^j+\delta) \right) 
-
\Pro_{\beta^j}\left( \tau_b^j < b(\rho^j-\delta) \right).
$$
It then suffices to show that
\begin{align}
    \limsup_{b \to \infty} \frac{1}{b} \log\left( \Pro_{\beta^j}\left( \tau_b^j < b(\rho^j-\delta) \right) \right) < 0, \label{aux:step_1} \\
        \liminf_{b \to \infty} \frac{1}{b} \log\left( \Pro_{\beta^j}\left( \tau_b^j \leq b(\rho^j+\delta) \right) \right) = 0. \label{aux:step_2}
\end{align}

\noindent \underline{\textit{Step 1: we show \eqref{aux:step_1} holds.}} Without loss of generality, we assume $\delta < \rho^j$. 
By Lemma \ref{app_lemma:upper_bound_var_single_j}, for each $v \in W^j$, $\beta^j \cdot v \geq r_j$, and then by definition and the union bound,
\begin{align*}
    &\Pro_{\beta^j}\left(\tau_b^j < b (\rho^j - \delta) \right) = 
    \Pro_{\beta^j}\left(\tau_b^j <  b (\rho^j - \delta),\; S_{\tau_b^j} \in b W^j \right) \\
    \leq &
    \Pro_{\beta^j}\left( 
    \max_{n  <  b(\rho^j-\delta)} \beta^j \cdot S_n \geq b r_j
    \right) \leq \sum_{n < b(\rho^j - \delta)}
    \Pro_{\beta^j}\left( 
 \sum_{i=1}^{n} \left(\beta^j \cdot X_i \right) \geq b r_j
    \right).
\end{align*}
Since $\Lambda(\beta^j) = 0$ due to Lemma \ref{app_lemma:upper_bound_var_single_j}, by the definition of $\Pro_{\beta^j}$ in \eqref{def:exp_tilting}, for $\epsilon \in \bR$
\begin{align*}
    \Exp_{\beta^j} \left[ e^{\epsilon \beta^j \cdot X_1} \right] =  \Lambda((1+\epsilon)\beta^j) := h(\epsilon).
\end{align*}
Now, due to \eqref{assumption_on_X}, $\operatorname{dom}(\Lambda)$ is open, and thus for $\epsilon \in \bR$ such that $|\epsilon|$ is sufficiently small, $(1+\epsilon) \beta^j \in \operatorname{dom}(\Lambda)$, and thus due to Lemma \ref{app_lemma:upper_bound_var_single_j},
$$
h'(0) = \nabla \Lambda(\beta^j) \cdot \beta^j = \left(\frac{1}{\rho^j} x^j \right) \cdot \beta^j = \frac{r_j}{\rho^j} > 0.
$$
By the Markov inequality, for $n <  b(\rho^j -\delta)$ and  $\epsilon > 0$
\begin{align*}
\Pro_{\beta^j}\left( 
 \sum_{i=1}^{n} \left(\beta^j \cdot X_i \right) \geq b r_j
    \right) &\leq \exp\left(n \Lambda((1+\epsilon)\beta^j) - b \epsilon r_j \right) = \exp\left( n h(\epsilon) - b \epsilon r_j\right) \\
    &\leq \exp\left(b\left((\rho^j-\epsilon) h(\epsilon) - \epsilon r_j \right) \right).
\end{align*}
Since $h(0) = 0$ and $h'(0) = r_j/(\rho^j)$, there exists $\tilde{\epsilon} > 0$ sufficiently small, such that 
\begin{align*}
    -\tilde{\delta} := (\rho^j-\epsilon) h(\tilde{\epsilon}) - \tilde{\epsilon} r_j < 0.
\end{align*}
As a result,
\begin{align*}
    &\Pro_{\beta^j}\left(\tau_b^j < b (\rho^j - \delta) \right) \leq b \exp\left( - b \tilde{\delta} \right),
\end{align*}
and thus proves that \eqref{aux:step_1} holds since $\log(b)/b \to 0$ as $b \to \infty$.

\medskip
\noindent \underline{\textit{Step 2: we show \eqref{aux:step_2} holds.}} 
Since $x^j \in \partial W^j$ and $W^{j}$ is open, there exist $\{v_n: n \geq 1\} \subset W^j$ and $\{\epsilon_n > 0: n \geq 1\}$  such that
\begin{align*}
    \lim_{n \to \infty} v_n = x^j, \quad \text{ and } \quad 
   B(v_n, \epsilon_n) \subset W^j.
\end{align*}

For any $b > 0$, define 
$n_b := \lfloor b   \rho^j \rfloor$, and by definition,
\begin{align}\label{aux:ratio}
 1 \leq \frac{b \rho^j}{n_b} \leq \frac{1}{1 - 1/(b\rho^j)}.
\end{align}
Fix $n \geq 1$. Due to \eqref{aux:ratio}, for sufficiently large $b > 0$,
$$
\frac{b \rho^j}{n_b} B(v_n, \epsilon_n) \supset B(v_n, \epsilon_n/2) \;\; \Leftrightarrow \;\; \frac{b }{n_b} B(v_n, \epsilon_n) \supset \frac{1}{\rho^j}B(v_n, \epsilon_n/2).
$$
As a result, for sufficiently large $b > 0$, we have
\begin{align*}
 \Pro_{\beta^j}\left(\tau_b^j \leq  b(\rho^j+\delta) \right) &\geq \Pro_{\beta^j}\left({S_{n_b}}\in  b B(v_n, \epsilon_n) \right)
 = \Pro_{\beta^j}\left(\frac{S_{n_b}}{n_b}\in \frac{b}{n_b} B(v_n, \epsilon_n) \right)\\
 &\geq \Pro_{\beta^j}\left(\frac{S_{n_b}}{n_b}\in \frac{1}{\rho^j} B(v_n, \epsilon_n/2) \right).
\end{align*}
Since $\operatorname{dom}(\Lambda^j)$ is open by Lemma \ref{app_lemma:P_beta_j}, by Cram\'er theorem \cite[Corollary 6.1.6]{dembo2009large}, $\{S_{n_b}/n_b: b > 0\}$ satisfies the large deviations principle with the rate function $\Lambda^{j,*}$ under $\Pro_{\beta^j}$, which, in particular, implies that for a fixed $n \geq 1$,
\begin{align*}
    \liminf_{b \to \infty} \frac{1}{n_b} \log \Pro_{\beta^j}(\tau_b^j \leq b (\rho^j + \delta))) \geq - \inf_{x \in (\rho^j)^{-1} B(v_n, \epsilon_n/2)} \Lambda^{j,*}(x) \geq - \Lambda^{j,*}(v_n/\rho^j).
\end{align*}
Note that the left-hand side does not depend on $n$. By  Lemma \ref{app_lemma:P_beta_j},  $\Lambda^{j,*}(\cdot)$ is continuous at $x^j/\rho^j$ and $\Lambda^{j,*}(x^j/\rho^j) = 0$. 
By sending $n \to \infty$, since $v_n/\rho^j$ approaches $x^j/\rho^j$,  we have
\begin{align*}
    \liminf_{b \to \infty} \frac{1}{n_b} \log \Pro_{\beta^j}(\tau_b^j \leq b (\rho^j + \delta)) \geq  - \Lambda^{j,*}(x^j/\rho^j) = 0.
\end{align*}
Then the proof for \eqref{aux:step_2} is complete, which also completes the proof of the lemma.
\end{proof}

\begin{lemma}\label{lemma:aux_prob2}
    Fix $j \in [J]$. 
    Assume that conditions \eqref{assumptions_directions_of_mean}, \eqref{assumption_on_X},  \eqref{assumption_on_regions}, and \eqref{assumption:negative_results} hold.  For $j' \in \{0,1,\ldots,J\}$ such that $j' \neq j$,
        \begin{align*}
          \limsup_{b \to \infty} \frac{1}{b}  \log \Pro_{\beta^j}\left(\tau_b^{j'} < \infty\right) < 0.
        \end{align*}  
\end{lemma}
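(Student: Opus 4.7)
The plan is to apply Theorem~\ref{thm:Collomore_prob} to the random walk $\{S_n\}$ under the tilted measure $\Pro_{\beta^j}$, with target set $W^{j'}$, and then verify that the resulting exponential rate is strictly positive. If $j' = 0$ and $W^0 = \emptyset$, the statement is trivial, so suppose otherwise; then \eqref{assumption:negative_results} guarantees that $W^{j'}$ is a nonempty open convex set satisfying assumptions parallel to those imposed on $W^i$, $i \in [J]$.

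Lemma~\ref{app_lemma:P_beta_j} shows that under $\Pro_{\beta^j}$ the increments are i.i.d.\ with cumulant function $\Lambda^j(\theta) = \Lambda(\theta + \beta^j)$, with open domain containing $\mathbf{0}$, mean $x^j/\rho^j$, and $\operatorname{dom}(\Lambda^{j,*}) = \operatorname{dom}(\Lambda^*)$. Condition~(2) of Theorem~\ref{thm:Collomore_prob} thus follows from \eqref{assumption_on_X} (together with the second clause of \eqref{assumption:negative_results} when $j'=0$). The crux is condition~(1): one needs $\delta > 0$ with $\operatorname{cone}(B(x^j/\rho^j, \delta)) \cap W^{j'} = \emptyset$. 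This is where \eqref{assumption:negative_results} is essential. Since $x^j \in \overline{W^j}$ and $\mathbf{0} \notin \overline{W^j}$ by \eqref{assumptions_directions_of_mean}, one has $x^j \in \overline{\operatorname{cone}(W^j)} \setminus \{\mathbf{0}\}$; by \eqref{assumption:negative_results}, $\overline{\operatorname{cone}(W^j)}$ and $\overline{\operatorname{cone}(W^{j'})}$ meet only at the origin, so their traces on the unit sphere are disjoint compact sets, hence at positive angular distance. For $\delta$ small enough, every unit-normalization of a nonzero vector in $B(x^j/\rho^j, \delta)$ lies strictly within that gap, which gives $\operatorname{cone}(B(x^j/\rho^j, \delta)) \cap \operatorname{cone}(W^{j'}) \subset \{\mathbf{0}\}$; since $\mathbf{0} \notin W^{j'}$ by \eqref{assumption_on_regions}, the required separation follows.

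Theorem~\ref{thm:Collomore_prob} then yields
\[
\lim_{b \to \infty} \frac{1}{b} \log \Pro_{\beta^j}(\tau_b^{j'} < \infty) \;=\; -\inf_{x \in W^{j'}} I^j(x), \qquad I^j(x) := \sup\bigl\{\theta \cdot x : \Lambda^j(\theta) \leq 0\bigr\},
\]
and it remains to show this rate is strictly positive. This will proceed exactly as in the final paragraph of the proof of Theorem~\ref{thm:prob_rate}: the analogue of Lemma~\ref{app_lemma:upper_bound_var_single_j} under $\Pro_{\beta^j}$ produces a minimizer $x^* \in \overline{W^{j'}}$ of $I^j$; by the angular separation just established, $x^* \neq \mathbf{0}$ and $x^* \notin \operatorname{cone}(B(x^j/\rho^j,\delta))$. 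Then Lemma~3.5 of \cite{collamore_thesis} reduces positivity to $\Lambda^{j,*}(\mathbf{0}) \neq 0$, which holds because $\operatorname{dom}(\Lambda^j)$ is open and $\nabla \Lambda^j(\mathbf{0}) = x^j/\rho^j \neq \mathbf{0}$. The main technical obstacle is the angular-separation step in the second paragraph, as it is the only place where \eqref{assumption:negative_results} (rather than the weaker \eqref{assumption_on_regions}) is genuinely used.
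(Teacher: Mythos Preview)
Your proposal is correct and follows essentially the same route as the paper's proof: apply Theorem~\ref{thm:Collomore_prob} under the tilted law $\Pro_{\beta^j}$, check its hypotheses via Lemma~\ref{app_lemma:P_beta_j} and the cone-separation in \eqref{assumption:negative_results}, and then argue strict positivity of the rate exactly as in the last paragraph of the proof of Theorem~\ref{thm:prob_rate} using $\Lambda^{j,*}(\mathbf{0}) \neq 0$. The only difference is that you spell out the angular-separation/compactness argument for condition~(1) in more detail than the paper does (the paper simply asserts the existence of such a $\delta$), and you cite \eqref{assumptions_directions_of_mean} where \eqref{assumption_on_regions} would be the cleaner reference for $\mathbf{0} \notin \overline{W^j}$.
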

\begin{proof}
In condition \eqref{assumption:negative_results}, without of loss generality, assume $W^{0}$ is \textit{non-empty}, that is, case (ii) holds; otherwise, the conclusion for $j' = 0$ is trivial.

By Lemma \ref{app_lemma:P_beta_j}, $\operatorname{dom}(\Lambda^j)$ is an open set and $\operatorname{dom}(\Lambda^{j,*}) = \operatorname{dom}(\Lambda^{*})$. Further, due to \eqref{assumption:negative_results}, since $\Exp_{\beta^j}[X_1] = x^j/\rho^j$ and $x^j \in \partial W^j$, there exists some $\delta > 0$ such that
$$
\text{cone}(B(\Exp_{\beta^j}[X_1], \delta)) \cap W^{j'} = \emptyset.
$$
Then due to \eqref{assumption_on_X} and case (ii) in \eqref{assumption:negative_results} if $j' = 0$, by Theorem \ref{thm:Collomore_prob}, we have
\begin{align*}
   \lim_{b \to \infty} \frac{1}{b}  \log \Pro_{\beta^j}\left(\tau_b^{j'} < \infty\right) = - \inf_{x \in W^{j'}} I^j(x), \quad \text{ where } I^j(x) := \sup_{\theta \in \mathcal{L}_0 \Lambda^j} \theta \cdot x.
\end{align*} 
Finally, by Lemma \ref{app_lemma:P_beta_j}, $\Lambda^{j,*}(\mathbf{0}) \neq 0$. Then by the same argument as in Theorem \ref{thm:prob_rate}, $\inf_{x \in W^{j'}} I^j(x) > 0$. The proof is complete.
\end{proof}

\begin{lemma}\label{lemma:aux_prob34}
    Fix $j \in [J]$. 
    Assume that conditions \eqref{assumptions_directions_of_mean}, \eqref{assumption_on_X}, and \eqref{assumption_on_regions} hold.  
    \begin{enumerate}[label=(\roman*)]
        \item For any $\delta >0$,
        \begin{align*}
          \limsup_{n \to \infty} \frac{1}{n}  \log \Pro_{\beta^j}\left( \frac{1}{n}\beta^j \cdot S_n -\frac{r_j}{\rho^j} \geq \delta  \right) < 0.
        \end{align*}
        \item For any $\theta \in \operatorname{dom}(\Lambda)$ such that $\theta \neq \beta^j$, there exists a sufficiently small $\delta > 0$ such that
        \begin{align*}
          \limsup_{n \to \infty} \frac{1}{n}  \log \Pro_{\beta^j}\left( \frac{1}{n}(\theta -\beta^j) \cdot S_n -\Lambda(\theta) \geq -\delta  \right) < 0.
        \end{align*}      
    \end{enumerate}
\end{lemma}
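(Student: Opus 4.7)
The plan is to treat both parts as exponential tail bounds under $\Pro_{\beta^j}$ via the Chernoff method. In each case the strategy is the same: identify the $\Pro_{\beta^j}$-mean of the linear combination of $X_i$ appearing in the event, observe that the event asks for deviation above that mean, and apply Markov's inequality after a small positive exponential tilt in the openness of $\operatorname{dom}(\Lambda)$ granted by \eqref{assumption_on_X}.

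For part (i), set $Y_i := \beta^j \cdot X_i$ and compute
\[
\Exp_{\beta^j}\!\left[e^{\epsilon Y_1}\right] \;=\; e^{\Lambda((1+\epsilon)\beta^j) - \Lambda(\beta^j)} \;=\; e^{h(\epsilon)}, \qquad h(\epsilon) := \Lambda((1+\epsilon)\beta^j),
\]
using $\Lambda(\beta^j)=0$ from Lemma \ref{app_lemma:upper_bound_var_single_j}. Since $\operatorname{dom}(\Lambda)$ is open and contains $\beta^j$, the map $h$ is smooth on a neighbourhood of $0$ with $h(0)=0$ and $h'(0)=\nabla\Lambda(\beta^j)\cdot\beta^j = (x^j/\rho^j)\cdot\beta^j = r_j/\rho^j$, again by Lemma \ref{app_lemma:upper_bound_var_single_j}. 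Chernoff's bound then yields, for every small $\epsilon>0$,
\[
\frac{1}{n}\log \Pro_{\beta^j}\!\left(\tfrac{1}{n}\beta^j\cdot S_n - \tfrac{r_j}{\rho^j} \geq \delta\right) \;\leq\; h(\epsilon) - \epsilon\bigl(\tfrac{r_j}{\rho^j}+\delta\bigr) \;=\; -\epsilon\delta + o(\epsilon),
\]
which is strictly negative for sufficiently small $\epsilon>0$.

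For part (ii), set $Z_i := (\theta - \beta^j)\cdot X_i$ and $\psi(t) := \Lambda(\beta^j + t(\theta-\beta^j))$. Openness of $\operatorname{dom}(\Lambda)$ guarantees $\psi$ is smooth in a neighbourhood of $[0,1]$, with $\psi(0)=0$, $\psi(1)=\Lambda(\theta)$, and $\psi'(0) = (\theta-\beta^j)\cdot\nabla\Lambda(\beta^j) = \Exp_{\beta^j}[Z_1]$. The crucial step is the \emph{strict} inequality
\[
\Lambda(\theta) \;=\; \psi(1) \;>\; \psi(0) + \psi'(0) \;=\; \Exp_{\beta^j}[Z_1],
\]
which I would obtain from strict convexity of $\psi$. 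Strict convexity follows because $\psi''(t)$ equals the variance of $Z_1$ under the tilted measure $\Pro_{\beta^j + t(\theta-\beta^j)}$, and this variance is strictly positive: $\theta \neq \beta^j$ means $\theta - \beta^j \neq 0$, and as noted in the proof of Lemma \ref{app_lemma:P_beta_j}, condition \eqref{assumption_on_X} forces the covariance matrix of $X_1$ under any exponential tilt to be invertible, so no nonzero linear functional of $X_1$ is a.s.\ constant. Fix $\delta\in(0,\Lambda(\theta)-\Exp_{\beta^j}[Z_1])$; then $\Lambda(\theta)-\delta$ exceeds the mean of $Z_1$ under $\Pro_{\beta^j}$, and Chernoff's inequality with a small positive tilt $t$ gives
\[
\frac{1}{n}\log \Pro_{\beta^j}\!\left(\tfrac{1}{n}\sum_{i=1}^n Z_i \geq \Lambda(\theta)-\delta\right) \leq \psi(t) - t(\Lambda(\theta)-\delta) = t\bigl(\psi'(0)-\Lambda(\theta)+\delta\bigr)+o(t),
\]
which is strictly negative for sufficiently small $t>0$ by the choice of $\delta$.

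The only real obstacle is justifying the strict inequality $\Lambda(\theta) > \Exp_{\beta^j}[(\theta-\beta^j)\cdot X_1]$ in part (ii), since the convex inequality alone only gives $\geq$. As indicated above, I would discharge this either by invoking the invertibility of the tilted covariance already established in the proof of Lemma \ref{app_lemma:P_beta_j}, or, as a cleaner alternative, by applying the Cram\'er upper bound to the i.i.d.\ sequence $\{Z_i\}$ under $\Pro_{\beta^j}$ directly, which yields exponential decay at rate $\psi^*(\Lambda(\theta)-\delta)$; this rate is strictly positive as soon as $\Lambda(\theta)-\delta$ is not the $\Pro_{\beta^j}$-mean of $Z_1$, and that non-equality in turn reduces to the same non-degeneracy point.
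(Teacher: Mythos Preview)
Your argument is correct. Both parts are handled soundly: the Chernoff bound with a small positive tilt gives exactly the claimed strict negativity, and your justification of the strict inequality $\Lambda(\theta)>\Exp_{\beta^j}[(\theta-\beta^j)\cdot X_1]$ via invertibility of the tilted covariance (equivalently, strict convexity of $\Lambda$ on $\operatorname{dom}(\Lambda)$) is valid under \eqref{assumption_on_X}.

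The paper takes a slightly different route. Instead of the bare Chernoff bound, it applies the Cram\'er upper bound to $\{S_n/n\}$ under $\Pro_{\beta^j}$: writing the event in part~(i) (respectively~(ii)) as $\{S_n/n\in F\}$ for a closed half-space $F$ not containing the mean $x^j/\rho^j$, it invokes Lemma~\ref{app_lemma:P_beta_j}(iii), which establishes $\inf_{x\in F}\Lambda^{j,*}(x)>0$ for any such $F$. The two approaches are close cousins---Cram\'er's upper bound is the optimized Chernoff bound---but yours is more self-contained: you never need Lemma~\ref{app_lemma:P_beta_j}(iii) or the LDP, only Markov's inequality plus a first-order Taylor expansion of $\epsilon\mapsto\Lambda((1+\epsilon)\beta^j)$ and $t\mapsto\Lambda(\beta^j+t(\theta-\beta^j))$ near zero. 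The paper's route, in turn, is more uniform with the rest of Appendix~\ref{app:neg_result}, where $\Lambda^{j,*}$ and Lemma~\ref{app_lemma:P_beta_j} are already in play. Both rely on the same strict-convexity fact for part~(ii); the paper cites \cite[Theorem~1.13]{brown1986fundamentals} directly, whereas you derive it from the positive-definiteness of the tilted covariance noted in the proof of Lemma~\ref{app_lemma:P_beta_j}.
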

\begin{proof}
We first focus on (i). Fix $\delta > 0$ and define
$$
F_1 := \left\{x \in \bR^d: \beta^j \cdot x - r_j/\rho^j \geq \delta \right\}.
$$
Clearly, $F_1$ is closed, and since $\beta^j \cdot x^j = r_j$ due to Lemma \ref{app_lemma:upper_bound_var_single_j}, we have $x^j/\rho^j \not \in F_1$.

By Lemma \ref{app_lemma:P_beta_j},
$\operatorname{dom}(\Lambda^j)$ is open and $\inf_{x \in F_1} \Lambda^{j,*}(x) > 0$. Then by Cram\'er theorem \cite[Corollary 6.1.6]{dembo2009large}, $\{S_{n}/n: n\geq 1\}$ satisfies the large deviations principle with the rate function $\Lambda^{j,*}$ under $\Pro_{\beta^j}$, which implies that 
      \begin{align*}
          \limsup_{n \to \infty} \frac{1}{n}  \log \Pro_{\beta^j}\left( \frac{1}{n}\beta^j \cdot S_n -\frac{r_j}{\rho^j} \geq \delta  \right) \leq - \inf_{x \in F_1}\Lambda^{j,*}(x)  < 0,
        \end{align*}
and proves (i).

Now, we focus on (ii). For $\delta > 0$, define
$$
F_{2,\delta} := \left\{x \in \bR^d: (\theta - \beta^j) \cdot x - \Lambda(\theta) \geq -\delta \right\}.
$$
Clearly, $F_{2,\delta}$ is closed. By the same argument as above,
\begin{align*}
          \limsup_{n \to \infty} \frac{1}{n}  \log \Pro_{\beta^j}\left( \frac{1}{n}(\theta -\beta^j) \cdot S_n -\Lambda(\theta) \geq -\delta  \right) \leq - \inf_{x \in F_{2,\delta}} \Lambda^{j,*}(x).
\end{align*}     
Then it suffices to show that $\inf_{x \in F_{2,\delta}} \Lambda^{j,*}(x) > 0$ for sufficient small $\delta > 0$. Since $\Lambda(\beta^j) = 0$, by the strong convexity of $\Lambda$ in $\operatorname{dom}(\Lambda)$ \cite[Theorem 1.3]{brown1986fundamentals},  we have
\begin{align*}
  \epsilon := \Lambda(\theta) - (\theta - \beta^j) \cdot \frac{x^j}{\rho^j} = \Lambda(\theta) - \Lambda(\beta^j) - (\theta - \beta^j) \cdot \nabla \Lambda(\beta^j) > 0.
\end{align*}
Thus for $\delta \in (0,\epsilon)$,
 $x^j/\rho^j \not \in F_{2,\delta}$. Then by Lemma \ref{app_lemma:P_beta_j}, $\inf_{x \in F_{2,\delta}} \Lambda^{j,*}(x) > 0$ for $\delta \in (0,\epsilon)$. The proof is complete.
\end{proof}

For $j \in [J]$, $\epsilon, \delta > 0$, and $\Theta \subset \operatorname{dom}(\Lambda)$, define the following events:
\begin{align}\label{def:aux_A_epsilon}
\begin{split}
  & \mathcal{A}_{b}^{(j,1)}  :=   
 \{\tau_b^k = \infty,\; \text{ for } \;k =0,1,\ldots,J \text{ and } k \neq j\},\\
   &\mathcal{A}_{b,\epsilon}^{(j,2)}  :=   \left\{\left|{\tau_b^j}/{b} - \rho^{j}\right| \leq \epsilon \right\},\\
& \mathcal{A}_{b, \delta}^{(j, 3)}  :=  
\left\{ \beta^j \cdot S_{\tau_b^j} \leq b(r_j +  \delta/4)  \right\},\\
 & \mathcal{A}_{b, \delta,\Theta}^{(j,4)}  := \bigcap_{\theta \in \Theta}   \left\{(\theta-\beta^j) \cdot S_{\tau_b^j} - \tau_b^j \Lambda(\theta)  \leq 
- \delta b 
\right\},\\
& \mathcal{A}_{b,\epsilon,\delta,\Theta}^{(j,*)}  := \mathcal{A}_{b}^{(j,1)} \cap \mathcal{A}_{b,\epsilon}^{(j,2)} \cap \mathcal{A}_{b,\delta}^{(j,3)} \cap \mathcal{A}_{b,\delta,\Theta}^{(j,4)}.
\end{split}
\end{align}

\begin{lemma}\label{aux_lemma_overall}
    Assume that conditions \eqref{assumptions_directions_of_mean}, \eqref{assumption_on_X},  \eqref{assumption_on_regions}, and \eqref{assumption:negative_results} hold.    Let   $\Theta \subset \operatorname{dom}(\Lambda)$ be a finite set such that $\beta^j \not \in \Theta$. There exist  $\epsilon, \delta > 0$ such that
    \begin{align*}
       \lim_{b \to \infty}  \frac{1}{b} \log \Pro_{\beta^j}\left(\mathcal{A}_{b,\epsilon,\delta,\Theta}^{(j,*)} \right) = 0.
    \end{align*}
\end{lemma}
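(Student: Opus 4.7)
The plan is to show the matching upper and lower bounds separately. The upper bound is immediate, since $\Pro_{\beta^j}(\mathcal{A}_{b,\epsilon,\delta,\Theta}^{(j,*)}) \leq 1$, so $\limsup_b (1/b) \log \Pro_{\beta^j}(\mathcal{A}_{b,\epsilon,\delta,\Theta}^{(j,*)}) \leq 0$. For the lower bound, the key observation is the elementary inclusion-exclusion/subadditivity bound:
\begin{align*}
\Pro_{\beta^j}(\mathcal{A}_{b,\epsilon,\delta,\Theta}^{(j,*)})
\;\geq\;
\Pro_{\beta^j}(\mathcal{A}_{b,\epsilon}^{(j,2)})
\;-\; \Pro_{\beta^j}\bigl((\mathcal{A}_b^{(j,1)})^c\bigr)
\;-\; \Pro_{\beta^j}\bigl((\mathcal{A}_{b,\delta}^{(j,3)})^c \cap \mathcal{A}_{b,\epsilon}^{(j,2)}\bigr)
\;-\; \Pro_{\beta^j}\bigl((\mathcal{A}_{b,\delta,\Theta}^{(j,4)})^c \cap \mathcal{A}_{b,\epsilon}^{(j,2)}\bigr).
\end{align*}
By Lemma \ref{lemma:aux_prob1}, the first term decays at logarithmic rate $0$. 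If we can show that the three subtracted terms each decay strictly exponentially, then Lemma \ref{lemma:log_max_sum} will yield $\liminf_b (1/b) \log \Pro_{\beta^j}(\mathcal{A}_{b,\epsilon,\delta,\Theta}^{(j,*)}) \geq 0$.

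For $\Pro_{\beta^j}((\mathcal{A}_b^{(j,1)})^c)$, apply subadditivity and Lemma \ref{lemma:aux_prob2} directly to each $j' \neq j$ in $\{0,1,\ldots,J\}$; since $J$ is finite, the overall probability decays strictly exponentially in $b$. For $\Pro_{\beta^j}((\mathcal{A}_{b,\delta}^{(j,3)})^c \cap \mathcal{A}_{b,\epsilon}^{(j,2)})$, I would note that on $\mathcal{A}_{b,\epsilon}^{(j,2)}$ we have $\tau_b^j \in [b(\rho^j - \epsilon), b(\rho^j + \epsilon)]$, so the event $\{\beta^j \cdot S_{\tau_b^j} > b(r_j + \delta/4)\}$ is contained in
$\bigcup_{n} \{(1/n)\beta^j \cdot S_n - r_j/\rho^j > c(\epsilon,\delta)\}$, where $n$ ranges over the integer interval above and
$c(\epsilon,\delta) := (r_j + \delta/4)/(\rho^j + \epsilon) - r_j/\rho^j$. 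Choosing $\epsilon$ small enough relative to $\delta$ (so that $c(\epsilon,\delta) > 0$), a union bound over the $O(b)$ admissible values of $n$ combined with Lemma \ref{lemma:aux_prob34}(i) yields strict exponential decay. For $\Pro_{\beta^j}((\mathcal{A}_{b,\delta,\Theta}^{(j,4)})^c \cap \mathcal{A}_{b,\epsilon}^{(j,2)})$, the hypothesis $\beta^j \notin \Theta$ means every $\theta \in \Theta$ satisfies $\theta \neq \beta^j$, so Lemma \ref{lemma:aux_prob34}(ii) supplies a threshold $\delta'_\theta > 0$ for each $\theta$; set $\delta'_{\min} := \min_{\theta \in \Theta} \delta'_\theta > 0$ (finite positive minimum since $\Theta$ is finite). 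On $\mathcal{A}_{b,\epsilon}^{(j,2)}$, the complement event $(\theta-\beta^j)\cdot S_{\tau_b^j} - \tau_b^j \Lambda(\theta) > -\delta b$ implies $(1/\tau_b^j)(\theta-\beta^j)\cdot S_{\tau_b^j} - \Lambda(\theta) > -\delta/(\rho^j - \epsilon)$. Choosing $\delta,\epsilon$ small enough that $\delta/(\rho^j - \epsilon) < \delta'_{\min}$, a union bound over $\theta \in \Theta$ and $n$ in the admissible interval combined with Lemma \ref{lemma:aux_prob34}(ii) yields strict exponential decay.

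The main obstacle, though essentially bookkeeping, is to select $\epsilon$ and $\delta$ compatibly so that both conditions $c(\epsilon,\delta) > 0$ and $\delta/(\rho^j - \epsilon) < \delta'_{\min}$ are satisfied simultaneously. This is handled by fixing $\delta > 0$ first sufficiently small that $\delta/\rho^j < \delta'_{\min}$, then picking $\epsilon > 0$ small enough that both $\rho^j - \epsilon > 0$ with $\delta/(\rho^j - \epsilon) < \delta'_{\min}$ and $(r_j + \delta/4)/(\rho^j + \epsilon) > r_j/\rho^j$. Combining all three exponential bounds with Lemma \ref{lemma:log_max_sum} and the sub-exponential lower bound from Lemma \ref{lemma:aux_prob1} then gives the claimed limit.
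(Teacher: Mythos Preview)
Your proposal is correct and follows essentially the same approach as the paper: the same union-bound decomposition, the same three auxiliary lemmas (\ref{lemma:aux_prob1}, \ref{lemma:aux_prob2}, \ref{lemma:aux_prob34}), and the same compatibility condition $\delta/4 > (r_j/\rho^j)\epsilon$ for the third term. One very minor quibble: Lemma~\ref{lemma:log_max_sum} is stated for sums of positive terms, not for a difference where one term dominates; the conclusion you need---that if $(1/b)\log p_b \to 0$ and each subtracted term decays strictly exponentially then $(1/b)\log(p_b - \sum q_b^{(i)}) \to 0$---follows from an elementary comparison rather than that lemma, but this is essentially a citation issue and the paper itself does not spell out this step either.
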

\begin{proof}
By definition and the union bound,
\begin{align*}
\Pro_{\beta^j}\left(\mathcal{A}_{b,\epsilon,\delta,\Theta}^{(j,*)} \right) \geq &\Pro_{\beta^j}\left(\mathcal{A}_{b,\epsilon}^{(j,2)}\right) - 
\Pro\left( \left(\mathcal{A}_{b}^{(j,1)}\right)^c \right) \\
&- \Pro\left( \mathcal{A}_{b,\epsilon}^{(j,2)}  \setminus \mathcal{A}_{b,\delta}^{(j,3)} \right) -
\Pro\left( \mathcal{A}_{b,\epsilon}^{(j,2)} \setminus \mathcal{A}_{b,\delta,\Theta}^{(j,4)} \right).
\end{align*}
Thus, it suffices to show that there exist $\epsilon,\delta > 0$ such that
\begin{align}
\begin{split}
     &\liminf_{b \to \infty}  \frac{1}{b} \log    \Pro_{\beta^j}\left(\left|{\tau_b^j}/{b} - \rho^{j}\right| \leq \epsilon  \right) = 0, \\
 & \limsup_{b \to \infty}  \frac{1}{b} \log    \Pro_{\beta^j}\left(\tau_b^{k} < \infty \right) < 0  \text{ for each } k \neq j, \\
  & \limsup_{b \to \infty}  \frac{1}{b} \log    \Pro_{\beta^j}\left(\left|{\tau_b^j}/{b} - \rho^{j}\right| \leq   \epsilon , \;\; \beta^j \cdot S_{\tau_b^j} \geq  b(r_j + \delta/4) \right) < 0, \\
   & \limsup_{b \to \infty}  \frac{1}{b} \log    \Pro_{\beta^j}\left(\left|{\tau_b^j}/{b} - \rho^{j}\right| \leq \epsilon,\; (\theta-\beta^j) \cdot S_{\tau_b^j} - \tau_b^j \Lambda(\theta)  \geq 
-\delta b  \right) < 0, \text{ for } \theta \in \Theta.
\end{split}
\label{eqref:aux_overall}
\end{align}
The first two claims have been established in Lemma \ref{lemma:aux_prob1} and Lemma \ref{lemma:aux_prob2}, respectively, for any $\epsilon > 0$.

Now, we focus on the third claim in \eqref{eqref:aux_overall}. By definition and the union bound,
\begin{align*}
 \Pro_{\beta^j}\left(\left|{\tau_b^j}/{b} - \rho^{j}\right| \leq   \epsilon, \;\; \beta^j \cdot S_{\tau_b^j} \geq  b(r_j + \delta/4) \right) 
 \leq \sum_{n = b(\rho^j-\epsilon)}^{b(\rho^j+\epsilon)}  \Pro_{\beta^j}\left(\beta^j \cdot S_{n} \geq  b(r_j + \delta/4) \right).
\end{align*}
For integer $n \geq 0$ such that $|n/b - \rho^j| \leq \epsilon$, we have
\begin{align*}
    \frac{1}{n}  b(r_j + \epsilon/4) \geq \frac{1}{\rho^j +\epsilon} (r_j + \delta/4) = \frac{r_j}{\rho^j} + \tilde{\delta},
\end{align*}
where we define
$$
\tilde{\delta}:= \frac{1}{\rho^j + \epsilon}\left[\frac{\delta}{4} - \frac{r_j}{\rho^j} \epsilon \right].
$$
Thus, if $\delta/4 >  ({r_j}/{\rho^j}) \epsilon $, we have $\tilde{\delta} > 0$, and by Lemma \ref{lemma:aux_prob34}(i), there exists some constant $c > 0$ such that  for all sufficiently large $n$, 
\begin{align*}
           \frac{1}{n}  \log \Pro_{\beta^j}\left( \frac{1}{n}\beta^j \cdot S_n \geq \frac{r_j}{\rho^j} + \tilde{\delta}  \right) \leq -c < 0.
    \end{align*}
As a result, for sufficiently large $b > 0$, if $\delta/4 >  ({r_j}/{\rho^j}) \epsilon $,
\begin{align*}
    \sum_{n = b(\rho^j-\epsilon)}^{b(\rho^j+\epsilon)}  \Pro_{\beta^j}\left(\beta^j \cdot S_{n} \geq  b(r_j + \delta/4) \right) \leq (2b\epsilon) e^{-c b(\rho^j - \epsilon)}.
\end{align*}
Thus, if $\delta/4 >  ({r_j}/{\rho^j}) \epsilon $ and $\epsilon < \rho^j$, then the third claim in \eqref{eqref:aux_overall} holds.

The fourth claim in \eqref{eqref:aux_overall} follows from similar arguments as above, once we replace   Lemma \ref{lemma:aux_prob34}(i) with Lemma \ref{lemma:aux_prob34}(ii).
\end{proof}

\begin{proof}[Proof of Theorem \ref{thm:negative}]
Recall the $j$ in Theorem \ref{thm:negative} and $\beta^j$ in Definition \ref{def:optimal_beta_j}. By change of measure and the definition of $\Pro_{\Theta}$ in Subsection \ref{sec:IS_general}, we have
\begin{align*}
&\Exp_{\Theta}\left[ \left(\widehat{\mathbb{W}}_{b}(\Theta) \right)^2  \right] 
\geq \Exp\left[ \widehat{\mathbb{W}}_{b}(\Theta)  \mathbbm{1}\left\{\tau_b^* = \tau_b^j\right\}\right] \\
= & \Exp_{\beta^j}\left[
\mathbb{L}_{\beta^j}^{-2}(\tau_b^j) 
   \left( \frac{1}{|\Theta|} \sum_{\theta \in \Theta} 
\mathbb{L}_{\theta}(\tau_b^j)/\mathbb{L}_{\beta^j}(\tau_b^j) \right)^{-1} 
\mathbbm{1}\left\{\tau_b^* = \tau_b^j\right\}\right].
\end{align*}

Recall the definition of $\mathcal{A}_{b,\epsilon,\delta,\Theta}^{(j,*)}$ in \eqref{def:aux_A_epsilon}. By Lemma \ref{aux_lemma_overall}, there exist  $\epsilon, \delta > 0$ such that
    \begin{align}\label{aux:overall_approach_zero}
       \lim_{b \to \infty}  \frac{1}{b} \log \Pro_{\beta^j}\left(\mathcal{A}_{b,\epsilon,\delta,\Theta}^{(j,*)} \right) = 0.
\end{align}
By definition, on the event $\mathcal{A}_{b,\epsilon,\delta,\Theta}^{(j,*)}$, since $\Lambda(\beta^j) = 0$, we have that for each $\theta \in \Theta$,
\begin{align*}
    &\mathbb{L}_{\beta^j}(\tau_b^j) = \exp(\beta^j \cdot S_{\tau_b^j}) \leq \exp\left( b(r_j + \delta/4)\right), \\
    &\mathbb{L}_{\theta}(\tau_b^j)/\mathbb{L}_{\beta^j}(\tau_b^j) =\exp\left( (\theta - \beta^j) \cdot S_{\tau_b^j} - \tau_b^j \Lambda(\theta) \right)  \leq \exp\left(-\delta b\right).
\end{align*}
 As a result, since $r_j = r_*$,
\begin{align*}
\Exp_{\Theta}\left[ \left(\widehat{\mathbb{W}}_{b}(\Theta) \right)^2  \right]  
\geq \Exp_{\beta^j} \left[
\exp\left(-2b(r_* + \delta/4)\right) 
\exp\left(\delta b\right); \mathcal{A}_{b,\epsilon,\delta,\Theta}^{(j,*)}
\right],
\end{align*}
which implies that
\begin{align*}
    \liminf_{b \to \infty} \frac{1}{b}\log \Exp_{\Theta}\left[ \left(\widehat{\mathbb{W}}_{b}(\Theta) \right)^2  \right] \geq -2r_* + \delta/2 +  \lim_{b \to \infty}  \frac{1}{b} \log \Pro_{\beta^j}\left(\mathcal{A}_{b,\epsilon,\delta,\Theta}^{(j,*)} \right).
\end{align*}
Due to \eqref{aux:overall_approach_zero}, the last term is zero. As a result, the left-hand side of the above inequality is bounded below by $-2r_* + \delta/2$. 
Since $\delta > 0$, the proof is complete.
\end{proof}

\section{Proofs regarding the multidimensional Siegmund problem}
\label{app:multi_ruin}

\begin{proof}[Proof of Lemma  \ref{lemma:opt_probs}]    
Due to \eqref{def:r_j_alt},
$$
r_A = \sup_{\theta \in \mathcal{L}_0(\Lambda)} \inf_{x \in \overline{W^A}} \theta \cdot x.
$$
By the definition of $W^A$ in \eqref{def:W_A}, for any $\theta \in \bR^d$, if
$\theta_k \geq 0, \theta_{k'} \leq 0, \text{ for }k \in A, k' \in A^c$, then
\begin{align*}
    \inf_{x \in\overline{W^A}} \theta \cdot x = 
        u \sum_{k \in A} \theta_k - \ell \sum_{k' \in A^c} \theta_{k'}.
\end{align*}
Otherwise, $\inf_{x \in \overline{W^A}} \theta \cdot x = -\infty$. Then the proof of the first claim is complete.

For the second claim, due to \eqref{def:lower_bound_V_gamma} and since $\Lambda(\gamma) \leq 0$, we have
\begin{align*}
v_A(\gamma)  
\geq  
\sup_{\theta \in \bR^d: \Lambda(\gamma) + \Lambda(\theta-\gamma) \leq 0}\; \inf_{x \in \overline{W^A}} \theta \cdot x \geq 
\sup_{\theta \in \bR^d:  \Lambda(\theta-\gamma) \leq 0} \;\inf_{x \in \overline{W^A}} \theta \cdot x.
\end{align*}
The rest of the arguments are the same as above. The proof is complete.
\end{proof}

\subsection{KKT conditions: multidimensional Siegmund problem}\label{app:KKT}
Recall the first optimization problem in  Lemma \ref{lemma:opt_probs}.
\begin{lemma}\label{lemma:app_KKT}
For any $A \in \mathcal{P}_d^{-}$, there exist unique  Lagrange multipliers $\widehat{\lambda}_{0},\ldots,  \widehat{\lambda}_{d}$ such that for each $k \in A$ and $k' \in A^c$,
\begin{align*}
\begin{split}
& \Lambda(\beta^A) = 0, \quad \widehat{\lambda}_0 > 0, \\
&\partial_k \Lambda(\beta^A) = (u + \widehat{\lambda}_{k})/\widehat{\lambda}_0, \qquad \beta^A_{k} \geq 0, \quad \widehat{\lambda}_k \geq 0, \quad \widehat{\lambda}_k \beta^A_{k} = 0\\
& \partial_{k'} \Lambda(\beta^A) = -(\ell + \widehat{\lambda}_{k'})/\widehat{\lambda}_0, \quad \beta^A_{k'} \leq 0, \quad \widehat{\lambda}_{k'} \geq 0, \quad
\widehat{\lambda}_{k'} \beta^A_{k'} = 0,
\end{split}
\end{align*}
where $\partial_k$ denotes the partial derivative. Further, any $\theta \in \bR^d$ that solves the above problem is the maximizer of the first optimization problem in Lemma \ref{lemma:opt_probs}.
\end{lemma}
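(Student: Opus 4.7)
The plan is to derive these conditions from the KKT theorem applied to the convex program in Lemma \ref{lemma:opt_probs}. The objective is linear, $\Lambda$ is convex, and the remaining constraints are affine, so the program is convex. To invoke KKT as a necessary condition I would first check Slater's condition in its form for convex programs with affine inequality constraints: it suffices to exhibit a feasible point where the nonlinear constraint $\Lambda(\theta) \leq 0$ holds strictly. Picking any $k_0 \in A$ and setting $\theta = \epsilon\, e_{k_0}$, the sign constraints are automatically satisfied, and since $\operatorname{dom}(\Lambda)$ is open with $\nabla \Lambda(\mathbf{0}) = \Exp[X_1]$ having strictly negative coordinates (the standing assumption of Section \ref{sec:multi_Siegmund}), one has $\Lambda(\theta) = \epsilon\, \Exp[X_{1,k_0}] + o(\epsilon) < 0$ for all sufficiently small $\epsilon > 0$.

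Slater's condition then yields the existence of nonnegative multipliers $\widehat{\lambda}_0, \widehat{\lambda}_1, \ldots, \widehat{\lambda}_d$ satisfying the stationarity equations
\begin{align*}
\widehat{\lambda}_0 \,\partial_k \Lambda(\beta^A) &= u + \widehat{\lambda}_k, \quad k \in A, \\
\widehat{\lambda}_0 \,\partial_{k'} \Lambda(\beta^A) &= -(\ell + \widehat{\lambda}_{k'}), \quad k' \in A^c,
\end{align*}
together with complementary slackness $\widehat{\lambda}_0 \Lambda(\beta^A) = 0$, $\widehat{\lambda}_k \beta^A_k = 0$ for $k \in A$, and $\widehat{\lambda}_{k'} \beta^A_{k'} = 0$ for $k' \in A^c$. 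If $\widehat{\lambda}_0 = 0$, the first stationarity equation at any $k \in A$ would read $0 = u + \widehat{\lambda}_k$, contradicting $u > 0$ and $\widehat{\lambda}_k \geq 0$; hence $\widehat{\lambda}_0 > 0$, which through complementary slackness forces $\Lambda(\beta^A) = 0$. Dividing the stationarity equations by $\widehat{\lambda}_0$ recovers the form written in the lemma statement.

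For uniqueness, observe that $r_A > 0$ by Theorem \ref{thm:prob_rate}, so the optimal value $u\sum_{k \in A} \beta^A_k - \ell \sum_{k' \in A^c} \beta^A_{k'} = r_A > 0$, combined with $\beta^A_k \geq 0$ for $k \in A$ and $\beta^A_{k'} \leq 0$ for $k' \in A^c$, implies that either some $\beta^A_{k_0} > 0$ with $k_0 \in A$ or some $\beta^A_{k_0'} < 0$ with $k_0' \in A^c$. In the first case, complementary slackness gives $\widehat{\lambda}_{k_0} = 0$, and stationarity pins down $\widehat{\lambda}_0 = u / \partial_{k_0}\Lambda(\beta^A)$ (the denominator being strictly positive since otherwise $u = 0$); the second case is analogous. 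Given $\widehat{\lambda}_0$, the remaining multipliers are uniquely determined by the stationarity equations. Finally, convexity makes the KKT conditions also sufficient for global optimality, and uniqueness of the maximizer from Lemma \ref{app_lemma:upper_bound_var_single_j}(i) gives the last claim of the lemma. The main subtlety I anticipate is ruling out a scale ambiguity in $\widehat{\lambda}_0$, which genuinely requires the positivity of $r_A$ rather than being a formal KKT consequence.
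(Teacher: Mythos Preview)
Your proof is correct and follows the same route as the paper: verify Slater's condition and invoke the necessity and sufficiency of KKT for convex programs (the paper simply cites Bertsekas, Assumption~6.4.3 and Lemma~6.4.4). Your argument is in fact more complete than the paper's terse version, which neither constructs an explicit feasible Slater point nor spells out why $\widehat{\lambda}_0 > 0$ or why the multipliers are unique.
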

\begin{proof}
By assumption, $\Exp[X_1] \neq \mathbf{0}$.  Since $\operatorname{dom}(X)$ is open by condition \eqref{assumption_on_X} and $\Lambda(\mathbf{0}) = 0$, there exists $\overline{\theta}$ such that $\Lambda(\overline{\theta}) < 0$.

Then for the first optimization problem in Lemma \ref{lemma:opt_probs},  Assumption 6.4.3 in \cite{bertsekas2003convex} holds and then by Lemma 6.4.4 in \cite{bertsekas2003convex}, the KKT conditions are necessary and sufficient.
\end{proof}

\subsection{Proofs regarding independent coordinates}\label{app:independent_streams}
For the independent coordinates case, the KKT conditions in Lemma \ref{app:KKT} are simplified as follows.
Let $A \in \mathcal{P}_{d}^{-}$.  For each $k \in A$ and $k' \in A^c$,
\begin{align}\label{def:KKT_independent}
\begin{split}
& \sum_{k \in [d]} \Lambda^{k}(\beta^A_k) = 0, \quad \widehat{\lambda}_0 > 0,\\ 
&(\Lambda^{k})'(\beta^A_k) = u/\widehat{\lambda}_0, \qquad \beta^A_{k} > 0, \\
& (\Lambda^{k'})'(\beta^A_{k'}) = -(\ell + \widehat{\lambda}_{k'})/\widehat{\lambda}_0, \quad \beta^A_{k'} \leq 0, \quad \widehat{\lambda}_{k'} \geq 0, \quad
\widehat{\lambda}_{k'} \beta^A_{k'} = 0,
\end{split}
\end{align}

\begin{proof}[Proof of Lemma \ref{lemma:independent_one_change}]
Fix $k \in [d]$. $\beta^{\{k\}} = \gamma^k$ if and only if $\gamma^{k}$ satisfies the KKT conditions in \eqref{def:KKT_independent} for the independent coordinates case (some are satisfied by definition): for each $k' \neq k$,
\begin{align*}
&(\Lambda^{k})'(z_k) = u/\widehat{\lambda}_0, \quad
(\Lambda^{k'})'(0) = -(\ell + \widehat{\lambda}_{k'})/\widehat{\lambda}_0.
\end{align*}
Recall the definition of $\kappa_{k}^{(i)}, i = 1,2$ prior to Lemma \ref{lemma:independent_one_change}. We have that the above condition holds if and only if  for each $k' \neq k$,
$\kappa^{(0)}_{k'} \geq (\ell/u) \kappa_{k}^{(1)}
$, which completes the proof of the first claim.

For the second claim, by the first part, $r_{\{k^*\}} = u z_{k^*}$ and, in particular, $u z_{k^*} \geq \min_{k \in [d]} r_{\{k\}}$. Note that for the optimization problem in \eqref{def:gamma_k_kp}, one feasible solution is the following:
$$
\theta_{m} = z_{m}, \quad \theta_{m'}= z_{m'}, \quad
\theta_{k} = 0, \text{ for any } k \not\in\{m,m'\},
$$
which implies that for any $k \neq k'$, $s_{k,k'} \geq u(z_{k}+z_{k'})$. Thus, the two conditions in Lemma \ref{lemma:independent_one_change} imply conditions $(H1)$  and $(H2)$ in Theorem \ref{thm:sufficient}, respectively. 
\end{proof}


\subsection{Proofs regarding the homogeneous case}\label{app:homo_streams}
If the coordinates of $X_1$ are independent and identically distributed, we have $\Lambda^k(\cdot) = \Lambda^{1}(\cdot)$, $z_k = z_1$ and $\kappa_{k}^{(i)} = \kappa_{1}^{(i)}$ for $k \in [d]$ and $i \in \{1,2\}$. Further, the KKT conditions can be simplified due to homogeneity. Specifically, let $A \in \mathcal{P}_{d}^{-}$. For each $1 \leq a \leq d-1$, there exists a pair $v_a^{+}$ and $v_a^{-}$ such that if $|A| = a$, then
$$
\beta^{A}_{k} = v_a^{+}\; \text{ for } \;k \in A,\quad
\beta^{A}_{k'} = v_a^{-}\; \text{ for } \;k' \in A^c,
$$
and further for a unique multiplier $\hat{\lambda}_{a}^{-}$,
\begin{align}\label{def:homo_KKT}
\begin{split}
       &a \Lambda^{1}(v_a^{+}) + (d-a) \Lambda^{1}(v_a^{-}) = 0,\quad v_a^{+} > 0, \quad v_{a}^{-} \leq 0, \\
     & - \frac{1}{\ell} (\Lambda^1)'(v_a^{-}) = \frac{1}{u} (\Lambda^1)'(v_a^{+}) + \lambda_a^{-}, \quad 
   \lambda_a^{-} v_a^{-} = 0,
   \end{split}
\end{align}

Further, for $a = d$, we define $v_d^{+} = z_{1}$ and $v_d^{-} = -\infty$. Clearly, due to homogeneity and the KKT conditions in \eqref{def:KKT_independent}, 
\begin{align*}
    \beta_{k}^{[d]} = z_1 = v_d^{+}\;\;\text{ for each } k \in [d].
\end{align*}

The following lemma is crucial for proving Theorem \ref{thm:homo_case}.

\begin{lemma}\label{lemma:homo_ordering}
Assume the setup in Theorem \ref{thm:homo_case}. Then $v_1^{+} \geq (d-1)(-v_1^{-})$. Further,
$$
0 < v_1^{+} \leq v_2^{+} \leq \ldots \leq v_d^{+} = z_1,\quad
-v_1^{-} \leq -v_2^{-} \leq \ldots \leq v_d^{-} = \infty.
$$
\end{lemma}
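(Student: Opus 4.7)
The plan is to prove the two parts of the lemma—the inequality $v_1^{+} \geq (d-1)(-v_1^{-})$ and the two monotonicity chains—by exploiting strict convexity of $\Lambda^1$ together with the KKT system \eqref{def:homo_KKT}. Throughout I will write $\mu := (\Lambda^1)'(0) = \Exp[X_{1,1}] < 0$.

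For the first inequality, the key observation is the tangent bound $\Lambda^1(t) \geq \mu t$ for every $t \in \bR$, which follows from convexity of $\Lambda^1$ combined with $\Lambda^1(0)=0$. Applied at $t = v_1^{-} \leq 0$ this yields $\Lambda^1(v_1^{-}) \geq \mu v_1^{-} = |\mu|(-v_1^{-})$; applied at $t = v_1^{+} \geq 0$ it yields $-\Lambda^1(v_1^{+}) \leq |\mu|\, v_1^{+}$. Combining with the identity $\Lambda^1(v_1^{+}) + (d-1)\Lambda^1(v_1^{-}) = 0$ from \eqref{def:homo_KKT} gives
\[
(d-1)|\mu|(-v_1^{-}) \;\leq\; (d-1)\Lambda^1(v_1^{-}) \;=\; -\Lambda^1(v_1^{+}) \;\leq\; |\mu|\, v_1^{+},
\]
which is the desired bound.

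For the monotonicity chains, I plan to reformulate the first optimization in Lemma \ref{lemma:opt_probs} by dividing both the objective and the constraint by $(d-a)$: writing $\rho_a := a/(d-a)$, the problem becomes
\[
\max_{t_{+}\geq 0,\, t_{-}\leq 0}\; \rho_a\, u\, t_{+} - \ell\, t_{-} \quad\text{subject to}\quad \rho_a \Lambda^1(t_{+}) + \Lambda^1(t_{-}) \leq 0,
\]
so the optimal pair depends on $a$ only through $\rho_a$, which is strictly increasing in $a$ on $\{1,\dots,d-1\}$. It then suffices to show that for the one-parameter family in $\rho>0$ the (interior) optimizer $(\tilde v^{+}(\rho), \tilde v^{-}(\rho))$ has $\tilde v^{+}{}'(\rho) > 0$ and $\tilde v^{-}{}'(\rho) < 0$. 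Implicit differentiation of the KKT equalities $\rho \Lambda^1(\tilde v^{+}) + \Lambda^1(\tilde v^{-}) = 0$ and $\ell(\Lambda^1)'(\tilde v^{+}) + u(\Lambda^1)'(\tilde v^{-}) = 0$, after eliminating $\tilde v^{-}{}'(\rho)$ via the second relation, yields an expression of the form $\tilde v^{+}{}'(\rho) = -\Lambda^1(\tilde v^{+})/D(\rho)$, where $D(\rho)>0$ by strict convexity of $\Lambda^1$ combined with $(\Lambda^1)'(\tilde v^{+}) > 0$ (which holds since $(\Lambda^1)'(\tilde v^{-}) < 0$). The numerator $-\Lambda^1(\tilde v^{+})$ is positive because $\tilde v^{+}$ lies in $(z^{*},z_1)$, where $z^{*}$ denotes the unique minimizer of $\Lambda^1$. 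The sign of $\tilde v^{-}{}'(\rho)$ then follows from the second KKT equation, and tracking the substitutions yields both monotonicity statements.

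The main technical complication I anticipate is the boundary regime where the constraint $t_{-}\leq 0$ is active, i.e.\ $v_a^{-}=0$. Inspecting the full KKT conditions with multiplier $\lambda_a^{-}$ shows this can only arise when $u|\mu| \geq \ell(\Lambda^1)'(z_1)$, and in that case $(v_a^{+}, v_a^{-}) = (z_1, 0)$ for every $a \in \{1,\dots,d-1\}$, so both the first inequality and the monotonicity statements become trivial. Combined with the convention $v_d^{+} = z_1$ and $v_d^{-} = -\infty$, this completes the two chains $0 < v_1^{+} \leq \cdots \leq v_d^{+} = z_1$ and $-v_1^{-} \leq \cdots \leq -v_d^{-} = \infty$.
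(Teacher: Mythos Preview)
Your proof is correct but takes a genuinely different route from the paper's in both parts.

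For the inequality $v_1^{+} \geq (d-1)(-v_1^{-})$, the paper applies Jensen's inequality to the convex combination $\tfrac{1}{d}v_1^{+} + \tfrac{d-1}{d}v_1^{-}$ and then uses the sign pattern of $\Lambda^1$ on $(-\infty,0)$ versus $[0,z_1]$ to conclude that this combination is nonnegative. Your tangent-line bound $\Lambda^1(t) \geq \mu t$ at the origin is an equally clean one-line convexity argument; both are essentially the same idea (supporting hyperplane versus secant), and neither is clearly preferable.

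For the monotonicity chains, the paper does \emph{not} pass to a continuous parameter. Instead it argues discretely by contradiction: assuming $v_a^{+} > v_{a+1}^{+}$, the relation $\ell(\Lambda^1)'(v_a^{+}) + u(\Lambda^1)'(v_a^{-}) = 0$ and strict monotonicity of $(\Lambda^1)'$ force $-v_a^{-} > -v_{a+1}^{-}$, and then comparing the two binding constraints $a\Lambda^1(v_a^{+}) + (d-a)\Lambda^1(v_a^{-}) = 0$ and $(a+1)\Lambda^1(v_{a+1}^{+}) + (d-a-1)\Lambda^1(v_{a+1}^{-}) = 0$ produces $\Lambda^1(v_a^{+}) > \Lambda^1(v_a^{-})$, which contradicts $\Lambda^1(v_a^{+}) < 0 < \Lambda^1(v_a^{-})$. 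This avoids any appeal to the implicit function theorem or second derivatives of $\Lambda^1$. Your implicit-differentiation approach is more systematic and makes the dependence on $\rho$ transparent, but it buys that at the cost of invoking smoothness and a nonvanishing Jacobian along the whole solution curve; the paper's contradiction needs only first-order convexity and is slightly more elementary. Both handle the boundary case $v_a^{-}=0$ identically, via the dichotomy $u|\mu| \gtrless \ell(\Lambda^1)'(z_1)$, which in the paper's notation is $\ell^{-1}\kappa_1^{(0)} \gtrless u^{-1}\kappa_1^{(1)}$.
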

\begin{proof}
Due to \eqref{def:homo_KKT} with $a=1$ and by the convexity of $\Lambda^1$, we have
$$
\Lambda^{1}\left(\frac{1}{d} v_1^{+} + \frac{d-1}{d} v_1^{-} \right) \leq 0.
$$
Since $\Lambda^{1}(\cdot)$ is non-positive on $[0,z_1]$ and non-negative outside this inteval, we have
$\frac{1}{d} v_1^{+} + \frac{d-1}{d} v_1^{-} \geq 0$, which proves the first claim.

Now we show the second claim. Note that $\Lambda^1(\cdot)$ is differentiable on its $\operatorname{dom}(\Lambda^1)$, which is assumed to be open, and its derivative $(\Lambda^1)'(\cdot)$ is strictly increasing on its $\operatorname{dom}(\Lambda^1)$.  Further, $\Lambda^1(0) = \Lambda^1(z_1) = 0$, $\Lambda^{1}(t) < 0$ for $0 < t < 1$, and $\Lambda^1(t) < 0$ for $t \in \operatorname{dom}(\Lambda^1)$ and $t < 0$ or $t > z_1$. Finally, denote by  $\iota \in (0,z)$ is the unique root of $\Lambda^1$. Then on the $\operatorname{dom}(\Lambda^1)$, $\Lambda^1(\cdot)$ is strictly decreasing to the left of $\iota$ and strictly increasing on to the right of $\iota$.   We consider two cases.

Case 1: $\ell^{-1} \kappa_1^{(0)} \geq u^{-1} \kappa_1^{(1)}$. Then for each $1 \leq a \leq d-1$, the solution to \eqref{def:homo_KKT} is
$$
v_a^{+} = z_1,\quad v_a^{-} = 0 \quad (\text{ with }
\lambda_{a}^{-} = \ell^{-1} \kappa_1^{(0)} - u^{-1} \kappa_1^{(1)}
).
$$
Thus, the conclusion holds.

Case 2: $\ell^{-1} \kappa_1^{(0)} < u^{-1} \kappa_1^{(1)}$.  
In this case, for each $1 \leq a \leq d-1$, the solution to \eqref{def:homo_KKT} must satisfy
\begin{align*}
\begin{split}
       &a \Lambda^{1}(v_a^{+}) + (d-a) \Lambda^{1}(v_a^{-}) = 0,\quad \iota < v_a^{+} < z_{1}, \quad v_{a}^{-} < 0, \\
     & - \frac{1}{\ell} (\Lambda^1)'(v_a^{-}) = \frac{1}{u} (\Lambda^1)'(v_a^{+}).
\end{split}
\end{align*}
In particular, $v_{d-1}^{+} \leq v_{d}^{+}$ and 
$v_{d-1}^{-} \leq v_{d}^{-}$. 

Now, assume the contrary that
$v_a^{+} > v_{a+1}^{+}$ for some $1 \leq a \leq d-2$. Due 
to the strict monotonicity of $(\Lambda^1)'$, due to the constraint on the second line above, we must have $(-v_a^{-}) > (-v_{a+1}^{-})$. Since on the $\operatorname{dom}(\Lambda^1)$, $\Lambda(\cdot)$ is strictly decreasing to the left of $\iota$ and strictly increasing on to the right of $\iota$, we have
\begin{align*}
    0 = &(a+1) \Lambda^1(v_{a+1}^{+}) + (d-a-1)\Lambda^1(v_{a+1}^{-})\\
    < &   (a+1) \Lambda^1(v_{a}^{+}) + (d-a-1)\Lambda^1(v_{a}^{-}) \\
    = & \Lambda^1(v_{a}^{+}) - \Lambda^1(v_{a}^{-}).
\end{align*}
However,  $\Lambda^1(v_{a}^{+}) > 0$ and $\Lambda^1(v_{a}^{-}) < 0$, which leads a contradiction. The proof is complete.
\end{proof}

\begin{proof}[Proof of Theorem \ref{thm:homo_case}]
Recall the discussion at the beginning of this subsection. Due to homogeneity, $r_A$ only depends on the size of $A \in \mathcal{P}_{d}^{-}$, and if $|A| = a$ for $a = 1,\ldots,d$, then
\begin{align*}
    r_A = u a v_a^{+} + \ell (d-a)(-v_a^{-}).
\end{align*}

Fix $A \in \mathcal{P}_{d}^{-}$ with $|A| = a$ for $a = 2,\ldots,d$. By the monotonicity in Lemma \ref{lemma:homo_ordering}, 
\begin{align}\label{aux_rA_minus_r1}
\begin{split}
        r_A - r_{\{1\}} \geq & u a v_1^{+} + \ell (d-a)(-v_1^{-}) - \left(u v_1^{+} + \ell (d-1) (-v_1^{-}) \right)  \\
    =& (a-1) \left( u v_1^{+} - \ell (-v_1^{-}) \right).
    \end{split}
\end{align}
Then due to first claim Lemma \ref{lemma:homo_ordering}, 
\begin{align*}
    r_A - r_{\{1\}} \geq (a-1)\left( u - \ell/(d-1)\right) v_1^{+}.
\end{align*}
By the assumption that $d \geq 1+\ell/u$,  $r_A - r_{\{1\}} \geq 0$ for any $A \in \mathcal{P}_d^{-}$. This shows that 
$r_* = r_{\{1\}}$.

Now we show the second claim. By Theorem \ref{thm:main_strategy}, it suffices to show that for any $A \subset [d]$ with $|A| \geq 2$ and $k \in A$, we have
\begin{align*}
v_A(\beta^{\{k\}}) 
\geq 2  r_{\{1\}}.    
\end{align*}

Fix $A \subset [d]$ with $a  :=  |A| \geq 2$ and $k \in A$. Note that by definition,
$$
\beta^{\{k\}}_{k} = v_1^{+},\quad
\beta^{\{k\}}_{k'} = v_1^{-}\; \text{ for } \;k' \neq k,
$$
and
$$
\beta^{A}_{m} = v_a^{+}\; \text{ for } \;m \in A,\quad
\beta^{A}_{m'} = v_a^{-}\; \text{ for } \;m' \in A^c.
$$
Define the following $\theta \in \bR^d$:
\begin{align*}
    &\theta_{k}^{*} = v_a^{+} + v_1^{+}, \quad
    \theta_{m}^{*} = v_a^{+} + v_1^{-}\; \text{ for } \;m \in A \setminus \{k\}\\
    &\theta_{m'}^{*} = v_a^{-} + v_1^{-}\; \text{ for } \;m' \in A^c.
\end{align*}
By Lemma \ref{lemma:homo_ordering}, $\theta_k^{*} \geq 0$, $\theta_{m'}^{*} \leq 0$ for $m' \in A^c$, and for $m \in A \setminus \{k\}$,
$$
\theta_{m}^{*} = v_a^{+} + v_1^{-} \geq v_1^{+} - (-v_1^{-}) \geq (1-1/(d-1)) v_1^{+} \geq 0,
$$
where the last inequality is because $d \geq 2$. Further,
\begin{align*}
    \sum_{i \in [d]} \Lambda^{1}(\theta_i^{*} - \beta^{\{k\}}_i) = a \Lambda^{1}(v_a^{+}) + (d-a) \Lambda^{1}(v_a^{-}) = 0,
\end{align*}
where the last equality is due to the KKT condition \eqref{def:homo_KKT}. This shows that $\theta^{*}$ is a feasible solution to the second optimization problem in Lemma \ref{lemma:opt_probs} with $\gamma = \beta^{\{k\}}$. In particular,
\begin{align*}
 v_A(\beta^{\{k\}}) 
 &\geq u \sum_{m \in A} \theta^{*}_m - \ell \sum_{m' \in A^c} \theta^{*}_{m'} \\
 & = u(v_a^{+} + v_1^{+}) + u (a-1) (v_a^+ + v_1^{-}) +\ell (d-a)(-v_a^{-} + (-v_1^{-})).
\end{align*}
Since $r_A = u a v_a^{+} + \ell (d-a)(-v_a^{-})$, we have
\begin{align*}
 v_A(\beta^{\{k\}}) 
 &\geq r_A + u v_1^{+}   + (\ell(d-a)-u(a-1))(-v_1^{-}).
\end{align*}
Due to the calculation in \eqref{aux_rA_minus_r1}, we have
\begin{align*}
   &v_A(\beta^{\{k\}}) 
  - 2r_{\{1\}} 
     \geq (a-1)(u-\ell/(d-1)) v_1^{+} - (a-1)(\ell+u)(-v_1^{-}).
\end{align*}
By Lemma \ref{lemma:homo_ordering}, we have $v_1^{+} \geq (d-1)(-v_1^{-})$ and thus
\begin{align*}
     &
     v_A(\beta^{\{k\}}) - 2r_{\{1\}} 
     \geq (a-1) v_1^{+} (u - (2\ell+u)/(d-1)).
\end{align*}
If $d \geq 2+2\ell/u$, then the lower bound above is non-negative. The proof is complete.
\end{proof}

\section{Proofs regarding the gap rule}\label{app:gap_rule}

\begin{proof}[Proof of Lemma \ref{lemma:gap_opt_probs}]
Fix $ A \in \mathcal{G}_m^{-}$. Due to \eqref{def:r_j_alt},
$$
r_A =  \sup_{\theta \in \mathcal{L}_0(\Lambda)} \inf_{x \in \overline{W^A}}  \theta \cdot x.
$$
By the definition of $W^A$ in \eqref{def:gap_W_A}, 
\begin{align*}
   \inf_{x \in \overline{W^A}}  \theta \cdot x =   \min_{x \in \bR^d, a \in \bR} \theta \cdot x, \;\;&\text{ subject to} \\
   & x_{k} \geq a,\; x_{k'} \leq a-1,\; \text{ for } \;k  \in A, k'\in A^c.
\end{align*}
By elementary computation or duality for linear programming, if $\sum_{i \in [d]}\theta_i = 0$, $\theta_k \geq 0$ and $\theta_{k'} \leq 0$ for $k \in A, k' \in A^c$, then
    \begin{align*}
   \inf_{x \in \overline{W^A}}  \theta \cdot x =   \sum_{k \in A} \theta_k;\;\; \text{ Otherwise, } \inf_{x \in \overline{W^A}}  \theta \cdot x = -\infty.
\end{align*}
The proof for the first claim is complete. For the second claim, due to \eqref{def:lower_bound_V_gamma} and since $\Lambda(\gamma) \leq  0$, we have
\begin{align*}
v_A(\gamma)  
\geq  
\sup_{\theta \in \bR^d: \Lambda(\gamma) + \Lambda(\theta-\gamma) \leq 0}\; \inf_{x \in \overline{W^A}} \theta \cdot x \geq 
\sup_{\theta \in \bR^d:  \Lambda(\theta-\gamma) \leq 0} \;\inf_{x \in \overline{W^A}} \theta \cdot x.
\end{align*}
The rest of the arguments are the same as above. The proof is complete.
\end{proof}

\subsection{KKT conditions: Gap rule}\label{app:KKT_gap}
Recall the first optimization problem in  Lemma \ref{lemma:gap_opt_probs}.

\begin{lemma}\label{lemma:app_Gap_KKT}
Let $A \in \mathcal{G}_m^{-}$. Then any $\beta^A \in \bR^d$ that satisfies the following KKT conditions is the maximizer of the first optimization problem in Lemma \ref{lemma:gap_opt_probs}:
for  Lagrange multipliers $\widehat{\lambda}_{0},\ldots,  \widehat{\lambda}_{d}$ and $\widehat{\lambda}_{*}$ such that for each $k \in A$ and $k' \in A^c$,
\begin{align*}
\begin{split}
& \Lambda(\beta^A) = 0, \quad \widehat{\lambda}_0 > 0, \\
& \partial_k \Lambda(\beta^A) =(1+\widehat{\lambda}_k)/\widehat{\lambda}_0 + \widehat{\lambda}_{*}, \quad  \beta^A_{k} \geq 0, \quad \widehat{\lambda}_k \geq 0, \quad \widehat{\lambda}_k \beta^A_{k} = 0,\\
&\partial_{k'}\Lambda(\beta^A) = -\widehat{\lambda}_{k'}/\widehat{\lambda}_0 + \widehat{\lambda}_{*},\qquad \beta^A_{k'} \leq 0, \quad \widehat{\lambda}_{k'} \geq 0, \quad
\widehat{\lambda}_{k'} \beta^A_{k'} = 0,
\\
&\sum_{i \in [d]} \beta^{A}_i = 0,
\end{split}
\end{align*}
where $\partial_k$ denotes the partial derivative. 
\end{lemma}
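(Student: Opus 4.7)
The plan is to mirror the proof of Lemma \ref{lemma:app_KKT}: cast the optimization as a convex program with affine equality constraints, verify Slater's constraint qualification (Assumption 6.4.3 of \cite{bertsekas2003convex}), and then invoke Lemma 6.4.4 of that reference to conclude that the displayed KKT conditions are sufficient for optimality. The stationarity equations match after noting that the coefficients of $\widehat{\lambda}_k, \widehat{\lambda}_{k'}$ and $\widehat{\lambda}_0$ arise from the inequality constraints (hence the sign and complementary slackness restrictions), while $\widehat{\lambda}_*$ is the multiplier associated with the affine equality $\sum_i \theta_i = 0$ and is therefore unsigned.

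First I would record that the problem is convex: the objective is linear, the sign and sum-to-zero constraints are affine, and $\Lambda(\theta) \leq 0$ is a convex inequality since $\Lambda$ is convex and, by condition \eqref{assumption_on_X}, continuously differentiable on the open set $\operatorname{dom}(\Lambda)$. This yields the standard KKT format and guarantees that all partial derivatives appearing in the stated conditions are well defined at any feasible $\beta^A$.

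The main obstacle is the construction of a Slater point, i.e., a feasible $\theta$ with $\Lambda(\theta) < 0$. Unlike the multidimensional Siegmund setting of Lemma \ref{lemma:app_KKT}, one cannot simply argue by general position because the equality constraint $\sum_i \theta_i = 0$ forbids perturbing $\mathbf{0}$ along $-\Exp[X_1]$ freely. The key is to exploit the gap-rule drift hypothesis of Section \ref{sec:gap_rule}: $\Exp[X_{1,k}] > 0$ for $k \in \Mstar$ and $\Exp[X_{1,k}] < 0$ for $k \notin \Mstar$. Since $A \in \mathcal{G}_m^{-}$ satisfies $|A| = m$ and $A \neq \Mstar$, both $A \setminus \Mstar$ and $\Mstar \setminus A$ are nonempty. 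Picking $k^* \in A \setminus \Mstar$ and $k^{**} \in \Mstar \setminus A$, and setting $\theta_{k^*} = \varepsilon$, $\theta_{k^{**}} = -\varepsilon$, $\theta_j = 0$ otherwise, produces a point satisfying all sign constraints and the sum-to-zero constraint. A first-order expansion of $\Lambda$ around $\mathbf{0}$ gives
\[
\Lambda(\theta) = \varepsilon \bigl(\Exp[X_{1,k^*}] - \Exp[X_{1,k^{**}}]\bigr) + O(\varepsilon^2),
\]
whose leading coefficient is strictly negative since $\Exp[X_{1,k^*}] < 0$ and $\Exp[X_{1,k^{**}}] > 0$ reinforce each other. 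Hence $\Lambda(\theta) < 0$ for all sufficiently small $\varepsilon > 0$, and Slater's condition holds.

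With Slater's condition in hand, Lemma 6.4.4 of \cite{bertsekas2003convex} immediately yields that any $\beta^A$ satisfying the KKT system displayed in the statement is a maximizer of the first optimization problem in Lemma \ref{lemma:gap_opt_probs}.
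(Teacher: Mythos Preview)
Your proposal is correct and follows the same approach as the paper, which simply writes ``The arguments are similar to Lemma \ref{lemma:app_KKT}, and thus omitted.'' You actually supply the detail the paper omits---namely, the explicit construction of a Slater point compatible with the additional equality constraint $\sum_i \theta_i = 0$---so your write-up is in fact more complete than what appears in the paper.
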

\begin{proof}
The arguments are similar to Lemma \ref{lemma:app_KKT}, and thus omitted.
\end{proof}

\subsection{Proofs regarding Example \ref{example:corr_normal_Gap} for the Gap rule}\label{app:gap_example_corrnormal_proof}

\begin{proof}[Proof of the statement in Example \ref{example:corr_normal_Gap}]
Let $\ell \in \Mstar, \; \ell' \not\in \Mstar$ and $A = (\Mstar \setminus \{\ell\}) \cup \{\ell'\}$.  The cumulant generating function and its gradient are as follows: for $\theta \in \bR^d$
\begin{align*}
    \Lambda(\theta) = \theta' \Exp[X_1]+ \frac{1}{2} \theta' \text{Cov}(X_1) \theta, \quad
    \nabla  \Lambda(\theta) = \Exp[X_1] + \text{Cov}(X_1) \theta.
\end{align*}
Elementary calculation shows that
$$
\tilde{\gamma}^{\ell,\ell'}_{\ell} = - \frac{\mu_{+} - \mu_{-}}{\sigma^2(1-\rho)}, 
\quad \tilde{\gamma}^{\ell,\ell'}_{\ell'} =  \frac{\mu_{+} - \mu_{-}}{\sigma^2(1-\rho)}, \quad 
\tilde{\gamma}^{\ell,\ell'}_{k} = 0\; \text{ for } \;k \not \in \{\ell,\ell'\}.
$$
Further, for each $k \in A$ and $k' \in A^c$,
\begin{align*}
   \partial_{k}  \Lambda(\tilde{\gamma}^{\ell,\ell'}) =  \mu_{+},
\quad
   \partial_{k'}  \Lambda(\tilde{\gamma}^{\ell,\ell'}) =  \mu_{-}.
\end{align*}
Finally, define
$$
\widehat{\lambda}_0 = \frac{1}{\mu_{+} - \mu_{-}}, \quad
\widehat{\lambda}_k = 0\; \text{ for } \;k \in [d], \quad
\widehat{\lambda}_{*} = \mu_{-}.
$$
We have that $\tilde{\gamma}^{\ell,\ell'}$ and  $\widehat{\lambda}_{0},\ldots,  \widehat{\lambda}_{d}$ and $\widehat{\lambda}_{*}$ satisfy the KKT condition in Lemma \ref{app:KKT_gap}. As a result, $\tilde{\gamma}^{\ell,\ell'} = \beta^{A}$ and 
$\tilde{z}_{\ell,\ell'} = r_A$, which completes the proof.
\end{proof}

\subsection{Additional simulation results under the setup of Example \ref{example:corr_normal_Gap}} \label{app:gap_example_corrnormal_more_sim}

Recall the Example \ref{example:corr_normal_Gap}. In this subsection, We let $\mu_{+} = -\mu_{-} = 1/2$, $\sigma^2 = 1$, $\rho \in \{0.1,0.4\}$, $d = 100$ and $m \in \{10,30\}$. In Figure \ref{fig:Gap_d100m1030}, we use $N = 10^6$ repetitions and plot the estimated relative error against $-\log_{10}$ of the estimated probability. We observe that for probabilities as small as $10^{-10}$, the estimated relative error is below $3.5\%$ for $m = 30$ and $2.5\%$ for $m = 10$. 

\begin{figure}[!t]
    \centering
    \includegraphics[width = \textwidth]{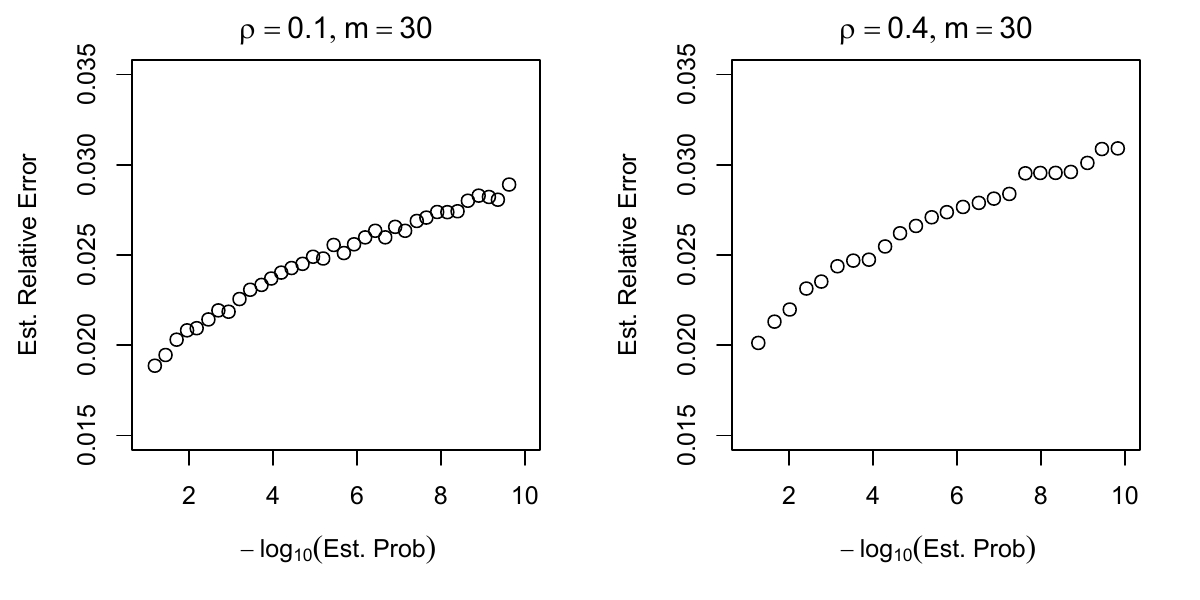} \\
    \includegraphics[width = \textwidth]{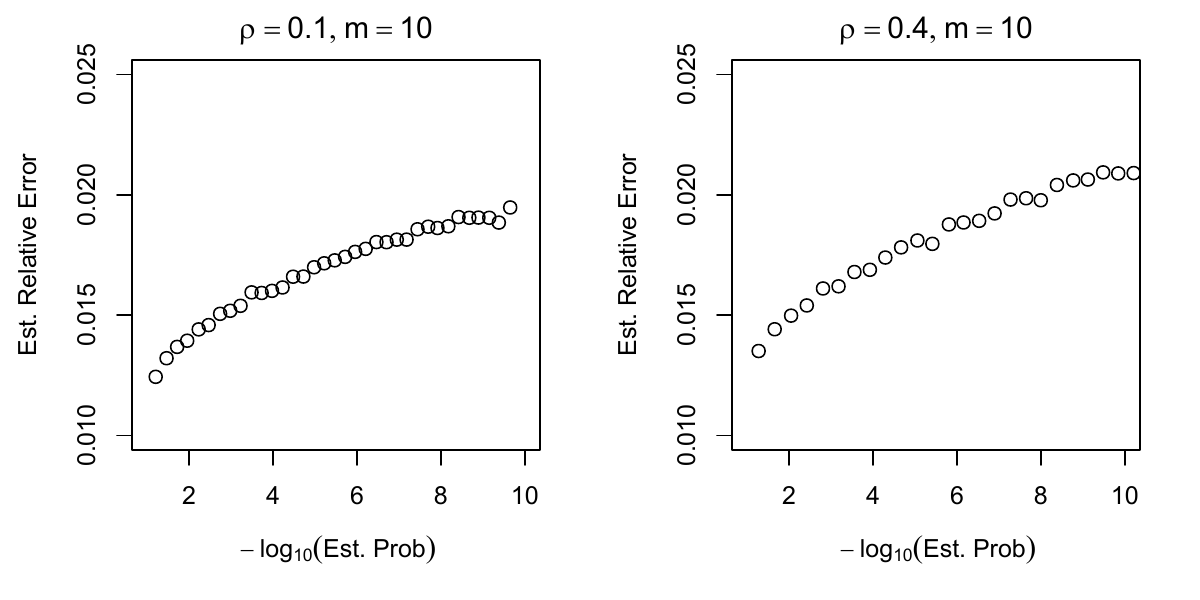}  
    \caption{Gap rule with $d=100$, $\rho \in \{0.1,0.4\}$ and $m \in \{10,30\}$ under the setup of Example \ref{example:corr_normal_Gap}. The x-axis is $-\log_{10}$ of the estimated probability, and the y-axis is the estimated relative error. The results are based on $10^6$ repetitions.}
    \label{fig:Gap_d100m1030}
\end{figure}

\section{Proofs regarding the sum-intersection rule}\label{app:sum_intersection}
Recall that for any $1 \leq L \leq d$, $\mathcal{G}_L$ is all subsets of $[d]$ with a size $L$ and 
$\mathcal{S}_L$ is all subsets of $[d]$ with a size \textit{at least} $L$.

\begin{proof}[Proof of Lemma \ref{lemma:sum_intersection_opt_probs}]
Fix some $ A \in \mathcal{S}_L$. Due to \eqref{def:r_j_alt},
$$
r_A =  \sup_{\theta \in \mathcal{L}_0(\Lambda)} \inf_{x \in \overline{W^A}}  \theta \cdot x.
$$
Let $\theta \in \bR^d$ and recall the definition of $W^A$ in \eqref{def:SI_W_A}. If there exists some $k \in A$ such that $\theta_k < 0$,  we have $\inf_{x \in \overline{W^A}}  \theta \cdot x =  -\infty$ since we can send $x_k \to +\infty$. Similarly, if there exists some $k' \in A^c$ such that $\theta_{k'} > 0$,  we have $\inf_{x \in \overline{W^A}}  \theta \cdot x =  -\infty$ since we can send $x_{k'} \to -\infty$.

Now, assume that $\theta_k \geq 0$ and $\theta_{k'} \leq 0$ for $k \in A$ and $k' \not \in A$. By the definition of $W^A$ and a change of variables,
\begin{align}\label{aux:sum-inter-opt_mid}
    \inf_{x \in \overline{W^A}}  \theta \cdot x 
=    \inf\left\{\sum_{k \in [d]} |\theta_k| x_k \;:\;  x_k \geq 0\; \text{ for } \;k \in [d],  \quad 
 \sum_{k \in B} x_k \geq 1\; \text{ for } \;B \in \mathcal{G}_L \right\}.
\end{align}
By Lemma \ref{lemma:ordering_opt1} ahead, we have
\begin{align*}
    \inf_{x \in \overline{W^A}}  \theta \cdot x 
=  \min_{1 \leq \ell \leq L} \frac{1}{\ell} \sum_{i = L - \ell + 1}^{d} |\theta|_{(i)},
\end{align*}
which completes the proof for the first claim. Since $\Lambda(\gamma) \leq 0$, the proof for the second claim is due to \eqref{def:lower_bound_V_gamma} and the result in \eqref{aux:sum-inter-opt_mid}.
\end{proof}

\begin{proof}[Proof of the statement in Remark \ref{remark:sum-intersection_convex}]
We note that $\min_{1 \leq \ell \leq L} \frac{1}{\ell} \sum_{i = L - \ell + 1}^{d} |\theta|_{(i)}$ is equal to the optimal value of the linear programming in \eqref{aux:sum-inter-opt_mid}.

Assume $\theta_k \geq 0$ and $\theta_{k'} \leq 0$ for $k \in A$ and $k' \not \in A$. By duality theory for linear programming \citep[Proposition 6.4.2]{bertsekas2003convex} (see also Exercise 6.5 in \cite{bertsekas2003convex}), 
\begin{align*}
&\inf\left\{\sum_{k \in [d]} |\theta_k| v_k \;:\;  v_k \geq 0\; \text{ for } \;k \in [d],  \quad 
 \sum_{k \in B} v_k \geq 1\; \text{ for } \;B \in \mathcal{G}_L \right\} = \\
&\max_{\lambda_B \in \bR:\; B \in \mathcal{G}_L} \sum_{B \in \mathcal{G}_L} \lambda_B, \;\; \text{ s.t. }\;\;  \lambda_B \geq 0 \;\text{ for }\; B \in \mathcal{G}_L,\;\; \sum_{B \in \mathcal{G}_L \text{ and } k \in B} \lambda_B \leq |\theta_k| \; \text{ for }\; k \in [d].
\end{align*}
As a result, the optimal value of the first optimization in Lemma \ref{lemma:sum_intersection_opt_probs} is equal to
\begin{align}\label{opt:sum_inter_equiv}
\begin{split}
       & \max_{\theta \in \bR^d,\;\; \{\lambda_B \in \bR:\; B \in \mathcal{G}_L\}} \sum_{B \in \mathcal{G}_L} \lambda_B, 
  \quad \textup{ subject to }  \\
& \Lambda(\theta) \leq 0, \quad \theta_k \geq 0 \;\text{ for }\; k \in A, \quad \theta_{k'} \leq 0 \;\text{ for }\; k' \in A^c,  \\ 
&   \lambda_B \geq 0 \;\text{ for }\; B \in \mathcal{G}_L,\quad \sum_{B \in \mathcal{G}_L \text{ and } k \in B} \lambda_B \leq |\theta_k| \; \text{ for }\; k \in [d].
\end{split}
\end{align}
Similarly, the optimal value of  the second optimization in Lemma \ref{lemma:sum_intersection_opt_probs} is equal to 
\begin{align}\label{opt:sum_inter_equiv2}
\begin{split}
       & \max_{\theta \in \bR^d,\;\; \{\lambda_B \in \bR:\; B \in \mathcal{G}_L\}} \sum_{B \in \mathcal{G}_L} \lambda_B, 
  \quad \textup{ subject to }  \\
& \Lambda(\theta - \gamma) \leq 0, \quad \theta_k \geq 0 \;\text{ for }\; k \in A, \quad \theta_{k'} \leq 0 \;\text{ for }\; k' \in A^c,  \\ 
&   \lambda_B \geq 0 \;\text{ for }\; B \in \mathcal{G}_L,\quad \sum_{B \in \mathcal{G}_L \text{ and } k \in B} \lambda_B \leq |\theta_k| \; \text{ for }\; k \in [d].
\end{split}
\end{align}
\end{proof}

\subsection{Proof of Theorem \ref{thm:sum_intersection_rule}} \label{app:proof_theorem_SI}
\begin{proof}[Proof of Theorem \ref{thm:sum_intersection_rule}]
Fix any $A \subset [d]$ such that $|A| \geq L+1$. Let $\tilde{A} \subset A$ such that $|\tilde{A}| = L$ and $\tilde{k} \in A \setminus \tilde{A}$. Recall the definition of $z_{\tilde{A}}$ and $\gamma^{\tilde{A}}$ in \eqref{opt:sum-inter_L}, and $s_{\tilde{A} \cup \{\tilde{k}\}}$ and $\tilde{\gamma}^{\tilde{A} \cup \{\tilde{k}\}}$ in \eqref{opt:sum-inter_L_1}.

Note that
$$
\gamma^{\tilde{A}}_k \geq 0 \; \text{ for } k \in \tilde{A}, \quad \text{ and } \quad
\gamma^{\tilde{A}}_k = 0 \; \text{ for } k \not \in \tilde{A},
$$
which implies
$$
\gamma^{\tilde{A}}_k \geq 0 \; \text{ for } k \in A,  \quad \text{ and } \quad
\gamma^{\tilde{A}}_{k'} \leq 0 \; \text{ for } k' \not \in A. 
$$
Applying the result in \eqref{aux:sum-inter-opt_mid} to $\theta = \gamma^{\tilde{A}}$, we have
\begin{align*}
 \inf_{x \in \overline{W^{{A}}}}  \gamma^{\tilde{A}} \cdot x = \inf\left\{\sum_{k \in [d]} |\gamma^{\tilde{A}}_k| x_k \;:\;  x_k \geq 0\; \text{ for } \;k \in [d],  \quad 
 \sum_{k \in B} x_k \geq 1\; \text{ for } \;B \in \mathcal{G}_L \right\}.
\end{align*}
Then due to Lemma \ref{lemma:ordering_opt1}, we have
\begin{align*}
\inf_{x \in \overline{W^{{A}}}}  \gamma^{\tilde{A}} \cdot x  =      \min_{1 \leq \ell \leq L} \frac{1}{\ell} \sum_{i = L - \ell + 1}^{d} |\gamma^{\tilde{A}}|_{(i)} = z_{\tilde{A}},
\end{align*}
where the final equality is because $\gamma^{\tilde{A}}$ has at most $L$ non-zero coordiantes.

By the same argument, applying the result in \eqref{aux:sum-inter-opt_mid} to $\theta = \tilde{\gamma}^{\tilde{A} \cup \{\tilde{k}\}}$ and 
$\theta = \gamma^{\tilde{A}} + \tilde{\gamma}^{\tilde{A} \cup \{\tilde{k}\}}$, and due to  Lemma \ref{lemma:ordering_opt1}, we have
\begin{align*}
&\inf_{x \in \overline{W^{{A}}}}  \tilde{\gamma}^{\tilde{A} \cup \{\tilde{k}\}} \cdot x  =      \min_{1 \leq \ell \leq L} \frac{1}{\ell} \sum_{i = L - \ell + 1}^{d} \left|\tilde{\gamma}^{\tilde{A} \cup \{\tilde{k}\}} \right|_{(i)} = s_{\tilde{A} \cup \{\tilde{k}\}}, \\
&\inf_{x \in \overline{W^{{A}}}}  \left( \gamma^{\tilde{A}}+\tilde{\gamma}^{\tilde{A} \cup \{\tilde{k}\}}\right) \cdot x  =      \min_{1 \leq \ell \leq L} \frac{1}{\ell} \sum_{i = L - \ell + 1}^{d} \left| \gamma^{\tilde{A}}+\tilde{\gamma}^{\tilde{A} \cup \{\tilde{k}\}}\right|_{(i)}.
\end{align*}
As a result, we immediately have
$$
  \min_{1 \leq \ell \leq L} \frac{1}{\ell} \sum_{i = L - \ell + 1}^{d} \left| \gamma^{\tilde{A}}+\tilde{\gamma}^{\tilde{A} \cup \{\tilde{k}\}}\right|_{(i)} \geq z_{\tilde{A}} + s_{\tilde{A} \cup \{\tilde{k}\}}.
$$
Finally, for the second optimization problem in Lemma \ref{lemma:sum_intersection_opt_probs} with $\gamma = \gamma^{\tilde{A}}$, we have that $\gamma^{\tilde{A}} + \tilde{\gamma}^{\tilde{A} \cup \{\tilde{k}\}}$ is a feasible solution, and as a result,
\begin{align*}
    v_{A}(\gamma^{\tilde{A}}) \geq  \min_{1 \leq \ell \leq L} \frac{1}{\ell} \sum_{i = L - \ell + 1}^{d} \left| \gamma^{\tilde{A}}+\tilde{\gamma}^{\tilde{A} \cup \{\tilde{k}\}}\right|_{(i)} \geq z_{\tilde{A}} + s_{\tilde{A} \cup \{\tilde{k}\}}.
\end{align*}
The proof is then complete due to Theorem \ref{thm:main_strategy}.
\end{proof}

\subsection{Supporting lemmas: sum-intersection rule}

\begin{lemma} \label{lemma:ordering_opt1}
Let $2 \leq L < d$, and $\theta \in \bR^d$ such that $\theta_1 \geq \theta_2 \geq \ldots \geq \theta_d \geq 0$. Consider the following optimization problem:
\begin{align*}
    \min_{x \in \bR^d} \theta \cdot x, \quad &\text{ subject to }\; x_k \geq 0\;\; \text{ for } \;k \in [d], \quad
     \sum_{k \in B} x_k \geq 1 \;\; \text{ for } \;B \in \mathcal{G}_L.
\end{align*}
Denote by $\tilde{\theta}_L = \sum_{k=L}^{d} \theta_k$. Then the optimal value is given by
\begin{align*}
    \min\left\{
    \tilde{\theta}_L,\; \frac{1}{2}(\tilde{\theta}_{L} + {\theta}_{L-1}),\; \frac{1}{3}(\tilde{\theta}_L + {\theta}_{L-1} + \theta_{L-2}),\; \ldots,\; \frac{1}{L}\left(\tilde{\theta}_m + {\theta}_{L-1} +  \ldots + \theta_{1} \right)
    \right\}
\end{align*}
\end{lemma}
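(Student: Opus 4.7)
The plan is to sandwich the optimum between matching upper and lower bounds. For the upper bound, for each $\ell \in \{1,\ldots,L\}$ I would exhibit the explicit feasible vector $x^{(\ell)}$ given by $x^{(\ell)}_k = 1/\ell$ for $k \in \{L-\ell+1,\ldots,d\}$ and $x^{(\ell)}_k = 0$ otherwise. Feasibility is immediate: any $B \in \mathcal{G}_L$ has at most $L-\ell$ indices in $\{1,\ldots,L-\ell\}$, so $|B \cap \{L-\ell+1,\ldots,d\}| \geq \ell$, which gives $\sum_{k\in B} x^{(\ell)}_k \geq 1$. The objective value at $x^{(\ell)}$ equals $\frac{1}{\ell}\sum_{i=L-\ell+1}^{d} \theta_i$, so the optimum is at most the claimed minimum.

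For the matching lower bound, I first reduce to sorted $x$. The feasible set is symmetric under coordinate permutations (the family of constraints $\{\sum_{k\in B}x_k \geq 1 : B \in \mathcal{G}_L\}$ is permutation-invariant), so given any feasible $x$ its non-decreasing rearrangement $\tilde x$ is also feasible, and the rearrangement inequality (applied to the non-increasing sequence $\theta$) yields $\theta\cdot\tilde x \leq \theta\cdot x$. Hence I may restrict to $0 \leq x_1 \leq \cdots \leq x_d$, for which the weakest among the constraints $\sum_{k\in B} x_k \geq 1$ is obtained by choosing $B=\{1,\ldots,L\}$, and it implies all the others. So the problem reduces to
\[
\min\Big\{\sum_{k=1}^d \theta_k x_k : 0\leq x_1\leq\cdots\leq x_d,\; x_1+\cdots+x_L\geq 1\Big\}.
\]

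Next I would introduce the increments $y_j := x_j - x_{j-1}$ (with $x_0 := 0$), so $y_j \geq 0$ and $x_k = \sum_{j=1}^k y_j$. By Abel summation,
\[
\sum_{k=1}^d \theta_k x_k \;=\; \sum_{j=1}^d y_j\, \Theta_j, \qquad \Theta_j := \sum_{k=j}^d \theta_k,
\]
and the constraint rewrites as $\sum_{j=1}^L (L-j+1)\, y_j \geq 1$. Since $\Theta_j \geq 0$, it is optimal to set $y_j = 0$ for $j > L$, leaving a single-constraint LP in $(y_1,\ldots,y_L)$. Writing $\sum_{j=1}^L \Theta_j y_j = \sum_{j=1}^L \frac{\Theta_j}{L-j+1}(L-j+1) y_j \geq \min_{1\leq j\leq L} \frac{\Theta_j}{L-j+1}$, and noting this bound is attained by concentrating all mass at the minimizing index, the optimum equals $\min_{1\leq j\leq L} \Theta_j/(L-j+1)$. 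Setting $\ell = L-j+1$ converts this to $\min_{1\leq\ell\leq L} \frac{1}{\ell}\sum_{i=L-\ell+1}^d \theta_i$, matching the claimed formula.

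I do not anticipate a genuine obstacle. The only mildly delicate step is verifying that rearranging $x$ preserves feasibility and then that the sorted constraint $x_1+\cdots+x_L \geq 1$ is indeed the binding one — both of which follow directly from the symmetry of $\mathcal{G}_L$. The rest is bookkeeping with Abel summation and the trivial solution of a one-constraint LP.
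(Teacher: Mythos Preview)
Your argument is correct. Both you and the paper begin identically: reduce to sorted $x$ via rearrangement (the paper does this by adjacent swaps, you invoke the rearrangement inequality directly), observe that the binding constraint becomes $x_1+\cdots+x_L\ge 1$, and note that $x_{L+1},\dots,x_d$ should be collapsed down since their coefficients are nonnegative. The divergence is in how the resulting $L$-variable problem is solved. The paper packages it as a separate Lemma (\ref{lemma:ordering_opt2}) and verifies the KKT conditions for the candidate optimum. You instead pass to increments $y_j=x_j-x_{j-1}$ and use Abel summation to rewrite the objective as $\sum_j \Theta_j y_j$ against a single linear constraint $\sum_{j\le L}(L-j+1)y_j\ge 1$, after which the answer is read off as the minimum ratio $\Theta_j/(L-j+1)$. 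Your route is more elementary and self-contained---it avoids Lagrange multipliers entirely and makes the structure of the optimizer transparent---while the paper's KKT verification has the minor advantage of exhibiting the primal optimizer and dual multipliers simultaneously. Your explicit upper-bound vectors $x^{(\ell)}$ are also a nice touch that the paper leaves implicit.
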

\begin{proof}
Let $x \in \bR^d$ be a feasible solution, and assume for some $1 \leq k \leq d-1$, $x_k \geq x_{k+1}$. Define $\tilde{x} \in \bR^d$ as follows:
$$
\tilde{x}_j = x_j\; \text{ for } \;j \not \in \{k,k+1\}, \quad
\tilde{x}_k = x_{k+1}, \quad \tilde{x}_{k+1} = x_k.
$$
Then $\tilde{x}$ is also a feasible solution and 
$$
\theta \cdot x - \theta \cdot \tilde{x} = (\theta_k - \theta_{k+1}) (x_k - x_{k+1}) \geq 0.
$$
That is, $\tilde{x}$ is at least as good as $x$. Then, by an inductive argument,
\begin{align*}
&\min_{x \in \bR^d} \theta \cdot x, \quad \text{ s.t. }\; x_k \geq 0\;\; \text{ for } \;k \in [d], \quad
     \sum_{k \in B} x_k \geq 1 \;\; \text{ for } \;B \in \mathcal{G}_L \\
=& \min_{x \in \bR^d} \theta \cdot x, \quad \text{ s.t. }\; 0 \leq x_1  \leq \ldots \leq x_{d} \;\; \text{ and } \;\; \sum_{k=1}^{L} x_i \geq 1.
\end{align*}
Further, since $\theta_k \geq 0$ for $k \in [d]$, for the latter optimization problem, we may assume $x_{d} = x_{d-1} = \ldots = x_{L+1} = x_{L}$ and $\sum_{k=1}^{L} x_i = 1$. Thus,
\begin{align*}
&\min_{x \in \bR^d} \theta \cdot x, \quad \text{ s.t. }\; x_k \geq 0\;\; \text{ for } \;k \in [d], \quad
     \sum_{k \in B} x_k \geq 1 \;\; \text{ for } \;B \in \mathcal{G}_L \\
=& \min_{x \in \bR^L} \theta_1 x_1 + \ldots + \theta_{L-1}x_{L-1} + \tilde{\theta}_L x_L, \;\; \text{ s.t. }\; 0 \leq x_1  \leq \ldots \leq x_{L} \;\; \text{ and } \;\; \sum_{k=1}^{L} x_i = 1.
\end{align*}
Then the proof is complete by Lemma \ref{lemma:ordering_opt2}.
\end{proof}

\begin{lemma} \label{lemma:ordering_opt2}
Let $L \geq 2$, and $\theta \in \bR^L$ such that $\theta_1 \geq \theta_2 \geq \ldots \geq \theta_{L-1} \geq 0$ and $\theta_{L} \geq 0$. Consider the following optimization problem:
\begin{align*}
    \min_{x \in \bR^L} \theta \cdot x, \quad &\text{ subject to } \; 0 \leq x_1  \leq \ldots \leq x_{L}, \quad
     \sum_{k =1}^{L} x_k = 1.
\end{align*}
Then the optimal value is given by
\begin{align*}
  v^*  :=   \min\left\{
    {\theta}_L,\; \frac{1}{2}({\theta}_L + {\theta}_{L-1}),\; \ldots,\; \frac{1}{L}\left({\theta}_L + {\theta}_{L-1} +  \ldots + \theta_{1} \right)
    \right\}.
\end{align*}
\end{lemma}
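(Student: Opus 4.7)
The plan is to reduce the problem to a one-dimensional minimization over a linear program on a scaled simplex by changing variables to successive differences. Specifically, I would introduce $y_0 := x_1$ and $y_k := x_{k+1} - x_k$ for $k = 1, \ldots, L-1$. Then the feasibility constraints $0 \leq x_1 \leq x_2 \leq \ldots \leq x_L$ become $y_j \geq 0$ for $j = 0, \ldots, L-1$, and $x_k = \sum_{j=0}^{k-1} y_j$. This change of variables is a bijection between the feasible region and $\{y \in \mathbb{R}_{\geq 0}^{L}\}$ intersected with the image of the equality constraint.

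Next I would rewrite the equality constraint and objective in terms of $y$. Summing telescopically,
\[
\sum_{k=1}^{L} x_k = \sum_{k=1}^{L} \sum_{j=0}^{k-1} y_j = \sum_{j=0}^{L-1} (L - j)\, y_j,
\]
so $\sum_k x_k = 1$ becomes $\sum_{j=0}^{L-1} (L-j)\, y_j = 1$. Setting $T_j := \sum_{k=j+1}^{L} \theta_k$ for $j = 0, \ldots, L-1$, the objective similarly rearranges as
\[
\theta \cdot x \;=\; \sum_{k=1}^{L} \theta_k \sum_{j=0}^{k-1} y_j \;=\; \sum_{j=0}^{L-1} T_j\, y_j.
\]
Thus the original problem is equivalent to
\[
\min \left\{ \sum_{j=0}^{L-1} T_j\, y_j \,:\, y_j \geq 0,\ \sum_{j=0}^{L-1} (L-j)\, y_j = 1 \right\}.
\]

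Substituting $z_j := (L-j)\, y_j$ turns this into minimization of $\sum_j \frac{T_j}{L-j} z_j$ over the standard simplex $\{z \geq 0,\ \sum_j z_j = 1\}$, whose minimum is attained at a vertex and equals $\min_{0 \leq j \leq L-1} T_j/(L-j)$. Re-indexing with $\ell = L - j$ gives $T_{L-\ell}/\ell = \frac{1}{\ell}(\theta_{L-\ell+1} + \cdots + \theta_L)$ for $\ell = 1, \ldots, L$, which is exactly $v^*$.

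There isn't really a hard step here; the only thing to be careful about is verifying that the change of variables preserves feasibility in both directions (so no extraneous vertices are introduced and no feasible points are missed), and that the nonnegativity $T_j \geq 0$ (which follows from $\theta_k \geq 0$ for all $k$, using the hypothesis) ensures the scaled simplex minimization is well-posed and attained at a single coordinate direction.
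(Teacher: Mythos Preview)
Your proof is correct and takes a genuinely different route from the paper's. The paper writes out the KKT conditions for the original problem, guesses the optimal primal--dual pair (namely $x_1=\cdots=x_{k^*-1}=0$, $x_{k^*}=\cdots=x_L=1/(L-k^*+1)$ with $\lambda=v^*$ and $\lambda_k=\sum_{\ell=k}^L(\theta_\ell-v^*)$), and verifies feasibility and complementary slackness directly. Your approach instead uses the substitution $y_0=x_1$, $y_k=x_{k+1}-x_k$ (an Abel/summation-by-parts change of variables) to convert the ordering cone into the nonnegative orthant, then a further rescaling $z_j=(L-j)y_j$ to land on the standard simplex, where the minimum of a linear functional is trivially a vertex value. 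This is arguably more elementary, since it avoids any appeal to optimality conditions and makes the extreme-point structure of the feasible region explicit; the paper's KKT verification, on the other hand, delivers the dual multipliers (and hence a certificate) for free. One small remark: your final comment that $T_j\ge 0$ is needed for the simplex minimization to be ``well-posed and attained at a single coordinate direction'' is unnecessary---the standard simplex is compact, so the minimum of any linear functional is attained at a vertex regardless of the signs of the coefficients.
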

\begin{proof}
The KKT conditions are as follows. There exist $x \in \bR^L$, $\lambda \in \bR$ and $\lambda_k \geq 0$ for $k \in [L]$ such that
\begin{align*}
&\lambda_k = \sum_{\ell=k}^{L} (\theta_\ell - \lambda) \;\; \text{ for } \;k \in [L],\quad  0 \leq x_1  \leq \ldots \leq x_{L}, \quad \sum_{k=1}^{L} x_i = 1,\\
& \lambda_1 x_1 = 0, \quad
\lambda_k(x_{k-1} - x_{k}) = 0 \;\text{ for } 2 \leq k \leq L.
\end{align*}
Denote by $k^* \in [L]$ an index that attains $v^*$, that is,
$$
v^* = \frac{1}{L-k^*+1} {\sum_{\ell=k^*}^{L} \theta_\ell}.
$$
It is elementary to verify that the following triplet $x \in \bR^L$, $\lambda \in \bR$ and $\{\lambda_{k}, k \in [L]\}$ satisfies the KKT conditions:
\begin{align*}
&x_1 = \ldots = x_{k^*-1} = 0, \quad x_{k^*} = \ldots = x_L = 1/(L-k^*+1), \quad \lambda = v^*,\\
&\lambda_k = \sum_{\ell=k}^{L} (\theta_\ell - v^*)\; \text{ for } \; k\in [L].
\end{align*}
In particular, by the definition of $k^*$ and $v^*$, $\lambda_k \geq 0$ for $k \in [L]$ and $\lambda_{k^*} = 0 $ As a result, the optimal value is $v^*$.
\end{proof}

\subsection{Additional simulation results under the setup of Example \ref{example:SI_corr}} \label{app:SI_corrnormal_more_sim}

Recall Example \ref{example:SI_corr}, where we set $d=100, L=2$, and consider $\rho \in \{0,0.05,0.1,0.2\}$. In Figure \ref{fig:SI_d100L2}, we plot the estimated relative error against $-\log_{10}$ of the estimated probability with $N=10^6$ repetitions. We observe that for probabilities as small as $10^{-10}$, the estimated relative error is below $3\%$ for $\rho \in \{0.05,0.1,0.2\}$ and below $4.2\%$ for $\rho = 0$.

\begin{figure}[t!]
    \centering
        \includegraphics[width=\textwidth]{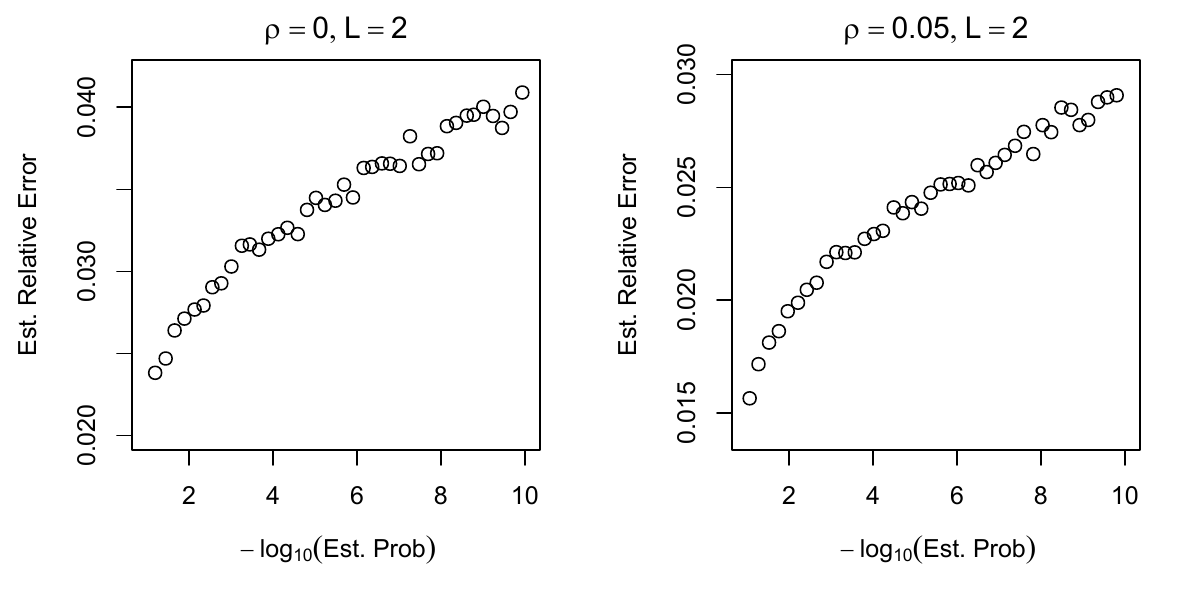}
\\
    \includegraphics[width=\textwidth]{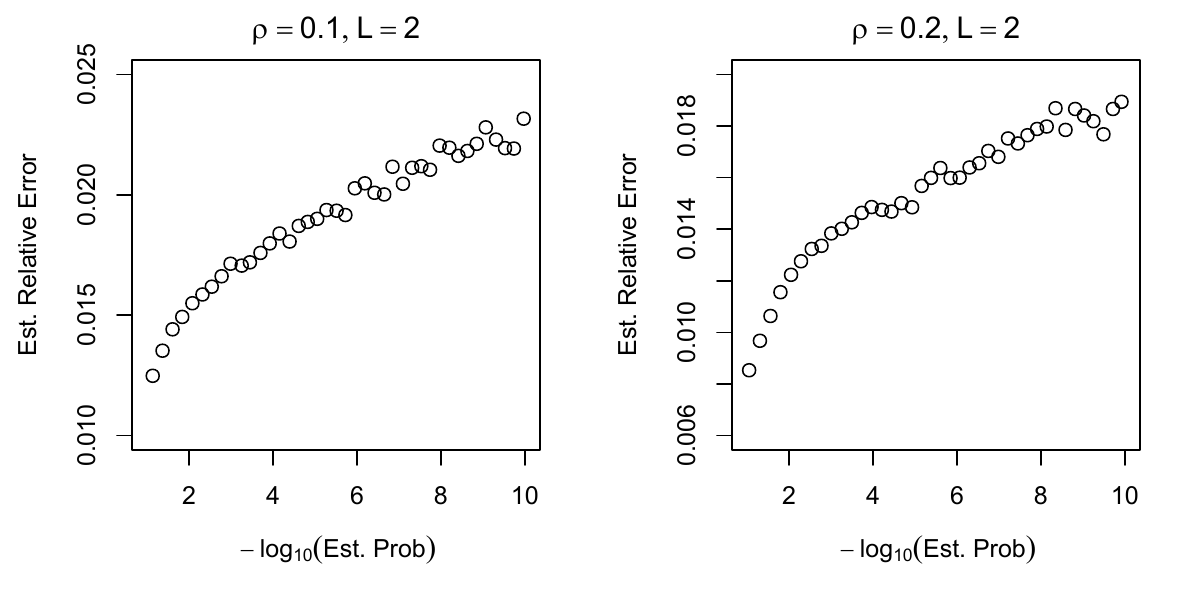}
    \caption{Sum-intersection rule with $d=100, L =2$ under the setup of Example \ref{example:SI_corr} for various $\rho$. The x-axis is $-\log_{10}$ of the estimated probability, and the y-axis is the estimated relative error. The results are based on $10^6$ repetitions.}
    \label{fig:SI_d100L2}
\end{figure}

\end{appendix}

\end{document}